\newcommand{\R}{{\Bbb R}}
\newcommand{\C}{{\Bbb C}}
\newcommand{\D}{{\Bbb D}}
\newcommand{\tr}{\text{\upshape tr\,}}
\newcommand{\res}{\text{\upshape Res\,}}
\newcommand{\diag}{\text{\upshape diag\,}}
\newcommand{\re}{\text{\upshape Re\,}}
\newcommand{\im}{\text{\upshape Im\,}}
\newcommand{\ntlim}{\lim^\angle}
\DeclareMathOperator{\sech}{sech}
\newcommand{\sol}{\text{\upshape sol}}
\newtheorem{theorem}{Theorem}[section]
\newtheorem{lemma}[theorem]{Lemma}
\newtheorem{remark}[theorem]{Remark}
\newtheorem{RHproblem}[theorem]{RH problem}
\newtheorem{figuretext}{Figure}
\numberwithin{equation}{section}
\tikzset{middlearrow/.style={
			decoration={markings,
				mark= at position 0.6 with {\arrow{#1}} ,
			},
			postaction={decorate}
		}
	}
\tikzset{->-/.style={decoration={
				markings,
				mark=at position #1 with {\arrow{latex}}},postaction={decorate}}}
\tikzset{-<-/.style={decoration={
				markings,
				mark=at position #1 with {\arrowreversed{latex}}},postaction={decorate}}}
				\tikzset{
	master/.style={
		execute at end picture={
			\coordinate (lower right) at (current bounding box.south east);
			\coordinate (upper left) at (current bounding box.north west);
		}
	},
	slave/.style={
		execute at end picture={
			\pgfresetboundingbox
			\path (upper left) rectangle (lower right);
		}
	}
}
\tikzset{cross/.style={cross out, draw, 
         minimum size=2*(#1-\pgflinewidth), 
         inner sep=0pt, outer sep=0pt}}
\def\XXint#1#2#3{{\setbox0=\hbox{$#1{#2#3}{\int}$ }
\vcenter{\hbox{$#2#3$ }}\kern-.59\wd0}}
\newcommand{\abstractin}[1]{%
  \otherlanguage{#1}%
  \item[\hskip\labelsep\scshape\abstractname.]%
}
\title[The soliton resolution conjecture for the Boussinesq equation]
{The soliton resolution conjecture \\ for the Boussinesq equation}
\author{C. Charlier$^{1}$ and J. Lenells$^{2}$}
\address{$^{1}$Centre for Mathematical Sciences, Lund University,
22100 Lund, Sweden. \\
$^{2}$Department of Mathematics, KTH Royal Institute of Technology, \\
10044 Stockholm, Sweden.}
\email{christophe.charlier@math.lu.se}
\email{jlenells@kth.se}
\begin{document}

\begin{abstract}
We analyze the Boussinesq equation on the line with Schwartz initial data belonging to the physically relevant class of global solutions. In a recent paper, we determined ten main asymptotic sectors describing the large $(x,t)$-behavior of the solution, and for each of these sectors we provided the leading order asymptotics in the case when no solitons are
present. In this paper, we give a formula valid in the asymptotic sector $x/t \in (1,M]$, where $M$ is a large positive constant, in the case when solitons are present. Combined with earlier results, this validates the soliton resolution conjecture for the Boussinesq equation everywhere in the $(x,t)$-plane except in a number of small transition zones.

\vspace{0.3cm}\abstractin{french}
Nous analysons l'\'{e}quation de Boussinesq sur la ligne avec des donn\'{e}es initiales de Schwartz appartenant \`{a} la classe physiquement pertinente de solutions globales. Dans un article r\'{e}cent, nous avons d\'{e}termin\'{e} dix principaux secteurs asymptotiques d\'{e}crivant le comportement de la solution pour $(x,t)$ grand, et pour chacun de ces secteurs, nous avons fourni les termes asymptotiques dominants dans le cas o\`{u} aucun soliton n'est pr\'{e}sent. Dans cet article, nous donnons une formule valable dans le secteur asymptotique $x/t \in (1,M]$, o\`{u} $M$ est une grande constante positive, dans le cas o\`{u} des solitons sont pr\'{e}sents. Combin\'{e} avec des r\'{e}sultats ant\'{e}rieurs, cela valide la conjecture de r\'{e}solution des solitons pour l'\'{e}quation de Boussinesq partout dans le plan $(x,t)$, \`{a} l'exception d'un certain nombre de petites zones de transition.
\end{abstract}

\maketitle

\noindent
{\small{\sc AMS Subject Classification (2020)}: 35C08, 35G25, 35Q15, 37K40, 76B15.}

\noindent
{\small{\sc Keywords}: Asymptotics, Boussinesq equation, Riemann--Hilbert problem, initial value problem.}


\section{Introduction}
This paper is concerned with the asymptotic behavior as $t \to \infty$ of solutions of the Boussinesq \cite{B1872} equation 
\begin{align}\label{boussinesq}
u_{tt} = u_{xx} + (u^2)_{xx} + u_{xxxx},
\end{align}
which models two-way propagation of dispersive waves of small amplitude in shallow water \cite{J1997}. A Lax pair for (\ref{boussinesq}) was constructed in \cite{Z1974}, soliton solutions were studied in \cite{H1973, B1976, BZ2002}, and the existence of global solutions of certain initial-boundary value problems was investigated in \cite{KL1977, LS1985, Y2002}. Determining the long-time asymptotics for the solution of \eqref{boussinesq} was listed as an important open problem by Deift in 2006 \cite{D2008}; we recently addressed this problem in \cite{CLmain}.

Equation (\ref{boussinesq}) is linearly unstable due to exponentially growing Fourier components of large frequency. As a consequence, it is ill-posed and is also referred to as the ``bad" Boussinesq equation. 
The ``good'' Boussinesq equation is the equation obtained by changing the sign of the $u_{xxxx}$-term in (\ref{boussinesq}); this change of signs makes the equation locally well-posed and papers dedicated to the study of the ``good'' Boussinesq equation include \cite{BFH2019, BS1988, CLgoodboussinesq, CLWasymptotics, F2009, F2011, HM2015, KT2010, X2006, L1993, TM1991, LS1995, L1997, WZ2022}.

In \cite{CLmain}, we developed an inverse scattering approach to the initial value problem for \eqref{boussinesq}. Along with other results, we established in \cite{CLmain} uniqueness and existence of global solutions to \eqref{boussinesq} for a broad class of initial data relevant for water waves. For these global solutions, we identified ten main asymptotic sectors describing the large $(x,t)$-behavior of the solution (see Figure \ref{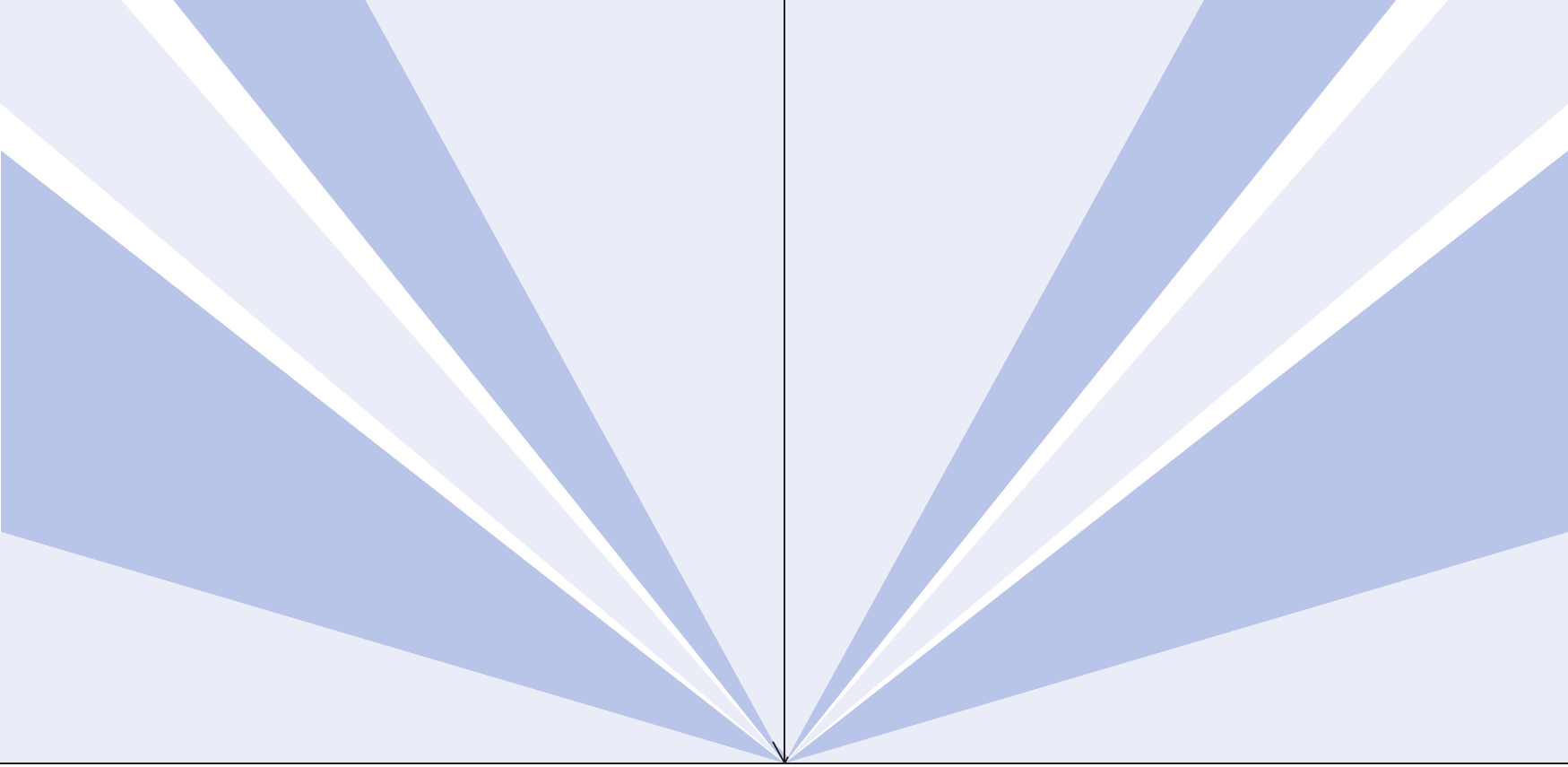}), and for each of these sectors we gave the leading order asymptotics in \cite[Theorem 2.14]{CLmain} in the case when no solitons are present. For conciseness, we omitted the proofs of some of these asymptotic formulas in \cite{CLmain}. 

In this paper, we establish an asymptotic formula for $u(x,t)$ valid in the sector $x/t \in (1,M]$, where $M$ is a large positive constant, in the case when solitons are present.
Following \cite{CLmain}, we refer to this sector as Sector II, see Figure \ref{sectors.pdf}. 
If no solitons are present, our formula reduces to the formula announced in \cite{CLmain}, thereby also providing a proof of this formula.

\begin{figure}
\bigskip\begin{center}
\begin{overpic}[width=.6\textwidth]{sectors.pdf}
      \put(102,-.3){\footnotesize $x$}
      \put(49.7,50.8){\footnotesize $t$}
      \put(90,4){\footnotesize I}
      \put(90,21){\footnotesize II}
      \put(86,36){\footnotesize III}
      \put(76,41){\footnotesize IV}
      \put(60.5,41){\footnotesize V}
      \put(37,41){\footnotesize VI}
      \put(19,41){\footnotesize VII}
      \put(9,36){\footnotesize VIII}
      \put(8,21){\footnotesize IX}
      \put(8,4){\footnotesize X}
    \end{overpic}
     \begin{figuretext}\label{sectors.pdf}
       The ten asymptotic sectors for (\ref{boussinesq}) in the $xt$-plane.
     \end{figuretext}
     \end{center}
\end{figure}

\subsection{The soliton resolution conjecture}
The soliton resolution conjecture for an integrable equation asserts, roughly speaking, that a solution of the equation with generic initial data eventually behaves as a finite number of solitons of order $1$ superimposed on a decaying radiative background. Earlier works on soliton resolution for equations with $2 \times 2$ Lax pairs include \cite{BJM2018, CL2021, CLL2000, CF2022, CGRT2023, HL2018, JLPS2018, JLPS2020, LTYF2022, LTY2022, PS2019, YF2021}. As mentioned above, the Boussinesq equation (\ref{boussinesq}) has the peculiar feature of being linearly unstable because large frequency Fourier modes grow (or decay) exponentially in time. 
The inverse scattering transform can be viewed as a nonlinear Fourier transform, and as such it allows us to identify the unstable high-frequency modes in a nonlinearly precise way \cite{CLmain}. The upshot is that if the initial data are such that unstable high-frequency modes are present, the solution will contain components that grow exponentially in time and the soliton resolution conjecture will automatically fail. 
This observation is consistent with the interpretation of the Boussinesq equation as a model for water waves, which relies on the absence of high-frequency modes.
Consequently, the natural formulation of the soliton resolution conjecture for (\ref{boussinesq}) restricts attention to initial data which contain no unstable high-frequency modes (in a nonlinear sense made precise in Assumption $(\ref{nounstablemodesassumption})$ below). Combined with the results of \cite{CLsectorI, CLsectorIV, CLsectorV}, our results here validate this soliton resolution conjecture for the Boussinesq equation everywhere in the $(x,t)$-plane except in a number of transition zones. In what follows, we explain this claim in further detail.

The one-soliton solutions of (\ref{boussinesq}) are given by 
\begin{align}\label{onesoliton}
u(x,t) = A \sech^2(\sqrt{A/6}(x - x_0 -ct)),
\end{align}
where $A \geq 0$ is the amplitude, $c = \pm \sqrt{1 + 2A/3}$ is the velocity, and $x_0 \in \R$ is a free parameter related to translation invariance \cite{H1973}. We observe that $c$ can be both positive and negative, corresponding to the fact that (\ref{boussinesq}) describes waves traveling in both directions. We also observe that $|c| > 1$, meaning that all one-solitons travel with speeds greater than $1$. One can show that all breather solitons of (\ref{boussinesq}) also propagate with speeds greater than $1$. Thus, by choosing $M$ large enough, we may assume that any finite number of right-moving solitons are contained in the sector $x/t \in (1,M]$, i.e., in Sector II. Similarly, we may assume that any finite number of left-moving solitons are contained in Sector IX, which is characterized by $x/t \in [-M,-1)$, see Figure \ref{sectors.pdf}. 
The soliton resolution conjecture for the Boussinesq equation then asserts that for generic initial data with no unstable high-frequency modes, the corresponding solution $u(x,t)$ of (\ref{boussinesq}) behaves as follows:
In Sectors II and IX, the large $t$ behavior of $u$ is described by a soliton component of order $1$ plus a decaying radiation component, whereas everywhere else, $u(x,t)$ asymptotes to a decaying radiative wave. Since (\ref{boussinesq}) is invariant under the reflection symmetry  $x \mapsto -x$, it is enough to consider positive values of $x$ when verifying this conjecture.

In \cite{CLsectorIV}, we showed that the long-time behavior of $u(x,t)$ in Sector IV, which is characterized by $x/t \in (1/\sqrt{3}, 1)$, is a radiative decaying wave described by a sum of two modulated sine-waves of order $O(t^{-1/2})$; we also observed that the only effect of solitons on the asymptotics in Sector IV is to introduce phase shifts in these modulated sine-waves.
In \cite{CLsectorV}, we obtained a similar asymptotic formula in Sector V. For brevity, we did not include solitons in \cite{CLsectorV}, but the same arguments as in \cite{CLsectorIV} show that the only effect of the solitons in Sector V is again to introduce phase shifts in the radiative terms.
In \cite{CLsectorI}, we obtained the asymptotics of $u(x,t)$ in Sector I. The formula in \cite{CLsectorI} was stated under the assumption of no solitons, but the arguments in the present paper show that they remain valid unchanged, provided that $M$ is large enough (so that all right-moving solitons are contained in Sector II). In the present paper, we obtain explicitly the leading order soliton component of order $1$ in Sector II, and show that the subleading term consists of a radiation component that decays like $t^{-1/2}$. Taken together, these results validate the soliton resolution conjecture for (\ref{boussinesq}) everywhere in the $(x,t)$-plane with the exception of a few narrow transition zones; more precisely, for any  $\epsilon > 0$, they validate it for all $\zeta := x/t \in \R$ such that $\zeta$ stays a distance $\epsilon$ away from the points $0, \pm 1/\sqrt{3}$, and $\pm 1$. We strongly expect that the soliton resolution conjecture holds also for $\zeta$ in the transition zones around $0, \pm 1/\sqrt{3}$, $\pm 1$, and that this can be proved using the framework of \cite{CLmain}; however, a treatment of these transition zones is beyond the scope of the present paper.

\subsection{Organization of the paper}
The main results are stated in Section \ref{mainsec}. The row-vector Riemann--Hilbert (RH) problem that underlies our inverse scattering approach to (\ref{boussinesq}) is recalled in Section \ref{overviewsec}. Sections \ref{nton1sec}--\ref{n3ton4sec} implement four transformations of this RH problem which bring it to a form suitable for the evaluation of its large $t$ behavior. In Section \ref{localparametrixsec}, we construct a local parametrix and in Section \ref{n3tonhatsec} we transform the RH problem one last time to arrive at a small-norm problem. The asymptotic behavior of $u(x,t)$ in Sector II is obtained in Section \ref{usec}. In Section \ref{puresolitonsec}, we solve the RH problem corresponding to pure multi-soliton solutions of \eqref{boussinesq}.
Another RH problem relevant for the asymptotics is analyzed in Section \ref{Msolsec}. Appendix \ref{modelapp} considers a model RH problem that is used in the construction of the local parametrix.

\section{Main results}\label{mainsec}
In this paper, the initial data 
\begin{align}\label{initial data}
u_{0}(x):=u(x,0) \quad \text{and} \quad u_{1}(x):=u_{t}(x,0)
\end{align}
are real-valued functions in the Schwartz class $\mathcal{S}(\mathbb{R})$ such that $\int_{\mathbb{R}}u_{1}(x)dx = 0$. The last assumption guarantees that the total mass $\int_{\mathbb{R}}u(x,t)dx$ is independent of $t$. It is shown in \cite{CLmain} that the solution to the initial value problem for \eqref{boussinesq} can be expressed in terms of the solution $n$ of a row-vector RH problem whose definition involves two spectral functions $r_{1}(k)$ and $r_{2}(k)$. The functions $r_{1}(k)$ and $r_{2}(k)$ are defined via a nonlinear Fourier transform of the initial data, which we briefly recall here for convenience.

\subsection{Definition of $r_{1}$ and $r_{2}$.} Let $\{l_j(k), z_j(k)\}_{j=1}^3$ be given by
\begin{align}\label{lmexpressions intro}
& l_{j}(k) = i \frac{\omega^{j}k + (\omega^{j}k)^{-1}}{2\sqrt{3}}, \qquad z_{j}(k) = i \frac{(\omega^{j}k)^{2} + (\omega^{j}k)^{-2}}{4\sqrt{3}}, \qquad k \in \C\setminus \{0\},
\end{align}
where $\omega = e^{\frac{2\pi i}{3}}$. Let $v_{0}(x) := \int_{-\infty}^{x}u_{1}(x')dx'$, and define $P(k)$ and $\mathsf{U}(x,k)$ by
\begin{align*}
P(k) = \begin{pmatrix}
1 & 1 & 1  \\
l_{1}(k) & l_{2}(k) & l_{3}(k) \\
l_{1}(k)^{2} & l_{2}(k)^{2} & l_{3}(k)^{2}
\end{pmatrix}, \quad \mathsf{U}(x,k) = P(k)^{-1} \begin{pmatrix}
0 & 0 & 0 \\
0 & 0 & 0 \\
-\frac{u_{0x}}{4}-\frac{iv_{0}}{4\sqrt{3}} & -\frac{u_{0}}{2} & 0
\end{pmatrix} P(k).
\end{align*} 
Let $\mathcal{L} = \diag(l_1 , l_2 , l_3)$, and let $X(x,k), X^A(x,k), Y(x,k), Y^A(x,k)$ be the unique solutions to the Volterra integral equations
\begin{align*}  
 & X(x,k) = I - \int_x^{\infty} e^{(x-x')\mathcal{L}(k)} (\mathsf{U}X)(x',k) e^{-(x-x')\mathcal{L}(k)} dx',
	\\
 & X^A(x,k) = I + \int_x^{\infty} e^{-(x-x')\mathcal{L}(k)} (\mathsf{U}^T X^A)(x',k)e^{(x-x')\mathcal{L}(k)} dx',	\\
& Y(x,k)  =  I  +  \int_{-\infty}^x  e^{(x-x')\mathcal{L}(k)} (\mathsf{U}Y)(x',k) e^{-(x-x')\mathcal{L}(k)}dx',
	\\ \nonumber
& Y^A(x,k)  =  I  -  \int_{-\infty}^x  e^{-(x-x')\mathcal{L}(k)} (\mathsf{U}^T Y^A)(x',k) e^{(x-x')\mathcal{L}(k)} dx',
\end{align*}
where $\mathsf{U}^T$ denotes the transpose of $\mathsf{U}$. Let $s(k)$ and $s^A(k)$ be given by 
\begin{align*}
& s(k) = I - \int_\R e^{-x \mathcal{L}(k)}(\mathsf{U}X)(x,k)e^{x \mathcal{L}(k)}dx, & & s^A(k) = I + \int_\R e^{x \mathcal{L}(k)}(\mathsf{U}^T X^A)(x,k)e^{-x \mathcal{L}(k)}dx.
\end{align*}
Let $\D = \{k \in \C \, | \, |k| < 1\}$ be the open unit disk and let $\Gamma = \cup_{j=1}^9 \Gamma_j$ be the contour displayed in Figure \ref{fig: Dn}. Let $\hat{\Gamma}_{j} = \Gamma_{j} \cup \partial \D$ denote the union of $\Gamma_j$ and the unit circle $\partial \D$. The functions $\{r_j(k)\}_1^2$ are defined by
\begin{align}\label{r1r2def}
\begin{cases}
r_1(k) = \frac{(s(k))_{12}}{(s(k))_{11}}, & k \in \hat{\Gamma}_{1}\setminus (\mathcal{Q} \cup \{0\}),
	\\ 
r_2(k) = \frac{(s^A(k))_{12}}{(s^A(k))_{11}}, \quad & k \in \hat{\Gamma}_{4}\setminus (\mathcal{Q} \cup \{0\}),
\end{cases}
\end{align}	
where the set $\mathcal{Q} := \{\kappa_{j}\}_{j=1}^{6}$ consists of the six points $\kappa_{j} := e^{\frac{\pi i(j-1)}{3}}$, $j=1,\ldots,6$.

\subsection{Solitons}
In \cite{CLscatteringsolitons}, we generalized the approach of \cite{CLmain} to allow for solutions with solitons. The solitons manifest themselves in the formulation of the RH problem as a set of poles $\mathsf{Z}$ and a set of corresponding residue constants $\{c_{k_0}\}_{k_0 \in \mathsf{Z}} \subset \C$ which are defined as follows. 

The functions $s_{11}$ and $s^A_{11}$ appearing in (\ref{r1r2def}) are analytic in the interiors of the sets $\bar{D}_1 \cup \bar{D}_2$ and $\bar{D}_4 \cup \bar{D}_5$, respectively, where the open sets $\{D_{j}\}_{j=1}^{6}$ are as indicated in Figure \ref{fig: Dn}, see \cite{CLmain}. The solitons are generated by the possible zeros of $s_{11}(k)$ and $s^A_{11}(k)$.
We will restrict ourselves to the generic case when there are no zeros on the contour $\Gamma$. 
In view of the symmetries
 $s_{11}(k) = s_{11}(\omega/k)$ and $s_{11}^A(k) = \overline{s_{11}(\bar{k}^{-1})}$ which were established in \cite{CLmain}, it is then sufficient to consider the zeros of $s_{11}$ in $D_2$. 
The set $\mathsf{Z}$ is defined as the zero-set of $s_{11}(k)$. Since $s_{11}(k) \to 1$ as $k\to \infty$, $\mathsf{Z}$ is necessarily a finite set.
We will consider the generic case when the zeros of $s_{11}$ are simple and $s_{22}^A \neq 0$ at every point of $\mathsf{Z} \setminus \R$.
 
We divide the open set $D_{2}$ into three parts: $D_{2} = D_{\mathrm{reg}} \sqcup D_{\mathrm{sing}} \sqcup (D_{2}\cap \R)$, where $D_{2}\cap \R = (-1,0) \cup (1,\infty)$ and
\begin{align*}
& D_{\mathrm{reg}} := D_{2} \cap \big( \{k \,|\, |k|>1, \im k >0\} \cup \{k \,|\, |k|<1, \im k <0\} \big), \\
& D_{\mathrm{sing}} := D_{2} \cap \big( \{k \,|\, |k|>1, \im k <0\} \cup \{k \,|\, |k|<1, \im k >0\} \big).
\end{align*}
If $u_0$ and $u_1$ are compactly supported, the residue constants $\{c_{k_0}\}_{k_0 \in \mathsf{Z}}$ are defined by
 \begin{align}\label{ck0compactdef}
& c_{k_{0}} := \begin{cases} 
- \frac{s_{13}(k_0)}{\dot{s}_{11}(k_0)}, & k_{0} \in \mathsf{Z}\setminus \mathbb{R}, \\
- \frac{s_{12}(k_0)}{\dot{s}_{11}(k_0)}, & k_{0} \in \mathsf{Z}\cap \mathbb{R}.
\end{cases}
\end{align}
If $u_0$ and $v_0$ are not compactly supported, then the expressions in (\ref{ck0compactdef}) are in general not well-defined and we instead use the following more complicated definition of the residue constants: Define the vector-valued function $w(x,k)$ by
\begin{align}\label{wdef}
w = \begin{pmatrix}
Y_{21}^AX_{32}^A - Y_{31}^AX_{22}^A  \\
Y_{31}^AX_{12}^A - Y_{11}^AX_{32}^A  \\
 Y_{11}^AX_{22}^A - Y_{21}^AX_{12}^A 
\end{pmatrix}.
\end{align}
If $k_{0} \in \mathsf{Z}\setminus \mathbb{R}$, then $c_{k_0} \in \C$ is defined as the unique constant such that
\begin{subequations}\label{ck0def}
\begin{align}\label{ck0def1}
\frac{w(x,k_{0})}{\dot{s}_{11}(k_0)} = c_{k_0} e^{(l_1(k_0)-l_3(k_0))x} [X(x,k_0)]_1 \qquad \text{for all $x \in \R$},
\end{align}
where we write $[A]_j$ for the $j$th column of an $n \times m$ matrix $A$. If $k_{0} \in \mathsf{Z} \cap \mathbb{R}$, then $c_{k_0} \in \C$ is defined as the unique constant such that
\begin{align}\label{ck0def2}
\frac{[Y(x,k_0)]_2}{\dot{s}_{22}^A(k_0)} = c_{k_0} e^{(l_1(k_0)-l_2(k_0))x} [X(x,k_0)]_1\qquad \text{for all $x \in \R$}.
\end{align}
\end{subequations}
It is proved in \cite{CLscatteringsolitons} that the $c_{k_0}$ are well-defined by the relations in (\ref{ck0def}).

Simple zeros of $s_{11}(k)$ in $\mathsf{Z} \setminus \R$ with positive and negative real parts give rise to right- and left-moving breather solitons, respectively \cite{CLscatteringsolitons}. 
Similarly, positive and negative simple zeros of $s_{11}(k)$ in $\mathsf{Z} \cap \R$ give rise to right- and left-moving bell-shaped solitons, respectively. 
It is shown in \cite{CLscatteringsolitons} that zeros in $D_{\mathrm{sing}}$ generate solitons with singularities. It is also shown in \cite{CLscatteringsolitons} that a soliton corresponding to $k_0 \in \mathsf{Z} \cap \R$ is non-singular and real-valued if and only if the associated residue constant $c_{k_0}$ satisfies $i(\omega^{2}k_{0}^{2} - \omega)c_{k_{0}} \geq 0$.
Since we are only interested in singularity-free real-valued solutions, we will assume that $\mathsf{Z} \subset D_{\mathrm{reg}} \cup (-1,0) \cup (1,\infty)$ and that $i(\omega^{2}k_{0}^{2} - \omega)c_{k_{0}} \geq 0$ for each $k_0 \in \mathsf{Z} \cap \R$. 


\subsection{Assumptions}\label{assumptionssubsec}
Our results will hold under the following assumptions:

\begin{enumerate}[$(i)$]
\item\label{solitonassumption} Finite number of non-singular solitons: assume that $s_{11}(k)$ has a simple zero at each point in $\mathsf{Z}$, where $\mathsf{Z} \subset D_{\mathrm{reg}} \cup (-1,0) \cup (1,\infty)$ is a finite set, and that $s_{11}(k)$ has no other zeros in $\bar{D}_2 \cup \partial \D \setminus (\mathcal{Q} \cup \{0\})$. If $k_0 \in \mathsf{Z} \setminus \R$, then suppose that $s_{22}^A(k_0) \neq 0$. If $k_0 \in \mathsf{Z} \cap \R$, then suppose that $i(\omega^{2}k_{0}^{2} - \omega)c_{k_{0}} \geq 0$. 

\item Generic behavior of $s$ and $s^{A}$  near $k= \pm 1$: we assume for $k_{\star} =1$ and $k_{\star}=-1$ that
\begin{align*}
& \lim_{k \to k_{\star}} (k-k_{\star}) s(k)_{11} \neq 0, & & \hspace{-0.1cm} \lim_{k \to k_{\star}} (k-k_{\star}) s(k)_{13} \neq 0, & & \hspace{-0.1cm} \lim_{k \to k_{\star}} s(k)_{31} \neq 0, & & \hspace{-0.1cm} \lim_{k \to k_{\star}} s(k)_{33} \neq 0, \\
& \lim_{k \to k_{\star}} (k-k_{\star}) s^{A}(k)_{11} \neq 0, & & \hspace{-0.1cm} \lim_{k \to k_{\star}} (k-k_{\star}) s^{A}(k)_{31} \neq 0, & & \hspace{-0.1cm} \lim_{k \to k_{\star}} s^{A}(k)_{13} \neq 0, & & \hspace{-0.1cm} \lim_{k \to k_{\star}} s^{A}(k)_{33} \neq 0.
\end{align*}

\item\label{nounstablemodesassumption}
 No unstable high-frequency modes: we assume that $r_{1}(k)=0$ for all $k \in [0,i]$, where $[0,i]$ is the vertical segment from $0$ to $i$.
\end{enumerate}

\begin{figure}
\begin{center}
\begin{tikzpicture}[scale=0.9]
\node at (0,0) {};
\draw[black,line width=0.45 mm,->-=0.4,->-=0.85] (0,0)--(30:4);
\draw[black,line width=0.45 mm,->-=0.4,->-=0.85] (0,0)--(90:4);
\draw[black,line width=0.45 mm,->-=0.4,->-=0.85] (0,0)--(150:4);
\draw[black,line width=0.45 mm,->-=0.4,->-=0.85] (0,0)--(-30:4);
\draw[black,line width=0.45 mm,->-=0.4,->-=0.85] (0,0)--(-90:4);
\draw[black,line width=0.45 mm,->-=0.4,->-=0.85] (0,0)--(-150:4);

\draw[black,line width=0.45 mm] ([shift=(-180:2.5cm)]0,0) arc (-180:180:2.5cm);
\draw[black,arrows={-Triangle[length=0.2cm,width=0.18cm]}]
($(3:2.5)$) --  ++(90:0.001);
\draw[black,arrows={-Triangle[length=0.2cm,width=0.18cm]}]
($(57:2.5)$) --  ++(-30:0.001);
\draw[black,arrows={-Triangle[length=0.2cm,width=0.18cm]}]
($(123:2.5)$) --  ++(210:0.001);
\draw[black,arrows={-Triangle[length=0.2cm,width=0.18cm]}]
($(177:2.5)$) --  ++(90:0.001);
\draw[black,arrows={-Triangle[length=0.2cm,width=0.18cm]}]
($(243:2.5)$) --  ++(330:0.001);
\draw[black,arrows={-Triangle[length=0.2cm,width=0.18cm]}]
($(297:2.5)$) --  ++(210:0.001);

\draw[black,line width=0.15 mm] ([shift=(-30:0.55cm)]0,0) arc (-30:30:0.55cm);

\node at (0.8,0) {$\tiny \frac{\pi}{3}$};

\node at (-1:2.85) {\footnotesize $\Gamma_8$};
\node at (60:2.85) {\footnotesize $\Gamma_9$};
\node at (116:2.8) {\footnotesize $\Gamma_7$};
\node at (181:2.83) {\footnotesize $\Gamma_8$};
\node at (234:2.71) {\footnotesize $\Gamma_9$};
\node at (300:2.83) {\footnotesize $\Gamma_7$};

\node at (77:1.45) {\footnotesize $\Gamma_1$};
\node at (160:1.45) {\footnotesize $\Gamma_2$};
\node at (-163:1.45) {\footnotesize $\Gamma_3$};
\node at (-77:1.45) {\footnotesize $\Gamma_4$};
\node at (-42:1.45) {\footnotesize $\Gamma_5$};
\node at (43:1.45) {\footnotesize $\Gamma_6$};

\node at (84:3.3) {\footnotesize $\Gamma_4$};
\node at (155:3.3) {\footnotesize $\Gamma_5$};
\node at (-156:3.3) {\footnotesize $\Gamma_6$};
\node at (-84:3.3) {\footnotesize $\Gamma_1$};
\node at (-35:3.3) {\footnotesize $\Gamma_2$};
\node at (35:3.3) {\footnotesize $\Gamma_3$};

\end{tikzpicture}
\hspace{0.5cm}
\begin{tikzpicture}[scale=0.9]
\node at (0,0) {};
\draw[black,line width=0.45 mm] (0,0)--(30:4);
\draw[black,line width=0.45 mm] (0,0)--(90:4);
\draw[black,line width=0.45 mm] (0,0)--(150:4);
\draw[black,line width=0.45 mm] (0,0)--(-30:4);
\draw[black,line width=0.45 mm] (0,0)--(-90:4);
\draw[black,line width=0.45 mm] (0,0)--(-150:4);

\draw[black,line width=0.45 mm] ([shift=(-180:2.5cm)]0,0) arc (-180:180:2.5cm);
\draw[black,line width=0.15 mm] ([shift=(-30:0.55cm)]0,0) arc (-30:30:0.55cm);

\node at (120:1.6) {\footnotesize{$D_{1}$}};
\node at (-60:3.7) {\footnotesize{$D_{1}$}};

\node at (180:1.6) {\footnotesize{$D_{2}$}};
\node at (0:3.7) {\footnotesize{$D_{2}$}};

\node at (240:1.6) {\footnotesize{$D_{3}$}};
\node at (60:3.7) {\footnotesize{$D_{3}$}};

\node at (-60:1.6) {\footnotesize{$D_{4}$}};
\node at (120:3.7) {\footnotesize{$D_{4}$}};

\node at (0:1.6) {\footnotesize{$D_{5}$}};
\node at (180:3.7) {\footnotesize{$D_{5}$}};

\node at (60:1.6) {\footnotesize{$D_{6}$}};
\node at (-120:3.7) {\footnotesize{$D_{6}$}};

\node at (0.8,0) {$\tiny \frac{\pi}{3}$};

\draw[fill] (0:2.5) circle (0.1);
\draw[fill] (60:2.5) circle (0.1);
\draw[fill] (120:2.5) circle (0.1);
\draw[fill] (180:2.5) circle (0.1);
\draw[fill] (240:2.5) circle (0.1);
\draw[fill] (300:2.5) circle (0.1);

\node at (0:2.9) {\footnotesize{$\kappa_1$}};
\node at (59.5:2.84) {\footnotesize{$\kappa_2$}};
\node at (120:2.85) {\footnotesize{$\kappa_3$}};
\node at (180:2.9) {\footnotesize{$\kappa_4$}};
\node at (240:2.85) {\footnotesize{$\kappa_5$}};
\node at (300:2.85) {\footnotesize{$\kappa_6$}};

\draw[dashed] (-4.3,-3.8)--(-4.3,3.8);

\end{tikzpicture}
\end{center}
\begin{figuretext}\label{fig: Dn}
The contour $\Gamma = \cup_{j=1}^9 \Gamma_j$ in the complex $k$-plane (left) and the open sets $D_{n}$, $n=1,\ldots,6$, together with the sixth roots of unity $\kappa_j$, $j = 1, \dots, 6$ (right).
\end{figuretext}
\end{figure}

Regarding Assumption $(iii)$, we note that equation (\ref{boussinesq}) possesses solutions that blow up at any given positive time \cite{CLmain}. Assumption $(iii)$ guarantees that the solution of the initial value problem \eqref{boussinesq}--\eqref{initial data} does not carry unstable nonlinear high-frequency modes and therefore exists globally. As discussed in the introduction, this assumption is physically motivated because the derivation of (\ref{boussinesq}) requires the wavelength to be long compared to the depth of the water; the equation therefore only remains valid as a model for water waves as long as the high-frequency modes are absent (or at least heavily suppressed). If Assumptions $(i)$--$(iii)$ hold true, then the solution $u(x,t)$ of the initial value problem \eqref{boussinesq}--\eqref{initial data} exists globally \cite{CLscatteringsolitons}. By \cite[Theorems 2.6 and 2.11]{CLscatteringsolitons}, there is a wide class of initial data satisfying $(i)$--$(iii)$.

\subsection{Statement of the main result}
Let $\zeta := x/t$ and suppose that $\zeta > 1$. For $1 \leq j<i \leq 3$, let $\Phi_{ij}(\zeta, k)$ be given by
\begin{align}\label{def of Phi ij}
\Phi_{ij}(\zeta,k) = (l_{i}(k)-l_{j}(k))\zeta + (z_{i}(k)-z_{j}(k)), \qquad k \in \mathbb{C}\setminus \{0\}.
\end{align}
The saddle points $\{k_{j}=k_{j}(\zeta)\}_{j=1}^{4}$ of $k\mapsto \Phi_{21}(\zeta,k)$ are given by
\begin{subequations}\label{def of kj}
\begin{align}
& k_{1} = \frac{1}{4}\bigg( \zeta - \sqrt{8+\zeta^{2}} + i \sqrt{2}\sqrt{4-\zeta^{2}+\zeta\sqrt{8+\zeta^{2}}} \bigg), && k_{2} = \bar{k}_1,  \\
& k_{3} = \frac{1}{4}\bigg( \zeta + \sqrt{8+\zeta^{2}} + \sqrt{2}\sqrt{-4+\zeta^{2}+\zeta\sqrt{8+\zeta^{2}}} \bigg), &&  k_{4}=k_{3}^{-1}, 
\end{align}
\end{subequations} 
and satisfy $|k_{1}|=1$, $k_{3}\in (1,+\infty)$, and $\arg k_{1} \in (\frac{\pi}{2},\frac{2\pi}{3})$. Define $z_{\star}$ by
\begin{align*}
& z_{\star} = z_{\star}(\zeta) := \sqrt{2}e^{\frac{\pi i}{4}} \sqrt{\frac{4-3k_{1} \zeta - k_{1}^{3} \zeta}{4k_{1}^{4}}},
\end{align*}
where the branch of the square root is such that $-ik_{1}z_{\star}>0$. 
Let $\Gamma(k)$ be the Gamma function, and define
\begin{align}\label{nudef}
& \nu = \nu(k_1) := - \frac{1}{2\pi}\ln(1+r_{1}(k_{1})r_{2}(k_{1})) \leq 0.
\end{align}
Define also
\begin{align}
\arg d_{0} = &\; \arg d_{0}(\zeta,t) := \nu  \ln \bigg| \frac{(\frac{1}{\omega^{2}k_{1}}-k_{1})(\frac{1}{\omega k_{1}}-k_{1})}{3(\frac{1}{k_{1}}-k_{1})^{2}z_{\star}^{2}} \bigg| - \nu\ln t \nonumber \\
& + \frac{1}{2\pi} \int_{i}^{k_{1}} \ln \bigg| \frac{(k_{1}-s)^{2}(\frac{1}{\omega^{2}k_{1}}-s)(\frac{1}{\omega k_{1}}-s)}{(\frac{1}{k_{1}}-s)^{2}(\omega k_{1}-s)(\omega^{2} k_{1}-s)} \bigg| d \ln(1+r_{1}(s)r_{2}(s)), \label{argd0}
\end{align}
where the path of the integral $\int_{i}^{k_{1}}$ starts at $i$, follows the unit circle in the counterclockwise direction, and ends at $k_{1}$.

Our main result establishes the behavior of $u(x,t)$ as $(x,t)\to \infty$ in Sector II. Its formulation involves the the unique solution $M_{\mathrm{sol}}(x,t,k)$ of RH problem \ref{RHS}, which is stated and discussed in Section \ref{Msolsubsec} below.

\begin{theorem}[Asymptotics in Sector II with solitons]\label{asymptoticsth}
Let $u_0,u_1 \in \mathcal{S}(\R)$ be real-valued initial data such that $\int_{\mathbb{R}}u_{1} dx =0$ and such that Assumptions $(i)$--$(iii)$ are fulfilled. Let $\mathcal{I}$ be a fixed compact subset of $(1,\infty)$. 
The global solution $u(x,t)$ of the initial value problem for (\ref{boussinesq}) with initial data $u_0, u_1$ enjoys the following asymptotics as $t \to \infty$:
\begin{align}\label{uasymptotics}
& u(x,t) = u_{\mathrm{sol}}(x,t) + \frac{u_{\mathrm{rad}}(x,t)}{\sqrt{t}} + O\bigg(\frac{\ln t}{t}\bigg), 
\end{align}
uniformly for $\zeta:= \frac{x}{t} \in \mathcal{I}$, where the soliton term $u_{\mathrm{sol}}$ is given by 
\begin{align}\label{usoldef}
u_{\mathrm{sol}}(x,t) := -i\sqrt{3}\frac{\partial}{\partial x} \lim_{k\to\infty}k\Big(\big((1,1,1)M_{\mathrm{sol}}(x,t,k)\big)_{3}-1\Big),
\end{align}
and the coefficient $u_{\mathrm{rad}}$ of the radiation term is given by
\begin{align}\nonumber
u_{\mathrm{rad}}(x,t) := &\; \frac{  \sqrt{6\pi}  e^{-\frac{\pi \nu}{2}}\im(k_1) }{-ik_{1}z_{\star}} 
(1,1,1)M_{\mathrm{sol}}(x,t,k_{1})
\begin{pmatrix}
0 & \hspace{-0.15cm}\frac{e^{\frac{\pi i}{4}} e^{-t\Phi_{21}(\zeta,k_{1})}}{e^{i \arg d_0} r_{2}(k_{1}) \Gamma(i\nu)} & \hspace{-0.15cm}0 \\
\frac{e^{-\frac{\pi i}{4}} e^{t \Phi_{21}(\zeta,k_{1})}}{e^{-i \arg d_0} r_{1}(k_{1}) \Gamma(-i\nu)} & \hspace{-0.15cm}0 & \hspace{-0.15cm}0 \\
0 & \hspace{-0.15cm}0 & \hspace{-0.15cm}0
\end{pmatrix}
	\\ \label{uraddef}
& \times M_{\mathrm{sol}}(x,t,k_{1})^{-1}\begin{pmatrix}
\omega^{2} i k_{1}^{-1} - \omega i k_{1}  \\ \omega ik_{1}^{-1} - \omega^{2}ik_{1} \\ ik_{1}^{-1}-ik_{1}
\end{pmatrix}.
\end{align}
\end{theorem}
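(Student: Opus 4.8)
The plan is to follow the Deift--Zhou steepest descent methodology developed in \cite{CLmain} and adapted to the soliton case in \cite{CLscatteringsolitons}, carrying the poles $\mathsf{Z}$ along throughout. I would begin from the row-vector RH problem for $n(x,t,k)$ recalled in Section \ref{overviewsec}, whose jump matrix on $\Gamma \cup \partial\D$ is built from $r_1,r_2$ and which has the residue conditions at the points of $\mathsf{Z}$ dictated by the $c_{k_0}$'s. The exponentials governing the decay/growth of the jumps are $e^{\pm t\Phi_{21}(\zeta,k)}$ and its $\omega$-rotated analogues. For $\zeta > 1$ the relevant phase function $\Phi_{21}$ has four saddle points $k_1,k_2 = \bar k_1, k_3, k_4 = k_3^{-1}$ as listed in \eqref{def of kj}; of these, $k_1$ lies on $\hat\Gamma_1$ (where $r_1$ lives) with $|k_1| = 1$, and it is this saddle — together with its symmetric images — that produces the $t^{-1/2}$ radiative correction, while the poles in $D_{\mathrm{reg}} \cup (-1,0)\cup(1,\infty)$ produce the order-one soliton term.

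The key steps, in order: (1) perform the first transformation $n \mapsto n^{(1)}$ which opens lenses and conjugates by a scalar function $\delta(\zeta,k)$ (a product over the relevant sign-change points, including the Blaschke-type factors that move the residue conditions) so as to prepare the jumps for deformation — this is where $\nu = \nu(k_1)$ and $\arg d_0$ enter, the latter being precisely the phase of the relevant $\delta$-factor expanded near $k_1$; (2--4) the further transformations $n^{(1)} \mapsto n^{(2)} \mapsto n^{(3)} \mapsto n^{(4)}$ of Sections \ref{nton1sec}--\ref{n3ton4sec} which deform contours so that all jumps away from the saddle points and the poles decay exponentially, leaving the poles intact to form the pure-soliton RH problem \ref{RHS} solved in Section \ref{puresolitonsec}; (5) construct the local parametrix near $k_1$ (and its symmetric images) out of the model parabolic-cylinder problem of Appendix \ref{modelapp}, matching it to $M_{\mathrm{sol}}$ on the boundary of a small disk — the explicit $\Gamma(\pm i\nu)$, $e^{\pm\pi i/4}$, $z_\star$, and $e^{\pm i\arg d_0}$ factors in \eqref{uraddef} come out of this matching; (6) define the final transformation to a small-norm RH problem whose solution is $I + O(t^{-1/2})$ with an explicit $t^{-1/2}$-term supported near $k_1$ coming from the mismatch of the local parametrix, and an $O(t^{-1})$ (hence the $O(t^{-1}\ln t)$ once the $\ln t$ in $\arg d_0$ is accounted for) error; (7) reconstruct $u(x,t)$ via the standard formula $u = -i\sqrt3\,\partial_x \lim_{k\to\infty} k((1,1,1)n)_3$ applied to the asymptotic expansion, identifying the $\lim_{k\to\infty}$-contribution of the pure-soliton solution as $u_{\mathrm{sol}}$ in \eqref{usoldef} and the contribution of the $t^{-1/2}$-term as $u_{\mathrm{rad}}/\sqrt t$ in \eqref{uraddef}, the residue $\begin{pmatrix}\omega^2 ik_1^{-1}-\omega ik_1\\ \omega ik_1^{-1}-\omega^2 ik_1\\ ik_1^{-1}-ik_1\end{pmatrix}$ being the vector $\tfrac{\partial}{\partial x}$ of the relevant column of the eigenfunction evaluated at $k_1$.

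I expect the main obstacle to be step (1) combined with step (5) in the presence of the poles: one must conjugate the RH problem so that (a) all jump matrices are deformable to the correct exponentially-decaying forms near the three symmetric arcs through $k_1$, $\omega k_1$, $\omega^2 k_1$, (b) the residue conditions at $\mathsf{Z}$, after conjugation by $\delta$, become exactly the residue conditions of RH problem \ref{RHS}, and (c) the scalar factor $\delta$ has the right local behavior at $k_1$ so that its contribution, when combined with the $t^{i\nu}$ from the model problem, collapses to the explicit $e^{i\arg d_0}$ in \eqref{argd0} — this bookkeeping of the sixth-order symmetry $k \mapsto \omega k \mapsto \omega^2 k \mapsto 1/k$ together with the $z_3$-phase (absent in the $2\times2$ theory) is where the calculation is genuinely heavier than in the scalar-reflection setting. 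A secondary technical point is verifying that RH problem \ref{RHS} is uniquely solvable and that $M_{\mathrm{sol}}$ and $M_{\mathrm{sol}}^{-1}$ are uniformly bounded in $(x,t)$ for $\zeta \in \mathcal{I}$ (so that the soliton term stays order one and the error estimates are uniform); this is the content of Sections \ref{puresolitonsec}--\ref{Msolsec} and I would invoke it as a black box once proved there. Everything else — the contour deformations, the small-norm estimates, and the final reconstruction — is by now routine within this framework, modulo careful tracking of the $O(t^{-1}\ln t)$ error.
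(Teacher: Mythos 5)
Your outline tracks the paper's steepest-descent proof step for step: analytic decompositions and lens opening ($n\to n^{(1)}\to n^{(2)}$), a $\delta$-conjugation to handle the diagonal factor $1+r_1r_2$ and simultaneously tilt the residue conditions ($n^{(2)}\to n^{(3)}$), removal of the poles by conjugation with $\hat M_{\mathrm{sol}}=M_{\mathrm{sol}}\mathsf{P}$ so that $n^{(4)}$ is in fact pole-free (rather than ``poles left intact''), a parabolic-cylinder local parametrix near $k_1$ conjugated by $\hat M_{\mathrm{sol}}Y$ and extended to the six saddles by symmetry, a small-norm estimate, and reconstruction via $u=-i\sqrt3\,\partial_x n_3^{(1)}$ — this is exactly the route of Sections \ref{nton1sec}--\ref{usec}. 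Two small misplacements in your description are worth flagging but do not affect the plan: the $\delta$-conjugation is the third transformation $n^{(2)}\to n^{(3)}$ in the paper, not part of $n\to n^{(1)}$; and RH problem \ref{RHS} is a \emph{modulated} soliton problem (its residue coefficients still carry the $r_1,r_2$-dependent weights $\hat\Delta$) analyzed in Section \ref{Msolsec}, with the genuinely pure multi-soliton case handled in Section \ref{puresolitonsec} only to derive the explicit determinant formula.
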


The proof of Theorem \ref{asymptoticsth} is given in Sections \ref{overviewsec}--\ref{usec}, with the final steps presented in Section \ref{proofth1subsec}.

The asymptotic formula of Theorem \ref{asymptoticsth} is very general. However, the presence of the solution $M_{\mathrm{sol}}(x,t,k)$ of RH problem \ref{RHS} makes it somewhat challenging to interpret. In what follows, we therefore present more explicit versions of the asymptotic formula (\ref{uasymptotics}) which make the resolution into individual solitons more evident. We also give an explicit expression for the leading term $u_{\mathrm{sol}}(x,t)$.

\subsection{Formula valid away from the asymptotic solitons}
Away from the directions of the asymptotic solitons, the asymptotic formula (\ref{uasymptotics}) can be simplified as follows. 
Let $S_0 \subset (1,+\infty)$ be the (finite) set of all $\zeta \in (1, +\infty)$ for which there exists a $k_{0}$ such that either $k_{0} \in (\mathsf{Z}\setminus \R) \cap \{k\,|\, \re \Phi_{31}(\zeta,k) = 0\}$ or $k_{0} \in \mathsf{Z}\cap \R \cap \{k\,|\, \re \Phi_{21}(\zeta,k) = 0\}$.
Then $S_0$ is the set of velocities of all asymptotic solitons propagating to the right, see (\ref{S0explicit}). For each $\epsilon > 0$, let $S_\epsilon$ be the $\epsilon$-neighborhood of $S_0$ given by 
\begin{align}\label{Sdeltadef}
S_\epsilon = \bigcup_{\zeta_0 \in S_0}\{\zeta \in (1,+\infty) \, |\, |\zeta - \zeta_0| \leq \epsilon\}.
\end{align}
If $\zeta \in (1,+\infty) \setminus S_\epsilon$, then there is no asymptotic soliton traveling with velocity $\zeta$, so we expect the soliton term $u_{\mathrm{sol}}(x,t)$ in (\ref{uasymptotics}) to be exponentially small and the radiation term $u_{\mathrm{rad}}(x,t)/\sqrt{t}$ to simplify. Our next theorem, whose statement involves the function
\begin{align}\label{def of mathcalP}
\mathcal{P}(\zeta,k)  =&\; \prod_{\substack{k_{0}\in \mathsf{Z}\setminus \R \\ \re \Phi_{31}(\zeta,k_{0})<0}} \frac{(k-k_{0})(k-k_{0}^{-1})(k-\omega^{2} \bar{k}_{0})(k-\omega \bar{k}_{0}^{-1})}{(k-\bar{k}_{0})(k-\bar{k}_{0}^{-1})(k-\omega k_{0})(k-\omega^{2} k_{0}^{-1})} \nonumber \\
& \times  \prod_{\substack{k_{0}\in \mathsf{Z}\cap \R \\ \re \Phi_{21}(\zeta,k_{0})<0}} \frac{(k-\omega^{2}k_{0})(k-\omega k_{0}^{-1})}{(k-\omega k_{0})(k-\omega^{2}k_{0}^{-1})}, 
\end{align}
shows that this is indeed the case.

\begin{theorem}[Asymptotics in Sector II away from solitons]\label{asymptoticsth2}
Under the assumptions of Theorem \ref{asymptoticsth}, we have, for each $\epsilon > 0$,
\begin{align}\label{uasymptotics2}
& u(x,t) = \frac{A(\zeta)}{\sqrt{t}} \cos \alpha(\zeta,t) + O\bigg(\frac{\ln t}{t}\bigg) \qquad \text{as $t \to \infty$},
\end{align}
uniformly for $\zeta = \frac{x}{t} \in \mathcal{I} \setminus S_\epsilon$, where 
\begin{align}
& A(\zeta) := 2\sqrt{3} \frac{\sqrt{-\nu}\sqrt{-1-2\cos (2\arg k_{1})}}{-ik_{1}z_{\star}} \im k_{1}, \nonumber \\
& \alpha(\zeta,t) := \frac{3\pi}{4} + \arg r_{2}(k_{1}) + \arg \Gamma(i \nu) + \arg d_{0} + \arg \frac{\mathcal{P}(\zeta,\omega k_{1})}{\mathcal{P}(\zeta,\omega^{2} k_{1})} + t \, \im \Phi_{21}(\zeta,k_{1}). \nonumber
\end{align}
\end{theorem}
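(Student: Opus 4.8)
The plan is to deduce Theorem~\ref{asymptoticsth2} from Theorem~\ref{asymptoticsth} by analyzing the solution $M_{\mathrm{sol}}(x,t,k)$ of RH problem~\ref{RHS} in the regime where $\zeta$ avoids the asymptotic soliton velocities. Since $\mathcal{I}$ is compact and $S_\epsilon$ is a closed $\epsilon$-neighborhood of the finite set $S_0$, the set $\mathcal{I}\setminus S_\epsilon$ is a compact subset of $(1,+\infty)\setminus S_0$; hence, by continuity and finiteness of $\mathsf{Z}$, there is a constant $c=c(\epsilon)>0$ such that the relevant exponent — $\re\Phi_{31}(\zeta,k_0)$ for $k_0\in\mathsf{Z}\setminus\R$, and $\re\Phi_{21}(\zeta,k_0)$ for $k_0\in\mathsf{Z}\cap\R$ — has absolute value $\geq c$ for every $k_0\in\mathsf{Z}$, uniformly for $\zeta\in\mathcal{I}\setminus S_\epsilon$. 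This uniform gap is what degenerates the pure-soliton RH problem.

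First I would split $\mathsf{Z}=\mathsf{Z}^+\sqcup\mathsf{Z}^-$ according to the sign of this exponent (a partition that is constant on each connected component of $(1,+\infty)\setminus S_0$, jumping only across points of $S_0$). For $k_0\in\mathsf{Z}^+$ the residue factor in RH problem~\ref{RHS} is $O(e^{-ct})$ and the corresponding pole is removed up to an exponentially small error. For $k_0\in\mathsf{Z}^-$ the residue factor is instead exponentially large, and one performs the standard pole-trading step: conjugate $M_{\mathrm{sol}}$ by an explicit diagonal matrix built from the elementary rational factors appearing in the definition~\eqref{def of mathcalP} of $\mathcal{P}$, the numerator and denominator of each factor running over the symmetric images of $k_0$ dictated by the symmetries of RH problem~\ref{RHS} (so that the diagonal matrix is compatible with the $\Z_3$- and inversion-symmetries of $M_{\mathrm{sol}}$). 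After this conjugation the residue factors become $O(e^{-ct})$ as well, and one is left with a jump-free, residue-free RH problem that is a small-norm perturbation of the trivial one. This yields
\[
M_{\mathrm{sol}}(x,t,k)=\bigl(I+O(e^{-ct})\bigr)\,\Delta(\zeta,k),
\]
uniformly for $k$ in compact sets bounded away from $\mathsf{Z}$ and its symmetric images — in particular near $k=\infty$ and near $k=k_1$, since $|k_1|=1$ with $\arg k_1\in(\pi/2,2\pi/3)$ keeps $k_1$ away from $\mathsf{Z}$. Here $\Delta(\zeta,k)$ is the explicit diagonal matrix just described; its diagonal entries are built from $\mathcal{P}(\zeta,k),\mathcal{P}(\zeta,\omega k),\mathcal{P}(\zeta,\omega^2 k)$ and are ratios of monic polynomials of equal degree, so $\Delta(\zeta,k)\to I$ as $k\to\infty$. (This reduction is carried out in Section~\ref{Msolsec}.)

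With this in hand the two terms in~\eqref{uasymptotics} are evaluated directly. For $u_{\mathrm{sol}}$: on each connected component of $(1,+\infty)\setminus S_0$ the set $\mathsf{Z}^-$ is fixed and the factors of $\mathcal{P}$ carry no $\zeta$-dependence, so $\Delta(\zeta,k)$ is there \emph{independent of $\zeta$}; consequently $\lim_{k\to\infty}k\bigl(((1,1,1)M_{\mathrm{sol}})_3-1\bigr)$ is a constant plus $O(e^{-ct})$, and applying $\partial_x$ gives $u_{\mathrm{sol}}(x,t)=O(e^{-ct})$, which is absorbed into the $O(\ln t/t)$ error. For $u_{\mathrm{rad}}$: substituting $M_{\mathrm{sol}}(x,t,k_1)=(I+O(e^{-ct}))\Delta(\zeta,k_1)$ into~\eqref{uraddef} and using that $\Delta$ is diagonal collapses the matrix sandwich to a sum of two scalars, one carrying $e^{-t\Phi_{21}(\zeta,k_1)}$ and one carrying $e^{+t\Phi_{21}(\zeta,k_1)}$, the coefficients involving $r_1(k_1),r_2(k_1),\Gamma(\pm i\nu),e^{\pm i\arg d_0}$, the vector in~\eqref{uraddef} (each of whose entries is real because $|k_1|=1$), and a ratio of diagonal entries of $\Delta$ at $k_1$ which, by the symmetry of $\Delta$, reduces to $\mathcal{P}(\zeta,\omega k_1)/\mathcal{P}(\zeta,\omega^2 k_1)$ and its reciprocal. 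Using $|k_1|=1$ — which gives $\Phi_{21}(\zeta,k_1)\in i\R$, $\overline{r_2(k_1)}=r_1(k_1)$, $\overline{\Gamma(i\nu)}=\Gamma(-i\nu)$ (as $\nu\le 0$ is real), and $\arg d_0\in\R$ — one checks that these two scalars are complex conjugates, so $u_{\mathrm{rad}}(x,t)=2\re(\cdot)=A(\zeta)\cos\alpha(\zeta,t)$. The amplitude $A(\zeta)$ emerges after invoking the reflection formula $|\Gamma(i\nu)|^2=\pi/(\nu\sinh(\pi\nu))$ together with $r_1(k_1)r_2(k_1)=e^{-2\pi\nu}-1$ (from~\eqref{nudef}), while $\alpha(\zeta,t)$ is read off as the argument, the term $\arg\bigl(\mathcal{P}(\zeta,\omega k_1)/\mathcal{P}(\zeta,\omega^2 k_1)\bigr)$ coming from the diagonal of $\Delta$ at $k_1$ and the term $t\,\im\Phi_{21}(\zeta,k_1)$ from $e^{t\Phi_{21}(\zeta,k_1)}$.

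The main obstacle is the reduction of the second paragraph: keeping the symmetry bookkeeping exact — identifying the correct orbit of each pole, verifying that the conjugating diagonal matrix respects the $\Z_3$- and inversion-symmetries of RH problem~\ref{RHS}, and checking that after trading every pole in $\mathsf{Z}^-$ all residue coefficients are genuinely $O(e^{-ct})$ uniformly for $\zeta\in\mathcal{I}\setminus S_\epsilon$ — so that the final problem is truly small-norm. The remaining work (the $\partial_x$-vanishing argument for $u_{\mathrm{sol}}$, and the reality/conjugation bookkeeping together with the Gamma-function identities for $u_{\mathrm{rad}}$) is routine, requiring only care with the branches of the square roots and of $\arg$.
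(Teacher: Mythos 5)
Your proposal is correct and follows essentially the same route as the paper. The paper implements your ``pole-trading'' via the conjugation $\hat{M}_{\mathrm{sol}}=M_{\mathrm{sol}}\mathsf{P}$ (Lemma \ref{hatMsolresiduelemma}), observes that for $\zeta\in\mathcal{I}\setminus S_\epsilon$ every residue coefficient in Lemma \ref{lemma:new residue} is uniformly exponentially small, deduces $\hat{M}_{\mathrm{sol}}(x,t,k_1)=I+O(t^{-N})$ and $u_{\mathrm{sol}}=O(t^{-N})$, and then simplifies; the only difference is that the paper feeds this into the intermediate quantity $\hat{n}_3^{(1)}$ of (\ref{uusolpartialZ}) rather than substituting back into (\ref{uraddef}), and extracts the modulated cosine as $2i\,\im(\cdot)$ before applying $\partial_x$, whereas you take $2\,\re(\cdot)$ after, which is equivalent (as one can check using $v_2=\tilde r(k_1)v_1$ and $\overline{\tilde r(k_1)}=\tilde r(k_1)$). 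Two small cautions: you overload the symbol $\Delta$, which the paper already uses for the Szeg\H{o}-type diagonal factor (\ref{Deltadef}) --- what you call $\Delta$ is the paper's $\mathsf{P}^{-1}$; and the step ``applying $\partial_x$ gives $u_{\mathrm{sol}}=O(e^{-ct})$'' presumes the $O(e^{-ct})$ error in the small-norm representation of $M_{\mathrm{sol}}$ is $x$-differentiable with the same rate, which holds but deserves a line of justification (the paper handles this implicitly through the smooth dependence of the small-norm solution on $x,t$).
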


The proof of Theorem \ref{asymptoticsth2} is given in Section \ref{proofth2subsec}.

\subsection{Explicit expression for the leading term}
RH problem \ref{RHS} for $M_{\mathrm{sol}}(x,t,k)$ can be solved explicitly and this leads to an explicit expression for the leading term $u_{\mathrm{sol}}(x,t)$ in Theorem \ref{asymptoticsth}. In general, $u_{\mathrm{sol}}(x,t)$ is not a pure multi-soliton solution of (\ref{boussinesq}). However, it can be described as a modulated multi-soliton, meaning that it is a multi-soliton with parameters depending on the slowly varying variable $\zeta = x/t$. 

To present the expression for $u_{\mathrm{sol}}(x,t)$, we first state the formula for the pure multi-soliton of (\ref{boussinesq}).
Consider the following data: 
\begin{enumerate}
\item two integers $n_b \geq 0$ and $n_s \geq 0$, 

\item a finite set $\{\lambda_j\}_1^{n_b} \subset D_{\mathrm{reg}}$ and associated constants $\{c_{\lambda_j}\}_{j=1}^{n_b} \in \C$, 

\item a finite set $\{k_j\}_1^{n_s} \subset (-1,0) \cup (1,\infty)$ and associated constants $\{c_{k_j}\}_{j=1}^{n_s} \in \C$ satisfying $i(\omega^{2}k_j^{2} - \omega)c_{k_j} \geq 0$ for $j = 1, \dots, n_s$.
\end{enumerate}
We show in Lemma \ref{uslemma} that associated to each such collection of data, there is a real-valued Schwartz class solution $u_s$ of (\ref{boussinesq}) for $(x,t) \in \R \times [0, +\infty)$ given by
\begin{align}\label{usdef}
u_s\Big(x,t; (\lambda_j, c_{\lambda_j})_{j=1}^{n_b}, (k_j, c_{k_j})_{j=1}^{n_s}\Big)
:= 6 \frac{\partial^2}{\partial x^2} \ln \det (I-B),
\end{align}
where the matrix $B = B\big(x,t; (\lambda_j, c_{\lambda_j})_{j=1}^{n_b}, (k_j, c_{k_j})_{j=1}^{n_s}\big)$ is the $(2n_b + n_s) \times (2n_b + n_s)$-matrix given in block form as
\begin{align}\label{Bdef}
B := \begin{pmatrix} 
B_{\lambda \lambda} & B_{\lambda \bar{\lambda}} & B_{\lambda  k}
	\\
B_{\bar{\lambda} \lambda} & B_{\bar{\lambda} \bar{\lambda}} & B_{\bar{\lambda}  k}
	\\
B_{ k \lambda} & B_{ k \bar{\lambda}} & B_{k k}
\end{pmatrix}
\end{align}
with the submatrices
\begin{align*}
& (B_{\lambda \lambda} )_{jl} 
= \frac{\tilde{c}_{\lambda_l} e^{x(l_1(\lambda_j) - l_3(\lambda_l)) + t(z_1(\lambda_j) - z_3(\lambda_l))}}{l_1(\lambda_j) - l_3(\lambda_l)} && \text{for $j, l \in \{1, \dots, n_b\}$,}
	\\
& (B_{\lambda \bar{\lambda}})_{jl} 
= \frac{\bar{\tilde{c}}_{\lambda_l} e^{x(l_1(\lambda_j) - l_2(\bar{\lambda}_l))+ t(z_1(\lambda_j) - z_2(\bar{\lambda}_l))}}{l_1(\lambda_j) - l_2(\bar{\lambda}_l)}  && \text{for $j, l \in \{1, \dots, n_b\}$},
	\\
& (B_{\lambda  k})_{jl} 
= \frac{\tilde{c}_{k_l} e^{x(l_1(\lambda_j) - l_2(k_l))+ t(z_1(\lambda_j) - z_2(k_l))}}{l_1(\lambda_j) - l_2(k_l)}  && \text{for $j \in \{1, \dots, n_b\}$ and $l \in \{1, \dots, n_s\}$,}
	\\
& (B_{\bar{\lambda} \lambda} )_{jl} 
= \frac{\tilde{c}_{\lambda_l} e^{x(l_3(\bar{\lambda}_j) - l_3(\lambda_l)) + t(z_3(\bar{\lambda}_j) - z_3(\lambda_l))}}{l_3(\bar{\lambda}_j) - l_3(\lambda_l)} && \text{for $j, l \in \{1, \dots, n_b\}$},
	\\
& (B_{\bar{\lambda} \bar{\lambda}} )_{jl} 
=  \frac{\bar{\tilde{c}}_{\lambda_l} e^{x(l_3(\bar{\lambda}_j) - l_2(\bar{\lambda}_l)) + t(z_3(\bar{\lambda}_j) - z_2(\bar{\lambda}_l))}}{l_3(\bar{\lambda}_j) - l_2(\bar{\lambda}_l)} && \text{for $j, l \in \{1, \dots, n_b\}$},
	\\
& (B_{\bar{\lambda}  k} )_{jl} 
= \frac{\tilde{c}_{k_l} e^{x(l_3(\bar{\lambda}_j) - l_2(k_l)) + t(z_3(\bar{\lambda}_j) - z_2(k_l))}}{l_3(\bar{\lambda}_j) - l_2(k_l)} && \text{for $j \in \{1, \dots, n_b\}$ and $l \in \{1, \dots, n_s\}$,}
	\\
& (B_{ k \lambda} )_{jl} 
= \frac{\tilde{c}_{\lambda_l} e^{x(l_1(k_j) - l_3(\lambda_l)) + t(z_1(k_j) - z_3(\lambda_l))}}{l_1(k_j) - l_3(\lambda_l)} && \text{for $j \in \{1, \dots, n_s\}$ and $l \in \{1, \dots, n_b\}$,}
	\\
& (B_{ k \bar{\lambda}} )_{jl} 
=  \frac{\bar{\tilde{c}}_{\lambda_l} e^{x(l_1(k_j) - l_2(\bar{\lambda}_l)) + t(z_1(k_j) - z_2(\bar{\lambda}_l))}}{l_1(k_j) - l_2(\bar{\lambda}_l)} && \text{for $j \in \{1, \dots, n_s\}$ and $l \in \{1, \dots, n_b\}$,}
	\\
& (B_{k k} )_{jl} 
=\frac{\tilde{c}_{k_l} e^{x(l_1(k_j) - l_2(k_l)) + t(z_1(k_j) - z_2(k_l))}}{l_1(k_j) - l_2(k_l)} && \text{for $j,l \in \{1, \dots, n_s\}$},
\end{align*}
and
\begin{align*}
& \tilde{c}_{\lambda_l} := \frac{i(\lambda_l^2-1)}{2 \sqrt{3} \lambda_l^2} c_{\lambda_l} \;\; \text{for $l \in \{1, \dots, n_b\}$},
	\qquad
 \tilde{c}_{k_l} := \frac{i(k_l^2-\omega^2)}{2 \sqrt{3} k_l^2} \omega^2 c_{k_l} \;\; \text{for $l \in \{1, \dots, n_s\}$}.
\end{align*}
The function $u_s\big(x,t; (\lambda_j, c_{\lambda_j})_{j=1}^{n_b}, (k_j, c_{k_j})_{j=1}^{n_s}\big)$ in (\ref{usdef}) is a multi-soliton solution of (\ref{boussinesq}) supporting $n_b$ breather solitons and $n_s$ bell-shaped solitons, see Remark \ref{multisolitonremark}.

The leading term $u_{\mathrm{sol}}(x,t)$ in Theorem \ref{asymptoticsth} is given by replacing the constants $c_{\lambda_j}$ and $c_{k_j}$ in (\ref{usdef}) by functions depending on $\zeta = x/t$.
More precisely, we have the following result whose proof is presented in Section \ref{usolexplicitproofsec}.

\begin{theorem}[Explicit expression for $u_{\mathrm{sol}}$]\label{usolth}
Under the assumptions of Theorem \ref{asymptoticsth}, the leading term in (\ref{uasymptotics}) is given by
\begin{align}\label{usolexplicit}
u_{\mathrm{sol}}(x,t) = u_s\bigg(x,t; \bigg(\lambda_j, \frac{c_{\lambda_j}}{\hat{\Delta}_{11}(\zeta, \lambda_j)\hat{\Delta}_{33}^{-1}(\zeta, \lambda_j)}\bigg)_{j=1}^{n_b}, \bigg(k_j, \frac{c_{k_j}}{\hat{\Delta}_{11}(\zeta, k_j)\hat{\Delta}_{22}^{-1}(\zeta, k_j)}\bigg)_{j=1}^{n_s}\bigg),
\end{align}
where $u_s$ is the function in (\ref{usdef}), $\zeta = x/t$, $\{\lambda_j\}_1^{n_b} := \mathsf{Z} \cap D_{\mathrm{reg}}$, $\{k_j\}_1^{n_s} := \mathsf{Z} \cap \R$, the constants $\{c_{\lambda_j}\}_1^{n_b}$ and $\{c_{k_j}\}_1^{n_s}$ are defined by (\ref{ck0def}), and the functions $\hat{\Delta}_{jj}(\zeta, k)$, $j = 1,2,3$, are defined for $\zeta \in \mathcal{I}$ and $k \in \mathbb{C}\setminus \partial \D$ by
\begin{align}\nonumber
& \hat{\Delta}_{11}(\zeta,k) := \hat{\Delta}_{33}(\zeta, \omega k), \quad \hat{\Delta}_{22}(\zeta,k) := \hat{\Delta}_{33}(\zeta,\omega^2 k), \quad
\hat{\Delta}_{33}(\zeta,k) := \frac{\delta(\zeta,\omega k)}{\delta(\zeta,\omega^{2} k)}\frac{\delta(\zeta,\frac{1}{\omega^{2} k})}{\delta(\zeta,\frac{1}{\omega k})},
	\\ \label{introdeltadef}
& \delta(\zeta, k) := \exp \left\{ \frac{-1}{2\pi i} \int_{i}^{k_{1}} \frac{\ln(1 + r_1(s)r_{2}(s))}{s - k} ds \right\}.
\end{align}
In (\ref{introdeltadef}), the principal branch is adopted for the logarithm and the path of integration follows the unit circle in the counterclockwise direction. 
\end{theorem}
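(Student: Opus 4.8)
The plan is to extract the leading term $u_{\mathrm{sol}}(x,t)$ from the definition \eqref{usoldef} by solving RH problem \ref{RHS} for $M_{\mathrm{sol}}(x,t,k)$ explicitly, and then to recognize the resulting expression as a Fredholm/linear-algebra determinant of the form \eqref{usdef} after a gauge-type adjustment of the residue constants. First I would recall from Section \ref{Msolsec} the precise jump/residue data of RH problem \ref{RHS}: since no solitons lie on the jump contour and the reflection coefficients have been folded into the scalar function $\delta$, $M_{\mathrm{sol}}$ solves a \emph{pure} discrete RH problem whose only singularities are simple poles at the points of $\mathsf{Z}$ and at the images of these points under the symmetries $k\mapsto \omega/k$, $k\mapsto \bar k^{-1}$, etc., with residue matrices built from the constants $c_{k_0}$ \emph{rescaled} by the diagonal factors $\hat\Delta_{jj}(\zeta,k_0)$ coming from the conjugation by $\delta$ performed in the earlier transformations of the RH problem. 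I would therefore show that $M_{\mathrm{sol}}$ coincides, up to this rescaling of the residue constants, with the solution of the pure multi-soliton RH problem of Section \ref{puresolitonsec}; that identification is essentially a uniqueness statement, since both solve RH problems with the same contour (none), the same normalization $M\to I$ as $k\to\infty$, and the same residue structure once $c_{\lambda_j}\mapsto c_{\lambda_j}/(\hat\Delta_{11}\hat\Delta_{33}^{-1})(\zeta,\lambda_j)$ and $c_{k_j}\mapsto c_{k_j}/(\hat\Delta_{11}\hat\Delta_{22}^{-1})(\zeta,k_j)$.

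The second ingredient is Lemma \ref{uslemma}: there it is established (or I would establish) that the pure multi-soliton RH problem with data $\big((\lambda_j,c_{\lambda_j}),(k_j,c_{k_j})\big)$ is solved by a rational matrix whose entries are ratios of determinants, and that the reconstruction formula — the $k\to\infty$ expansion of $(1,1,1)M(x,t,k)$ extracted via \eqref{usoldef} — collapses to $u_s = 6\,\partial_x^2 \ln\det(I-B)$ with $B$ as in \eqref{Bdef}. The standard mechanism here is that the residue conditions turn into a linear system for the columns of $M$ evaluated at the poles; Cramer's rule expresses the relevant matrix entry as $-\partial_x \ln\det(I-B)$ (with the $e^{x(l_i-l_j)+t(z_i-z_j)}$ factors and the denominators $l_i-l_j$ appearing exactly as written in the submatrices $B_{\lambda\lambda},\dots,B_{kk}$), and one more $x$-derivative together with the prefactor $-i\sqrt3\,\partial_x$ produces the $6\,\partial_x^2\ln\det$. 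The symmetries $s_{11}(k)=s_{11}(\omega/k)$ and $s_{11}^A(k)=\overline{s_{11}(\bar k^{-1})}$ are what force the poles to come in the sextuples $\{k_0,k_0^{-1},\omega^2\bar k_0,\dots\}$ for breathers and triples for bell-shaped solitons, and hence what makes $B$ have the block sizes $2n_b+n_s$; I would invoke \cite{CLscatteringsolitons} for the precise symmetry reduction and for the reality and Schwartz-class claims about $u_s$.

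Putting these together: by Theorem \ref{asymptoticsth}, $u_{\mathrm{sol}}(x,t)$ is given by \eqref{usoldef} with $M_{\mathrm{sol}}$ the solution of RH problem \ref{RHS}; by the identification above, $M_{\mathrm{sol}}(x,t,k)$ equals the pure multi-soliton solution with residue constants rescaled by $\hat\Delta$; and by Lemma \ref{uslemma} the reconstruction formula for that pure multi-soliton solution is exactly $u_s$ evaluated at the rescaled constants, which is \eqref{usolexplicit}. The main obstacle, I expect, is bookkeeping the $\hat\Delta$ factors correctly: one must track through all four transformations of Sections \ref{nton1sec}--\ref{n3ton4sec} (and the conjugation by the $\delta$-matrix) precisely which power of $\delta(\zeta,\cdot)$ multiplies each residue, confirm that the net effect at a pole $\lambda_j\in D_{\mathrm{reg}}$ is division by $\hat\Delta_{11}(\zeta,\lambda_j)\hat\Delta_{33}^{-1}(\zeta,\lambda_j)$ and at $k_j\in\mathsf Z\cap\R$ division by $\hat\Delta_{11}(\zeta,k_j)\hat\Delta_{22}^{-1}(\zeta,k_j)$, and check that $\hat\Delta_{jj}(\zeta,k_0)$ is finite and nonzero at every $k_0\in\mathsf Z$ (which follows from Assumption $(i)$ and the fact that $\mathsf Z$ avoids $\partial\D$). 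A secondary but routine point is justifying the termwise $k\to\infty$ asymptotics and the interchange of $\partial_x$ with the limit in \eqref{usoldef}, which is immediate since $M_{\mathrm{sol}}$ is rational in $k$ with coefficients smooth in $x$.
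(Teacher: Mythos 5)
Your overall route is the same as the paper's: identify $(1,1,1)M_{\mathrm{sol}}$ with the solution of the pure multi-soliton RH problem (RH problem \ref{RHn} with $v\equiv I$) after the rescalings $c_{k_0}\mapsto c_{k_0}/(\hat\Delta_{11}\hat\Delta_{33}^{-1})(\zeta,k_0)$ for $k_0\in\mathsf Z\setminus\R$ and $c_{k_0}\mapsto c_{k_0}/(\hat\Delta_{11}\hat\Delta_{22}^{-1})(\zeta,k_0)$ for $k_0\in\mathsf Z\cap\R$, then invoke Lemma \ref{uslemma} to convert this to the determinant formula \eqref{usdef}. That is exactly the paper's argument.

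However, you gloss over two verifications that the paper carries out explicitly and that are genuinely needed before Lemma \ref{uslemma} can be applied. First, Lemma \ref{uslemma} is stated under the positivity hypothesis $i(\omega^2 k_0^2-\omega)c_{k_0}\geq 0$ for $k_0\in\mathsf Z\cap\R$, and this must survive the rescaling; your remark that $\hat\Delta_{jj}(\zeta,k_0)$ is ``finite and nonzero'' is not enough. What is needed is that the scalar $\hat\Delta_{11}(\zeta,k_0)\hat\Delta_{22}^{-1}(\zeta,k_0)$ is a \emph{positive real} number for $k_0\in\mathsf Z\cap\R$, and this follows from the conjugation identity $\overline{\hat\Delta_{11}(\zeta,k)\hat\Delta_{22}^{-1}(\zeta,k)}=\hat\Delta_{11}(\zeta,\bar k)\hat\Delta_{22}^{-1}(\zeta,\bar k)$ (a consequence of \eqref{mathcalPDelta33symm}), together with nonvanishing and continuity. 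Second, Lemma \ref{uslemma} manufactures the residue constants $d_{k_0}$ at the conjugate points from the $c_{k_0}$ via \eqref{dk0def}, whereas RH problem \ref{RHS} prescribes the residue at $\bar k_0$ directly through \eqref{Mk0notinR 2} with its own factor $\hat\Delta_{33}(\zeta,\bar k_0)\hat\Delta_{22}^{-1}(\zeta,\bar k_0)$; these two must be compatible, and the compatibility is precisely the other conjugation identity $\overline{\hat\Delta_{11}(\zeta,k)\hat\Delta_{33}^{-1}(\zeta,k)}=\hat\Delta_{33}(\zeta,\bar k)\hat\Delta_{22}^{-1}(\zeta,\bar k)$. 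Neither check is automatic, and without the first one you cannot even invoke Lemma \ref{uslemma}. Finally, your suggestion to ``track $\hat\Delta$ through all four transformations of Sections \ref{nton1sec}--\ref{n3ton4sec}'' is misdirected for this theorem: the $\hat\Delta$ factors already appear directly in the residue conditions \eqref{Mk0notinR}--\eqref{Mk0inR} of RH problem \ref{RHS}, so no backtracking through the steepest-descent chain is required here --- the whole proof is a comparison of two pole RH problems.
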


We see from (\ref{usolexplicit}) and (\ref{introdeltadef}) that if $r_1 \equiv r_2 \equiv 0$, then $u_{\mathrm{sol}}(x,t)$ is a pure multi-soliton solution of (\ref{boussinesq}). In general, $u_{\mathrm{sol}}(x,t)$ is a modulated version of this multi-soliton, where the constants $c_{\lambda_j}$ and $c_{k_j}$ have been replaced by the slowly varying functions  $\frac{c_{\lambda_j}}{\hat{\Delta}_{11}(\zeta, \lambda_j)\hat{\Delta}_{33}^{-1}(\zeta, \lambda_j)}$ and $\frac{c_{k_j}}{\hat{\Delta}_{11}(\zeta, k_j)\hat{\Delta}_{22}^{-1}(\zeta, k_j)}$, respectively.\footnote{We emphasize that the $x$-derivatives in (\ref{usdef}) should be evaluated before this replacement is made, i.e., the $x$-derivatives in (\ref{usdef}) do not act on the functions $\hat{\Delta}_{11}$, $\hat{\Delta}_{22}$, and $\hat{\Delta}_{33}$.}

\begin{figure}
\bigskip\begin{center}
\hspace{-.4cm}
\begin{overpic}[width=.46\textwidth]{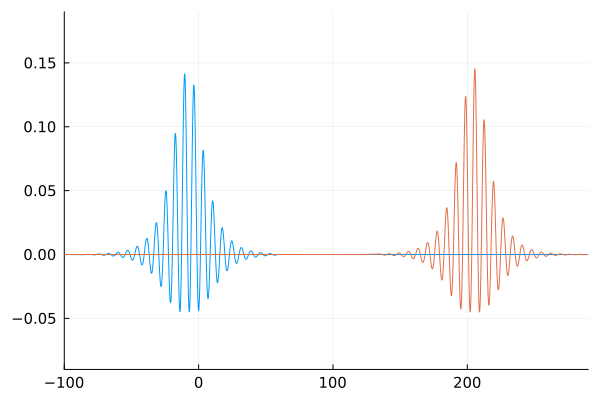}
      \put(8.5,67.5){\footnotesize $u_s$}
      \put(27,58){\footnotesize $t=0$}
      \put(73,58){\footnotesize $t=200$}
      \put(101,4){\footnotesize $x$}
    \end{overpic}
    \hspace{0.5cm}
\begin{overpic}[width=.46\textwidth]{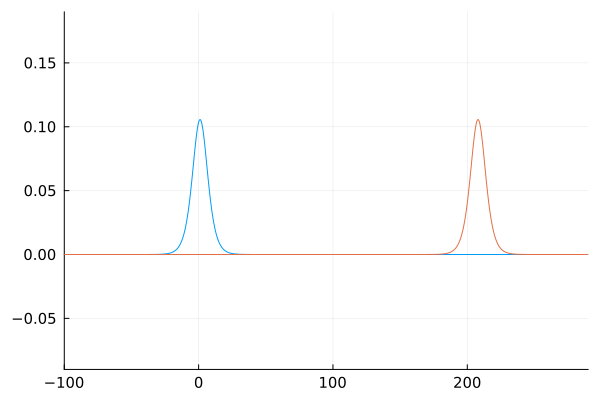}
       \put(8.5,67.5){\footnotesize $u_s$}
      \put(29,50){\footnotesize $t=0$}
      \put(74,50){\footnotesize $t=200$}
      \put(101,4){\footnotesize $x$}
    \end{overpic}
     \begin{figuretext}\label{solitonfig}
       Left: The single breather soliton $u_s(x,t; (\lambda_1, c_{\lambda_1}), \emptyset)$ for $\lambda_1 = 1.2+0.01i$ and $c_{\lambda_1} = 0.1$ at time $t = 0$ (blue) and $t = 200$ (red). 
       
       Right: The one-soliton $u_s(x,t; \emptyset, (k_1, c_{k_1}))$ for $k_1 = 1.3$ and $c_{k_1} = 0.1 e^{-i\arg(i(\omega^2 k_1^2 - \omega))}$ at time $t = 0$ (blue) and $t = 200$ (red). 
\end{figuretext}
     \end{center}
\end{figure}

\begin{remark}\label{multisolitonremark}
To understand the multi-soliton formula (\ref{usdef}) it is useful to keep the following in mind (see \cite[Appendix A]{CLscatteringsolitons}): If $n_b = 1$ and $n_s = 0$, then the function $u_s(x,t; (\lambda_1, c_{\lambda_1}), \emptyset)$ in (\ref{usdef}) reduces to a single breather soliton propagating in the direction $x/t = \zeta_{\lambda_1}$, where $\zeta_{\lambda_1}$ is such that $\re \Phi_{31}(\zeta_{\lambda_1},\lambda_{1}) = 0$ (recall here that $\lambda_{1}\in D_{\mathrm{reg}}$), i.e.
\begin{align}\label{breathervelocity}
\zeta_{\lambda_1} := \frac{(1+|\lambda_1|^2)(\lambda_1^2 + \omega \bar{\lambda}_1^2)}{2|\lambda_1|^2(\omega \lambda_1 + \bar{\lambda}_1)} \in (-\infty, -1) \cup (1, +\infty),
\end{align}
while if $n_b = 0$ and $n_s = 1$, then the function $u_s(x,t; \emptyset, (k_1, c_{k_1}))$ in (\ref{usdef}) reduces to the pure one-soliton (assuming that $c_{k_1} \neq 0$)
\begin{align} \label{usonesoliton}
& u_s(x,t; \emptyset, (k_1, c_{k_1})) = A \sech^2\bigg(\sqrt{\frac{A}{6}}(x - x_0 - \zeta_{k_1} t)\bigg),
	\\  \nonumber
& A := \frac{3}{8}(k_1 - k_1^{-1})^2 \geq 0, \qquad 
x_0 := \frac{2k_1}{k_1^2 -1}\ln\bigg( \frac{i (\omega^2 k_1^2 - \omega) c_{k_1}}{\sqrt{3}k_1(k_1^2 -1)}\bigg) \in \R,
\end{align}
propagating in the direction $x/t = \zeta_{k_1}$, where the principal branch is used for the logarithm and $\zeta_{k_1}$ is such that $\re \Phi_{21}(\zeta_{k_1}, k_{1}) = 0$ (recall here that $k_{1}\in (-1,0)\cup (1,+\infty)$), i.e.
\begin{align}\label{onesolitonvelocity}
\zeta_{k_1} := \frac{k_1 + k_1^{-1}}{2} \in (-\infty, -1) \cup (1, +\infty),
\end{align}
see Figure \ref{solitonfig}. For general values of $n_b \geq 0$ and $n_s \geq 0$, the multi-soliton $u_s$ can be viewed as a nonlinear superposition of $n_b$ breather solitons and $n_s$ one-solitons.
\end{remark}

\subsection{Formula valid near an asymptotic soliton}
Our last theorem provides a simplification of the asymptotic formula (\ref{uasymptotics}) in the direction of an asymptotic soliton. Under the assumptions of Theorem \ref{asymptoticsth}, let $\{\lambda_j\}_1^{n_b} := \mathsf{Z} \cap D_{\mathrm{reg}}$ and $\{k_j\}_1^{n_s} := \mathsf{Z} \cap \R$ denote the zeros of $s_{11}(k)$ in $D_{\mathrm{reg}}$ and $(-1,0) \cup (1,+\infty)$, respectively. For $j = 1, \dots, n_b$, we define 
$$\zeta_{\lambda_j} := \frac{(1+|\lambda_j|^2)(\lambda_j^2 + \omega \bar{\lambda}_j^2)}{2|\lambda_j|^2(\omega \lambda_j + \bar{\lambda}_j)} \in (-\infty, -1) \cup (1, +\infty),$$
and for $j = 1, \dots, n_s$, we define
$$\zeta_{k_j} := \frac{k_j + k_j^{-1}}{2} \in (-\infty, -1) \cup (1, +\infty).$$
Note that $\re \Phi_{31}(\zeta,\lambda_j) = 0$ if and only if $\zeta = \zeta_{\lambda_j}$ and $\re \Phi_{21}(\zeta, k_j) = 0$ if and only if $\zeta = \zeta_{k_j}$, so the set $S_0$ defined above equation (\ref{Sdeltadef}) is given by
\begin{align}\label{S0explicit}
S_0 = \{\zeta_{\lambda_1}, \dots, \zeta_{\lambda_{n_b}}, \zeta_{k_1}, \dots, \zeta_{k_{n_s}}\} \cap (1,+\infty).
\end{align}
   
In light of (\ref{breathervelocity}) and (\ref{onesolitonvelocity}), we expect each zero $\lambda_j$ to generate an asymptotic breather soliton propagating at velocity $x/t = \zeta_{\lambda_j}$, and each zero $k_j$ to generate an asymptotic one-soliton propagating at velocity $x/t = \zeta_{k_j}$.
If some of the velocities $\zeta_{\lambda_j}$ and/or $\zeta_{k_j}$ coincide and are equal to say $\zeta_0$, then we expect the asymptotics in the direction $x/t = \zeta_0$ to be described by a multi-soliton that combines all the breathers and one-solitons traveling at velocity $\zeta_0$.
The following theorem, which complements Theorem \ref{asymptoticsth2}, confirms that this is indeed the case.

\begin{theorem}[Asymptotics in the direction of an asymptotic soliton]\label{asymptoticsth3}
Under the assumptions of Theorem \ref{asymptoticsth}, if $\zeta_0 \in S_0$ and $\epsilon > 0$ is so small that $\zeta_0 - \epsilon > 1$ and $\{\zeta \in \R \,| \, |\zeta - \zeta_0| \leq \epsilon\}$ contains no other point in $S_0$, then, as $t\to \infty$,
\begin{align}\nonumber
 u(x,t) = &\; u_s\bigg(x,t; \bigg(\lambda_j, \frac{c_{\lambda_j}}{\hat{\Delta}_{11}(\zeta, \lambda_j)\hat{\Delta}_{33}^{-1}(\zeta, \lambda_j)}\bigg)_{j \in I_0}, \bigg(k_j, \frac{c_{k_j}}{\hat{\Delta}_{11}(\zeta, k_j)\hat{\Delta}_{22}^{-1}(\zeta, k_j)}\bigg)_{j \in J_0}\bigg)
	\\ \label{uasymptoticsnearsoliton}
& + \frac{u_{\mathrm{rad}}(x,t)}{\sqrt{t}} + O\bigg(\frac{\ln t}{t}\bigg)
\end{align}
uniformly for all $\zeta:= \frac{x}{t}$ such that $|\zeta - \zeta_0| \leq \epsilon$, where $I_0$ consists of all $j \in \{1, \dots, n_b\}$ such that $\zeta_{\lambda_j} = \zeta_0$, $J_0$ consists of all $j \in \{1, \dots, n_s\}$ such that $\zeta_{k_j} = \zeta_0$, and $u_{\mathrm{rad}}(x,t)$ is given by (\ref{uraddef}).
In particular, if the set $I_0$ is empty and $J_0 = \{j\}$ contains one element, then, for any $C_0 > 0$,
\begin{align}\label{usechasymptotics}
 u(x,t) = &\; A_j \sech^2\bigg(\sqrt{\frac{A_j}{6}}(x - x_{0j} - \zeta_{k_j} t)\bigg)
 + \frac{u_{\mathrm{rad}}(x,t)}{\sqrt{t}} + O\bigg(\frac{\ln t}{t}\bigg)
\end{align}
as $t\to \infty$ uniformly for $x \in [\zeta_{k_j} t - C_0, \zeta_{k_j} t + C_0]$, where
\begin{align}\nonumber
& A_j := \frac{3}{8}(k_j - k_j^{-1})^2 \geq 0
\end{align}
and
\begin{align}\nonumber 
x_{0j} := \frac{2k_j}{k_j^2 -1} \ln\bigg( \frac{i (\omega^2 k_j^2 - \omega)}{\sqrt{3}k_j(k_j^2 -1)} \frac{c_{k_j}}{\hat{\Delta}_{11}(\zeta_{k_j}, k_j)\hat{\Delta}_{22}^{-1}(\zeta_{k_j}, k_j)}\bigg) \in \R
\end{align}
with the principal branch used for the logarithm.
\end{theorem}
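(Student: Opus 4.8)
The plan is to deduce the statement from Theorems~\ref{asymptoticsth} and~\ref{usolth} together with a soliton--separation argument that, in the narrow window $|\zeta-\zeta_0|\le\epsilon$, collapses the full modulated multi-soliton $u_{\mathrm{sol}}$ onto the multi-soliton built only from those zeros whose velocity equals $\zeta_0$. First, since $\zeta_0-\epsilon>1$, the set $\{\zeta:|\zeta-\zeta_0|\le\epsilon\}$ is a compact subset of $(1,\infty)$, so Theorem~\ref{asymptoticsth} gives $u=u_{\mathrm{sol}}+u_{\mathrm{rad}}/\sqrt t+O(\ln t/t)$ uniformly in this window and Theorem~\ref{usolth} identifies $u_{\mathrm{sol}}$ with $u_s$ evaluated on the full data, using the modulated constants $\hat c_{\lambda_j}:=c_{\lambda_j}/(\hat\Delta_{11}(\zeta,\lambda_j)\hat\Delta_{33}^{-1}(\zeta,\lambda_j))$ and $\hat c_{k_j}:=c_{k_j}/(\hat\Delta_{11}(\zeta,k_j)\hat\Delta_{22}^{-1}(\zeta,k_j))$. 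Since $u_{\mathrm{rad}}$ is carried over unchanged, it then suffices to prove that there is $c=c(\epsilon)>0$ such that
\begin{align*}
u_s\big(x,t;(\lambda_j,\hat c_{\lambda_j})_{j=1}^{n_b},(k_j,\hat c_{k_j})_{j=1}^{n_s}\big)=u_s\big(x,t;(\lambda_j,\hat c_{\lambda_j})_{j\in I_0},(k_j,\hat c_{k_j})_{j\in J_0}\big)+O(e^{-ct})
\end{align*}
uniformly for $|\zeta-\zeta_0|\le\epsilon$, because $O(e^{-ct})$ is absorbed into $O(\ln t/t)$.

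Second, I would establish this collapse by removing the ``off-velocity'' poles from RH problem~\ref{RHS} for $M_{\mathrm{sol}}$. That problem is meromorphic (no jump), with poles at the zeros in $\mathsf Z$ and their symmetry images; the residue condition attached to a zero of velocity $\zeta^{(i)}$ carries an exponential factor $e^{\mp t\Phi^{(i)}(\zeta,\cdot)}$ with $\Phi^{(i)}=\Phi_{31}$ for breather zeros and $\Phi^{(i)}=\Phi_{21}$ for real zeros, so $\re\Phi^{(i)}$ is affine in $\zeta$, vanishes exactly at $\zeta=\zeta^{(i)}$, and changes sign there. If $\epsilon$ is such that no soliton velocity other than $\zeta_0$ lies within $\epsilon$ of $\zeta_0$ (there being finitely many, this forces a uniform gap on the window), then for $|\zeta-\zeta_0|\le\epsilon$ every pole coming from a zero with $\zeta^{(i)}\ne\zeta_0$ has a residue coefficient that is uniformly either $\le e^{-ct}$ — in which case it is deleted directly, perturbing $M_{\mathrm{sol}}$ by $I+O(e^{-ct})$ — or $\ge e^{ct}$ — in which case one first applies the standard local transformation that moves it to a residue condition with coefficient $\le e^{-ct}$ and then deletes it. All these transformations are supported in fixed small disks around the (fixed) off-velocity poles, hence are the identity near $k=\infty$ and near $k=k_1$. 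What remains after removing all off-velocity poles is precisely RH problem~\ref{RHS} for the restricted data $(\lambda_j)_{j\in I_0}$, $(k_j)_{j\in J_0}$ (the $\delta$- and $\hat\Delta$-factors are untouched because $r_1,r_2$ are unchanged), whose solution $\hat M_{\mathrm{sol}}$ therefore satisfies $M_{\mathrm{sol}}(x,t,k)=(I+O(e^{-ct}))\hat M_{\mathrm{sol}}(x,t,k)$ uniformly for $k$ near $\infty$; inserting this into (\ref{usoldef}) and applying Theorem~\ref{usolth} to the restricted data yields the collapse, hence (\ref{uasymptoticsnearsoliton}). Equivalently one can run the argument on the determinant formula (\ref{usdef}): writing $B=R\,C\,\tilde R$ with $C$ an $(x,t)$-independent Cauchy-type matrix and $R,\tilde R$ diagonal exponential matrices gives $\det(I-B)=\det(I-EC)$ with $E$ diagonal, $E_{ii}=\beta_i e^{\mp t\Phi^{(i)}(\zeta)}$; in the window the off-velocity indices split into those with $|E_{ii}|$ exponentially large and those with $|E_{ii}|$ exponentially small, and a Schur-complement reduction strips all of them off, the exponentially large factors contributing to $\ln\det(I-B)$ only a term affine in $x$, which $\partial_x^2$ annihilates.

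Third, for the special case $I_0=\emptyset$, $J_0=\{j\}$ I would specialize the collapse to the single-soliton data $(k_j,\hat c_{k_j})$ and use Remark~\ref{multisolitonremark}: by (\ref{usonesoliton}), with $c_{k_1}$ there replaced by $\hat c_{k_j}=c_{k_j}/(\hat\Delta_{11}(\zeta,k_j)\hat\Delta_{22}^{-1}(\zeta,k_j))$, one has $u_s(x,t;\emptyset,(k_j,\hat c_{k_j}))=A_j\sech^2\big(\sqrt{A_j/6}\,(x-\zeta_{k_j}t-\tilde x_{0j}(\zeta))\big)$ with $A_j=\tfrac38(k_j-k_j^{-1})^2$ and $\tilde x_{0j}(\zeta)=\tfrac{2k_j}{k_j^2-1}\ln\!\big(\tfrac{i(\omega^2k_j^2-\omega)}{\sqrt 3\,k_j(k_j^2-1)}\hat c_{k_j}\big)$. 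On the window $x\in[\zeta_{k_j}t-C_0,\zeta_{k_j}t+C_0]$ one has $\zeta=\zeta_{k_j}+O(1/t)$, so $\hat\Delta_{11}(\zeta,k_j)=\hat\Delta_{11}(\zeta_{k_j},k_j)+O(1/t)$ and likewise for $\hat\Delta_{22}$; since $\tilde x_{0j}$ depends smoothly on these quantities and the $\sech^2$ profile depends smoothly on $\tilde x_{0j}$, freezing $\zeta=\zeta_{k_j}$ inside $\hat\Delta_{11},\hat\Delta_{22}$ changes $u_s$ by $O(1/t)$, which is absorbed into $O(\ln t/t)$. This yields (\ref{usechasymptotics}) with $A_j$ and $x_{0j}$ as stated.

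The hard part will be the pole-removal step: one must check, uniformly for $|\zeta-\zeta_0|\le\epsilon$, that every off-velocity residue coefficient is indeed uniformly exponentially large or small — this rests on the sign behaviour of $\re\Phi_{31}$ and $\re\Phi_{21}$ across the soliton velocities combined with the choice of $\epsilon$ — that the local swap transformations introduce no spurious growth, and, crucially, that the problem left after deleting all off-velocity poles is literally RH problem~\ref{RHS} for the restricted data, so that Theorem~\ref{usolth} applies verbatim. In the determinant formulation the delicate point is the same in disguise: each breather zero contributes two indices to $B$ whose exponents have opposite sign relative to $\re\Phi_{31}$, so an off-velocity breather always supplies one exponentially large and one exponentially small index, and one must verify that after stripping all of them the surviving minor is exactly the principal minor indexed by $I_0\cup J_0$.
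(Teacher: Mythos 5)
Your proposal is essentially the same as the paper's proof: reduce via Theorems \ref{asymptoticsth} and \ref{usolth} to showing that $u_{\mathrm{sol}}$ collapses, up to $O(e^{-ct})$, to the modulated multi-soliton built only from the zeros with velocity $\zeta_0$, and establish the collapse by converting the off-velocity residue conditions into exponentially small jumps on small circles (the "swap" transformations $Q_1^{\lessgtr}, Q_7^{\lessgtr}, P_1^{\lessgtr}$ of Lemma \ref{lemma:Msol P is unif bounded}) and removing them by a small-norm argument. The one point you flag but do not resolve --- that the swap transformations introduce no spurious growth and that what survives is literally RH problem \ref{RHS} for $\mathsf{Z}^0$ --- is exactly what the paper handles by introducing $M_{\mathrm{sol}}^0$ explicitly and invoking Lemma \ref{lemma:Msol P is unif bounded} to get a uniform bound on $\tilde{M}_{\mathrm{sol}}^0$ and its inverse even at $\zeta=\zeta_0$, so that the ratio $\tilde{M}_{\mathrm{sol}}(\tilde{M}_{\mathrm{sol}}^0)^{-1}$ solves a small-norm RH problem with $O(e^{-ct})$ jump; the iterative "delete one pole at a time" phrasing is fine in spirit but the uniform bound on the target is what makes it airtight. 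Your determinant/Schur-complement alternative is a genuinely different route that the paper does not pursue; it would bypass the RH machinery for this step but requires its own care with the simultaneous presence of exponentially large and small diagonal entries, and you'd still need the analogue of the uniform-boundedness lemma (nonvanishing of the surviving principal minor) to control the error. The specialization to the $\sech^2$ case via Remark \ref{multisolitonremark} and the $O(1/t)$ freezing of $\zeta$ on the window $[\zeta_{k_j}t-C_0,\zeta_{k_j}t+C_0]$ matches the paper exactly.
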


\begin{remark}
Since the function $k\to \zeta_{k} := \frac{k+k^{-1}}{2}$ is a bijection from $(1,+\infty)$ to $(1,+\infty)$, the set $J_{0}$ in \eqref{uasymptoticsnearsoliton} contains at most one element.
\end{remark}

The proof of Theorem \ref{asymptoticsth3} is presented in Section \ref{proofth3subsec}.

\subsection{The RH problem for $M_{\mathrm{sol}}$}\label{Msolsubsec}
The statement of Theorem \ref{asymptoticsth} involves the $3 \times 3$-matrix valued function $M_{\mathrm{sol}}(x,t,k)$. This function is defined as the unique solution of a RH problem which we now state. 
For each $k_{0}\in \mathsf{Z}\setminus \mathbb{R}$, we define $d_{k_0} \in \mathbb{C}$ by
\begin{align}\label{dk0def}
d_{k_{0}} = \frac{\bar{k}_0^2-1}{\omega^2 (\omega^2 - \bar{k}_0^2)} \bar{c}_{k_0}, \qquad k_{0}\in \mathsf{Z}\setminus \mathbb{R}.
\end{align}
For $\zeta \in \mathcal{I}$ and $k \in \mathbb{C}\setminus \partial \D$, we define $\delta(\zeta,k)$, $\hat{\Delta}_{11}(\zeta,k)$, $\hat{\Delta}_{22}(\zeta,k)$, and $\hat{\Delta}_{33}(\zeta,k)$ as in (\ref{introdeltadef}) and we set
\begin{align}\label{hatDeltadef}
\hat{\Delta}(\zeta,k) := \begin{pmatrix}
\hat{\Delta}_{11}(\zeta, k) & 0 & 0 \\
0 & \hat{\Delta}_{22}(\zeta, k) & 0 \\
0 & 0 & \hat{\Delta}_{33}(\zeta,k) \end{pmatrix}.
\end{align}
We also define the set $\hat{\mathsf{Z}}$ by
\begin{align}\label{def of hatZ}
\hat{\mathsf{Z}} = \mathsf{Z} \cup \mathsf{Z}^{*} \cup \omega \mathsf{Z} \cup \omega \mathsf{Z}^{*} \cup \omega^{2} \mathsf{Z} \cup \omega^{2} \mathsf{Z}^{*} \cup \mathsf{Z}^{-1} \cup \mathsf{Z}^{-*} \cup \omega \mathsf{Z}^{-1} \cup \omega \mathsf{Z}^{-*} \cup \omega^{2} \mathsf{Z}^{-1} \cup \omega^{2} \mathsf{Z}^{-*},
\end{align}
where $\mathsf{Z}^{-1}=\{k_{0}^{-1} \,|\, k_{0}\in \mathsf{Z}\}$, $\mathsf{Z}^{*}=\{\bar{k}_{0} \,|\, k_{0}\in \mathsf{Z}\}$, and $\mathsf{Z}^{-*}=\{\bar{k}_{0}^{-1} \,|\, k_{0}\in \mathsf{Z}\}$.

\begin{RHproblem}[RH problem for $M_{\mathrm{sol}}$]\label{RHS}
Find a $3 \times 3$-matrix valued function $M_{\mathrm{sol}}(x,t,k)$ with the following properties:
\begin{enumerate}[$(a)$]
\item\label{RHSitema} $M_{\mathrm{sol}}(x,t,\cdot) : \C \setminus \hat{\mathsf{Z}} \to \mathbb{C}^{3 \times 3}$ is analytic, where $\hat{\mathsf{Z}}$ is given by \eqref{def of hatZ}.

\item\label{RHSitemb} For $k \in \C \setminus \hat{\mathsf{Z}}$, $M_{\mathrm{sol}}$ obeys the symmetries
\begin{align}\label{Msolsymm}
M_{\mathrm{sol}}(x,t,k) = \mathcal{A} M_{\mathrm{sol}}(x,t,\omega k)\mathcal{A}^{-1} = \mathcal{B} M_{\mathrm{sol}}(x,t,k^{-1}) \mathcal{B},
\end{align}
where
\begin{align}\label{def of Acal and Bcal}
\mathcal{A} := \begin{pmatrix}
0 & 0 & 1 \\
1 & 0 & 0 \\
0 & 1 & 0
\end{pmatrix} \qquad \mbox{ and } \qquad \mathcal{B} := \begin{pmatrix}
0 & 1 & 0 \\
1 & 0 & 0 \\
0 & 0 & 1
\end{pmatrix}.
\end{align}

\item\label{RHSitemc} $M_{\mathrm{sol}}(x,t,k) = I + O(k^{-1})$ as $k \to \infty$.

\item\label{RHSitemd}
At each point $k_0 \in \mathsf{Z}\setminus \R$, the first two columns of $M_{\mathrm{sol}}$ are analytic, while the third column has (at most) a simple pole. Furthermore, 
\begin{align}\label{Mk0notinR}
 \underset{k = k_0}{\res} [M_{\mathrm{sol}}(x,t,k)]_3 = \frac{c_{k_{0}} e^{-\theta_{31}(x,t,k_0)}}{\hat{\Delta}_{11}(\zeta, k_0)\hat{\Delta}_{33}^{-1}(\zeta, k_0)}[M_{\mathrm{sol}}(x,t,k_0)]_1.
\end{align}
At each point $\bar{k}_{0}$ with $k_{0}\in \mathsf{Z}\setminus \R$, the first and third columns of $M_{\mathrm{sol}}$ are analytic, while the second column has (at most) a simple pole. Furthermore, 
\begin{align}\label{Mk0notinR 2}
 \underset{k = \bar{k}_0}{\res} [M_{\mathrm{sol}}(x,t,k)]_2 = \frac{d_{k_{0}} e^{\theta_{32}(x,t,\bar{k}_0)}}{\hat{\Delta}_{33}(\zeta, \bar{k}_0)\hat{\Delta}_{22}^{-1}(\zeta, \bar{k}_0)} [M_{\mathrm{sol}}(x,t,\bar{k}_0)]_3,
\end{align}
where $d_{k_0}$ is given by (\ref{dk0def}).
At each point $k_0 \in \mathsf{Z}\cap \R$, the first and third columns of $M_{\mathrm{sol}}$ are analytic, while the second column has (at most) a simple pole. Furthermore,
\begin{align}\label{Mk0inR}
 \underset{k = k_{0}}{\res} [M_{\mathrm{sol}}(x,t,k)]_2 = \frac{c_{k_{0}} e^{-\theta_{21}(x,t,k_0)}}{\hat{\Delta}_{11}(\zeta, k_0)\hat{\Delta}_{22}^{-1}(\zeta, k_0)} [M_{\mathrm{sol}}(x,t,k_{0})]_1.
\end{align}
\end{enumerate}
\end{RHproblem}

Suppose $\{r_1, r_2, \mathsf{Z}, \{c_{k_0}\}_{k_0 \in \mathsf{Z}}\}$ are scattering data associated to some initial data $u_0, u_1$ fulfilling the assumptions of Theorem \ref{asymptoticsth}. Then the solution of RH problem \ref{RHS} exists, is unique, and satisfies $\det M_{\mathrm{sol}}(x,t,k)=1$; moreover, the soliton term $u_{\mathrm{sol}}(x,t)$ in the asymptotic formula (\ref{uasymptotics}) defined in terms of $M_{\mathrm{sol}}(x,t,k)$ via (\ref{usoldef}) is a smooth function of $x$ and $t$, see Lemma \ref{RHSlemma}.

\subsection{Discussion of the asymptotic formulas}
Let us first discuss the soliton term $u_{\mathrm{sol}}(x,t)$ in (\ref{uasymptotics}).
The solution $M_{\mathrm{sol}}$ depends on the reflection coefficients $r_1(k)$ and $r_2(k)$ only via the function $\hat{\Delta}$ that appears in the residue conditions (\ref{Mk0notinR})--(\ref{Mk0inR}). Pure soliton solutions of the Boussinesq equation (\ref{boussinesq}) arise when the reflection coefficients $r_1(k)$ and $r_2(k)$ vanish identically. If we temporarily set $r_1$ and $r_2$ to zero in RH problem \ref{RHS}, then $\hat{\Delta}$ is identically equal to the identity matrix $I$ and the function $u_\sol$ defined in (\ref{usoldef}) is a non-decaying solution of (\ref{boussinesq}); more precisely, it is the (multi-)soliton solution of (\ref{boussinesq}) generated by the scattering data $\{r_1 = 0, r_2 = 0, \mathsf{Z}, \{c_{k_0}\}_{k_0 \in \mathsf{Z}}\}$. 
In the general case, the functions $r_1(k)$ and $r_2(k)$ do not vanish identically. In this case, the function $u_\sol$ defined in (\ref{usoldef}) is a perturbation of the pure multi-soliton generated by $\{0,0, \mathsf{Z}, \{c_{k_0}\}_{k_0 \in \mathsf{Z}}\}$. The perturbation, which is introduced via the function $\hat{\Delta}$ (see (\ref{uasymptoticsnearsoliton})), encodes the effect of the radiative background on the solitons.

\begin{figure}
\begin{center}
\begin{tikzpicture}[master]
\node at (0,0) {\includegraphics[width=6.5cm]{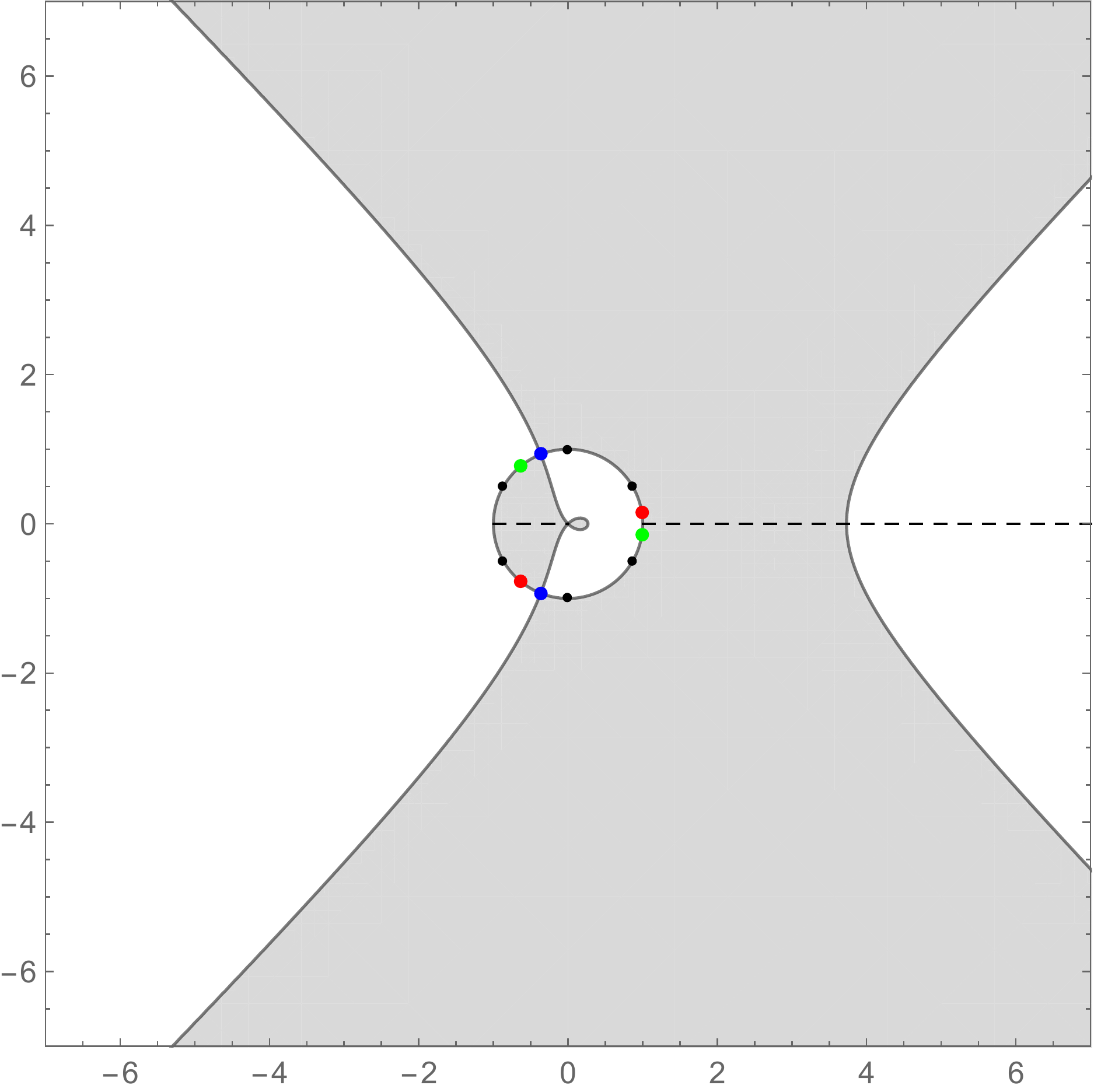}};

\node at (-1.7,0.15) {\small $\re \Phi_{21}\hspace{-0.05cm} < 0$};
\node at (0.8,2) {\small $\re \Phi_{21} \hspace{-0.05cm} > 0$};

\end{tikzpicture} \hspace{0.1cm} 
\begin{tikzpicture}[slave]
\node at (0,0) {\includegraphics[width=6.5cm]{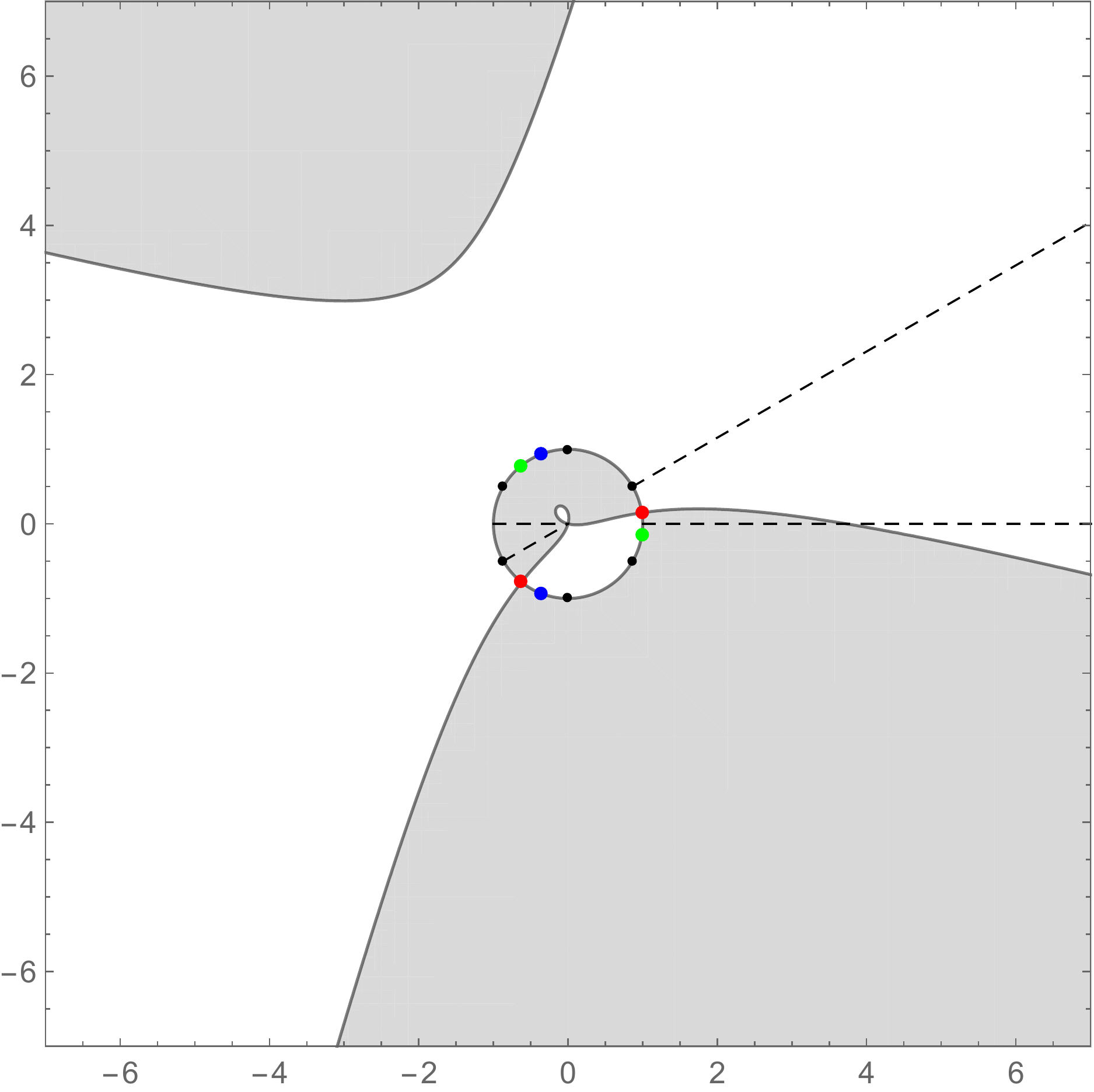}};


\node at (1.25,-1.1) {\small $\re \Phi_{31}\hspace{-0.05cm}> 0$};
\node at (-1.8,-0.35) {\small $\re \Phi_{31} \hspace{-0.05cm}< 0$};

\end{tikzpicture} 
%
%
\end{center}
\begin{figuretext}
\label{fig:soliton and signature table} From left to right: The signature tables for $\Phi_{21}$ and $\Phi_{31}$ for $\zeta=2$. The grey regions correspond to $\{k \,|\, \re \Phi_{ij}>0\}$ and the white regions to $\{k \,|\, \re \Phi_{ij}<0\}$. In the left picture, the dashed line segments represent $(-1,0)\cup (1,\infty)$, while in the right picture the dashed line segments are parts of $\partial D_{\mathrm{reg}}$. The saddle points $k_{1},k_{2}$ of $\Phi_{21}$ are blue, the saddle points $\omega k_{1},\omega k_{2}$ of $\Phi_{31}$ are red, and the saddle points $\omega^{2} k_{1},\omega^{2} k_{2}$ of $\Phi_{32}$ are green. The black dots are the points $i \kappa_j$, $j=1,\ldots,6$.
\end{figuretext}
\end{figure}

We next discuss the leading term in (\ref{uasymptotics2}). This term is a modulated sine-wave of order $O(t^{-1/2})$, where the effect of the solitons is encoded in the phase shift $\arg \frac{\mathcal{P}(\zeta,\omega k_{1})}{\mathcal{P}(\zeta,\omega^{2} k_{1})}$.
The first product in the definition (\ref{def of mathcalP}) of $\mathcal{P}(\zeta, k)$ runs over all points $k_0 \in \mathsf{Z} \setminus \R$ such that $\re \Phi_{31}(\zeta,k_{0})<0$; these points correspond to breather solitons, and the condition $\re \Phi_{31}(\zeta,k_{0})<0$ ensures that only those breather solitons traveling at velocities greater than $\zeta$ are taken into account, see Figure \ref{fig:soliton and signature table}. Similarly, the second product in (\ref{def of mathcalP}) runs over all points $k_0 \in \mathsf{Z} \cap \R$ such that $\re \Phi_{21}(\zeta,k_{0})<0$; these points correspond to one-solitons, and the condition $\re \Phi_{21}(\zeta,k_{0})<0$ ensures that only those one-solitons traveling at velocities larger than $\zeta$ are taken into account. In other words, the asymptotic behavior of $u(x,t)$ in the direction $x/t = \zeta > 1$ is given by a decaying modulated sine-wave and the phase of this sine-wave receives a shift from each right-moving soliton traveling at a speed greater than $\zeta$. 

The function $\mathcal{P}(\zeta,k)$ is in general discontinuous as a function of $\zeta$, because as $\zeta$ decreases below the velocity of an asymptotic soliton, a new factor suddenly appears in one of the products in \eqref{def of mathcalP}. This is consistent with the fact that the asymptotic formula (\ref{uasymptotics2}) only is valid away from the directions of the asymptotic solitons, i.e., away from the points of discontinuity of $\mathcal{P}(\zeta,k)$. The additional complexity of the expression for the term of $O(t^{-1/2})$ in Theorems \ref{asymptoticsth} and \ref{asymptoticsth3} compared to the analogous term in Theorem \ref{asymptoticsth2} can be viewed as a price we have to pay to smoothen out these discontinuities. Indeed, the asymptotic formulas (\ref{uasymptotics}) and (\ref{uasymptoticsnearsoliton}) are uniformly valid also near the directions of the asymptotic solitons and the radiation term $u_{\mathrm{rad}}(x,t)/\sqrt{t}$ in (\ref{uasymptotics}) and (\ref{uasymptoticsnearsoliton}) depends smoothly on $\zeta$.

\subsection{Notation}\label{notationsubsec}
We use $C>0$ and $c>0$ to denote generic constants that may change within a computation.
If $A$ is an $n \times m$ matrix, we define $|A| \geq 0$ by $|A|^2=\Sigma_{i,j}|A_{ij}|^2$. For a piecewise smooth contour $\gamma \subset \C$ and $1 \le p \le \infty$, if $|A|$ belongs to $L^p(\gamma)$, we write $A \in L^p(\gamma)$ and define $\|A\|_{L^p(\gamma)} := \| |A|\|_{L^p(\gamma)}$. We will use $D_\epsilon(k)$ to denote the open disk of radius $\epsilon$ centered at $k \in \C$. The Schwartz space $\mathcal{S}(\R)$ contains all smooth functions $f$ on $\R$ such that $f$ and all its derivatives have rapid decay as $x \to \pm \infty$.

\section{The RH problem for $n$}\label{overviewsec}

Our proofs of Theorems \ref{asymptoticsth} and \ref{asymptoticsth2} are based on a Deift-Zhou steepest descent analysis of a $1\times 3$ RH problem. This RH problem, whose solution is denoted by $n$, was first derived in \cite{CLmain}, and extended to the case when solitons are present in \cite{CLscatteringsolitons}. Its jump contour $\Gamma = \cup_{j=1}^{9}\Gamma_{j}$ is oriented as in Figure \ref{fig: Dn}. For each $j \in \{ 1, \dots, 6\}$, we write $\Gamma_j = \Gamma_{j'} \cup \Gamma_{j''}$, where $\Gamma_{j'} = \Gamma_j \setminus \D$ and $\Gamma_{j''} := \Gamma_j \setminus \Gamma_{j'}$ with $\Gamma_j$ as in Figure \ref{fig: Dn}. We let $\theta_{ij}(x,t,k) = t \, \Phi_{ij}(\zeta,k)$. By Assumption $(iii)$, we have $r_{1}(k)=0$ for $k \in [0,i]$, and thus the jump matrix $v(x,t,k)$ is given for $k \in \Gamma$ by (see \cite{CLmain})
\begin{align}
& v_{1'} = \begin{pmatrix}
1 & -r_{1}(k)e^{-\theta_{21}} & 0 \\
0 & 1 & 0 \\
0 & 0 & 1
\end{pmatrix}, \; v_{1''} = \begin{pmatrix}
1 & 0 & 0 \\
r_{1}(\frac{1}{k})e^{\theta_{21}} & 1 & 0 \\
0 & 0 & 1
\end{pmatrix}, \; v_{2'} = \begin{pmatrix}
1 & 0 & 0 \\
0 & 1 & -r_{2}(\frac{1}{\omega k})e^{-\theta_{32}} \\
0 & 0 & 1
\end{pmatrix}, \nonumber \\
& v_{2''} = \begin{pmatrix}
1 & 0 & 0 \\
0 & 1 & 0 \\
0 & r_{2}(\omega k)e^{\theta_{32}} & 1
\end{pmatrix}, \;  v_{3'} = \begin{pmatrix}
1 & 0 & 0 \\
0 & 1 & 0 \\
-r_{1}(\omega^{2}k)e^{\theta_{31}} & 0 & 1
\end{pmatrix}, \; v_{3''} = \begin{pmatrix}
1 & 0 & r_{1}(\frac{1}{\omega^{2}k})e^{-\theta_{31}} \\
0 & 1 & 0 \\
0 & 0 & 1
\end{pmatrix}, \nonumber \\
& v_{4'} = \begin{pmatrix}
1 & -r_{2}(\frac{1}{k})e^{-\theta_{21}} & 0 \\
0 & 1 & 0 \\
0 & 0 & 1
\end{pmatrix}, \; v_{4''} = \begin{pmatrix}
1 & 0 & 0 \\
r_{2}(k)e^{\theta_{21}} & 1 & 0 \\
0 & 0 & 1
\end{pmatrix}, \; v_{5'} = \begin{pmatrix}
1 & 0 & 0 \\
0 & 1 & -r_{1}(\omega k)e^{-\theta_{32}} \\
0 & 0 & 1
\end{pmatrix}, \nonumber \\
& v_{5''} = \begin{pmatrix}
1 & 0 & 0 \\
0 & 1 & 0 \\
0 & r_{1}(\frac{1}{\omega k})e^{\theta_{32}} & 1
\end{pmatrix}, \;  v_{6'} = \begin{pmatrix}
1 & 0 & 0 \\
0 & 1 & 0 \\
-r_{2}(\frac{1}{\omega^{2} k})e^{\theta_{31}} & 0 & 1
\end{pmatrix}, \; v_{6''} = \begin{pmatrix}
1 & 0 & r_{2}(\omega^{2}k)e^{-\theta_{31}} \\
0 & 1 & 0 \\
0 & 0 & 1
\end{pmatrix}, \nonumber \\
& v_{7} = \begin{pmatrix}
1 & -r_{1}(k)e^{-\theta_{21}} & r_{2}(\omega^{2}k)e^{-\theta_{31}} \\
-r_{2}(k)e^{\theta_{21}} & 1+r_{1}(k)r_{2}(k) & \big(r_{2}(\frac{1}{\omega k})-r_{2}(k)r_{2}(\omega^{2}k)\big)e^{-\theta_{32}} \\
r_{1}(\omega^{2}k)e^{\theta_{31}} & \big(r_{1}(\frac{1}{\omega k})-r_{1}(k)r_{1}(\omega^{2}k)\big)e^{\theta_{32}} & f(\omega^{2}k)
\end{pmatrix}, \nonumber \\
& v_{8} = \begin{pmatrix}
f(k) & r_{1}(k)e^{-\theta_{21}} & \big(r_{1}(\frac{1}{\omega^{2} k})-r_{1}(k)r_{1}(\omega k)\big)e^{-\theta_{31}} \\
r_{2}(k)e^{\theta_{21}} & 1 & -r_{1}(\omega k) e^{-\theta_{32}} \\
\big( r_{2}(\frac{1}{\omega^{2}k})-r_{2}(\omega k)r_{2}(k) \big)e^{\theta_{31}} & -r_{2}(\omega k) e^{\theta_{32}} & 1+r_{1}(\omega k)r_{2}(\omega k)
\end{pmatrix}, \nonumber \\
& v_{9} = \begin{pmatrix}
1+r_{1}(\omega^{2}k)r_{2}(\omega^{2}k) & \big( r_{2}(\frac{1}{k})-r_{2}(\omega k)r_{2}(\omega^{2} k) \big)e^{-\theta_{21}} & -r_{2}(\omega^{2}k)e^{-\theta_{31}} \\
\big(r_{1}(\frac{1}{k})-r_{1}(\omega k) r_{1}(\omega^{2} k)\big)e^{\theta_{21}} & f(\omega k) & r_{1}(\omega k)e^{-\theta_{32}} \\
-r_{1}(\omega^{2}k)e^{\theta_{31}} & r_{2}(\omega k) e^{\theta_{32}} & 1
\end{pmatrix}, \label{vdef}
\end{align}
where $v_j, v_{j'}, v_{j''}$ are the restrictions of $v$ to $\Gamma_{j}$, $\Gamma_{j'}$, and $\Gamma_{j''}$, respectively, and 
\begin{align}\label{def of f}
f(k) := 1+r_{1}(k)r_{2}(k) + r_{1}(\tfrac{1}{\omega^{2}k})r_{2}(\tfrac{1}{\omega^{2}k}), \qquad k \in \partial \D.
\end{align}
The function $v$ obeys the symmetries
\begin{align}\label{vsymm}
v(x,t,k) = \mathcal{A} v(x,t,\omega k)\mathcal{A}^{-1}
 = \mathcal{B} v(x,t, k^{-1})^{-1}\mathcal{B}, \qquad k \in \Gamma,
\end{align}
where $\mathcal{A}$ and $\mathcal{B}$ are given by (\ref{def of Acal and Bcal}).
Note that $v$ depends on $x$ and $t$ only via the phase functions $\Phi_{21}$, $\Phi_{31}$, and $\Phi_{32}$. 

Let $\mathsf{Z}$ be as in Assumption $(\ref{solitonassumption})$ and define $\hat{\mathsf{Z}}$ by (\ref{def of hatZ}).
We denote by $\Gamma_{\star} = \{i\kappa_j\}_{j=1}^6 \cup \{0\}$ the set of intersection points of $\Gamma$. 

\begin{RHproblem}[RH problem for $n$]\label{RHn}
Find a $1 \times 3$-row-vector valued function $n(x,t,k)$ with the following properties:
\begin{enumerate}[$(a)$]
\item\label{RHnitema} $n(x,t,\cdot) : \C \setminus (\Gamma \cup \hat{\mathsf{Z}}) \to \mathbb{C}^{1 \times 3}$ is analytic.

\item\label{RHnitemb} The limits of $n(x,t,k)$ as $k$ approaches $\Gamma \setminus \Gamma_\star$ from the left and right exist, are continuous on $\Gamma \setminus \Gamma_\star$, and are denoted by $n_+$ and $n_-$, respectively. Moreover, 
\begin{align}\label{njump}
  n_+(x,t,k) = n_-(x, t, k) v(x, t, k) \qquad \text{for} \quad k \in \Gamma \setminus \Gamma_\star.
\end{align}

\item\label{RHnitemc} $n(x,t,k) = O(1)$ as $k \to k_{\star} \in \Gamma_\star$.

\item\label{RHnitemd} For $k \in \C \setminus (\Gamma \cup \hat{\mathsf{Z}})$, $n$ obeys the symmetries
\begin{align}\label{nsymm}
n(x,t,k) = n(x,t,\omega k)\mathcal{A}^{-1} = n(x,t,k^{-1}) \mathcal{B}.
\end{align}

\item\label{RHniteme} $n(x,t,k) = (1,1,1) + O(k^{-1})$ as $k \to \infty$.

\item\label{RHnitemf} 
At each point of $\hat{\mathsf{Z}}$, one entry of $n$ has (at most) a simple pole while two entries are analytic. Moreover, the following residue conditions hold: for each $k_{0}\in \mathsf{Z}\setminus \mathbb{R}$, 
\begin{align}\nonumber
& \underset{k = k_0}{\res} n_3(k) = c_{k_{0}}e^{-\theta_{31}(k_0)} n_1(k_0), & & \underset{k = \omega k_0}{\res} n_2(k) = \omega c_{k_{0}}e^{-\theta_{31}(k_0)} n_3(\omega k_0), 
	\\ \nonumber
& \underset{k = \omega^2 k_0}{\res} n_1(k) = \omega^2 c_{k_{0}}e^{-\theta_{31}(k_0)} n_2(\omega^2 k_0), & & \underset{k = k_0^{-1}}{\res} n_3(k) = - k_0^{-2} c_{k_{0}}e^{-\theta_{31}(k_0)} n_2(k_0^{-1}),	
	\\\nonumber
&  \underset{k = \omega^2k_0^{-1}}{\res} n_1(k) = -\tfrac{\omega^2}{k_0^{2}} c_{k_{0}}e^{-\theta_{31}(k_0)} n_3(\omega^2 k_0^{-1}), & & \underset{k = \omega k_0^{-1}}{\res} n_2(k) 
= -\tfrac{\omega}{k_0^{2}} c_{k_{0}}e^{-\theta_{31}(k_0)} n_1(\omega k_0^{-1}), 
	 \\ \nonumber
& \underset{k = \bar{k}_0}{\res} n_2(k) = d_{k_0} e^{\theta_{32}(\bar{k}_0)} n_3(\bar{k}_0), & & \underset{k = \omega \bar{k}_0}{\res} n_1(k) = \omega d_{k_0} e^{\theta_{32}(\bar{k}_0)} n_2( \omega \bar{k}_0),
	\\ \nonumber
& \underset{k = \omega^2 \bar{k}_0}{\res} n_3(k) = \omega^2 d_{k_0} e^{\theta_{32}(\bar{k}_0)} n_1(\omega^2 \bar{k}_0), & & \underset{k = \bar{k}_0^{-1}}{\res} n_1(k) = - \bar{k}_0^{-2} d_{k_0} e^{\theta_{32}(\bar{k}_0)} n_3(\bar{k}_0^{-1}),	
	\\ \label{nresiduesk0}
&  \underset{k = \omega^2 \bar{k}_0^{-1}}{\res} n_2(k) = -\tfrac{\omega^2}{\bar{k}_0^{2}} d_{k_0} e^{\theta_{32}(\bar{k}_0)} n_1( \omega^2 \bar{k}_0^{-1}), & & \underset{k = \omega \bar{k}_0^{-1}}{\res} n_3(k) = -\tfrac{\omega}{\bar{k}_0^{2}} d_{k_0} e^{\theta_{32}(\bar{k}_0)} n_2(\omega \bar{k}_0^{-1}), 
\end{align}
and, for each $k_{0}\in \mathsf{Z}\cap \mathbb{R}$, 
\begin{align}\nonumber
& \underset{k = k_0}{\res} n_2(k) = c_{k_0} e^{-\theta_{21}(k_0)} n_1(k_0), & & \underset{k = \omega k_0}{\res} n_1(k) = \omega c_{k_0} e^{-\theta_{21}(k_0)} n_3(\omega k_0), 
	\\  \nonumber
& \underset{k = \omega^2 k_0}{\res} n_3(k) = \omega^2 c_{k_0} e^{-\theta_{21}(k_0)} n_2(\omega^2 k_0), & & \underset{k = k_0^{-1}}{\res} n_1(k) = - k_0^{-2} c_{k_0} e^{-\theta_{21}(k_0)} n_2(k_0^{-1}),		
	\\ \label{nresiduesk0real}
&  \underset{k = \omega^2k_0^{-1}}{\res} n_2(k) = -\tfrac{\omega^2}{k_0^{2}} c_{k_0} e^{-\theta_{21}(k_0)} n_3(\omega^2 k_0^{-1}), & & \underset{k = \omega k_0^{-1}}{\res} n_3(k) 
= -\tfrac{\omega}{k_0^{2}} c_{k_0} e^{-\theta_{21}(k_0)} n_1(\omega k_0^{-1}),
\end{align}
where the $(x,t)$-dependence of $n$ and of $\theta_{ij}$ has been suppressed for conciseness.
\end{enumerate}
\end{RHproblem}
Since $u_{0}$ and $u_1$ satisfy Assumptions $(i)$--$(iii)$, it follows from \cite[Theorems 2.6 and 2.11]{CLscatteringsolitons} that RH problem \ref{RHn} has a unique solution $n(x,t,k)$ for each $(x,t) \in \R \times [0,\infty)$, that
$$n_{3}^{(1)}(x,t) := \lim_{k\to \infty} k (n_{3}(x,t,k) -1)$$ 
is well-defined and smooth for $(x,t) \in \R \times [0,\infty)$, and that 
\begin{align}\label{recoveruvn}
u(x,t) = -i\sqrt{3}\frac{\partial}{\partial x}n_{3}^{(1)}(x,t)
\end{align}
is a real-valued Schwartz class solution of (\ref{boussinesq}) on $\R \times [0,\infty)$ with initial data $u(x,0)=u_{0}(x)$ and $u_{t}(x,0)=u_{1}(x)$.

Let $\mathcal{I}$ be a fixed compact subset of $(1,\infty)$. 
Recall from Section \ref{mainsec} that $\{k_{j}\}_{j=1}^{4}$ are the saddle points of $\Phi_{21}$. From the identities
$\Phi_{31}(\zeta,k) = - \Phi_{21}(\zeta,\omega^{2}k)$ and $\Phi_{32}(\zeta, k) = \Phi_{21}(\zeta, \omega k)$,
we infer that $\{\omega k_{j}\}_{j=1}^{4}$ are the saddle points of $\Phi_{31}$ and that $\{\omega^{2} k_{j}\}_{j=1}^{4}$ are the saddle points of $\Phi_{32}$. 
For the sector considered in this paper, namely $\zeta \in \mathcal{I}$, it turns out that only the saddle points $\{\omega^{j}k_{1},\omega^{j}k_{2}\}_{j=0}^{2}$ play a role in the asymptotic analysis of $n$. The signature tables for $\Phi_{21}$, $\Phi_{31}$, and $\Phi_{32}$ are shown in Figure \ref{IIbis fig: Re Phi 21 31 and 32 for zeta=0.7} (see also Figure \ref{fig:soliton and signature table} for the signature tables of $\Phi_{21}$ and $\Phi_{31}$ in a larger subset of the complex plane).
 
\begin{figure}[h]
\begin{center}
\begin{tikzpicture}[master]
\node at (0,0) {\includegraphics[width=4.5cm]{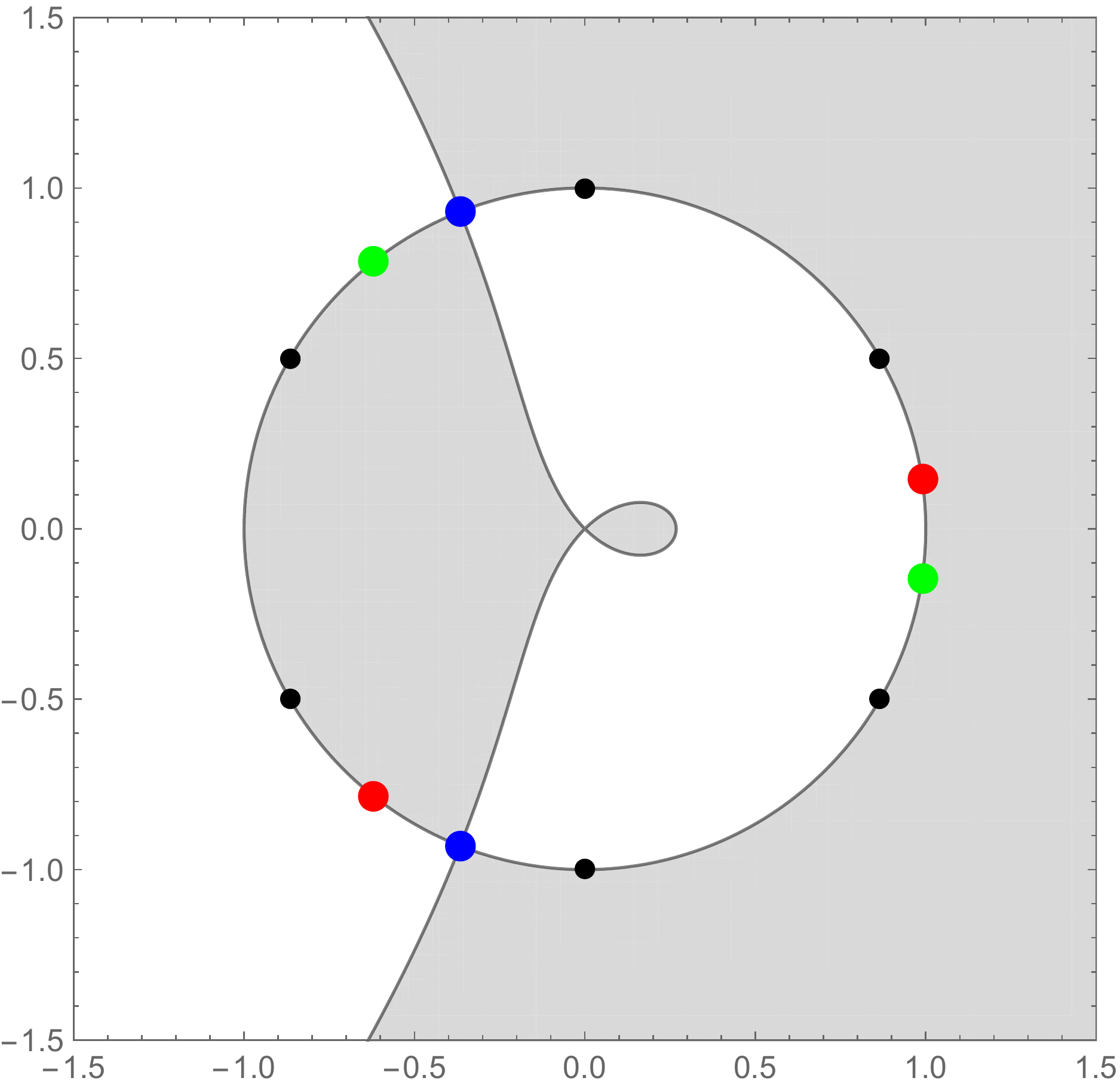}};

\node at (-0.31,1.54) {\tiny $k_1$};
\node at (-0.31,-1.43) {\tiny $k_2$};

\end{tikzpicture} \hspace{0.1cm} 
\begin{tikzpicture}[slave]
\node at (0,0) {\includegraphics[width=4.5cm]{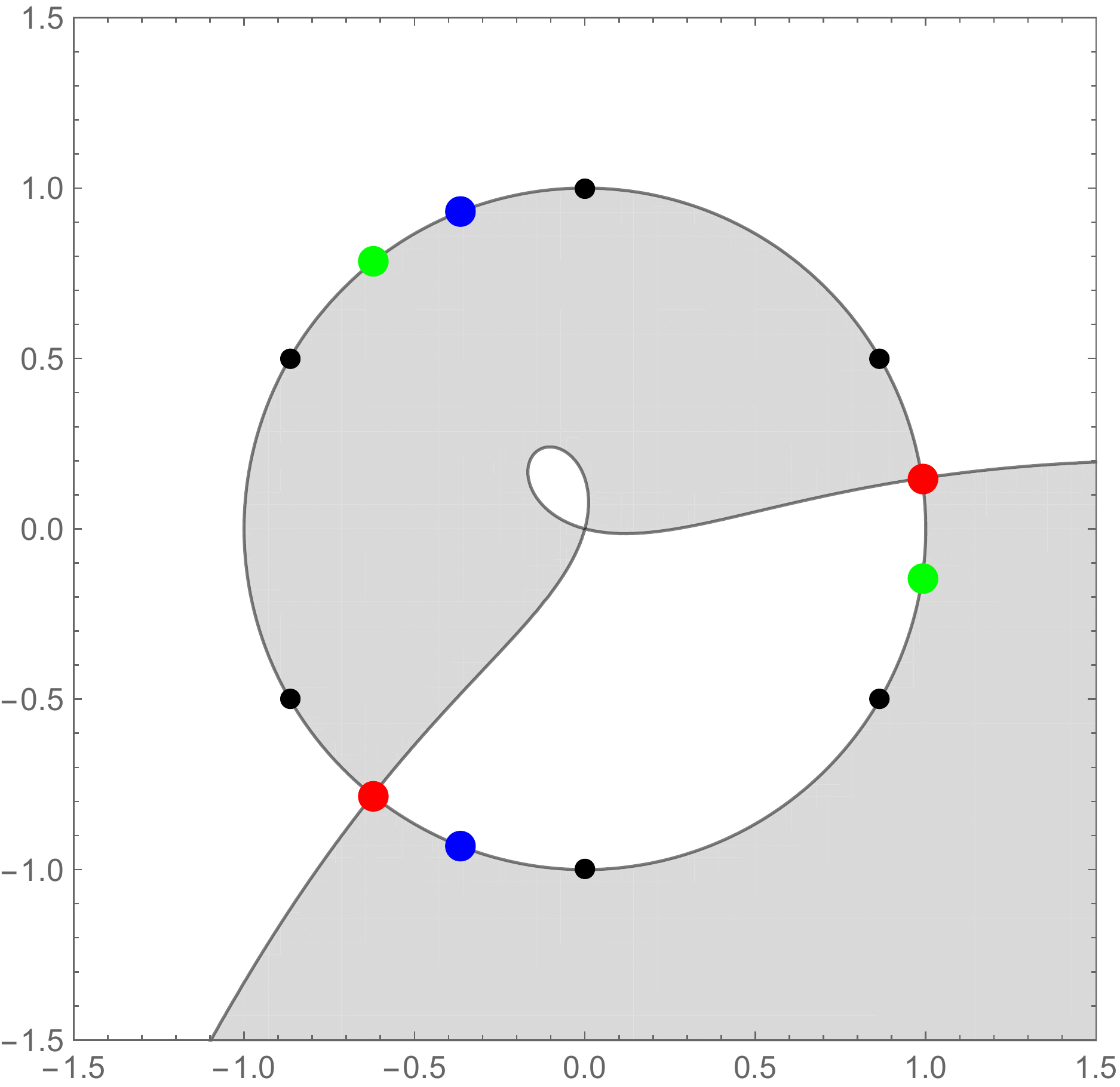}};

\node at (-1.06,-1.01) {\tiny $\omega k_1$};
\node at (1.75,0.45) {\tiny $\omega k_2$};

\end{tikzpicture} \hspace{0.1cm} 
\begin{tikzpicture}[slave]
\node at (0,0) {\includegraphics[width=4.5cm]{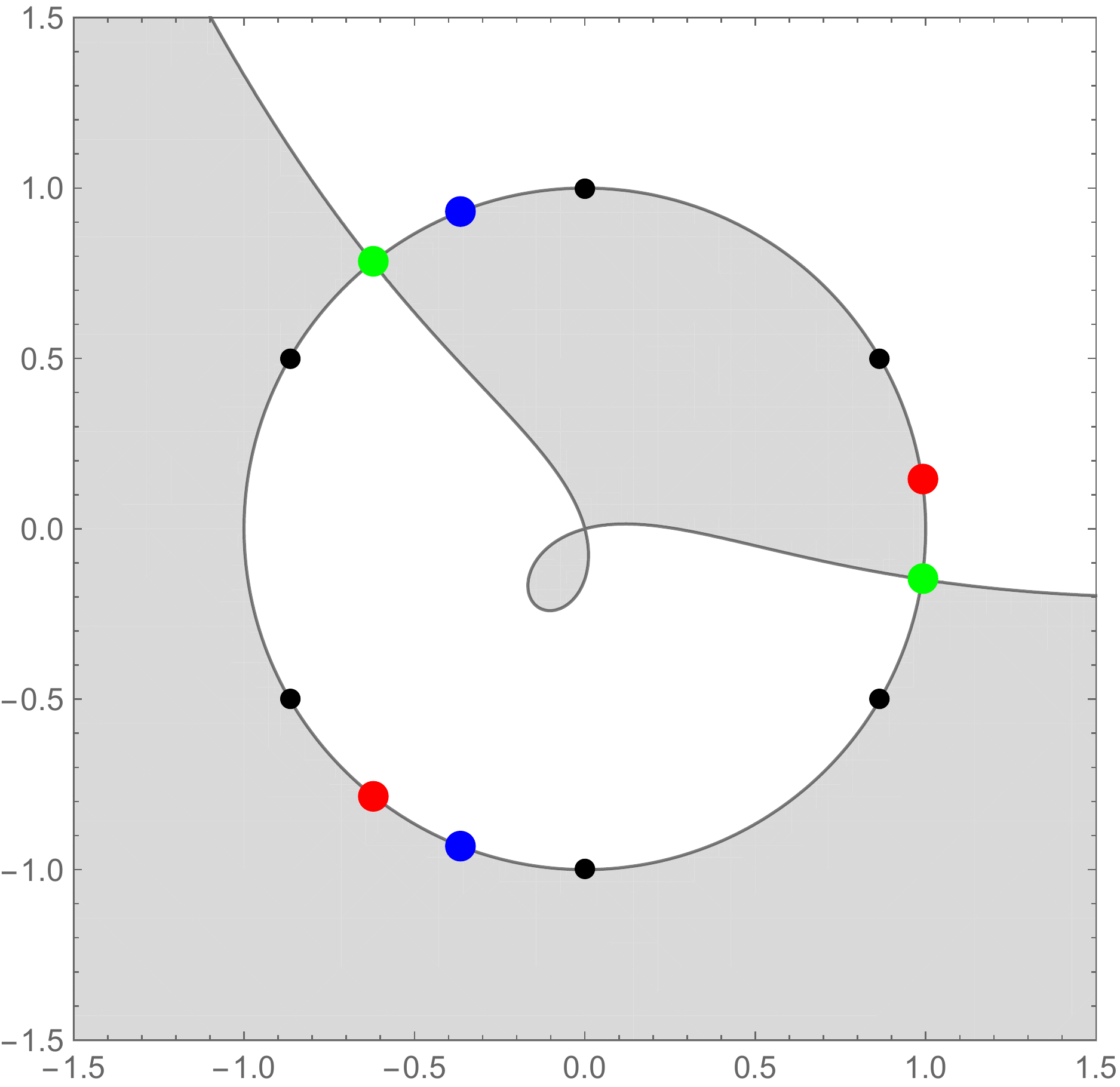}};

\node at (1.77,-0.34) {\tiny $\omega^2 k_1$};
\node at (-1.15,1.2) {\tiny $\omega^2 k_2$};

\end{tikzpicture}
\end{center}
\begin{figuretext}
\label{IIbis fig: Re Phi 21 31 and 32 for zeta=0.7} From left to right: The signature tables for $\Phi_{21}$, $\Phi_{31}$, and $\Phi_{32}$ for $\zeta=2$. The grey regions correspond to $\{k \,|\, \re \Phi_{ij}>0\}$ and the white regions to $\{k \,|\, \re \Phi_{ij}<0\}$. The saddle points $k_{1},k_{2}$ of $\Phi_{21}$ are blue, the saddle points $\omega k_{1},\omega k_{2}$ of $\Phi_{31}$ are red, and the saddle points $\omega^{2} k_{1},\omega^{2} k_{2}$ of $\Phi_{32}$ are green. The black dots are the points $i \kappa_j$, $j=1,\ldots,6$.
\end{figuretext}
\end{figure}

We will use the Deift--Zhou \cite{DZ1993} steepest descent method applied to RH problem \ref{RHn} to obtain asymptotics for $u$. We will proceed via several transformations $n \to n^{(1)} \to n^{(2)} \to n^{(3)} \to n^{(4)} \to \hat{n}$, such that the RH problems satisfied by $n^{(1)}, n^{(2)}, n^{(3)},  n^{(4)}, \hat{n}$ are equivalent to the original RH problem \ref{RHn}. We will write $\Gamma^{(j)}, \hat{\Gamma}$ and $v^{(j)},\hat{v}$ for the contours and jump matrices of the RH problems for $n^{(j)}, \hat{n}$, respectively. At each step, the symmetries (\ref{nsymm}) and (\ref{vsymm}) will be preserved, so that, for $j = 1, 2,3,4$,
\begin{align}\label{vjsymm}
& v^{(j)}(x,t,k) = \mathcal{A} v^{(j)}(x,t,\omega k)\mathcal{A}^{-1}
 = \mathcal{B} v^{(j)}(x,t,k^{-1})^{-1}\mathcal{B}, & & k \in \Gamma^{(j)},
	\\ \label{njsymm}
& n^{(j)}(x,t, k) = n^{(j)}(x,t,\omega k)\mathcal{A}^{-1}
 = n^{(j)}(x,t, k^{-1}) \mathcal{B}, & & k \in \C \setminus \Gamma^{(j)},
\end{align}
and similarly for $\hat{n}$ and $\hat{v}$. Thanks to the symmetries \eqref{vjsymm}--\eqref{njsymm}, the transformations $n \to n^{(1)}$, $n^{(j)} \to n^{(j+1)}$, and $n^{(4)}\to \hat{n}$ will only be defined explicitly in the sector 
\begin{align}\label{mathsfSdef}
\mathsf{S}:= \big\{k \in \mathbb{C} \,|  \arg k \in [\tfrac{\pi}{3},\tfrac{2\pi}{3}]\big\}.
\end{align}
In the rest of the complex plane, they will be defined using the $\mathcal{A}$- and $\mathcal{B}$-symmetries.

Any point $k_0 \in \mathsf{Z}$ with $c_{k_0} = 0$ can be removed from $\mathsf{Z}$ without affecting RH problem \ref{RHn}. We will therefore without loss of generality henceforth assume that $c_{k_0} \neq 0$ for all $k_0 \in \mathsf{Z}$.

\section{The $n \to n^{(1)}$ transformation}\label{nton1sec}
We now begin the proof of Theorem \ref{asymptoticsth}. We recall the following facts from \cite[Theorem 2.3]{CLscatteringsolitons}: $r_1 \in C^\infty(\hat{\Gamma}_{1})$, $r_2 \in C^\infty(\hat{\Gamma}_{4}\setminus \{\pm \omega^{2}\})$, $r_{1}(\kappa_{j})\neq 0$ for $j=1,\ldots,6$, $r_{2}(k)$ has simple zeros at $k=\pm\omega$ and simple poles at $k=\pm \omega^2$, and $r_{1},r_{2}$ have rapid decay as $|k|\to \infty$. Moreover,
\begin{align}
& r_{1}(\tfrac{1}{\omega k}) + r_{2}(\omega k) + r_{1}(\omega^{2} k) r_{2}(\tfrac{1}{k}) = 0, & & k \in \partial \D\setminus \{\pm \omega\}, \label{r1r2 relation on the unit circle} \\
& r_{2}(k) = \tilde{r}(k)\overline{r_{1}(\bar{k}^{-1})}, \qquad \tilde{r}(k) :=\frac{\omega^{2}-k^{2}}{1-\omega^{2}k^{2}}, & & k \in \hat{\Gamma}_{4}\setminus \{0, \pm \omega^{2}\}, \label{r1r2 relation with kbar symmetry} \\
& r_{1}(1) = r_{1}(-1) = 1, \qquad r_{2}(1) = r_{2}(-1) = -1. \label{r1r2at0}
\end{align}

Define $\hat{r}_{1}(k)$ and $\hat{r}_{2}(k)$ for $k \in \partial \D$ with $\arg k \in [\tfrac{\pi}{2},\arg k_{1}]$ by
\begin{align}\label{def of rhat}
& \hat{r}_{j}(k) := \frac{r_{j}(k)}{1+r_{1}(k)r_{2}(k)} = \frac{r_{j}(k)}{1+\tilde{r}(k)|r_{1}(k)|^{2}}, \qquad j = 1,2,
\end{align}
where we have used \eqref{r1r2 relation with kbar symmetry} for the second equality. Since $\arg k_{1} \in (\frac{\pi}{2},\frac{2\pi}{3})$ for $\zeta>1$, and since $\tilde{r}(e^{i\theta})>0$ for $\theta \in [\frac{\pi}{2},\frac{2\pi}{3})$, we have $1+r_{1}(k)r_{2}(k)=1+\tilde{r}(k)|r_{1}(k)|^{2}>1$ for $|k|=1, \arg k \in [\tfrac{\pi}{2},\arg k_{1}]$. It follows that $\hat{r}_{1}$, $\hat{r}_{2}$ are well-defined.
To implement the transformation $n \to n^{(1)}$, we need analytic approximations of the functions $r_{1},r_{2}, \hat{r}_{1}$, and $\hat{r}_{2}$. 

For $K>1$, we define open sets $U_1 = U_1(\zeta,K), \hat{U}_1 = \hat{U}_1(\zeta,K) \subset \C$ as follows:
\begin{align*}
U_{1} = & \; \big(\{k \,| \arg k \in [-\pi,-\tfrac{5\pi}{6}) \cup (\arg k_{1},\pi], \; K^{-1}<|k|<1 \}
	\\
& \; \cup
\{k \,| \arg k \in (-\tfrac{\pi}{2},\tfrac{\pi}{6}), \; 1<|k|<K \}\big) \cap \{k  \,|\, \re \Phi_{21}>0\}, \\
\hat{U}_{1} = & \; \{k \,| \arg k \in (\tfrac{\pi}{2},\arg k_{1}), \; 1<|k|<K \} \cap \{k \,|\, \re \Phi_{21}>0\},
\end{align*}
see also Figure \ref{fig: U1 and U2}. Let $N \geq 1$ be an integer.


\begin{figure}
\begin{center}
\begin{tikzpicture}[master]
\node at (0,0) {\includegraphics[width=5cm]{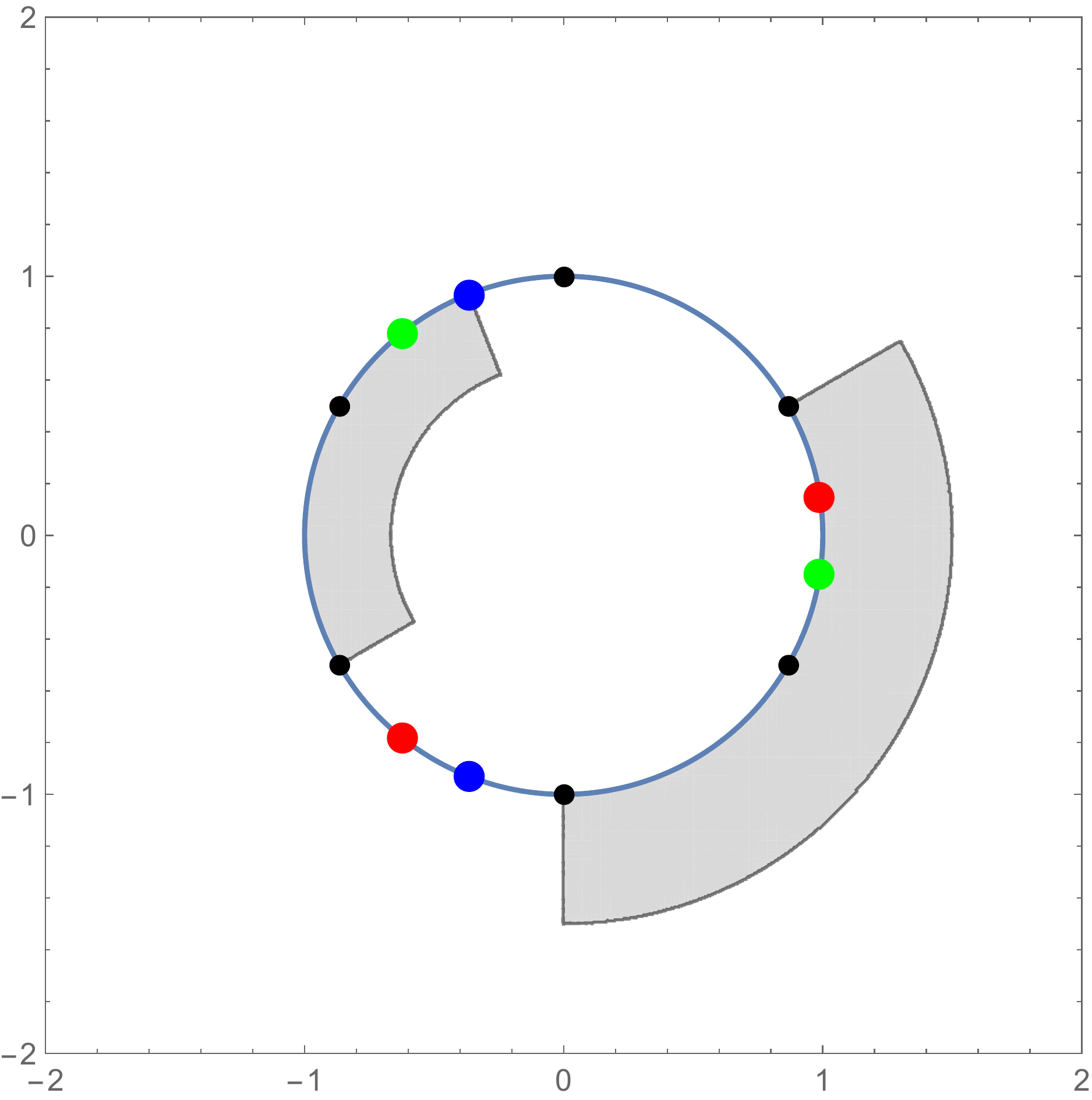}};
\node at (-0.87,0.25) {\tiny $U_{1}$};
\node at (1.2,-0.87) {\tiny $U_{1}$};
\end{tikzpicture} \hspace{0.1cm} \begin{tikzpicture}[slave]
\node at (0,0) {\includegraphics[width=5cm]{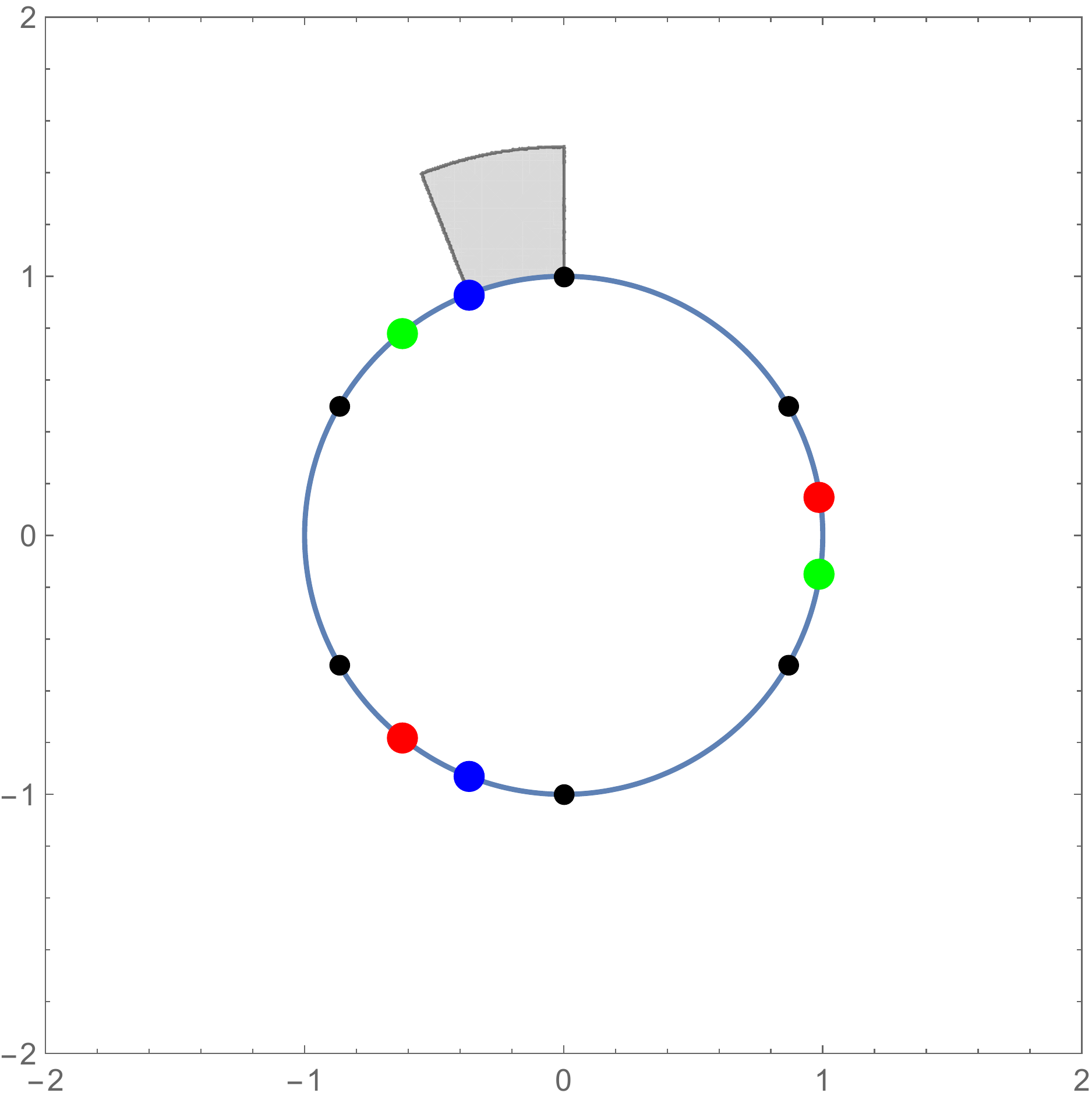}};
\node at (-0.15,1.5) {\tiny $\hat{U}_{1}$};
\end{tikzpicture}
\end{center}
\begin{figuretext}
\label{fig: U1 and U2} The open subsets $U_{1}$ and $\hat{U}_{1}$ of the complex $k$-plane.
\end{figuretext}
\end{figure}
\begin{lemma}[Decomposition lemma]\label{IIbis decompositionlemma}
There exist $K>1$ and decompositions
\begin{align*}
& r_{1}(k) = r_{1,a}(x, t, k) + r_{1,r}(x, t, k), & & k \in \partial U_{1} \cap \partial \D, \\
& \hat{r}_{1}(k) = \hat{r}_{1,a}(x, t, k) + \hat{r}_{1,r}(x, t, k), & & k \in \partial \hat{U}_{1} \cap \partial \D,
\end{align*}
such that the functions $r_{1,a}, r_{1,r}, \hat{r}_{1,a}, \hat{r}_{1,r}$ have the following properties:
\begin{enumerate}[$(a)$]
\item 
For each $\zeta \in \mathcal{I}$ and $t \geq 1$, $r_{1,a}(x, t, k)$ is defined and continuous for $k \in \bar{U}_1$ and analytic for $k \in U_1$, while $\hat{r}_{1,a}(x, t, k)$ is defined and continuous for $k \in \bar{\hat{U}}_1$ and analytic for $k \in \hat{U}_1$.

\item For $\zeta \in \mathcal{I}$ and $t\geq 1$, the functions $r_{1,a}$ and $\hat{r}_{1,a}$ satisfy
\begin{align*} 
& \Big| r_{1,a}(x, t, k)-\sum_{j=0}^{N}\frac{r_{1}^{(j)}(k_{\star})}{j!}(k-k_{\star})^{j}  \Big| \leq C |k-k_{\star}|^{N+1} e^{\frac{t}{4}|\re \Phi_{21}(\zeta,k)|}, & & k \in \bar{U}_{1}, \, k_{\star} \in \mathcal{R}, 
	\\ 
& \Big| \hat{r}_{1,a}(x, t, k)-\sum_{j=0}^{N}\frac{\hat{r}_{1}^{(j)}(k_{\star})}{j!}(k-k_{\star})^{j} \Big| \leq C|k-k_{\star}|^{N+1} e^{\frac{t}{4}|\re \Phi_{21}(\zeta,k)|},  & & k \in \bar{\hat{U}}_{1}, \, k_{\star} \in \hat{\mathcal{R}}, 
\end{align*}
where $\mathcal{R} =\{k_{1},\pm \omega, \omega^{2}k_{2},\pm e^{\frac{\pi i}{6}},\pm e^{-\frac{\pi i}{6}},\pm 1,-i,\omega^{2}k_{1},\omega k_{2}\}$, $\hat{\mathcal{R}} =\{i,k_{1}\}$, and the constant $C$ is independent of $\zeta, t, k$.
\item For each $1 \leq p \leq \infty$, the $L^p$-norm of $r_{1,r}(x,t,\cdot)$ on $\partial U_{1} \cap \partial \D$ is $O(t^{-N})$ and the $L^p$-norm of $\hat{r}_{1,r}(x,t,\cdot)$ on $\partial \hat{U}_{1} \cap \partial \D$ is $O(t^{-N})$ uniformly for $\zeta \in \mathcal{I}$ as $t \to \infty$.
\end{enumerate}
\end{lemma}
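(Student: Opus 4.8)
The plan is to carry out the ``analytic approximation'' step of the Deift--Zhou method, in the form used in \cite{CLmain,CLsectorIV,CLsectorV} and going back to \cite{DZ1993}; the only genuinely new input is the list of special points $\mathcal R$, $\hat{\mathcal R}$ appropriate to the present sector. I will describe the argument for $r_1$ on $\partial U_1\cap\partial\D$; the function $\hat r_1$ on $\partial\hat U_1\cap\partial\D$ is treated by the same recipe, using that by the computation recalled just before the lemma one has $1+r_1 r_2=1+\tilde r|r_1|^2>1$ on that arc, so that $\hat r_1=r_1/(1+\tilde r|r_1|^2)$ is there a well-defined $C^\infty$ function. Since $r_1\in C^\infty(\hat\Gamma_1)$ is not analytic, the contour cannot be deformed off $\partial\D$ directly; the idea is to split $r_1$ on the arc into an analytic piece $r_{1,a}$ whose only growth into $U_1$ is the factor $e^{\frac t4|\re\Phi_{21}|}$ (harmless, since on $U_1$ the quantity $\re\Phi_{21}$ is positive and $r_{1,a}$ will always be multiplied by the exponentially small factor $e^{\mp t\Phi_{21}}$ after the deformation) plus a non-analytic remainder $r_{1,r}$ that is $O(t^{-N})$ in every $L^p$.

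The construction proceeds in two moves. First, reduce to a high-order-vanishing smooth function: pick a polynomial $P(k)$ (a Hermite interpolant) agreeing with $r_1$ and its first $N$ derivatives at every point of $\mathcal R$; since $P$ is entire, it may be placed in the analytic part, and since $g:=r_1-P$ vanishes to order $\ge N+1$ at each point of the finite set $\mathcal R$, one may write $g(k)=\big(\prod_{k_\star\in\mathcal R}(k-k_\star)^{N+1}\big)h(k)$ with $h\in C^\infty(\partial U_1\cap\partial\D)$. Second, decompose $h$ by a change of variables and a Fourier split. Working on one connected component of $U_1$ and its boundary arc at a time, use that $\Phi_{21}(\zeta,\cdot)$ is purely imaginary on $\partial\D$ (so $\re\Phi_{21}=0$ along the arc) and that the only saddle point of $\Phi_{21}$ lying on $\partial U_1\cap\partial\D$ is the corner point $k_1$, already absorbed by the Taylor matching; hence $p:=-i\Phi_{21}(\zeta,\cdot)$ is a smooth diffeomorphism from the arc onto a bounded interval and $h=H\circ p$ with $H\in C^\infty$. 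Extend $H$ to a function in $\mathcal S(\R)$, write $H(y)=\frac1{2\pi}\int_\R\hat H(\xi)e^{iy\xi}\,d\xi$, and note that $e^{iy\xi}|_{y=p(k)}=e^{\xi\Phi_{21}(\zeta,k)}$; then set
\begin{align*}
h_a(x,t,k)=\frac1{2\pi}\int_{-\infty}^{t/4}\hat H(\xi)\,e^{\xi\Phi_{21}(\zeta,k)}\,d\xi,\qquad h_r(x,t,k)=\frac1{2\pi}\int_{t/4}^{\infty}\hat H(\xi)\,e^{\xi\Phi_{21}(\zeta,k)}\,d\xi,
\end{align*}
and finally $r_{1,a}:=P+\big(\prod_{k_\star}(k-k_\star)^{N+1}\big)h_a$ and $r_{1,r}:=\big(\prod_{k_\star}(k-k_\star)^{N+1}\big)h_r$ on the arc. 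The $(x,t)$-dependence of $r_{1,a}$ enters only through the cutoff $t/4$ and through $\zeta=x/t$.

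Properties $(a)$--$(c)$ then follow from routine estimates. Since $\re\Phi_{21}>0$ on $U_1$, the integral defining $h_a$ converges and one may differentiate under the integral sign, so $h_a$ is analytic on $U_1$, and it extends continuously to $\bar U_1$ by dominated convergence (using the rapid decay of $\hat H$ where $\re\Phi_{21}\to0$); this gives $(a)$. For $k\in\bar U_1$ one has $|h_a(x,t,k)|\le\frac1{2\pi}\int_{-\infty}^0|\hat H(\xi)|\,d\xi+\frac{e^{\frac t4\re\Phi_{21}(\zeta,k)}}{2\pi}\int_0^{t/4}|\hat H(\xi)|\,d\xi\le C\,e^{\frac t4|\re\Phi_{21}(\zeta,k)|}$, which combined with $\prod_{k_\star}(k-k_\star)^{N+1}=O(|k-k_\star|^{N+1})$ near each $k_\star$ and with $P(k)-\sum_{j=0}^N\frac{r_1^{(j)}(k_\star)}{j!}(k-k_\star)^j=O(|k-k_\star|^{N+1})$ gives $(b)$; and since $\re\Phi_{21}=0$ along the arc, $\|r_{1,r}(x,t,\cdot)\|_{L^p(\partial U_1\cap\partial\D)}\le C\int_{t/4}^\infty|\hat H(\xi)|\,d\xi=O(t^{-N})$ because $\hat H\in\mathcal S(\R)$, which gives $(c)$. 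I expect the main difficulty to be not any single estimate but the bookkeeping: one must verify that a single $K>1$ makes the geometric picture of $U_1,\hat U_1$ in Figure~\ref{fig: U1 and U2} correct for all $\zeta\in\mathcal I$ and that all constants are uniform in $\zeta$ and $t$ (which follows from compactness of $\mathcal I$ and the smooth dependence of $\Phi_{21}$, $r_1$, and the points of $\mathcal R$ on $\zeta$), that $p=-i\Phi_{21}$ really is a diffeomorphism on each boundary component once all saddle points and intersection points $i\kappa_j$ touching $\partial U_1\cap\partial\D$ have been put into $\mathcal R$, and that localizing to a single component does not spoil the analyticity of $h_a$ on $U_1$. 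This is exactly the argument carried out in \cite{CLmain}, which I would import with the new points of $\mathcal R$ and $\hat{\mathcal R}$ (in particular the saddle-related points $k_1,\omega^2k_1,\omega^2k_2,\omega k_2$ and the intersection point $i\in\hat{\mathcal R}$) inserted.
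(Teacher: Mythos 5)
Your proposal is correct and follows exactly the argument the paper has in mind: the authors simply observe that $\re\Phi_{21}\equiv 0$ and $\theta\mapsto\im\Phi_{21}(\zeta,e^{i\theta})=(\zeta-\cos\theta)\sin\theta$ is monotone on each connected component of $\partial U_1\cap\partial\D$ and $\partial\hat U_1\cap\partial\D$, and invoke the standard Deift--Zhou decomposition from \cite{DZ1993} without spelling it out. Your Hermite-interpolant-plus-Fourier-cutoff construction is precisely that standard argument, and the monotonicity you use to conclude that $p=-i\Phi_{21}$ is a diffeomorphism on each boundary arc is exactly the fact the paper records as the only sector-specific input.
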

\begin{proof}
On each connected component of $\partial U_{1} \cap \partial \D$ and $\partial \hat{U}_{1} \cap \partial \D$, the function $\theta \mapsto \im \Phi_{21}(\zeta,e^{i\theta})=(\zeta-\cos \theta)\sin \theta$ is monotone and $\re  \Phi_{21}$ is identically zero. We can thus prove the statement using the method from \cite{DZ1993}. This method is standard by now, so we omit details. 
\end{proof}

The above lemma establishes decompositions of $r_{1}$ and $\hat{r}_{1}$; we now obtain decompositions of $r_{2}$ and $\hat{r}_{2}$ using the symmetry \eqref{r1r2 relation with kbar symmetry}. Let $U_{2}:=\{k|\bar{k}^{-1}\in U_{1}\}$ and $\hat{U}_{2}:=\{k|\bar{k}^{-1}\in \hat{U}_{1}\}$. We define decompositions $r_{2}=r_{2,a}+r_{2,r}$ and $\hat{r}_{2}=\hat{r}_{2,a}+\hat{r}_{2,r}$ by
\begin{align*}
& r_{2,a}(k) := \tilde{r}(k)\overline{r_{1,a}(\bar{k}^{-1})}, \quad k \in U_{2},
& & r_{2,r}(k) := \tilde{r}(k)\overline{r_{1,r}(\bar{k}^{-1})}, \quad k \in \partial U_{2}\cap \partial \D,
	\\
& \hat{r}_{2,a}(k) := \tilde{r}(k)\overline{\hat{r}_{1,a}(\bar{k}^{-1})}, \quad k \in \hat{U}_{2},
& & \hat{r}_{2,r}(k) := \tilde{r}(k)\overline{\hat{r}_{1,r}(\bar{k}^{-1})}, \quad k \in \partial \hat{U}_{2}\cap \partial \D.
\end{align*}

\begin{figure}[h]
\begin{center}
\begin{tikzpicture}[master,scale=0.9]
\node at (0,0) {};
\draw[black,line width=0.65 mm] (0,0)--(30:7.5);
\draw[black,line width=0.65 mm,->-=0.45,->-=0.91] (0,0)--(90:7);
\draw[black,line width=0.65 mm] (0,0)--(150:7.5);
\draw[dashed,black,line width=0.15 mm] (0,0)--(60:7.5);
\draw[dashed,black,line width=0.15 mm] (0,0)--(120:7.5);

\draw[black,line width=0.65 mm] ([shift=(30:3*1.5cm)]0,0) arc (30:150:3*1.5cm);
\draw[black,arrows={-Triangle[length=0.27cm,width=0.18cm]}]
($(73:3*1.5)$) --  ++(-15:0.001);
\draw[black,arrows={-Triangle[length=0.27cm,width=0.18cm]}]
($(97:3*1.5)$) --  ++(97-90:0.001);
\draw[black,arrows={-Triangle[length=0.27cm,width=0.18cm]}]
($(123-4:3*1.5)$) --  ++(120+90-4:0.001);

\node at (75.5:3.2*1.5) {\small $2$};

\node at (116:3.2*1.5) {\small $8$};

\node at (82.5:1.97*1.5) {\small $1''$};

\node at (86.5:6.15) {\small $4'$};

\node at (100:3.2*1.5) {\small $5$};

\draw[blue,fill] (110.3:4.5) circle (0.12cm);
\draw[green,fill] (129.688:4.5) circle (0.12cm);


\node at (110:4.08) {\small $k_{1}$};
\node at (130:4.08) {\small $\omega^{2}k_2$};

\end{tikzpicture}
\end{center}
\begin{figuretext}
\label{IIbis Gammap0p}The contour $\Gamma^{(0)}$ (solid), the boundary of $\mathsf{S}$ (dashed) and the saddle points $k_{1}$ (blue) and $\omega^{2}k_{2}$ (green).
\end{figuretext}
\end{figure}

We are now in a position to define the first transformation $n \to n^{(1)}$. As explained in Section \ref{overviewsec}, we will focus our attention on the sector $\mathsf{S}$ defined in (\ref{mathsfSdef}). Let $\Gamma^{(0)}$ be as in Figure \ref{IIbis Gammap0p}. Note that 
\begin{align*}
\Gamma_{8}^{(0)}:= \{e^{i\theta} \,|\, \theta \in (\arg k_{1},\tfrac{2\pi}{3})\}, \qquad \Gamma_{2}^{(0)}:= \{e^{i\theta} \,|\, \theta \in (\tfrac{\pi}{3},\tfrac{\pi}{2})\}.
\end{align*} 
The contour $\Gamma^{(0)}$ coincides as a set with $\Gamma\cap \mathsf{S}$, but is oriented and labeled differently.

On $\Gamma_{2}^{(0)}$, we will use the factorization
\begin{align*}
v_{9} = v_{3}^{(1)}v_{2}^{(1)}v_{1}^{(1)},
\end{align*}
where
\begin{align}
& v_{3}^{(1)} = \begin{pmatrix}
1 & 0 & -r_{2,a}(\omega^{2}k)e^{-\theta_{31}} \\
r_{1,a}(\frac{1}{k})e^{\theta_{21}} & 1 & r_{1,a}(\omega k)e^{-\theta_{32}} \\
0 & 0 & 1
\end{pmatrix}, \; v_{1}^{(1)} = \begin{pmatrix}
1 & r_{2,a}(\frac{1}{k})e^{-\theta_{21}} & 0 \\
0 & 1 & 0  \\
-r_{1,a}(\omega^{2}k)e^{\theta_{31}} & r_{2,a}(\omega k)e^{\theta_{32}} & 1
\end{pmatrix}, \nonumber \\
& v_{2}^{(1)} = I + v_{2,r}^{(1)}, \qquad v_{2,r}^{(1)}= \begin{pmatrix}
r_{1,r}(\omega^{2}k)r_{2,r}(\omega^{2}k) & g_{2}(\omega k)e^{-\theta_{21}} & -r_{2,r}(\omega^{2}k)e^{-\theta_{31}} \\
g_{1}(\omega k)e^{\theta_{21}} & g(\omega k) & h_{1}(\omega k)e^{-\theta_{32}} \\
-r_{1,r}(\omega^{2}k)e^{\theta_{31}} & h_{2}(\omega k)e^{\theta_{32}} & 0
\end{pmatrix}, \label{vp1p 123}
\end{align}
and
\begin{align*}
h_{1}(k) = & \; r_{1,r}(k) + r_{1,a}(\tfrac{1}{\omega^{2}k})r_{2,r}(\omega k), \qquad h_{2}(k) =  r_{2,r}(k) + r_{2,a}(\tfrac{1}{\omega^{2}k})r_{1,r}(\omega k), \\
g_{1}(k) = & \; r_{1,r}(\tfrac{1}{\omega^{2}k})-r_{1,r}(\omega k) \big( r_{1,r}(k)+r_{1,a}(\tfrac{1}{\omega^{2}k})r_{2,r}(\omega k) \big), \\
g_{2}(k) = & \; r_{2,r}(\tfrac{1}{\omega^{2}k})-r_{2,r}(\omega k) \big( r_{2,r}(k)+r_{2,a}(\tfrac{1}{\omega^{2}k})r_{1,r}(\omega k) \big), \\
g(k) = & \; r_{1,r}(k)\big(r_{1,r}(\omega k)r_{2,a}(\tfrac{1}{\omega^{2}k})+r_{2,r}(k)\big) \\
& +r_{1,a}(\tfrac{1}{\omega^{2}k})r_{2,r}(\omega k)\big( r_{1,r}(\omega k)r_{2,a}(\tfrac{1}{\omega^{2}k})+r_{2,r}(k) \big) + r_{1,r}(\tfrac{1}{\omega^{2}k})r_{2,r}(\tfrac{1}{\omega^{2}k}).
\end{align*}
On $\Gamma_{5}^{(0)}$, we will use the factorization
\begin{align}
& v_{7}^{-1} = v_{6}^{(1)}v_{5}^{(1)}v_{4}^{(1)}, \qquad v_{5}^{(1)} = \begin{pmatrix}
1+r_{1}(k)r_{2}(k) & 0 & 0 \\
0 & \frac{1}{1+r_{1}(k)r_{2}(k)} & 0 \\
0 & 0 & 1
\end{pmatrix} + v_{5,r}^{(1)}, \nonumber \\
& v_{6}^{(1)} \hspace{-0.07cm} = \hspace{-0.07cm} \begin{pmatrix}
1 & 0 & r_{1,a}(\frac{1}{\omega^{2} k})e^{-\theta_{31}} \\[0.08cm]
\hat{r}_{2,a}(k)e^{\theta_{21}} & 1 & -r_{2,a}(\frac{1}{\omega k})e^{-\theta_{32}} \\
0 & 0 & 1
\end{pmatrix}, \; v_{4}^{(1)} \hspace{-0.07cm} = \hspace{-0.07cm} \begin{pmatrix}
1 & \hspace{-0.07cm} \hat{r}_{1,a}(k)e^{-\theta_{21}} & \hspace{-0.07cm} 0 \\
0 & \hspace{-0.07cm} 1 & \hspace{-0.07cm} 0 \\
r_{2,a}(\tfrac{1}{\omega^{2} k})e^{\theta_{31}} & \hspace{-0.07cm} -r_{1,a}(\tfrac{1}{\omega k})e^{\theta_{32}} & \hspace{-0.07cm} 1
\end{pmatrix}\hspace{-0.07cm}, \label{vp1p 789}
\end{align}
where we have used \eqref{r1r2 relation on the unit circle}. Finally, on $\Gamma_{8}^{(0)}$, we will use the factorization
\begin{align}
& v_{7} = v_{7}^{(1)}v_{8}^{(1)}v_{9}^{(1)}, \;\;\; v_{8}^{(1)} = I + v_{8,r}^{(1)}, \;\;\; v_{8,r}^{(1)} = \begin{pmatrix}
0 & -r_{1,r}(k)e^{-\theta_{21}} & h_{2}(\omega^{2}k)e^{-\theta_{31}} \\
-r_{2,r}(k)e^{\theta_{21}} & r_{1,r}(k)r_{2,r}(k) & g_{2}(\omega^{2}k)e^{-\theta_{32}} \\
h_{1}(\omega^{2}k)e^{\theta_{31}} & g_{1}(\omega^{2}k)e^{\theta_{32}} & g(\omega^{2}k)
\end{pmatrix},
	 \nonumber \\
& v_{7}^{(1)} = \begin{pmatrix}
1 & 0 & 0 \\
-r_{2,a}(k)e^{\theta_{21}} & 1 & 0 \\
r_{1,a}(\omega^{2}k)e^{\theta_{31}} & r_{1,a}(\frac{1}{\omega k})e^{\theta_{32}} & 1
\end{pmatrix}, \quad v_{9}^{(1)} = \begin{pmatrix}
1 & -r_{1,a}(k)e^{-\theta_{21}} & r_{2,a}(\omega^{2}k)e^{-\theta_{31}} \\
0 & 1 & r_{2,a}(\frac{1}{\omega k})e^{-\theta_{32}} \\
0 & 0 & 1 
\end{pmatrix}. \label{vp1p 101112}
\end{align}
We omit the long expression for $v_{5,r}^{(1)}$, but note that Lemma \ref{IIbis decompositionlemma} implies that
\begin{align*}
\| v_{j,r}^{(1)} \|_{(L^{1}\cap L^{\infty})(\Gamma_{j}^{(0)})} = O(t^{-1}) \qquad \mbox{as } t \to \infty, \; j= 2,5,8.
\end{align*}

\begin{figure}
\begin{center}
\begin{tikzpicture}[master]
\node at (0,0) {};
\draw[black,line width=0.65 mm] (0,0)--(30:7.5);
\draw[black,line width=0.65 mm,->-=0.25,->-=0.57,->-=0.71,->-=0.91] (0,0)--(90:7.5);
\draw[black,line width=0.65 mm] (0,0)--(150:7.5);
\draw[dashed,black,line width=0.15 mm] (0,0)--(60:7.5);
\draw[dashed,black,line width=0.15 mm] (0,0)--(120:7.5);

\draw[black,line width=0.65 mm] ([shift=(30:3*1.5cm)]0,0) arc (30:150:3*1.5cm);
\draw[black,arrows={-Triangle[length=0.27cm,width=0.18cm]}]
($(73:3*1.5)$) --  ++(-17:0.001);
\draw[black,arrows={-Triangle[length=0.27cm,width=0.18cm]}]
($(119:3*1.5)$) --  ++(116+90:0.001);

\draw[black,line width=0.65 mm] ([shift=(30:3*1.5cm)]0,0) arc (30:150:3*1.5cm);

\node at (75:3.15*1.5) {\scriptsize $2$};
\node at (75:4.03*1.5) {\scriptsize $1$};
\node at (75:2.58*1.5) {\scriptsize $3$};

\node at (116.5:3.155*1.5) {\scriptsize $8$};
\node at (115.5:3.65*1.5) {\scriptsize $7$};
\node at (114:2.55*1.5) {\scriptsize $9$};

\node at (99.5:1.15*1.5) {\scriptsize $1''$};
\node at (93.5:2.75*1.5) {\scriptsize $1_{r}''$};

\node at (92:6.6) {\scriptsize $4'$};
\node at (93.1:3.4*1.5) {\scriptsize $4_{r}'$};

\node at (100:3.17*1.5) {\scriptsize $5$};
\node at (100:3.62*1.5) {\scriptsize $4$};
\node at (100:2.45*1.5) {\scriptsize $6$};

\node at (110:3.22*1.5) {\scriptsize $k_1$};
\node at (131:3.31*1.5) {\scriptsize $\omega^2 k_2$};

\draw[black,arrows={-Triangle[length=0.27cm,width=0.18cm]}]
($(97:3*1.5)$) --  ++(97-90:0.001);

\draw[black,line width=0.65 mm] (150:3.65)--($(129.688:3*1.5)+(129.688+135:0.5)$)--(129.688:3*1.5)--($(129.688:3*1.5)+(129.688+45:0.5)$)--(150:5.8);
\draw[black,line width=0.65 mm,-<-=0.13,->-=0.78] (90:3.65)--($(110.3:3*1.5)+(110.3-135:0.5)$)--(110.3:3*1.5)--($(110.3:3*1.5)+(110.3-45:0.5)$)--(90:5.8);
\draw[black,line width=0.65 mm,-<-=0.12,->-=0.78] (120:3.65)--($(110.3:3*1.5)+(110.3+135:0.5)$)--(110.3:3*1.5)--($(110.3:3*1.5)+(110.3+45:0.5)$)--(120:5.8);
\draw[black,line width=0.65 mm] (120:3.65)--($(129.688:3*1.5)+(129.688-135:0.5)$)--(129.688:3*1.5)--($(129.688:3*1.5)+(129.688-45:0.5)$)--(120:5.8);

\draw[green,fill] (129.688:3*1.5) circle (0.12cm);
\draw[blue,fill] (110.3:3*1.5) circle (0.12cm);

\draw[black,line width=0.65 mm,->-=0.6] (60:4.5)--(60:5.8);
\draw[black,line width=0.65 mm,->-=0.75] (60:3.65)--(60:4.5);
\draw[black,line width=0.65 mm,->-=0.6] (120:4.5)--(120:5.8);
\draw[black,line width=0.65 mm,->-=0.75] (120:3.65)--(120:4.5);

\draw[black,line width=0.65 mm,-<-=0.70] ([shift=(30:3.65cm)]0,0) arc (30:90:3.65cm);
\draw[black,line width=0.65 mm,-<-=0.71] ([shift=(30:5.8cm)]0,0) arc (30:90:5.8cm);

\node at (122:5) {\scriptsize $1_{s}$};
\node at (122.5:4.2) {\scriptsize $2_{s}$};
\node at (57:5.1) {\scriptsize $3_{s}$};
\node at (56.5:4.15) {\scriptsize $4_{s}$};
\end{tikzpicture}
\end{center}
\begin{figuretext}
\label{IIbis Gammap1p}The contour $\Gamma^{(1)}$ (solid), the boundary of $\mathsf{S}$ (dashed), and the saddle points $k_{1}$ (blue) and $\omega^{2}k_{2}$ (green).
\end{figuretext}
\end{figure}

Let $\Gamma^{(1)}$ be the contour shown in Figure \ref{IIbis Gammap1p}. The function $n^{(1)}$ is defined by
\begin{align}\label{Sector IIbis first transfo}
n^{(1)}(x,t,k) = n(x,t,k)G^{(1)}(x,t,k),\qquad k \in \C \setminus \Gamma^{(1)},
\end{align}
where $G^{(1)}$ is analytic in $\mathbb{C}\setminus \Gamma^{(1)}$. It is given for $k \in \mathsf{S}$ by
\begin{align}\label{IIbis Gp1pdef}
\hspace{-0.1cm} G^{(1)} \hspace{-0.1cm} = \hspace{-0.1cm} \begin{cases} 
v_{3}^{(1)}, & \hspace{-0.3cm} k \mbox{ on the $-$ side of }\Gamma_{2}^{(1)}, \\
(v_{1}^{(1)})^{-1}, & \hspace{-0.3cm} k \mbox{ on the $+$ side of }\Gamma_{2}^{(1)},  \\
v_{6}^{(1)}, & \hspace{-0.3cm} k \mbox{ on the $-$ side of }\Gamma_{5}^{(1)}, \\
(v_{4}^{(1)})^{-1}, & \hspace{-0.3cm} k \mbox{ on the $+$ side of }\Gamma_{5}^{(1)},
\end{cases} \; G^{(1)} \hspace{-0.1cm} = \hspace{-0.1cm} \begin{cases} 
v_{7}^{(1)}, & \hspace{-0.3cm} k \mbox{ on the $-$ side of }\Gamma_{8}^{(1)}, \\
(v_{9}^{(1)})^{-1}, & \hspace{-0.3cm} k \mbox{ on the $+$ side of }\Gamma_{8}^{(1)},  \\
I, & \hspace{-0.3cm} \mbox{otherwise},
\end{cases}
\end{align}
and extended to $\mathbb{C}\setminus \Gamma^{(1)}$ by
\begin{align}\label{symmetry of G1}
G^{(1)}(x,t, k) = \mathcal{A} G^{(1)}(x,t,\omega k)\mathcal{A}^{-1}
 = \mathcal{B} G^{(1)}(x,t, k^{-1}) \mathcal{B}.
\end{align}
The next lemma follows from the signature tables of Figure \ref{IIbis fig: Re Phi 21 31 and 32 for zeta=0.7}.

\begin{lemma}\label{lemma:G1p1p}
$G^{(1)}(x,t,k)$ and $G^{(1)}(x,t,k)^{-1}$ are uniformly bounded for $k \in \mathbb{C}\setminus \Gamma^{(1)}$, $t \geq 1$, and $\zeta \in \mathcal{I}$. Furthermore, $G^{(1)}(x,t,k)=I$ for all large enough $|k|$.
\end{lemma}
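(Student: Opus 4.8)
The plan is to use the symmetry \eqref{symmetry of G1} to reduce everything to the sector $\mathsf{S}$, and then to bound $G^{(1)}$ entrywise by combining the signature tables of Figure \ref{IIbis fig: Re Phi 21 31 and 32 for zeta=0.7} with the growth estimate of Lemma \ref{IIbis decompositionlemma}$(b)$. Since $\mathcal{A}$ and $\mathcal{B}$ are fixed invertible matrices, \eqref{symmetry of G1} shows that $G^{(1)}$ and $(G^{(1)})^{-1}$ are uniformly bounded on $\C\setminus\Gamma^{(1)}$ as soon as they are uniformly bounded on $\mathsf{S}\setminus\Gamma^{(1)}$, and that $G^{(1)}\equiv I$ for large $|k|$ as soon as this holds for $k\in\mathsf{S}$; so I work only in $\mathsf{S}$.

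By \eqref{IIbis Gp1pdef}, for $k\in\mathsf{S}$ the function $G^{(1)}$ equals $I$ outside a union of finitely many bounded lens-shaped regions and, on each such region, equals one of the six explicit matrices $v_3^{(1)},(v_1^{(1)})^{-1},v_6^{(1)},(v_4^{(1)})^{-1},v_7^{(1)},(v_9^{(1)})^{-1}$; note in particular that the remainder factors $v_{2,r}^{(1)},v_{5,r}^{(1)},v_{8,r}^{(1)}$ and the diagonal part of $v_5^{(1)}$ never enter $G^{(1)}$ (they stay in the transformed jump matrix). For the last assertion of the lemma: the domains $U_1,\hat U_1,U_2,\hat U_2$ and the associated lenses are all contained in $\{K^{-1}\le|k|\le K\}$ up to a bounded neighborhood, so there is $R=R(\mathcal{I})$ with $G^{(1)}(x,t,k)=I$ for $|k|\ge R$, uniformly in $t\ge 1$ and $\zeta\in\mathcal{I}$.

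For the boundedness, observe that each of the six matrices is unipotent: writing it as $I+N$, the matrix $N$ becomes strictly triangular after a permutation of the indices $\{1,2,3\}$, so $N^3=0$ and $(I+N)^{-1}=I-N+N^2$. It therefore suffices to bound the entries of $N$, each of which has the form $r_{\#,a}(\sigma(k))\,e^{\pm\theta_{ij}(x,t,k)}$ with $r_{\#,a}\in\{r_{1,a},r_{2,a},\hat r_{1,a},\hat r_{2,a}\}$ and $\sigma(k)\in\{k,k^{-1},\omega^{\pm1}k,\omega^{\pm1}k^{-1}\}$. Two facts combine. First, by the signature tables (Figure \ref{IIbis fig: Re Phi 21 31 and 32 for zeta=0.7}), each lens is chosen so that $\re\Phi_{ij}$ has the sign making $|e^{\pm\theta_{ij}(x,t,k)}|=e^{-t|\re\Phi_{ij}(\zeta,k)|}$ there. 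Second, Lemma \ref{IIbis decompositionlemma}$(b)$, together with the relations $r_{2,a}=\tilde r\,\overline{r_{1,a}(\bar\cdot^{-1})}$, $\hat r_{2,a}=\tilde r\,\overline{\hat r_{1,a}(\bar\cdot^{-1})}$ and the boundedness of $\tilde r$ on the regions in play, gives $|r_{\#,a}(\sigma(k))|\le C\bigl(1+e^{\frac t4|\re\Phi_{21}(\zeta,\tau(k))|}\bigr)$ for a suitable point $\tau(k)$; and using the identities $\Phi_{31}(\zeta,k)=-\Phi_{21}(\zeta,\omega^2k)$, $\Phi_{32}(\zeta,k)=\Phi_{21}(\zeta,\omega k)$, $\Phi_{21}(\zeta,k^{-1})=-\Phi_{21}(\zeta,k)$ and $\overline{\Phi_{21}(\zeta,\bar k)}=\Phi_{21}(\zeta,k)$ (all immediate from \eqref{lmexpressions intro} and the definitions of $\Phi_{ij}$ and $z_j$) one checks that in every case $|\re\Phi_{21}(\zeta,\tau(k))|=|\re\Phi_{ij}(\zeta,k)|$, with the index $ij$ matching that of the accompanying exponential. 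Hence each entry of $N$ is $\le C\,e^{-\frac34 t|\re\Phi_{ij}(\zeta,k)|}\le C$, uniformly in $t\ge1$ and $\zeta\in\mathcal{I}$, and the same bounds then follow for $(I+N)^{-1}=I-N+N^2$.

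The only real obstacle is the bookkeeping in the last step: for each of the (several) off-diagonal entries of each of the six matrices, one must identify $\sigma(k)$, match it via the above symmetries to the phase $\Phi_{ij}$ carried by the exponential, and read off from the signature tables that this exponential decays on the relevant lens. This is precisely what the choice of $\Gamma^{(1)}$ and of the domains $U_1,\hat U_1,U_2,\hat U_2$ was engineered to make true, so it is routine; I would display one representative entry in full and omit the rest, as is standard in such steepest-descent arguments.
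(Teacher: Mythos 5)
Your proof is correct and follows the same route the paper intends — the paper dispatches this lemma in one line (``follows from the signature tables of Figure \ref{IIbis fig: Re Phi 21 31 and 32 for zeta=0.7}''), and you have simply unpacked that: reduce to $\mathsf{S}$ via the $\mathcal{A}$- and $\mathcal{B}$-symmetry \eqref{symmetry of G1}, note $G^{(1)}$ is piecewise one of six unipotent matrices supported on bounded lenses (whence $G^{(1)}=I$ for $|k|$ large and $(I+N)^{-1}=I-N+N^{2}$), and then bound each off-diagonal entry by combining the sub-exponential growth allowed by Lemma \ref{IIbis decompositionlemma}$(b)$ with the exponential decay of the matching $e^{\pm\theta_{ij}}$ on the lens, using the phase identities $\Phi_{31}(\zeta,k)=-\Phi_{21}(\zeta,\omega^{2}k)$, $\Phi_{32}(\zeta,k)=\Phi_{21}(\zeta,\omega k)$, $\Phi_{21}(\zeta,k^{-1})=-\Phi_{21}(\zeta,k)$, and $\overline{\Phi_{21}(\zeta,\bar k)}=\Phi_{21}(\zeta,k)$. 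The only small imprecision is cosmetic: from $|r_{\#,a}(\sigma(k))|\le C\bigl(1+e^{\frac{t}{4}|\re\Phi_{ij}|}\bigr)$ and $|e^{\pm\theta_{ij}}|=e^{-t|\re\Phi_{ij}|}$ the product is bounded by $C\bigl(e^{-t|\re\Phi_{ij}|}+e^{-\frac34 t|\re\Phi_{ij}|}\bigr)\le 2C$ rather than by $Ce^{-\frac34 t|\re\Phi_{ij}|}$ alone — the conclusion is of course unaffected.
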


Using \eqref{Sector IIbis first transfo}, we infer that $n^{(1)}_{+}=n^{(1)}_{-}v_{j}^{(1)}$ on $\Gamma^{(1)}_{j}$, where $v_{j}^{(1)}$ is given by \eqref{vp1p 123} for $j=1,2,3$, by \eqref{vp1p 789} for $j=4,5,6$, by \eqref{vp1p 101112} for $j=7,8,9$, and
\begin{align}
v_{1''}^{(1)} = & \; v_{1''}, \quad v_{4'}^{(1)} = v_{4'}, \quad v_{1_{r}''}^{(1)} = (v_{3}^{(1)})^{-1} v_{1''}v_{6}^{(1)}, \quad v_{4_{r}'}^{(1)} = v_{1}^{(1)}v_{4'}(v_{4}^{(1)})^{-1},
	 \nonumber \\
 v_{1_s}^{(1)} = &\; G_-^{(1)}(k)^{-1}G_+^{(1)}(k)
= v_{7}^{(1)}(k)^{-1} \mathcal{A} \mathcal{B} v_{9}^{(1)}(\tfrac{1}{\omega k})^{-1} \mathcal{B} \mathcal{A}^{-1}  
= \begin{pmatrix}
 1 & 0 & 0
 \\
 0 & 1 & 0 \\
 (v_{1_s}^{(1)})_{31} & 0 & 1 \\
\end{pmatrix}, \label{v1sexpression}
	\\ \nonumber
 (v_{1_s}^{(1)})_{31} := &\;  -\big(r_{1,a}(\omega^{2}k)+r_{2,a}(k)r_{1,a}(\tfrac{1}{\omega k})+r_{2,a}(\tfrac{1}{\omega^{2}k})\big) e^{\theta_{31}}.
\end{align}
In general, $(v_{1_s}^{(1)})_{31} \neq 0$ for $k \in \Gamma_{1_{s}}^{(1)}$, even though $r_{1}(\omega^{2}k)+r_{2}(k)r_{1}(\tfrac{1}{\omega k})+r_{2}(\tfrac{1}{\omega^{2}k})=0$ for $k \in \partial \D$ by \eqref{r1r2 relation on the unit circle}. The matrices $v_{2_{s}}, v_{3_{s}}, v_{4_{s}}$ are similar to $v_{1_{s}}$ so their explicit expressions are omitted.

\section{The $n^{(1)} \to n^{(2)}$ transformation}\label{n1ton2sec}
For the second transformation, we will use the following factorizations (the matrices with subscripts $u$ and $d$ will be used to deform contours up and down, respectively, see \eqref{IIbis Gp2pdef} below):
\begin{align}\nonumber
& v_{4}^{(1)}\hspace{-0.07cm}=\hspace{-0.07cm}v_{4}^{(2)}v_{4u}^{(2)}, \;\;\; v_{4}^{(2)}\hspace{-0.07cm} =\hspace{-0.07cm} \begin{pmatrix}
1 & \hat{r}_{1,a}(k) e^{-\theta_{21}} & 0 \\
0 & 1 & 0 \\
0 & 0 & 1
\end{pmatrix}\hspace{-0.07cm}, \;\;\; v_{4u}^{(2)}\hspace{-0.07cm} =\hspace{-0.07cm} \begin{pmatrix}
1 & 0 & 0 \\
0 & 1 & 0 \\
r_{2,a}(\frac{1}{\omega^{2}k}) e^{\theta_{31}} & -r_{1,a}(\frac{1}{\omega k})e^{\theta_{32}} & 1
\end{pmatrix}\hspace{-0.07cm}, \nonumber \\
& v_{6}^{(1)}=v_{6d}^{(2)}v_{6}^{(2)}, \quad v_{6d}^{(2)} = \begin{pmatrix}
1 & 0 & r_{1,a}(\frac{1}{\omega^{2} k})e^{-\theta_{31}} \\[0.05cm]
0 & 1 & -r_{2,a}(\frac{1}{\omega k})e^{-\theta_{32}} \\
0 & 0 & 1
\end{pmatrix}, \quad v_{6}^{(2)} = \begin{pmatrix}
1 & 0 & 0 \\
\hat{r}_{2,a}(k) e^{\theta_{21}} & 1 & 0 \\
0 & 0 & 1
\end{pmatrix}, \nonumber \\
& v_{7}^{(1)} \hspace{-0.07cm} = \hspace{-0.07cm} v_{7u}^{(2)}v_{7}^{(2)}\hspace{-0.07cm}, \;\; v_{7}^{(2)} \hspace{-0.07cm}=\hspace{-0.07cm} \begin{pmatrix}
1 & \hspace{-0.07cm}0 & \hspace{-0.07cm}0 \\
-r_{2,a}(k)e^{\theta_{21}} & \hspace{-0.07cm}1 & \hspace{-0.07cm}0 \\
0 & \hspace{-0.07cm}0 & \hspace{-0.07cm}1
\end{pmatrix}\hspace{-0.07cm}, \;\; v_{9}^{(1)} \hspace{-0.07cm}=\hspace{-0.07cm} v_{9}^{(2)}v_{9d}^{(2)}, \;\; v_{9}^{(2)} \hspace{-0.07cm}=\hspace{-0.07cm} \begin{pmatrix}
1 & \hspace{-0.07cm}-r_{1,a}(k)e^{-\theta_{21}} & \hspace{-0.07cm}0 \\
0 & \hspace{-0.07cm}1 & \hspace{-0.07cm}0 \\
0 & \hspace{-0.07cm}0 & \hspace{-0.07cm}1
\end{pmatrix}\hspace{-0.07cm}, \nonumber \\
& v_{7u}^{(2)} = \begin{pmatrix}
1 & 0 & 0 \\
0 & 1 & 0 \\
(r_{1,a}(\omega^{2}k)+r_{1,a}(\frac{1}{\omega k})r_{2,a}(k))e^{\theta_{31}} & r_{1,a}(\frac{1}{\omega k})e^{\theta_{32}} & 1
\end{pmatrix}, \nonumber \\
& v_{9d}^{(2)} = \begin{pmatrix}
1 & 0 & (r_{2,a}( \omega^{2}k)+r_{1,a}(k)r_{2,a}(\frac{1}{\omega k}))e^{-\theta_{31}} \\
0 & 1 & r_{2,a}(\frac{1}{\omega k})e^{-\theta_{32}} \\
0 & 0 & 1
\end{pmatrix}. \label{Vv2def}
\end{align}
\begin{figure}
\begin{center}
\begin{tikzpicture}[master]
\node at (0,0) {};
\draw[black,line width=0.65 mm] (0,0)--(30:7.5);
\draw[black,line width=0.65 mm,->-=0.25,->-=0.57,->-=0.71,->-=0.91] (0,0)--(90:7.5);
\draw[black,line width=0.65 mm] (0,0)--(150:7.5);
\draw[dashed,black,line width=0.15 mm] (0,0)--(60:7.5);
\draw[dashed,black,line width=0.15 mm] (0,0)--(120:7.5);

\draw[black,line width=0.65 mm] ([shift=(30:3*1.5cm)]0,0) arc (30:150:3*1.5cm);
\draw[black,arrows={-Triangle[length=0.27cm,width=0.18cm]}]
($(73:3*1.5)$) --  ++(-17:0.001);
\draw[black,arrows={-Triangle[length=0.27cm,width=0.18cm]}]
($(119:3*1.5)$) --  ++(116+90:0.001);

\draw[black,line width=0.65 mm] ([shift=(30:3*1.5cm)]0,0) arc (30:150:3*1.5cm);

\node at (75:3.15*1.5) {\scriptsize $2$};
\node at (75:4.03*1.5) {\scriptsize $1$};
\node at (75:2.58*1.5) {\scriptsize $3$};

\node at (116.5:3.13*1.5) {\scriptsize $8$};
\node at (115.5:3.55*1.5) {\scriptsize $7$};
\node at (115:2.6*1.5) {\scriptsize $9$};

\node at (99.5:1.15*1.5) {\scriptsize $1''$};
\node at (93.5:2.75*1.5) {\scriptsize $1_{r}''$};

\node at (92:6.6) {\scriptsize $4'$};
\node at (93.1:3.4*1.5) {\scriptsize $4_{r}'$};

\node at (100:3.15*1.5) {\scriptsize $5$};
\node at (100:3.59*1.5) {\scriptsize $4$};
\node at (101.2:2.78*1.5) {\scriptsize $6$};

\draw[black,arrows={-Triangle[length=0.27cm,width=0.18cm]}]
($(97:3*1.5)$) --  ++(97-90:0.001);

\draw[black,line width=0.65 mm] (150:3.65)--($(129.688:3*1.5)+(129.688+135:0.5)$)--(129.688:3*1.5)--($(129.688:3*1.5)+(129.688+45:0.5)$)--(150:5.8);
\draw[black,line width=0.65 mm,-<-=0.13,->-=0.78] (90:3.65)--($(110.3:3*1.5)+(110.3-135:0.5)$)--(110.3:3*1.5)--($(110.3:3*1.5)+(110.3-45:0.5)$)--(90:5.8);
\draw[black,line width=0.65 mm,-<-=0.12,->-=0.78] (120:3.65)--($(110.3:3*1.5)+(110.3+135:0.5)$)--(110.3:3*1.5)--($(110.3:3*1.5)+(110.3+45:0.5)$)--(120:5.8);
\draw[black,line width=0.65 mm] (120:3.65)--($(129.688:3*1.5)+(129.688-135:0.5)$)--(129.688:3*1.5)--($(129.688:3*1.5)+(129.688-45:0.5)$)--(120:5.8);

\draw[black,line width=0.65 mm,->-=0.6] (60:4.5)--(60:5.8);
\draw[black,line width=0.65 mm,->-=0.75] (60:3.65)--(60:4.5);
\draw[black,line width=0.65 mm,->-=0.6] (120:4.5)--(120:5.8);
\draw[black,line width=0.65 mm,->-=0.75] (120:3.65)--(120:4.5);

\draw[black,line width=0.65 mm,-<-=0.70] ([shift=(30:3.65cm)]0,0) arc (30:90:3.65cm);
\draw[black,line width=0.65 mm,-<-=0.71] ([shift=(30:5.8cm)]0,0) arc (30:90:5.8cm);

\node at (122:5) {\scriptsize $1_{s}$};
\node at (122.5:4.2) {\scriptsize $2_{s}$};
\node at (57:5.1) {\scriptsize $3_{s}$};
\node at (56.5:4.15) {\scriptsize $4_{s}$};

\draw[black,line width=0.65 mm] (129.688:3.65)--(129.688:5.8);
\draw[black,line width=0.65 mm,-<-=0.12,->-=0.78] (110.3:3.65)--(110.3:5.8);

\draw[black,line width=0.65 mm] ([shift=(30:3.65cm)]0,0) arc (30:150:3.65cm);
\draw[black,line width=0.65 mm] ([shift=(30:5.8cm)]0,0) arc (30:150:5.8cm);

\node at (108:5.2) {\scriptsize $5_{s}$};
\node at (107.5:3.9) {\scriptsize $6_{s}$};

\draw[green,fill] (129.688:3*1.5) circle (0.12cm);
\draw[blue,fill] (110.3:3*1.5) circle (0.12cm);
\end{tikzpicture}
\end{center}
\begin{figuretext}
\label{IIbis Gammap2p}The contour $\Gamma^{(2)}$ (solid), the boundary of $\mathsf{S}$ (dashed), and the saddle points $k_{1}$ (blue) and $\omega^{2}k_{2}$ (green).
\end{figuretext}
\end{figure}
Let $\Gamma^{(2)}$ be the contour shown in Figure \ref{IIbis Gammap2p}. The function $n^{(2)}$ is defined by
\begin{align}\label{Sector IIbis second transfo}
n^{(2)}(x,t,k) = n^{(1)}(x,t,k)G^{(2)}(x,t,k), \qquad k \in \C \setminus \Gamma^{(2)},
\end{align}
where $G^{(2)}$ is given for $k \in \mathsf{S}$ by
\begin{align}\label{IIbis Gp2pdef}
G^{(2)}(x,t,k) = \begin{cases} 
(v_{4u}^{(2)})^{-1}, & \hspace{-0.15cm}k \mbox{ above }\Gamma_{4}^{(2)}, \\
v_{6d}^{(2)}, & \hspace{-0.15cm}k \mbox{ below }\Gamma_{6}^{(2)}, \\
v_{7u}^{(2)}, & \hspace{-0.15cm}k \mbox{ above }\Gamma_{7}^{(2)}, \\
(v_{9d}^{(2)})^{-1}, & \hspace{-0.15cm}k \mbox{ below }\Gamma_{9}^{(2)}, \\
I, & \hspace{-0.15cm}\mbox{otherwise},
\end{cases}
\end{align}
and $G^{(2)}$ is defined to $\mathbb{C}\setminus \Gamma^{(2)}$ using the $\mathcal{A}$- and $\mathcal{B}$-symmetries (as in \eqref{symmetry of G1}). The following lemma follows from Lemma \ref{IIbis decompositionlemma} and Figure \ref{IIbis fig: Re Phi 21 31 and 32 for zeta=0.7}.

\begin{lemma}
$G^{(2)}(x,t,k)$ and $G^{(2)}(x,t,k)^{-1}$ are uniformly bounded for $k \in \mathbb{C}\setminus \Gamma^{(2)}$, $t\geq 1$, and $\zeta \in \mathcal{I}$. Furthermore, $G^{(2)}(x,t,k)=I$ for all large enough $|k|$.
\end{lemma}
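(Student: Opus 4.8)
The plan is to read off all three assertions directly from the piecewise definition (\ref{IIbis Gp2pdef}) together with the decomposition estimates of Lemma \ref{IIbis decompositionlemma}; this is the standard Deift--Zhou bookkeeping. Since $\mathcal{A}$ and $\mathcal{B}$ are fixed invertible matrices and the maps $k\mapsto\omega k$, $k\mapsto k^{-1}$ carry the closed annulus containing $U_1,\hat U_1,U_2,\hat U_2$ (hence every region occurring in (\ref{IIbis Gp2pdef})) into itself, the symmetry extension reduces the problem to bounding, on the subset of $\mathsf S$ where it is used, each of the four matrices $v_{4u}^{(2)}$, $v_{6d}^{(2)}$, $v_{7u}^{(2)}$, $v_{9d}^{(2)}$ from (\ref{Vv2def}). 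Each of these is unipotent triangular, so its inverse equals $I$ minus a finite sum of products of its off-diagonal entries; consequently a uniform $O(1)$ bound for the off-diagonal entries gives at once the uniform boundedness of both $G^{(2)}$ and $G^{(2)}(x,t,k)^{-1}$.

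The second step is to bound those entries. Every off-diagonal entry of the four matrices is a finite sum of terms $r_{j,a}(\eta(k))\,e^{s\,\theta_{il}(x,t,k)}$ and $r_{j,a}(\eta(k))r_{j',a}(\eta'(k))\,e^{s\,\theta_{il}(x,t,k)}$ with $j,j'\in\{1,2\}$, $\eta(k)\in\{\omega^{2}k,\tfrac{1}{\omega k},\tfrac{1}{\omega^{2}k},k\}$, $(i,l)\in\{(3,1),(3,2)\}$, and $s=+1$ for the ``up'' matrices $v_{4u}^{(2)},v_{7u}^{(2)}$ and $s=-1$ for the ``down'' matrices $v_{6d}^{(2)},v_{9d}^{(2)}$. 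From Lemma \ref{IIbis decompositionlemma}$(b)$ (take $N=1$, absorb the bounded Taylor polynomial and the bounded factor $|k-k_\star|^{N+1}$ on the bounded set $\bar U_1$ into the constant, and use $e^{\frac t4|\re\Phi_{21}|}\geq 1$) one gets $|r_{1,a}(x,t,k)|\leq C\,e^{\frac t4|\re\Phi_{21}(\zeta,k)|}$ for $k\in\bar U_1$, uniformly for $\zeta\in\mathcal I$ and $t\geq 1$; the same bound for $r_{2,a}$ on $\bar U_2$ follows from $r_{2,a}(k)=\tilde r(k)\overline{r_{1,a}(\bar k^{-1})}$ and the boundedness of $\tilde r$ on the relevant compact arcs. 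Using the elementary phase identities $\Phi_{31}(\zeta,k)=-\Phi_{21}(\zeta,\omega^{2}k)$, $\Phi_{32}(\zeta,k)=\Phi_{21}(\zeta,\omega k)$, $\Phi_{21}(\zeta,k^{-1})=-\Phi_{21}(\zeta,k)$, and $\overline{\Phi_{21}(\zeta,k)}=-\Phi_{21}(\zeta,\bar k)$ to re-express each $|\re\Phi_{21}(\zeta,\eta(k))|$ through $|\re\Phi_{31}(\zeta,k)|$, $|\re\Phi_{32}(\zeta,k)|$, and $|\re\Phi_{21}(\zeta,k)|$, each of the above terms is dominated by
\begin{align*}
C\exp\!\Big( t\big[\,s\,\re\Phi_{il}(\zeta,k)+\tfrac14\big(|\re\Phi_{\bullet}(\zeta,k)|+|\re\Phi_{\bullet\bullet}(\zeta,k)|\big)\,\big]\Big),
\end{align*}
where $\Phi_{\bullet},\Phi_{\bullet\bullet}$ lie among $\Phi_{21},\Phi_{31},\Phi_{32}$; this is $\leq C$ exactly when the bracketed quantity is nonpositive.

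It therefore remains to supply the geometric input, which is what Figure \ref{IIbis Gammap2p} records: the contour $\Gamma^{(2)}$ has been drawn so that the region above $\Gamma_4^{(2)}$ and the region above $\Gamma_7^{(2)}$ lie in $\{\re\Phi_{31}\leq0\}\cap\{\re\Phi_{32}\leq0\}$ with decay rate fast enough to absorb the (at most quarter-rate) growth of the analytic approximations, while the regions below $\Gamma_6^{(2)}$ and below $\Gamma_9^{(2)}$ lie, with the analogous margin, in $\{\re\Phi_{31}\geq0\}\cap\{\re\Phi_{32}\geq0\}$; this is read off from the signature tables of $\Phi_{31}$ and $\Phi_{32}$ in Figure \ref{IIbis fig: Re Phi 21 31 and 32 for zeta=0.7}, in which $\re\Phi\equiv0$ only on $\partial\D$ and at the saddle points $k_1,\omega^2 k_2$, where the prefactors $r_{j,a}$ are bounded and the exponentials are $O(1)$ so that the bound persists. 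Uniformity in $\zeta\in\mathcal I$ follows because the saddle points and the signature curves depend continuously on $\zeta$ over the compact set $\mathcal I$, and uniformity in $t\geq1$ is built into the estimate above. Finally, $G^{(2)}\equiv I$ for all large $|k|$: inside $\mathsf S$ the four deformation regions are contained in the bounded annulus carrying $U_1,\hat U_1,U_2,\hat U_2$, so for $|k|$ large $G^{(2)}=I$ there by the ``otherwise'' clause in (\ref{IIbis Gp2pdef}), and the $\mathcal A$- and $\mathcal B$-extension propagates this to all of $\C\setminus\Gamma^{(2)}$. The only step that genuinely requires care is the last one in spirit --- verifying that on the deformation regions the exponential decay from $e^{\pm\theta_{31}},e^{\pm\theta_{32}}$ dominates the growth of $r_{1,a},r_{2,a}$, i.e. that $\Gamma^{(2)}$ sits inside the correct sign regions with a margin --- but since $\Gamma^{(2)}$ was constructed for exactly this purpose, it is a routine inspection of the standard signature tables.
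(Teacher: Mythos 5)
Your proposal is correct and spells out exactly the argument the paper intends---the paper itself gives no proof beyond the line ``follows from Lemma \ref{IIbis decompositionlemma} and Figure \ref{IIbis fig: Re Phi 21 31 and 32 for zeta=0.7},'' and your version (unipotent triangular structure of the four deformation matrices, the $O(e^{\frac{t}{4}|\re\Phi_{21}|})$ bound on $r_{1,a},r_{2,a}$ coming from the decomposition lemma, and the sign conditions read off the signature tables) is precisely that bookkeeping. One small slip worth noting: the conjugation identity is $\overline{\Phi_{21}(\zeta,k)}=\Phi_{21}(\zeta,\bar k)$, not $-\Phi_{21}(\zeta,\bar k)$ (as one sees directly from $\Phi_{21}(\zeta,k)=\tfrac{k-k^{-1}}{2}\zeta-\tfrac{k^{2}-k^{-2}}{4}$), but since it only enters your argument inside $|\re(\cdot)|$ this is harmless.
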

The jumps $v_j^{(2)}$ of $n^{(2)}$ are given for $j = 4,6,7,9$ by (\ref{Vv2def}) and by
\begin{align}
& v_j^{(2)} := v_j^{(1)}, \; j=1,2,3,5,8,1_{s},2_{s},3_{s},4_{s}, \qquad
v_{5_s}^{(2)} :=  v_{4u}^{(2)} v_{7u}^{(2)},\qquad v_{6_s}^{(2)} :=  v_{9d}^{(2)} v_{6d}^{(2)}. \label{II jumps vj diagonal p2p}
\end{align}


On the subcontours of $\Gamma^{(2)}\cap \mathsf{S}$ that are unlabeled in Figure \ref{IIbis Gammap2p}, the matrix $v^{(2)}$ is close to $I$ as $t \to \infty$, so we do not write it down. On $\Gamma^{(2)}\setminus \mathsf{S}$, $v^{(2)}$ can be computed using \eqref{vjsymm}. The proof of the next lemma is similar to \cite[Proof of Lemma 8.5]{CLmain}, so we omit it.

\begin{lemma}\label{vsmallnearilemmaV}
The $L^\infty$-norm of $v^{(2)} - I$ on $\Gamma_{4_r'}^{(2)} \cup \Gamma_{1_r''}^{(2)}$ is $O(t^{-N})$ as $t \to \infty$ uniformly for $\zeta \in \mathcal{I}$.
\end{lemma}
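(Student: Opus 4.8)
The plan is to follow the argument of \cite[Proof of Lemma 8.5]{CLmain}. First, since the matrix $G^{(2)}$ of \eqref{IIbis Gp2pdef} equals $I$ in a neighbourhood of each of $\Gamma_{4_r'}^{(2)}$ and $\Gamma_{1_r''}^{(2)}$ — neither of these two contours bounds any of the four lens regions in \eqref{IIbis Gp2pdef} — we have $v^{(2)} = v^{(1)}$ on them, so by \eqref{v1sexpression} and the formulas preceding it it suffices to estimate
\begin{align*}
v_{4_r'}^{(1)} - I = v_1^{(1)} v_{4'} (v_4^{(1)})^{-1} - I \ \text{ on } \Gamma_{4_r'}^{(2)}, \qquad v_{1_r''}^{(1)} - I = (v_3^{(1)})^{-1} v_{1''} v_6^{(1)} - I \ \text{ on } \Gamma_{1_r''}^{(2)},
\end{align*}
with $v_{1''}, v_{4'}$ as in \eqref{vdef} and $v_1^{(1)}, v_3^{(1)}, v_4^{(1)}, v_6^{(1)}$ as in \eqref{vp1p 123}--\eqref{vp1p 789}.

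Second, I would multiply out these two products and record that: (i) the true coefficient $r_2(\tfrac1k)$ of $v_{4'}$ (resp.\ $r_1(\tfrac1k)$ of $v_{1''}$) is cancelled by the corresponding analytic approximant in $v_1^{(1)}$ (resp.\ $v_3^{(1)}$), leaving only the remainder $r_{2,r}(\tfrac1k)$ (resp.\ $r_{1,r}(\tfrac1k)$), which by Lemma \ref{IIbis decompositionlemma} is $O(t^{-N})$ in $L^\infty$ on the relevant arcs uniformly for $\zeta\in\mathcal I$; and (ii) every remaining off-diagonal entry of $v^{(2)}-I$ is a product of a coefficient — one of $r_j$, $\tilde r$, or one of the analytic approximants $r_{j,a}, \hat r_{j,a}$ evaluated at one of $\tfrac1k, \omega k, \omega^2 k, \tfrac1{\omega k}, \tfrac1{\omega^2 k}$ — times a single exponential from $\{e^{\theta_{21}}, e^{-\theta_{31}}, e^{-\theta_{32}}\}$ on $\Gamma_{1_r''}^{(2)}$, or from $\{e^{-\theta_{21}}, e^{\theta_{31}}, e^{\theta_{32}}\}$ on $\Gamma_{4_r'}^{(2)}$. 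Using the identities $\Phi_{31} = \Phi_{32}+\Phi_{21}$, $\Phi_{31}(\zeta,k)=-\Phi_{21}(\zeta,\omega^2 k)$, $\Phi_{32}(\zeta,k)=\Phi_{21}(\zeta,\omega k)$, and $\Phi_{21}(\zeta,1/k)=-\Phi_{21}(\zeta,k)$ to transport the bounds $|r_{j,a}(\cdot)|, |\hat r_{j,a}(\cdot)| \le Ce^{\frac t4|\re\Phi_{21}|}$ of Lemma \ref{IIbis decompositionlemma}(b), each such product is bounded by $Ce^{\frac t4|\re\Phi_{i'j'}(\zeta,k)|}|e^{\pm\theta_{i'j'}(x,t,k)}|$ for the appropriate pair $(i',j')$; the signature tables of $\Phi_{21},\Phi_{31},\Phi_{32}$ (Figure \ref{IIbis fig: Re Phi 21 31 and 32 for zeta=0.7}) show that on these two contours every such exponent is of the favourable sign, whence each product is $O(e^{-\frac34 t|\re\Phi_{i'j'}|})$. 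Since $\Gamma_{4_r'}^{(2)}$ and $\Gamma_{1_r''}^{(2)}$ stay a fixed positive distance from the saddle points $\{\omega^j k_1, \omega^j k_2\}_{j=0}^2$ as $\zeta$ ranges over the compact set $\mathcal I$ (by \eqref{def of kj}), this gives $O(e^{-ct})$ away from the point $i$; on the remaining part — a fixed neighbourhood of $i$ — one uses in addition the vanishing of $r_1$ and $r_2$ at $i$ (Assumption $(iii)$, together with \eqref{r1r2 relation with kbar symmetry}) and the Taylor matching in Lemma \ref{IIbis decompositionlemma}(b) (note $i\in\hat{\mathcal R}$) to produce the extra powers of $|k-i|$ that, via the elementary estimate $\sup_{\rho\ge0}\rho^m e^{-ct\rho}=O(t^{-m})$, bring the bound down to $O(t^{-N})$. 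Combined with the uniform boundedness of $G^{(1)}$ and $G^{(1),-1}$ (Lemma \ref{lemma:G1p1p}), this yields $\|v^{(2)}-I\|_{L^\infty(\Gamma_{4_r'}^{(2)}\cup\Gamma_{1_r''}^{(2)})}=O(t^{-N})$.

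The step I expect to require the most care is this exponent bookkeeping — keeping track of the point at which each analytic approximant is evaluated, pushing its growth bound through the $\Phi$-symmetries, and verifying that the net exponent is nonpositive with the stated uniformity over $\zeta\in\mathcal I$ — together with the delicate analysis in the neighbourhood of $i$, where one must exploit both the decay of the surviving exponentials and the order of vanishing of the coefficients dictated by Assumption $(iii)$. This is exactly the routine but lengthy computation carried out in \cite[Proof of Lemma 8.5]{CLmain}, which is why it is omitted here.
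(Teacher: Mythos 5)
The overall architecture of your sketch is plausible, and your first step — that $G^{(2)}$ is the identity on a neighbourhood of $\Gamma_{4_r'}^{(2)} \cup \Gamma_{1_r''}^{(2)}$, so one may pass from $v^{(2)}$ to $v^{(1)}$ — is correct: the four lens regions in \eqref{IIbis Gp2pdef} lie strictly between the lens contours $\Gamma_{4}^{(2)},\Gamma_6^{(2)},\Gamma_7^{(2)},\Gamma_9^{(2)}$ and the concentric arcs $\Gamma_{5_s}^{(2)},\Gamma_{6_s}^{(2)}$, and touch the positive imaginary axis only at corners. The reduction to $v_{4_r'}^{(1)} = v_1^{(1)} v_{4'}(v_4^{(1)})^{-1}$ and $v_{1_r''}^{(1)} = (v_3^{(1)})^{-1} v_{1''} v_6^{(1)}$ is also the right starting point, and the splitting of each contour into a part bounded away from $i$ (where all the relevant exponentials are uniformly exponentially decaying) and a neighbourhood of $i$ is sound.

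The step I believe contains a genuine gap is the estimate near $k=i$. First, a small inaccuracy: you list the arguments at which $r_{j,a},\hat r_{j,a}$ appear as $\tfrac1k, \omega k, \omega^2 k, \tfrac1{\omega k}, \tfrac1{\omega^2 k}$, but $\hat r_{1,a}(k)$ \emph{at $k$ itself} occurs in $v_4^{(1)}$ (and $\hat r_{2,a}(k)$ in $v_6^{(1)}$), and such a term survives in the product; for instance a direct multiplication gives $\big(v_1^{(1)}v_{4'}(v_4^{(1)})^{-1}\big)_{12} = -\big(\hat r_{1,a}(k)+r_{2}(\tfrac1k)-r_{2,a}(\tfrac1k)\big)e^{-\theta_{21}}$. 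More importantly, you assert that ``the vanishing of $r_1$ and $r_2$ at $i$ \dots and the Taylor matching in Lemma \ref{IIbis decompositionlemma}(b) \dots produce the extra powers of $|k-i|$ that \dots bring the bound down to $O(t^{-N})$.'' But Assumption $(\ref{nounstablemodesassumption})$ only forces $r_1(i)=0$ (hence $\hat r_1(i)=0$), i.e.\ a single order of vanishing. Lemma \ref{IIbis decompositionlemma}(b) controls only the \emph{difference} between $\hat r_{1,a}$ and its Taylor polynomial $\sum_{j=0}^N \frac{\hat r_1^{(j)}(i)}{j!}(k-i)^j$, not the Taylor polynomial itself; the coefficients $\hat r_1^{(j)}(i)$ for $j\ge 1$ are not claimed to vanish, and nothing in Assumption $(\ref{nounstablemodesassumption})$ forces them to (it constrains $r_1$ on $[0,i]$, whereas these are tangential derivatives along $\partial\D$). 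Consequently the contribution $|\hat r_{1,a}(k)e^{-\theta_{21}}|\lesssim |k-i|\,e^{-ct|k-i|}+|k-i|^{N+1}e^{-\frac{3c}{4}t|k-i|}$ only gives $O(t^{-1})$, not $O(t^{-N})$. The same concern applies to the other surviving entries: the algebraic identity \eqref{r1r2 relation on the unit circle} does make their coefficients vanish \emph{at} $k=i$, but again only to first order. To close the gap one must either exhibit a more subtle cancellation in the matrix products making the coefficients vanish to order $\ge N$ in $|k-i|$, or establish higher-order vanishing of $r_1$ (hence $\hat r_1$, $r_2$) at $i$, neither of which your sketch supplies.
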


As in \cite[Lemma 6.3]{CLsectorIV}, we can (and do) choose the analytic approximations of Lemma \ref{IIbis decompositionlemma} so that the $L^\infty$-norm of $v_j^{(2)} - I$ on $\Gamma_j^{(2)}$, $j = 1_s, \dots, 6_s$, is $O(t^{-N})$ as $t \to \infty$ uniformly for $\zeta \in \mathcal{I}$. Thus $v^{(2)}-I=O(t^{-N})$ as $t \to \infty$, uniformly for $k \in \Gamma^{(2)} \setminus \big(\cup_{j=0}^{2}\cup_{\ell=1}^{2} D_{\epsilon}(\omega^{j} k_{\ell}) \cup \partial \D \big)$.

\section{The $n^{(2)}\to n^{(3)}$ transformation}\label{n2ton3sec}
The goal of the transformation $n^{(2)}\to n^{(3)}$ is to make $v^{(3)}-I$ uniformly small as $t \to \infty$ for $k \in \partial \D$. 

For each $\zeta \in \mathcal{I}$, we seek an analytic function $\delta(\zeta, \cdot): \mathbb{C}\setminus \Gamma_5^{(2)} \to \mathbb{C}$ obeying the jump condition
\begin{align*}
& \delta_{+}(\zeta, k) = \delta_{-}(\zeta, k)(1 + r_{1}(k)r_{2}(k)), \qquad k \in \Gamma_{5}^{(2)},
\end{align*}
and the normalization condition $\delta(\zeta, k) = 1 + O(k^{-1})$ as $k \to \infty$. As mentioned previously (see the text below \eqref{def of rhat}), $1 + r_{1}(k)r_{2}(k)>1$ for all $k \in \Gamma_{5}^{(2)}$. Hence, by taking the logarithm and using Plemelj's formula, we find that 
\begin{equation} \label{delta1def}
\delta(\zeta, k) = \exp \left\{ \frac{-1}{2\pi i} \int_{i}^{k_{1}} \frac{\ln(1 + r_1(s)r_{2}(s))}{s - k} ds \right\}, \qquad k \in \C \setminus \Gamma_{5}^{(2)},
\end{equation}
where the principal branch is taken for the logarithm, and where the path of integration follows the unit circle in the counterclockwise direction. 

\begin{lemma}\label{IIbis deltalemma}
The function $\delta(\zeta, k)$ has the following properties:
\begin{enumerate}[$(a)$]
\item\label{deltaparta}
 $\delta(\zeta,k)$ can be written as
\begin{align}
& \delta(\zeta,k) = e^{-i \nu \ln_{k_{1}}(k-k_{1})}e^{-\chi(\zeta,k)}, \label{delta expression in terms of log and chi} \\
& \chi(\zeta,k) = \frac{-1}{2\pi i} \int_{i}^{k_{1}}  \ln_{s}(k-s) d\ln(1+r_1(s)r_{2}(s)), \label{def of chi}
\end{align}
where $\nu \leq 0$ is defined in \eqref{nudef} and the path of integration follows the unit circle in the counterclockwise direction. 
For $s \in \Gamma_{5}^{(2)}$, $k \mapsto \ln_{s}(k-s)=\ln |k-s|+i \arg_{s}(k-s)$ has a cut along $\{e^{i \theta} \,|\, \theta \in [\frac{\pi}{2},\arg s], \; \arg s \in [\frac{\pi}{2},\frac{2\pi}{3}]\}\cup(i,i\infty)$ and $\arg_{s}(1)=2\pi$.

\item\label{deltapartb} 
For each $\zeta \in \mathcal{I}$, $\delta(\zeta, k)$ and $\delta(\zeta, k)^{-1}$ are analytic functions of $k \in \mathbb{C}\setminus \Gamma_{5}^{(2)}$. Moreover,
\begin{align*}
& \sup_{\zeta \in \mathcal{I}} \sup_{\substack{k \in \C \setminus \Gamma_{5}^{(2)}}} |\delta(\zeta,k)^{\pm 1}| < \infty.
\end{align*}

\item\label{deltapartc} 
For $k \in \partial \D\setminus \Gamma_{5}^{(2)}$, 
\begin{align}\label{re chi on part of the unit circle}
\re \chi(\zeta,k) = \frac{\pi + \arg_{i} k}{2}\nu + \frac{1}{2\pi} \int_{\Gamma_{5}^{(2)}} \frac{\arg s}{2} d\ln(1+r_1(s)r_{2}(s)),
\end{align}
where $\arg_i k \in (\frac{\pi}{2},\frac{5\pi}{2})$ and $\arg s \in (-\pi,\pi)$. In particular, for $k \in \partial \D\setminus \Gamma_{5}^{(2)}$, $|\delta(\zeta,k)|$ is constant and given by
\begin{align}\label{|delta| is constant on a part of the unit circle}
|\delta(\zeta,k)| = \exp \bigg( \nu \frac{\arg k_{1}}{2} - \frac{1}{2\pi} \int_{\Gamma_{5}^{(2)}} \frac{\arg s}{2} d\ln(1+r_1(s)r_{2}(s)) \bigg),
\end{align}
where $\arg k_{1} \in [\frac{\pi}{2},\frac{2\pi}{3}]$.

\item\label{deltapartd} 
As $k \to k_{1}$ along a path which is nontangential to $\Gamma_{5}^{(2)}$, we have
\begin{align}
& |\chi(\zeta,k)-\chi(\zeta,k_{1})| \leq C |k-k_{1}| (1+|\ln|k- k_{1}||), \label{II asymp chi at k1}
\end{align}
where $C$ is independent of $\zeta \in \mathcal{I}$.  
\end{enumerate}
\end{lemma}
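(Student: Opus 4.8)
The four parts are treated in order, with part $(a)$ supplying the representation on which $(b)$--$(d)$ rest. For part $(a)$, the plan is to integrate by parts in \eqref{delta1def}, using $\frac{1}{s-k}=\partial_s\ln_s(k-s)$ for a suitable branch of the logarithm:
\begin{align*}
\frac{-1}{2\pi i}\int_{i}^{k_1}\frac{\ln(1+r_1r_2)(s)}{s-k}\,ds
=\frac{-1}{2\pi i}\Big[\ln(1+r_1r_2)(s)\,\ln_s(k-s)\Big]_{s=i}^{s=k_1}-\chi(\zeta,k),
\end{align*}
with $\chi$ as in \eqref{def of chi}. By Assumption $(iii)$ we have $r_1(i)=0$, so the lower endpoint contributes $\ln 1=0$; at the upper endpoint $\ln(1+r_1(k_1)r_2(k_1))=-2\pi\nu$ by \eqref{nudef}, and since $\tfrac{-1}{2\pi i}(-2\pi\nu)=-i\nu$ the boundary term equals $-i\nu\,\ln_{k_1}(k-k_1)$, yielding \eqref{delta expression in terms of log and chi}. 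The delicate point is the placement of the branch cut of $k\mapsto\ln_s(k-s)$: it must run from the branch point $s$ backward along $\partial\D$ to $i$ and then up the segment $(i,i\infty)$, with the normalization $\arg_s(1)=2\pi$. One then checks that with these choices $e^{-i\nu\ln_{k_1}(k-k_1)}e^{-\chi(\zeta,k)}$ has exactly the jump $1+r_1r_2$ across $\Gamma_5^{(2)}$, is analytic off $\Gamma_5^{(2)}$, and equals $1+O(k^{-1})$ at infinity, which matches the Cauchy-integral definition of $\delta$ and identifies the branch structure uniquely.

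For part $(b)$, analyticity of $\delta$ off $\Gamma_5^{(2)}$ is immediate from \eqref{delta1def} since the density $\ln(1+r_1r_2)$ is smooth on the arc and vanishes at $i$, and $\delta^{-1}=\exp\{+\cdots\}$ is analytic for the same reason. For the uniform bound I would use part $(a)$: $|e^{-i\nu\ln_{k_1}(k-k_1)}|=e^{\nu\arg_{k_1}(k-k_1)}$ is bounded because $\nu=\nu(k_1(\zeta))$ is bounded for $\zeta$ in the compact set $\mathcal I$ (as $r_1,r_2$ are continuous) and $\arg_{k_1}(k-k_1)$ ranges over a bounded interval in the chosen branch, while $e^{-\chi(\zeta,k)}$ is bounded because $\chi$ is continuous on $\C$ — including at $k_1$, by part $(d)$ — and $\chi(\zeta,k)\to 0$ as $k\to\infty$. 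Uniformity in $\zeta$ follows from the continuous dependence of $k_1=k_1(\zeta)$ on $\zeta$ and the fact that for $\zeta\in\mathcal I$ the integration arc from $i$ to $k_1$ stays in a fixed compact subarc of $\partial\D$ on which $r_1,r_2$ are smooth.

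For part $(c)$, on $\partial\D\setminus\Gamma_5^{(2)}$ write $k=e^{i\theta}$, $s=e^{i\phi}$ and use the elementary identity $e^{i\theta}-e^{i\phi}=2i\sin\!\big(\tfrac{\theta-\phi}{2}\big)e^{i(\theta+\phi)/2}$; together with the normalization of $\ln_s$ this expresses $\arg_s(k-s)$ as an affine function of $\theta,\phi$ with $\theta$-coefficient $\tfrac12$. Since $1+r_1(s)r_2(s)=1+\tilde r(s)|r_1(s)|^2>1$ is real on the arc (by \eqref{r1r2 relation with kbar symmetry} and $\tilde r>0$), the measure $d\ln(1+r_1r_2)$ is real, so $\re\chi$ picks up only the imaginary part of $\ln_s(k-s)$ through the prefactor $\tfrac{-1}{2\pi i}$; separating the $\theta$-dependent piece and using $\int_i^{k_1}d\ln(1+r_1r_2)=\ln(1+r_1(k_1)r_2(k_1))-0=-2\pi\nu$ produces \eqref{re chi on part of the unit circle}. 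Since $|\delta|=e^{\nu\arg_{k_1}(k-k_1)-\re\chi}$ and $\arg_{k_1}(k-k_1)$ is likewise affine in $\theta$ with $\theta$-coefficient $\tfrac12$, the $\theta$-dependence cancels against the $\tfrac{\arg_i k}{2}\nu$ term in $\re\chi$, so $|\delta|$ is constant on $\partial\D\setminus\Gamma_5^{(2)}$; evaluating the constant gives \eqref{|delta| is constant on a part of the unit circle}.

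For part $(d)$, write $\chi(\zeta,k)-\chi(\zeta,k_1)=\frac{-1}{2\pi i}\int_i^{k_1}\big(\ln_s(k-s)-\ln_s(k_1-s)\big)\,d\ln(1+r_1r_2)(s)$, noting $d\ln(1+r_1r_2)$ has bounded density on the fixed compact arc uniformly in $\zeta\in\mathcal I$, and split the arc at distance $2|k-k_1|$ from $k_1$. On the far part, $\big|\ln\tfrac{k-s}{k_1-s}\big|=\big|\ln(1+\tfrac{k-k_1}{k_1-s})\big|\le C\tfrac{|k-k_1|}{|s-k_1|}$, and $\int_{2|k-k_1|}^{C}\tfrac{d\rho}{\rho}=O(1+|\ln|k-k_1||)$, giving $O(|k-k_1|(1+|\ln|k-k_1||))$; on the near part, bounding $|\ln_s(k-s)|$ and $|\ln_s(k_1-s)|$ each by $|\ln(\mathrm{dist})|+C$ and using $\int_0^{C|k-k_1|}(|\ln\rho|+1)\,d\rho=O(|k-k_1|(1+|\ln|k-k_1||))$ gives the same bound. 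Adding yields \eqref{II asymp chi at k1} with a constant depending only on the $C^1$-norms of $r_1,r_2$ on the compact arc. I expect the only genuinely delicate step to be the branch-cut bookkeeping in part $(a)$ — pinning down the cut of $\ln_s(k-s)$ and the normalization $\arg_s(1)=2\pi$ so that the factorization reproduces the Cauchy-integral $\delta$ analytically off $\Gamma_5^{(2)}$; parts $(b)$--$(d)$ are then standard Cauchy-integral estimates of the kind already used in \cite{DZ1993}.
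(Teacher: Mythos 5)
Your proposal follows essentially the same route as the paper's (quite terse) proof: integration by parts from \eqref{delta1def} with the boundary terms evaluated at $s=i$ (vanishing by Assumption~$(iii)$) and $s=k_1$ (giving $-i\nu\ln_{k_1}(k-k_1)$) for part~$(a)$; the modulus formula $|\delta|=e^{\nu\arg_{k_1}(k-k_1)-\re\chi}$ combined with $\im\nu=0$ for parts~$(b)$ and~$(c)$; and a standard split of the arc at scale $|k-k_1|$ for the log-potential estimate in part~$(d)$. The structure is right and parts~$(a)$, $(c)$, $(d)$ are fine.

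Two of your justifications in part~$(b)$ are off, though the conclusion still stands. First, $\chi$ is \emph{not} continuous on all of $\C$: it inherits jumps across $\Gamma_5^{(2)}\cup(i,i\infty)$ from the $s$-dependent cuts of $\ln_s(\cdot-s)$; what you actually need (and what is true) is that $\chi$ is bounded. Second, $\chi(\zeta,k)\not\to 0$ as $k\to\infty$ when $\nu\neq 0$; since $\delta=1+O(k^{-1})$ forces $\chi = -i\nu\ln_{k_1}(k-k_1)+O(k^{-1})$, the function $\chi$ grows logarithmically, and it is $\re\chi$ (equal to $-\tfrac{1}{2\pi}\int\arg_s(k-s)\,d\ln(1+r_1r_2)$, with $\arg_s(k-s)$ bounded) that remains bounded. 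Replacing ``$\chi$ is continuous on $\C$ and $\to0$'' by ``$\re\chi$ is uniformly bounded because the arc is a fixed compact set on which $d\ln(1+r_1r_2)$ has bounded density and $\arg_s(k-s)$ is bounded'' repairs the argument without changing the approach; the paper's hint ``follows from \eqref{delta expression in terms of log and chi} and $\im\nu=0$'' is pointing at exactly this.
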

\begin{proof}
Part $(\ref{deltaparta})$ follows from \eqref{delta1def} and an integration by parts. Part $(\ref{deltapartb})$ follows from \eqref{delta expression in terms of log and chi} and the fact that $\im \nu = 0$. For part $(\ref{deltapartc})$, we note from \eqref{def of chi} that, for $k \in \partial \D\setminus \Gamma_{5}^{(2)}$, 
\begin{align*}
\re \chi(\zeta,k) & = \frac{1}{2\pi} \int_{\Gamma_{5}^{(2)}}  \arg_{s}(k-s) d\ln(1+r_1(s)r_{2}(s)) \\
& = \frac{1}{2\pi} \int_{\Gamma_{5}^{(2)}} \frac{\pi + \arg_{i} k + \arg s}{2} d\ln(1+r_1(s)r_{2}(s)),
\end{align*}
and \eqref{re chi on part of the unit circle} now follows from the definition \eqref{nudef} of $\nu$ and the assumption that $r_1 = 0$ on $[0, i]$. By \eqref{delta expression in terms of log and chi}, 
\begin{align*}
|\delta(\zeta,k)| = e^{\nu \arg_{k_{1}}(k-k_{1})}e^{-\re \chi(\zeta,k)}.
\end{align*}
After substituting \eqref{re chi on part of the unit circle} and $\arg_{k_{1}}(k-k_{1}) = \frac{\pi + \arg_{i} k + \arg k_{1}}{2}$ into the above equation, we find \eqref{|delta| is constant on a part of the unit circle}. Part $(\ref{deltapartd})$ follows from \eqref{def of chi} and standard estimates.
\end{proof}
For $k \in \mathbb{C}\setminus \partial \D$, we define
\begin{align*}
\Delta_{33}(\zeta,k) = \frac{\delta(\zeta,\omega k)}{\delta(\zeta,\omega^{2} k)}\frac{\delta(\zeta,\frac{1}{\omega^{2} k})}{\delta(\zeta,\frac{1}{\omega k})}\mathcal{P}(\zeta,k), \;\; \Delta_{11}(\zeta,k)=\Delta_{33}(\zeta,\omega k), \;\; \Delta_{22}(\zeta,k)=\Delta_{33}(\zeta,\omega^{2} k),
\end{align*}
where we recall that $\mathcal{P}$ is defined in \eqref{def of mathcalP}. Note that $\mathcal{P}$ and $\Delta_{33}$ admit the symmetries 
\begin{align}\label{mathcalPDelta33symm}
\mathcal{P}(\zeta, k) = \mathcal{P}(\zeta, \tfrac{1}{k}) = \overline{\mathcal{P}(\zeta, \bar{k})}^{-1}, \qquad \Delta_{33}(\zeta,k)  = \Delta_{33}(\zeta,\tfrac{1}{k}) = \overline{\Delta_{33}(\zeta,\bar{k})}^{-1}.
\end{align}
The function $\Delta$ defined by
\begin{align}\label{Deltadef}
\Delta(\zeta,k) = \begin{pmatrix}
\Delta_{11}(\zeta,k) & 0 & 0 \\
0 & \Delta_{22}(\zeta,k) & 0 \\
0 & 0 & \Delta_{33}(\zeta,k)
\end{pmatrix}, \quad \zeta \in \mathcal{I}, \; k \in \mathbb{C}\setminus \partial \D,
\end{align}
obeys the symmetries
\begin{align}\label{Deltasymm}
\Delta(\zeta,k) = \mathcal{A}\Delta(\zeta,\omega k)\mathcal{A}^{-1} = \mathcal{B}\Delta(\zeta,\tfrac{1}{k})\mathcal{B}
= \mathcal{B}\overline{\Delta(\zeta,\bar{k})}^{-1}\mathcal{B},
\end{align}
and satisfies
\begin{subequations}\label{IIbis jumps Delta11}
\begin{align}
& \Delta_{+}^{-1}(\zeta,k) = \Delta_{-}^{-1}(\zeta,k) \begin{pmatrix}
1+r_{1}(k)r_{2}(k) & 0 & 0 \\
0 & \frac{1}{1+r_{1}(k)r_{2}(k)} & 0 \\
0 & 0 & 1
\end{pmatrix}, & & k \in \Gamma_{5}^{(2)}, \\
& \Delta_{+}^{-1}(\zeta,k) = \Delta_{-}^{-1}(\zeta,k), & & k \in \Gamma_{2}^{(2)}\cup \Gamma_{8}^{(2)}.
\end{align}
\end{subequations}
The next lemma follows from \eqref{Deltadef} and Lemma \ref{IIbis deltalemma}.

\begin{lemma}\label{Deltalemma}
For any $\epsilon>0$, $\Delta(\zeta,k)$ and $\Delta(\zeta,k)^{-1}$ are uniformly bounded for $k \in \mathbb{C}\setminus (\partial \D \cup \cup_{k_{0}\in \hat{\mathsf{Z}}}D_{\epsilon}(k_0))$ and $\zeta \in \mathcal{I}$. Furthermore, $\Delta(\zeta,k)=I+O(k^{-1})$ as $k \to \infty$. 
\end{lemma}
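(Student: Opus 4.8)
The plan is to reduce the statement to facts already established about the scalar function $\delta(\zeta,\cdot)$ and the Blaschke-type product $\mathcal{P}(\zeta,\cdot)$. Since $\Delta(\zeta,k)$ is diagonal, it suffices to bound $\Delta_{jj}(\zeta,k)$ and $\Delta_{jj}(\zeta,k)^{-1}$ for $j=1,2,3$. Because $\Delta_{11}(\zeta,k)=\Delta_{33}(\zeta,\omega k)$ and $\Delta_{22}(\zeta,k)=\Delta_{33}(\zeta,\omega^{2}k)$, and because the set $\hat{\mathsf{Z}}$ defined in \eqref{def of hatZ} is invariant under $k\mapsto\omega k$, it is enough to treat $\Delta_{33}$; the corresponding bounds for $\Delta_{11}$ and $\Delta_{22}$ then follow on $\C\setminus(\partial\D\cup\bigcup_{k_0\in\hat{\mathsf{Z}}}D_\epsilon(k_0))$ without enlarging the exceptional set. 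One should also observe that each of the four $\delta$-factors appearing in $\Delta_{33}$ has its branch cut contained in $\partial\D$, so that $\Delta$ is genuinely well-defined and analytic on $\C\setminus(\partial\D\cup\bigcup_{k_0\in\hat{\mathsf{Z}}}D_\epsilon(k_0))$, consistent with \eqref{Deltadef}.

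First I would control the $\delta$-factors in $\Delta_{33}(\zeta,k)=\frac{\delta(\zeta,\omega k)}{\delta(\zeta,\omega^{2}k)}\frac{\delta(\zeta,1/(\omega^{2}k))}{\delta(\zeta,1/(\omega k))}\mathcal{P}(\zeta,k)$. Since $\Gamma_5^{(2)}$ is an arc of the unit circle, each of $\omega k$, $\omega^{2}k$, $1/(\omega k)$, $1/(\omega^{2}k)$ lies in $\C\setminus\Gamma_5^{(2)}$ whenever $k\in\C\setminus\partial\D$; hence Lemma \ref{IIbis deltalemma}$(\ref{deltapartb})$ implies that all four $\delta$-factors and their reciprocals are bounded above and below uniformly for $\zeta\in\mathcal{I}$ and $k\in\C\setminus\partial\D$, so that the $\delta$-quotient in $\Delta_{33}$ is uniformly bounded away from $0$ and $\infty$.

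Next I would treat $\mathcal{P}(\zeta,k)$. By \eqref{def of mathcalP} it is a finite product of Möbius factors $(k-a)/(k-b)$ all of whose zeros $a$ and poles $b$ belong to $\hat{\mathsf{Z}}$; each such factor is bounded above and below on $\C\setminus(D_\epsilon(a)\cup D_\epsilon(b))$ by a constant depending only on $\epsilon$ and on the finite set $\hat{\mathsf{Z}}$, and tends to $1$ as $k\to\infty$. The set $\mathsf{Z}$ and the collection of potential factors do not depend on $\zeta$; only which factors are active — determined by the signs of $\re\Phi_{31}(\zeta,k_0)$ and $\re\Phi_{21}(\zeta,k_0)$ — depends on $\zeta$, and this selection takes only finitely many values as $\zeta$ ranges over the compact set $\mathcal{I}$. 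Hence $\mathcal{P}(\zeta,k)^{\pm1}$ is uniformly bounded on $\C\setminus(\partial\D\cup\bigcup_{k_0\in\hat{\mathsf{Z}}}D_\epsilon(k_0))$ for $\zeta\in\mathcal{I}$. Combining this with the $\delta$-estimate yields the asserted uniform bound on $\Delta_{33}^{\pm1}$, and hence on $\Delta^{\pm1}$.

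For the behaviour as $k\to\infty$, I would expand the Cauchy integral in \eqref{delta1def} in powers of $k^{-1}$, which gives $\delta(\zeta,k)=\exp\{O(k^{-1})\}=1+O(k^{-1})$ uniformly for $\zeta\in\mathcal{I}$ (the integrand is a fixed bounded function on the arc from $i$ to $k_1$, and $k_1$ stays in a compact subarc). Since $\delta(\zeta,\cdot)$ is analytic and non-vanishing at $0$, also $\delta(\zeta,1/(\omega^{j}k))=\delta(\zeta,0)+O(k^{-1})$, so the quotient $\delta(\zeta,1/(\omega^{2}k))/\delta(\zeta,1/(\omega k))=1+O(k^{-1})$, while $\delta(\zeta,\omega k)/\delta(\zeta,\omega^{2}k)=1+O(k^{-1})$. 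Each Möbius factor of $\mathcal{P}$ is $1+O(k^{-1})$, so $\mathcal{P}(\zeta,k)=1+O(k^{-1})$, whence $\Delta_{33}(\zeta,k)=1+O(k^{-1})$ and therefore $\Delta(\zeta,k)=I+O(k^{-1})$. I do not expect a genuine obstacle here; the only points demanding care are verifying that every zero and pole occurring in \eqref{def of mathcalP} indeed lies in $\hat{\mathsf{Z}}$ and that $\hat{\mathsf{Z}}$ is closed under multiplication by $\omega$ (so that passing from $\Delta_{33}$ to $\Delta_{11},\Delta_{22}$ costs nothing), together with the uniformity in $\zeta$, which follows from there being only finitely many configurations of active Blaschke factors over the compact set $\mathcal{I}$.
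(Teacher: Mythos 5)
Your proposal is correct and fills in, with the right level of care, the details the paper compresses into a one-line citation of \eqref{Deltadef} and Lemma~\ref{IIbis deltalemma}: the $\mathcal{A}$-symmetry reduction to $\Delta_{33}$, the uniform two-sided bound on the $\delta$-factors from Lemma~\ref{IIbis deltalemma}$(\ref{deltapartb})$, the elementary bound on the finite product $\mathcal{P}$ away from the $\epsilon$-disks around $\hat{\mathsf{Z}}$, and the $k\to\infty$ expansion. This is the same route the paper takes.
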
 

 We define $n^{(3)}$ by
\begin{align}\label{IIbis def of mp3p}
n^{(3)}(x,t,k) = n^{(2)}(x,t,k)\Delta(\zeta,k), \qquad  k \in \mathbb{C}\setminus \Gamma^{(3)},
\end{align}
where $\Gamma^{(3)}=\Gamma^{(2)}$. The boundary values of $n^{(3)}$ obey $n_{+}^{(3)}=n_{-}^{(3)}v^{(3)}$ on $\Gamma^{(3)}$, where $v^{(3)}=\Delta_{-}^{-1}v^{(2)}\Delta_{+}$. Let $\mathcal{X}^{\epsilon}:= \cup_{j=1}^{4}\mathcal{X}_{j}^{\epsilon}$ be a small cross centered at $k_{1}$, where
\begin{align*}
&\mathcal{X}_{1}^{\epsilon} = \Gamma_{4}^{(3)}\cap D_{\epsilon}(k_{1}), & & \mathcal{X}_{2}^{\epsilon} = \Gamma_{7}^{(3)}\cap D_{\epsilon}(k_{1}), & & \mathcal{X}_{3}^{\epsilon} = \Gamma_{9}^{(3)}\cap D_{\epsilon}(k_{1}), & & \mathcal{X}_{4}^{\epsilon} = \Gamma_{6}^{(3)}\cap D_{\epsilon}(k_{1}),
\end{align*}
are oriented outwards from $k_1$. Let $\hat{\mathcal{X}}^\epsilon = \mathcal{X}^\epsilon \cup \omega \mathcal{X}^\epsilon \cup \omega^2 \mathcal{X}^\epsilon \cup (\mathcal{X}^\epsilon)^{-1} \cup (\omega \mathcal{X}^\epsilon)^{-1} \cup (\omega^2 \mathcal{X}^\epsilon)^{-1}$ be the union of six small crosses centered at $\{\omega^{j}k_{1},\omega^{j}k_{2}\}_{j=0}^{2}$.

\begin{lemma}\label{II v3lemma}
$v^{(3)}$ converges to the identity matrix $I$ as $t \to \infty$ uniformly for $\zeta \in \mathcal{I}$ and $k \in \Gamma^{(3)}\setminus \hat{\mathcal{X}}^\epsilon$. More precisely, for $\zeta \in \mathcal{I}$,
\begin{align}\label{II v3estimatesa}
& \|v^{(3)} - I\|_{(L^1 \cap L^\infty)(\Gamma^{(3)}\setminus \hat{\mathcal{X}}^\epsilon)} \leq Ct^{-1}.
\end{align}
\end{lemma}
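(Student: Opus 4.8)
The plan is to track how the jump matrix changes under the conjugation $v^{(3)} = \Delta_-^{-1} v^{(2)} \Delta_+$ on each piece of $\Gamma^{(3)} = \Gamma^{(2)}$, using the boundedness of $\Delta^{\pm 1}$ away from $\partial\D$ (Lemma \ref{Deltalemma}) together with the decay estimates already established for $v^{(2)} - I$. Most of the work is bookkeeping: the key conceptual point is that the transformation was \emph{designed} so that the diagonal part of $v^{(2)}$ on $\Gamma_5^{(2)} \subset \partial\D$ (which is $\diag(1 + r_1 r_2, (1+r_1 r_2)^{-1}, 1)$ plus an $O(t^{-N})$ remainder $v_{5,r}^{(1)}$) is exactly cancelled by $\Delta$ via \eqref{IIbis jumps Delta11}.

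\textbf{First}, I would split $\Gamma^{(3)} \setminus \hat{\mathcal{X}}^\epsilon$ into three types of components and treat each:

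\begin{enumerate}[$(1)$]
\item \emph{The arcs on the unit circle, $\Gamma_2^{(2)} \cup \Gamma_5^{(2)} \cup \Gamma_8^{(2)}$.} On $\Gamma_2^{(2)}$ and $\Gamma_8^{(2)}$ the factor $\Delta_+^{-1} = \Delta_-^{-1}$ by \eqref{IIbis jumps Delta11}(b), so the conjugation has no effect beyond multiplying the off-diagonal entries of $v^{(2)} - I$ (which are the remainders $v_{2,r}^{(1)}$, $v_{8,r}^{(1)}$, already $O(t^{-1})$ in $L^1 \cap L^\infty$ by the estimate below \eqref{vp1p 101112}) by bounded factors. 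On $\Gamma_5^{(2)}$ we have $v^{(2)} = v_5^{(1)} = \diag(1+r_1r_2, \tfrac{1}{1+r_1r_2}, 1) + v_{5,r}^{(1)}$; writing $v^{(3)} = \Delta_-^{-1} v^{(2)} \Delta_+ = \Delta_-^{-1} \diag(\ldots) \Delta_+ + \Delta_-^{-1} v_{5,r}^{(1)} \Delta_+$, the first term equals $I$ exactly by \eqref{IIbis jumps Delta11}(a), and the second is $O(t^{-N})$ since $\Delta^{\pm 1}$ are bounded near $\Gamma_5^{(2)}$ (this arc stays a fixed distance from $\hat{\mathsf{Z}}$ for $\zeta \in \mathcal{I}$ by Assumption $(i)$) and $\|v_{5,r}^{(1)}\|_{(L^1\cap L^\infty)} = O(t^{-N})$.

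\item \emph{The components $\Gamma_{4_r'}^{(2)}$, $\Gamma_{1_r''}^{(2)}$ and the "small-norm" contours $\Gamma_{1_s}^{(2)}, \dots, \Gamma_{6_s}^{(2)}$.} On each of these, $v^{(2)} - I$ is already $O(t^{-N})$ in $L^\infty$ (Lemma \ref{vsmallnearilemmaV} and the discussion following \eqref{II jumps vj diagonal p2p}), and all of these contours remain uniformly bounded away from $\partial\D$ and from $\hat{\mathsf{Z}}$ for $\zeta \in \mathcal{I}$, so $\Delta^{\pm 1}$ are bounded there by Lemma \ref{Deltalemma}; hence $v^{(3)} - I$ inherits the $O(t^{-N})$ bound in $L^1 \cap L^\infty$ (the $L^1$ bound follows because these contours have bounded total length uniformly in $\zeta \in \mathcal{I}$).

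\item \emph{The parts of $\Gamma_4^{(2)}, \Gamma_6^{(2)}, \Gamma_7^{(2)}, \Gamma_9^{(2)}$ (and the rays $\Gamma_{1''}, \Gamma_{4'}$ and the arcs near $k_1$) lying outside $\hat{\mathcal{X}}^\epsilon$.} Here $v^{(2)} - I$ has off-diagonal entries of the form (analytic approximation of $r_1$ or $r_2$) times $e^{\pm \theta_{ij}}$; by the signature tables in Figure \ref{IIbis fig: Re Phi 21 31 and 32 for zeta=0.7}, on each such sub-arc lying outside the disks $D_\epsilon(\omega^j k_\ell)$ the relevant $\re \Phi_{ij}$ has a fixed sign bounded away from zero, so $|e^{\pm \theta_{ij}}| \leq e^{-ct}$ for some $c = c(\epsilon) > 0$; combined with the boundedness of $\Delta^{\pm 1}$ (these contours avoid $\partial\D$ except at the endpoints $\omega^j k_\ell$, which lie on $\partial\D$, so one must check the approach: near those endpoints but outside $D_\epsilon$, $\Delta$ may be large like a power of $\ln|k - k_1|$ via \eqref{delta expression in terms of log and chi}, but this is dominated by $e^{-ct}$), one gets exponential decay, which is in particular $O(t^{-1})$.
\end{enumerate}

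\textbf{Then} I would combine the three cases: the $L^1 \cap L^\infty$ norm of $v^{(3)} - I$ on $\Gamma^{(3)} \setminus \hat{\mathcal{X}}^\epsilon$ is bounded by a sum of terms each $O(t^{-N})$ or exponentially small, except possibly the off-diagonal remainders $v_{2,r}^{(1)}, v_{8,r}^{(1)}$ on $\Gamma_2^{(2)}, \Gamma_8^{(2)}$ which are only $O(t^{-1})$; this is what produces the stated rate $Ct^{-1}$ (taking $N \geq 1$). The main obstacle — really the only subtle point — is the behavior near the saddle points: one must verify that $\Delta(\zeta, k)$, although it blows up logarithmically as $k \to \omega^j k_\ell$ (see \eqref{delta expression in terms of log and chi}), stays bounded on $\Gamma^{(3)} \setminus \hat{\mathcal{X}}^\epsilon$ uniformly in $\zeta \in \mathcal{I}$, which is exactly the content of Lemma \ref{Deltalemma} with the disks $D_\epsilon(k_0)$ removed, and that on the sub-arcs approaching these points the exponential decay of $e^{\pm\theta_{ij}}$ (uniform for $\zeta$ in the compact set $\mathcal{I}$, using that $\arg k_1 \in (\tfrac{\pi}{2}, \tfrac{2\pi}{3})$ stays in a compact subset) genuinely beats any polynomial growth of $\Delta$. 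Everything else is the standard Deift–Zhou estimate and I would state it as such, omitting the routine details as the paper does elsewhere.
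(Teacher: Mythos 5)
Your proof is correct and follows essentially the same route as the paper's (which is a terse two-step: conjugate on $\Gamma_2^{(2)} \cup \Gamma_5^{(2)} \cup \Gamma_8^{(2)}$ using \eqref{IIbis jumps Delta11} to reduce to $\Delta_-^{-1} v_{j,r}^{(1)} \Delta_+$, then invoke Lemmas \ref{IIbis decompositionlemma}, \ref{Deltalemma}, \ref{vsmallnearilemmaV} for the rest, working in $\mathsf{S}$ and extending by the symmetry \eqref{vjsymm}); you have merely spelled out the bookkeeping in more detail. The one small inaccuracy is your worry in case $(3)$ that $\Delta$ might grow like a power of $\ln|k-k_1|$ near the saddle points: since $\nu$ is real, \eqref{delta expression in terms of log and chi} gives $|e^{-i\nu\ln_{k_1}(k-k_1)}| = e^{\nu\arg_{k_1}(k-k_1)}$, which is bounded, and $\chi$ is bounded by \eqref{II asymp chi at k1}; this is exactly what Lemma \ref{IIbis deltalemma}$(\ref{deltapartb})$ records, so $\Delta^{\pm1}$ are uniformly bounded on $\Gamma^{(3)}\setminus\hat{\mathcal{X}}^\epsilon$ (which stays a distance $\sim\epsilon$ from $\hat{\mathsf{Z}}$) and there is no competition to resolve — though your conclusion survives regardless since $e^{-ct}$ would dominate.
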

\begin{proof}
Using \eqref{vp1p 123}, \eqref{vp1p 789}, \eqref{vp1p 101112}, and \eqref{II jumps vj diagonal p2p} together with the properties \eqref{IIbis jumps Delta11} of $\Delta$, we infer that
\begin{align}\label{lol3}
& v_j^{(3)} = \Delta_{-}^{-1}v_j^{(1)}\Delta_{+} = I + \Delta_{-}^{-1}v_{j,r}^{(1)}\Delta_{+}, \quad j = 2,5,8.
\end{align}
By Lemmas \ref{IIbis decompositionlemma} and \ref{Deltalemma}, the matrices $\Delta_{-}^{-1}v_{j,r}^{(1)}\Delta_{+}$, $j = 2,5,8$, are small as $ t \to \infty$.
Using also Lemma \ref{vsmallnearilemmaV}, we infer that the $L^1$ and $L^\infty$ norms of $v^{(3)}-I$ are $O(t^{-N})$ as $t \to \infty$, uniformly for $\zeta \in \mathcal{I}$ and $k \in \mathsf{S}\cap \Gamma^{(3)}\setminus \hat{\mathcal{X}}^\epsilon$. The claim \eqref{II v3estimatesa} now follows from \eqref{vjsymm}.
\end{proof}

The next lemma, which follows from (\ref{nresiduesk0})--(\ref{nresiduesk0real}) and the definition (\ref{IIbis def of mp3p}) of $n^{(3)}$, establishes the residue conditions of $n^{(3)}$ at the points in $\mathsf{Z}$.

\begin{lemma}[Residue conditions for $n^{(3)}$]\label{lemma:new residue}
At each point of $\hat{\mathsf{Z}}$, one entry of $n^{(3)}$ has a simple pole while two entries are analytic. Moreover, the following residue conditions hold at the points in $\mathsf{Z}$ (suppressing the $(x,t)$-dependence for conciseness):
\begin{enumerate}[$(a)$]
\item For each $k_{0}\in \mathsf{Z}\setminus\R$ with $\re  \Phi_{31}(\zeta,k_{0})<0$,
\begin{align*}
& \underset{k = k_0}{\res} n_{1}^{(3)}(k) = \frac{c_{k_{0}}^{-1}e^{\theta_{31}(k_{0})}n_{3}^{(3)}(k_{0})}{\Delta_{33}'(k_{0})(\Delta_{11}^{-1})'(k_{0})}, 
& & \underset{k = \bar{k}_0}{\res} n_{3}^{(3)}(k) = \frac{d_{k_{0}}^{-1}e^{-\theta_{32}(\bar{k}_{0})}n_{2}^{(3)}(\bar{k}_{0})}{\Delta_{22}'(\bar{k}_{0})(\Delta_{33}^{-1})'(\bar{k}_{0})}.
\end{align*}

\item For each $k_{0}\in \mathsf{Z}\cap\R$ with $\re  \Phi_{21}(\zeta,k_{0})<0$,
\begin{align}\nonumber
& \underset{k = k_0}{\res} n_{1}^{(3)}(k) = \frac{c_{k_0}^{-1} e^{\theta_{21}(k_0)}n_{2}^{(3)}(k_{0})}{\Delta_{22}'(k_{0})(\Delta_{11}^{-1})'(k_{0})}.
\end{align}

\item For each $k_{0}\in \mathsf{Z}\setminus\R$ with $\re  \Phi_{31}(\zeta,k_{0})\geq 0$, 
\begin{align}\nonumber
& \underset{k = k_0}{\res} n_3^{(3)}(k) = \frac{c_{k_{0}}e^{-\theta_{31}(k_0)} n_1^{(3)}(k_0)}{\Delta_{11}(k_{0})\Delta_{33}^{-1}(k_{0})}, & & \underset{k = \bar{k}_0}{\res} n_2^{(3)}(k) = \frac{d_{k_0} e^{\theta_{32}(\bar{k}_0)} n_3^{(3)}(\bar{k}_0)}{\Delta_{33}(\bar{k}_{0})\Delta_{22}^{-1}(\bar{k}_{0})}. 
\end{align}

\item For each $k_{0}\in \mathsf{Z}\cap\R$ with $\re  \Phi_{21}(\zeta,k_{0})\geq 0$,
\begin{align}\nonumber
& \underset{k = k_0}{\res} n_2^{(3)}(k) = \frac{c_{k_0} e^{-\theta_{21}(k_0)} n_1^{(3)}(k_0)}{\Delta_{11}(k_{0})\Delta_{22}^{-1}(k_{0})}.
\end{align}
\end{enumerate}
\end{lemma}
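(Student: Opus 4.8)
The plan is to track the effect of the three transformations leading from $n$ to $n^{(3)}$ on the residue conditions \eqref{nresiduesk0}--\eqref{nresiduesk0real}. Two ingredients do the work: first, that $n^{(2)}$ still satisfies exactly \eqref{nresiduesk0}--\eqref{nresiduesk0real} (with $n$ replaced by $n^{(2)}$), and second, that the diagonal factor $\Delta$ in \eqref{IIbis def of mp3p} has an explicitly computable zero/pole structure at the points of $\hat{\mathsf{Z}}$. Combining these with $n^{(3)}_{j}=n^{(2)}_{j}\Delta_{jj}$ gives the statement.

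For the first ingredient I would note that, by Assumption $(i)$, the set $\hat{\mathsf{Z}}$ is finite and disjoint from $\partial\D$, hence -- uniformly for $\zeta\in\mathcal{I}$ -- also from the saddle points $\{\omega^{j}k_{1},\omega^{j}k_{2}\}_{j=0}^{2}\subset\partial\D$. Since the multipliers $G^{(1)}$ and $G^{(2)}$ differ from $I$ only inside thin lens regions hugging $\partial\D$ and inside small fixed disks around the saddle points, and since the relevant contours and the analytic approximations of Lemma \ref{IIbis decompositionlemma} may be chosen so that their supports avoid the fixed finite set $\hat{\mathsf{Z}}$, both $G^{(1)}$ and $G^{(2)}$ are identically $I$ in a neighborhood of each point of $\hat{\mathsf{Z}}$. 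Right-multiplication by the identity preserves poles and residues, so $n^{(2)}=nG^{(1)}G^{(2)}$ has a single simple-pole entry at each point of $\hat{\mathsf{Z}}$ with residue still given by \eqref{nresiduesk0}--\eqref{nresiduesk0real}.

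For the second ingredient, the $\delta$-factors entering $\Delta$ in \eqref{Deltadef} are analytic and nonvanishing in a neighborhood of every point of $\hat{\mathsf{Z}}$ (by Lemma \ref{IIbis deltalemma}, such points lying off $\Gamma_{5}^{(2)}\subset\partial\D$), so the zeros and poles of $\Delta_{11},\Delta_{22},\Delta_{33}$ on $\hat{\mathsf{Z}}$ are precisely those of the rational factor $\mathcal{P}$ of \eqref{def of mathcalP}, transported by the shifts $k\mapsto\omega^{\pm1}k$ linking the three diagonal entries. Reading the factors off \eqref{def of mathcalP} gives: for $k_{0}\in\mathsf{Z}\setminus\R$ with $\re\Phi_{31}(\zeta,k_{0})<0$, $\Delta_{33}$ has a simple zero and $\Delta_{11}$ a simple pole at $k_{0}$, while $\Delta_{22}$ has a simple zero and $\Delta_{33}$ a simple pole at $\bar{k}_{0}$, the remaining diagonal entries being analytic and nonzero there; for $k_{0}\in\mathsf{Z}\cap\R$ with $\re\Phi_{21}(\zeta,k_{0})<0$, $\Delta_{22}$ has a simple zero and $\Delta_{11}$ a simple pole at $k_{0}$, with $\Delta_{33}$ analytic and nonzero; and in the cases $\re\Phi_{31}(\zeta,k_{0})\geq0$ and $\re\Phi_{21}(\zeta,k_{0})\geq0$ all of $\Delta_{11},\Delta_{22},\Delta_{33}$ are analytic and nonzero at the relevant points. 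Here one uses that, for $k_{0}\in\mathsf{Z}\subset D_{\mathrm{reg}}\cup(-1,0)\cup(1,\infty)$, the twelve points in \eqref{def of hatZ} attached to $k_{0}$ are pairwise distinct (a consequence of the location of $D_{\mathrm{reg}}$ inside $D_{2}$), so these zeros and poles of $\Delta$ are genuinely simple.

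Finally I would assemble the residues of $n^{(3)}_{j}=n^{(2)}_{j}\Delta_{jj}$ case by case. When $\re\Phi_{31}(\zeta,k_{0})\geq0$ or $\re\Phi_{21}(\zeta,k_{0})\geq0$, all $\Delta_{jj}$ are regular at $k_{0}$ (and at $\bar k_{0}$), so multiplying the relevant identity in \eqref{nresiduesk0}--\eqref{nresiduesk0real} by the value of the appropriate diagonal entry of $\Delta$ there and substituting $n^{(2)}_{i}(k_{0})=n^{(3)}_{i}(k_{0})/\Delta_{ii}(\zeta,k_{0})$ yields parts $(c)$ and $(d)$. When $\re\Phi_{31}(\zeta,k_{0})<0$ or $\re\Phi_{21}(\zeta,k_{0})<0$, the simple zero of $\Delta_{33}$ (resp. $\Delta_{22}$) cancels the pole of $n^{(2)}_{3}$ (resp. $n^{(2)}_{2}$), so that entry of $n^{(3)}$ becomes analytic, while the simple pole of $\Delta_{11}$ makes $n^{(3)}_{1}$ the pole entry; evaluating the resulting residue with the identities $\res_{k_{0}}\Delta_{11}=1/(\Delta_{11}^{-1})'(k_{0})$ and $\Delta_{33}(\zeta,k)=\Delta_{33}'(\zeta,k_{0})(k-k_{0})+O((k-k_{0})^{2})$ (and likewise at $\bar{k}_{0}$ with $\Delta_{22},\Delta_{33}$) produces parts $(a)$ and $(b)$. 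The claim that exactly one entry of $n^{(3)}$ has a simple pole at each point of $\hat{\mathsf{Z}}$ follows from the same analysis together with the symmetries \eqref{njsymm} and \eqref{Deltasymm}. The only genuinely nontrivial step is the bookkeeping just described -- confirming that no two rational factors of $\mathcal{P}$ collide at a point of $\hat{\mathsf{Z}}$, so that the zeros and poles of $\Delta$ there are simple and the pole is transferred cleanly from one fixed entry of $n^{(2)}$ to another fixed entry of $n^{(3)}$, and that $G^{(1)},G^{(2)}$ are indeed the identity near $\hat{\mathsf{Z}}$ uniformly in $\zeta\in\mathcal{I}$; both reduce to elementary facts about the geometry of $D_{\mathrm{reg}}\subset D_{2}$ and the signature tables of Figure \ref{IIbis fig: Re Phi 21 31 and 32 for zeta=0.7}, after which the residue computations are routine.
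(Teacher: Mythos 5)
Your proof is correct and matches the paper's intended approach (the paper merely asserts that the lemma "follows from (\ref{nresiduesk0})--(\ref{nresiduesk0real}) and the definition (\ref{IIbis def of mp3p}) of $n^{(3)}$" without spelling out the details). You have correctly identified the two needed observations — that $G^{(1)}$ and $G^{(2)}$ are the identity near $\hat{\mathsf{Z}}$ so that $n^{(2)}$ inherits the residue conditions of $n$ verbatim, and that the zero/pole structure of the diagonal conjugator $\Delta$ at $\hat{\mathsf{Z}}$ is read off from the rational factor $\mathcal{P}$ — and the subsequent residue bookkeeping is carried out correctly.
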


The residue conditions of $n^{(3)}$ at the points of $\hat{\mathsf{Z}} \setminus \mathsf{Z}$ can be obtained from those in Lemma \ref{lemma:new residue} and the $\mathcal{A}$- and $\mathcal{B}$-symmetries (\ref{nsymm}).

\section{The $n^{(3)}\to n^{(4)}$ transformation}\label{n3ton4sec}
The purpose of the transformation $n^{(3)}\to n^{(4)}$ is to remove the poles of $n^{(3)}$ at the points of $\hat{\mathsf{Z}}$. 
Let 
\begin{align}\label{hatMsoldef}
\hat{M}_{\sol}(x,t,k):=M_{\mathrm{sol}}(x,t,k) \mathsf{P}(\zeta,k),
\end{align}
where $M_{\mathrm{sol}}(x,t,k)$ is the unique solution of RH problem \ref{RHS} and 
\begin{align}\label{mathsfPdef}
\mathsf{P}(\zeta,k) := \diag (\mathcal{P}(\zeta,\omega k),\mathcal{P}(\zeta,\omega^{2}k),\mathcal{P}(\zeta,k)).
\end{align}

We define $n^{(4)}$ by
\begin{align}\label{n4def}
n^{(4)}(x,t,k) = n^{(3)}(x,t,k)\hat{M}_{\mathrm{sol}}(x,t,k)^{-1}, \qquad  k \in \mathbb{C}\setminus \Gamma^{(4)},
\end{align}
where $\Gamma^{(4)}=\Gamma^{(3)}$. 
We deduce from (\ref{mathcalPDelta33symm}) and (\ref{Msolsymm}) that
\begin{align}\label{Mhatsolsymm}
\hat{M}_{\mathrm{sol}}(x,t,k) = \mathcal{A} \hat{M}_{\mathrm{sol}}(x,t,\omega k)\mathcal{A}^{-1} = \mathcal{B} \hat{M}_{\mathrm{sol}}(x,t,k^{-1}) \mathcal{B},
\end{align}
and hence $n^{(4)}$ obeys the $\mathcal{A}$- and $\mathcal{B}$-symmetries in (\ref{njsymm}).
The jump matrix for $n^{(4)}$ is given by $v^{(4)} = \hat{M}_{\mathrm{sol}} n^{(4)} \hat{M}_{\mathrm{sol}}^{-1}$.

\begin{lemma}\label{hatMsolresiduelemma}
At each point of $\mathsf{Z}$, one column of $\hat{M}_{\mathrm{sol}}$ has a simple pole while two columns are analytic. Moreover, $\hat{M}_{\mathrm{sol}}(x,t,k)$ satisfies the residue conditions in Lemma \ref{lemma:new residue} with $n^{(3)}_{j}$ replaced by $[\hat{M}_{\mathrm{sol}}(x,t,k)]_{j}$ for $j = 1,2,3$.
\end{lemma}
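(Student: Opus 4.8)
The plan is to transfer the residue conditions of RH problem \ref{RHS} for $M_{\mathrm{sol}}$ through the algebraic multiplication by $\mathsf{P}(\zeta,k)$. First I would record that $\mathsf{P}(\zeta,k)$ is a diagonal matrix whose entries $\mathcal{P}(\zeta,\omega k)$, $\mathcal{P}(\zeta,\omega^2 k)$, $\mathcal{P}(\zeta,k)$ are rational functions of $k$ that are analytic and nonzero in a neighborhood of each point of $\hat{\mathsf{Z}}$ — indeed, by \eqref{def of mathcalP} the zeros and poles of $\mathcal{P}(\zeta,\cdot)$ lie at points of the form $k_0, k_0^{-1}, \omega^2 \bar k_0, \omega \bar k_0^{-1}$ and $\bar k_0, \bar k_0^{-1}, \omega k_0, \omega^2 k_0^{-1}$ (with $k_0 \in \mathsf{Z}\setminus\R$) and $\omega^2 k_0, \omega k_0^{-1}, \omega k_0, \omega^2 k_0^{-1}$ (with $k_0 \in \mathsf{Z}\cap\R$), none of which coincide with a point of $\hat{\mathsf{Z}}$ under the genericity Assumption $(i)$ (here one uses that $\mathsf{Z}\subset D_{\mathrm{reg}}\cup(-1,0)\cup(1,\infty)$ is disjoint from $\omega\mathsf{Z}$, $\omega^2\mathsf{Z}$, etc.). Hence multiplication by $\mathsf{P}(\zeta,k)$ on the right neither creates nor cancels poles at points of $\hat{\mathsf{Z}}$, and it preserves which column is singular and which two are analytic; this gives the first sentence of the lemma.

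Next I would compute the residues. Fix $k_0 \in \mathsf{Z}\setminus\R$ with $\re\Phi_{31}(\zeta,k_0)\geq 0$. From RH problem \ref{RHS}$(d)$, equation \eqref{Mk0notinR}, the third column of $M_{\mathrm{sol}}$ has a simple pole at $k_0$ with
\[
\underset{k=k_0}{\res}\,[M_{\mathrm{sol}}(x,t,k)]_3 = \frac{c_{k_0}e^{-\theta_{31}(x,t,k_0)}}{\hat\Delta_{11}(\zeta,k_0)\hat\Delta_{33}^{-1}(\zeta,k_0)}[M_{\mathrm{sol}}(x,t,k_0)]_1.
\]
Since $\hat M_{\mathrm{sol}} = M_{\mathrm{sol}}\,\mathsf{P}$ with $\mathsf{P}$ diagonal, $[\hat M_{\mathrm{sol}}]_3 = \mathcal{P}(\zeta,k)[M_{\mathrm{sol}}]_3$ and $[\hat M_{\mathrm{sol}}]_1 = \mathcal{P}(\zeta,\omega k)[M_{\mathrm{sol}}]_1$, and $\mathcal{P}(\zeta,\cdot)$ is analytic and nonvanishing at $k_0$, so
\[
\underset{k=k_0}{\res}\,[\hat M_{\mathrm{sol}}]_3 = \mathcal{P}(\zeta,k_0)\,\underset{k=k_0}{\res}\,[M_{\mathrm{sol}}]_3 = \frac{\mathcal{P}(\zeta,k_0)}{\mathcal{P}(\zeta,\omega k_0)}\cdot\frac{c_{k_0}e^{-\theta_{31}(k_0)}}{\hat\Delta_{11}(\zeta,k_0)\hat\Delta_{33}^{-1}(\zeta,k_0)}[\hat M_{\mathrm{sol}}(x,t,k_0)]_1.
\]
The point is then that the prefactor $\mathcal{P}(\zeta,k_0)/\mathcal{P}(\zeta,\omega k_0)$ is exactly the discrepancy between $\hat\Delta_{jj}$ and $\Delta_{jj}$: recalling from Section \ref{n2ton3sec} that $\Delta_{33}(\zeta,k) = \hat\Delta_{33}(\zeta,k)\mathcal{P}(\zeta,k)$ and $\Delta_{11}(\zeta,k)=\Delta_{33}(\zeta,\omega k)=\hat\Delta_{33}(\zeta,\omega k)\mathcal{P}(\zeta,\omega k)=\hat\Delta_{11}(\zeta,k)\mathcal{P}(\zeta,\omega k)$, one gets
\[
\frac{\mathcal{P}(\zeta,k_0)}{\mathcal{P}(\zeta,\omega k_0)}\cdot\frac{1}{\hat\Delta_{11}(\zeta,k_0)\hat\Delta_{33}^{-1}(\zeta,k_0)} = \frac{1}{\Delta_{11}(\zeta,k_0)\Delta_{33}^{-1}(\zeta,k_0)},
\]
which is precisely the residue condition of Lemma \ref{lemma:new residue}$(c)$ with $n^{(3)}_j$ replaced by $[\hat M_{\mathrm{sol}}]_j$. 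I would then run the same two-line computation for the remaining cases: $\bar k_0$ with $k_0\in\mathsf{Z}\setminus\R$, $\re\Phi_{31}\geq 0$ (using \eqref{Mk0notinR 2}, $[\hat M_{\mathrm{sol}}]_2=\mathcal{P}(\zeta,\omega^2 k)[M_{\mathrm{sol}}]_2$, $[\hat M_{\mathrm{sol}}]_3 = \mathcal{P}(\zeta,k)[M_{\mathrm{sol}}]_3$, and $\Delta_{22}(\zeta,k)=\hat\Delta_{22}(\zeta,k)\mathcal{P}(\zeta,\omega^2 k)$); and $k_0\in\mathsf{Z}\cap\R$, $\re\Phi_{21}\geq 0$ (using \eqref{Mk0inR}). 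For the cases $\re\Phi_{ij}<0$ in parts $(a)$ and $(b)$ of Lemma \ref{lemma:new residue}, I would note that these conditions are \emph{derived} consequences of the $\re\Phi_{ij}\geq 0$ conditions together with the symmetries \eqref{Msolsymm}–\eqref{Mhatsolsymm} and the fact that $\det M_{\mathrm{sol}}=1$ — more precisely, that the pole of one column at $k_0$ forces, via these symmetries, a pole structure at the reflected/rotated images, and the residue constant inverts; since $\hat M_{\mathrm{sol}}$ enjoys the same symmetries \eqref{Mhatsolsymm} and also has unit determinant, the identical argument yields the $\re\Phi_{ij}<0$ residue conditions for $\hat M_{\mathrm{sol}}$. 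Alternatively, one checks these directly the same way using the symmetry relations satisfied by $\mathcal{P}$ in \eqref{mathcalPDelta33symm}.

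The only genuinely delicate point — and the one I would treat with care — is the claim that $\mathcal{P}(\zeta,\omega^j k)$ for the relevant $j$ is analytic and \emph{nonvanishing} at each of the twelve types of points in $\hat{\mathsf{Z}}$ appearing in the residue conditions, so that the rank of the pole is genuinely unchanged and the residue of a product is the product of the nonsingular value with the residue. This requires knowing that $\mathsf{Z}$, $\omega\mathsf{Z}$, $\omega^2\mathsf{Z}$, $\mathsf{Z}^{-1}$, $\mathsf{Z}^*$, etc., and the finitely many zero/pole locations of $\mathcal{P}(\zeta,\cdot)$ are pairwise disjoint; this is where Assumption $(i)$ (simple zeros, located in $D_{\mathrm{reg}}\cup(-1,0)\cup(1,\infty)$, no zeros on $\partial\D$ or at $\mathcal{Q}\cup\{0\}$) and the structure of \eqref{def of mathcalP} enter. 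I would state this disjointness as an elementary observation (possibly citing the geometry already used in \eqref{def of hatZ} and in Section \ref{n2ton3sec}) and then the rest is the routine residue bookkeeping sketched above. In the write-up I would present the $k_0\in\mathsf{Z}\setminus\R$, $\re\Phi_{31}\geq0$ case in full and say that the other cases follow \emph{mutatis mutandis} using \eqref{Mk0notinR 2}, \eqref{Mk0inR}, the symmetries \eqref{Mhatsolsymm}, and $\det\hat M_{\mathrm{sol}}= \det\mathsf{P} = 1$ (the latter because $\mathcal{P}(\zeta,k)\mathcal{P}(\zeta,\omega k)\mathcal{P}(\zeta,\omega^2 k)=1$ by the first symmetry in \eqref{mathcalPDelta33symm} combined with $\omega$-periodicity considerations, so $\det\mathsf{P}\equiv 1$).
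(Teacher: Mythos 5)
Your proposal rests on a false premise, and it is exactly the premise the paper's proof is designed to avoid. You claim that each entry of $\mathsf{P}(\zeta,k)$ is ``analytic and nonzero in a neighborhood of each point of $\hat{\mathsf{Z}}$'' and that ``multiplication by $\mathsf{P}(\zeta,k)$ on the right neither creates nor cancels poles at points of $\hat{\mathsf{Z}}$, and it preserves which column is singular.'' Neither claim is true. Read off \eqref{def of mathcalP} directly: if $k_{0}\in \mathsf{Z}\setminus\R$ and $\re\Phi_{31}(\zeta,k_{0})<0$, the numerator factor $(k-k_{0})$ gives $\mathcal{P}(\zeta,k)=\mathsf{P}_{33}(\zeta,k)$ a simple \emph{zero} at $k_{0}$, and the denominator factor $(\cdot-\omega k_{0})$ (evaluated after the substitution $k\mapsto\omega k$) gives $\mathcal{P}(\zeta,\omega k)=\mathsf{P}_{11}(\zeta,k)$ a simple \emph{pole} at $k_{0}$. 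And $k_{0}\in\mathsf{Z}\subset\hat{\mathsf{Z}}$, so the zeros and poles of the $\mathsf{P}_{jj}$ sit precisely at some of the points of $\hat{\mathsf{Z}}$. (A similar structure holds for $k_{0}\in\mathsf{Z}\cap\R$ with $\re\Phi_{21}(\zeta,k_{0})<0$, and at the images under the $\mathcal{A}$- and $\mathcal{B}$-symmetries.) No amount of genericity in Assumption $(i)$ makes these cancellations disappear; they are not accidents of the configuration but are built into the definition of $\mathcal{P}$.

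The consequence is that multiplication by $\mathsf{P}$ \emph{moves} the pole from one column to another whenever $\re\Phi_{ij}<0$: in the example above, $[\hat{M}_{\mathrm{sol}}]_{3}=\mathsf{P}_{33}[M_{\mathrm{sol}}]_{3}$ becomes analytic at $k_{0}$ (simple zero killing simple pole) while $[\hat{M}_{\mathrm{sol}}]_{1}=\mathsf{P}_{11}[M_{\mathrm{sol}}]_{1}$ acquires a simple pole there. This column shift is the entire point of passing from $M_{\mathrm{sol}}$ to $\hat{M}_{\mathrm{sol}}$: it converts the residue coefficient $c_{k_{0}}e^{-\theta_{31}(k_{0})}$ of RH problem \ref{RHS}, which is exponentially \emph{growing} when $\re\Phi_{31}(\zeta,k_{0})<0$, into a coefficient proportional to $c_{k_{0}}^{-1}e^{\theta_{31}(k_{0})}$, which is exponentially \emph{decaying}. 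Matching Lemma \ref{lemma:new residue}$(a)$--$(b)$ then requires the small computation the paper actually performs:
\[
\underset{k=k_{0}}{\res}\,[\hat{M}_{\mathrm{sol}}]_{1}
=\underset{k=k_{0}}{\res}\,\mathsf{P}_{11}(k)\cdot[M_{\mathrm{sol}}(k_{0})]_{1}
=\frac{1}{(\mathsf{P}_{11}^{-1})'(k_{0})}\cdot\frac{c_{k_{0}}^{-1}e^{\theta_{31}(k_{0})}}{\hat\Delta_{11}^{-1}(k_{0})\hat\Delta_{33}(k_{0})}\cdot\frac{[\hat{M}_{\mathrm{sol}}(k_{0})]_{3}}{\mathsf{P}_{33}'(k_{0})},
\]
and then the identity $\Delta=\hat\Delta\,\mathsf{P}$ collapses the prefactor into $\bigl((\Delta_{11}^{-1})'(k_{0})\Delta_{33}'(k_{0})\bigr)^{-1}$. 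Your computation handles only the cases $(c)$ and $(d)$ of Lemma \ref{lemma:new residue}, where $\re\Phi_{ij}\geq 0$ and $\mathsf{P}$ is genuinely regular and nonvanishing at $k_{0}$; for those your algebra is correct and matches the paper. Your fallback that cases $(a)$ and $(b)$ are ``derived consequences'' of $(c)$ and $(d)$ via the symmetries \eqref{Msolsymm}--\eqref{Mhatsolsymm} does not hold either: the four parts of Lemma \ref{lemma:new residue} partition $\mathsf{Z}$ itself by the sign of $\re\Phi_{ij}(\zeta,k_{0})$, so a given $k_{0}\in\mathsf{Z}$ belongs to exactly one case, and the symmetries relate $k_{0}$ only to points of $\hat{\mathsf{Z}}\setminus\mathsf{Z}$, never to a ``reflected'' version of case $(c)$/$(d)$ for the same $k_{0}$.
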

\begin{proof}
We see from \eqref{hatDeltadef} and \eqref{Deltadef} that $\hat{\Delta}(\zeta,k) = \Delta(\zeta,k) \mathsf{P}(\zeta,k)^{-1}$ and clearly $\hat{\Delta}$ has no zeros or poles in $\hat{\mathsf{Z}}$. 
Let $k_{0}\in \mathsf{Z}\setminus \R$ be such that $\re  \Phi_{31}(\zeta,k_{0})<0$. 
Then $\mathsf{P}_{11}(\zeta,k)$ has a simple pole at $k_0$, $\mathsf{P}_{33}(\zeta,k)$ has a simple zero at $k_0$, and $\mathsf{P}_{22}(\zeta,k)$ has neither a zero nor a pole at $k_0$. 
Consequently, in view of part $(\ref{RHSitemd})$ of RH problem \ref{RHS}, $[\hat{M}_{\mathrm{sol}}]_j = \mathsf{P}_{jj}[M_{\mathrm{sol}}]_j$ has a simple pole at $k_0$ for $j = 1$, while it is analytic for $j = 2$ and $j = 3$. Moreover, using (\ref{Mk0notinR}) and omitting the $(x,t)$-dependence for brevity, we find
\begin{align*}
\underset{k = k_0}{\res} [\hat{M}_{\mathrm{sol}}(k)]_1 
& = \underset{k = k_0}{\res} \mathsf{P}_{11}(k) [M_{\mathrm{sol}}(k_0)]_1 
= \frac{1}{(\mathsf{P}_{11}^{-1})'(k_{0})} \frac{c_{k_0}^{-1} e^{\theta_{31}(k_0)}}{\hat{\Delta}_{11}^{-1}(k_0)\hat{\Delta}_{33}(k_0)} \underset{k = k_0}{\res}  [M_{\mathrm{sol}}(k)]_3 
	\\
& 
= \frac{c_{k_0}^{-1} e^{\theta_{31}(k_0)}}{(\mathsf{P}_{11}^{-1})'(k_{0})  \hat{\Delta}_{11}^{-1}(k_0) \hat{\Delta}_{33}(k_0)} \frac{[\hat{M}_{\mathrm{sol}}(k_0)]_3 }{\mathsf{P}_{33}'(k_0)}
= \frac{c_{k_0}^{-1} e^{\theta_{31}(k_0)} [\hat{M}_{\mathrm{sol}}(k_0)]_3}{(\Delta_{11}^{-1})'(k_0) \Delta_{33}'(k_0)}, 
\end{align*}
which proves the claim for  $k_{0}\in \mathsf{Z}\setminus \R$ with $\re  \Phi_{31}(\zeta,k_{0})<0$. 
The other points in $\mathsf{Z}$ can be treated in a similar way.
\end{proof}

It follows from Lemma \ref{hatMsolresiduelemma} and the symmetries (\ref{Mhatsolsymm}) that $(1,1,1) \hat{M}_{\mathrm{sol}}$ has the same pole structure and obeys the same residue conditions as $n^{(3)}$. The following lemma can be viewed as a consequence of this fact.

\begin{lemma}
  The function $n^{(4)}$ is analytic at every point of $\hat{\mathsf{Z}}$.
\end{lemma}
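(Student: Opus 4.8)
The plan is to prove analyticity at each point $k_0\in\hat{\mathsf{Z}}$ individually, by ``undressing'' the simple pole of the row vector $n^{(3)}$ and the simple pole of the matrix $\hat{M}_{\mathrm{sol}}$ simultaneously with one and the same explicit rational multiplier. Fix $k_0\in\hat{\mathsf{Z}}$ and choose $\epsilon>0$ so small that $D_{\epsilon}(k_0)$ is disjoint from $\Gamma^{(4)}=\Gamma^{(3)}$ and from $\hat{\mathsf{Z}}\setminus\{k_0\}$. By Lemma \ref{lemma:new residue} together with the $\mathcal{A}$- and $\mathcal{B}$-symmetries of $n^{(3)}$, exactly one entry $n^{(3)}_{j_0}$ of $n^{(3)}$ has a simple pole at $k_0$, the other two being analytic, and this pole is governed by a residue relation of the form $\underset{k=k_0}{\res}\, n^{(3)}_{j_0}(k)=\alpha\, n^{(3)}_{j_1}(k_0)$ for some index $j_1\ne j_0$ and some nonzero constant $\alpha=\alpha(x,t)$; for instance, for $k_0\in\mathsf{Z}\setminus\R$ with $\re\Phi_{31}(\zeta,k_0)<0$ one has $(j_0,j_1)=(1,3)$ and $\alpha=c_{k_0}^{-1}e^{\theta_{31}(x,t,k_0)}/(\Delta_{33}'(\zeta,k_0)(\Delta_{11}^{-1})'(\zeta,k_0))$, and the remaining cases in Lemma \ref{lemma:new residue} and their symmetry images are entirely analogous. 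The crucial input is Lemma \ref{hatMsolresiduelemma}: it guarantees that $[\hat{M}_{\mathrm{sol}}]_{j_0}$ has its simple pole at $k_0$ with the \emph{same} pair $(j_0,j_1)$ and the \emph{same} constant $\alpha$, namely $\underset{k=k_0}{\res}\,[\hat{M}_{\mathrm{sol}}(x,t,k)]_{j_0}=\alpha\,[\hat{M}_{\mathrm{sol}}(x,t,k_0)]_{j_1}$, while the other two columns of $\hat{M}_{\mathrm{sol}}$ are analytic at $k_0$.

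Next I would introduce the multiplier $G_{k_0}(k):=I-\tfrac{\alpha}{k-k_0}E_{j_1 j_0}$, where $E_{j_1 j_0}$ is the $3\times 3$ matrix with a single $1$ in position $(j_1,j_0)$. Right multiplication by $G_{k_0}$ subtracts $\tfrac{\alpha}{k-k_0}$ times the $j_1$-th entry (resp.\ column) from the $j_0$-th entry (resp.\ column) and leaves the others untouched, so the two residue relations just recorded show that $n^{(3),\mathrm{reg}}:=n^{(3)}G_{k_0}$ and $\hat{M}_{\mathrm{sol}}^{\mathrm{reg}}:=\hat{M}_{\mathrm{sol}}G_{k_0}$ are both analytic at $k_0$. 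Since $j_1\ne j_0$, $E_{j_1 j_0}$ is nilpotent and $\det G_{k_0}\equiv1$; hence $\det\hat{M}_{\mathrm{sol}}^{\mathrm{reg}}=\det\hat{M}_{\mathrm{sol}}=\det M_{\mathrm{sol}}\,\det\mathsf{P}$. Using $\det M_{\mathrm{sol}}\equiv1$ (recorded below RH problem \ref{RHS}) together with the identity $\det\mathsf{P}(\zeta,k)=\mathcal{P}(\zeta,k)\mathcal{P}(\zeta,\omega k)\mathcal{P}(\zeta,\omega^2 k)\equiv1$ — which one verifies by checking that the multiset of zeros of this product equals the multiset of its poles, so that it is the constant dictated by its value $1$ at $k=\infty$ — we obtain $\det\hat{M}_{\mathrm{sol}}^{\mathrm{reg}}\equiv1$. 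In particular $\hat{M}_{\mathrm{sol}}^{\mathrm{reg}}(x,t,k_0)$ is invertible, so $(\hat{M}_{\mathrm{sol}}^{\mathrm{reg}})^{-1}$ is analytic at $k_0$.

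The conclusion then follows from the identity, valid for $k\in D_{\epsilon}(k_0)\setminus\{k_0\}$ where $G_{k_0}(k)$ is invertible,
\begin{align*}
n^{(4)}=n^{(3)}\hat{M}_{\mathrm{sol}}^{-1}&=\bigl(n^{(3)}G_{k_0}\bigr)\bigl(\hat{M}_{\mathrm{sol}}G_{k_0}\bigr)^{-1}
\\
&=n^{(3),\mathrm{reg}}\bigl(\hat{M}_{\mathrm{sol}}^{\mathrm{reg}}\bigr)^{-1},
\end{align*}
whose right-hand side extends analytically to all of $D_{\epsilon}(k_0)$; hence $n^{(4)}$ is analytic at $k_0$, and since $k_0\in\hat{\mathsf{Z}}$ was arbitrary the lemma follows. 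No step here is genuinely difficult; the one point demanding care — and exactly what Lemma \ref{hatMsolresiduelemma} was engineered to supply — is the \emph{alignment} of the two pole structures: because $n^{(3)}$ and $\hat{M}_{\mathrm{sol}}$ carry their poles in the same slot with the same residue constant at every point of $\hat{\mathsf{Z}}$, the single multiplier $G_{k_0}$ removes both at once, which is what makes the product pole-free. The remaining work is the routine bookkeeping of running this argument through the several cases of Lemma \ref{lemma:new residue} (and their $\mathcal{A}$- and $\mathcal{B}$-images) with the appropriate $(j_0,j_1)$ and $\alpha$.
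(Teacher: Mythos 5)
Your proof is correct but takes a genuinely different route from the paper's. The paper works directly with the Laurent expansion of $\hat{M}_{\mathrm{sol}}^{-1}(k)$ as $k \to k_0$: using $\det\hat{M}_{\mathrm{sol}}\equiv 1$, it writes the relevant rows of $\hat{M}_{\mathrm{sol}}^{-1}$ as cross products of columns of $\hat{M}_{\mathrm{sol}}(k_0)$, reads off the residue of the singular row, and then multiplies out $n^{(3)}\hat{M}_{\mathrm{sol}}^{-1}$ term by term to display the cancellation of the $(k-k_0)^{-1}$ parts. You achieve the same cancellation more structurally: the alignment of the pole structures supplied by Lemma \ref{hatMsolresiduelemma} means that the one unimodular rational multiplier $G_{k_0}=I-\tfrac{\alpha}{k-k_0}E_{j_1 j_0}$ regularizes both $n^{(3)}$ and $\hat{M}_{\mathrm{sol}}$ simultaneously, and $n^{(4)}=(n^{(3)}G_{k_0})(\hat{M}_{\mathrm{sol}}G_{k_0})^{-1}$ is then manifestly a product of two factors analytic at $k_0$ because $\det(\hat{M}_{\mathrm{sol}}G_{k_0})\equiv 1$. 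The determinant facts you invoke — $\det M_{\mathrm{sol}}\equiv 1$ and $\det\mathsf{P}\equiv 1$ — are both recorded in the paper (Lemma \ref{RHSlemma} and the opening line of the proof of Lemma \ref{lemma:Msol P is unif bounded}), and your zero/pole-multiset verification of the latter is correct, so nothing is missing. Your version avoids the explicit cofactor computation and treats all twelve points of $\hat{\mathsf{Z}}$ (including the $\mathcal{A}$- and $\mathcal{B}$-images) uniformly, at the small cost of needing the determinant observation up front; the paper's version is more computational but makes the cancellation visible by direct expansion. Both hinge on the same key input, namely the matching of residue data between $n^{(3)}$ and $\hat{M}_{\mathrm{sol}}$ that Lemma \ref{hatMsolresiduelemma} was designed to deliver.
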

\begin{proof}
We give the details in the case of points $k_{0}\in \mathsf{Z}\setminus \R$ such that $\re  \Phi_{31}(\zeta,k_{0})\geq 0$; the other points in $\mathsf{Z}$ can be handled similarly and then the $\mathcal{A}$- and $\mathcal{B}$-symmetries yields analyticity also at the points in $\hat{\mathsf{Z}} \setminus \mathsf{Z}$. 

Let $k_{0}\in \mathsf{Z}\setminus \R$ be such that $\re  \Phi_{31}(\zeta,k_{0})\geq 0$. 
Since $\hat{M}_{\mathrm{sol}}$ has unit determinant, it follows from Lemma \ref{lemma:new residue} with $n^{(3)}_{j}$ replaced by $[\hat{M}_{\mathrm{sol}}(x,t,k)]_{j}$ that
$$\hat{M}_{\mathrm{sol}}^{-1}(k) =  \begin{pmatrix} - \frac{c_{k_0} e^{-\theta_{31}(k_0)}}{\Delta_{11}(k_0) \Delta_{33}^{-1}(k_0)(k - k_0)} V + O(1) \\ O(1) \\ V  + O(k-k_0)\end{pmatrix} \qquad \text{as $k \to k_0$},$$
where the row-vector $V = (V_1, V_2, V_3) \in \C^3$ is the (transpose of the) cross product of the two vectors $[\hat{M}_{\mathrm{sol}}(k_0)]_1$ and $[\hat{M}_{\mathrm{sol}}(k_0)]_2$, i.e., 
$$V_1 = (M_{\mathrm{sol}}(k_0))_{21}(M_{\mathrm{sol}}(k_0))_{32} - (M_{\mathrm{sol}}(k_0))_{22}(M_{\mathrm{sol}}(k_0))_{31},$$ 
etc. On the other hand, by Lemma \ref{lemma:new residue},
$$n^{(3)}(k) = \begin{pmatrix}
n_1^{(3)}(k_0) + O(k-k_0) & O(1) & \frac{c_{k_{0}}e^{-\theta_{31}(k_0)} n_1^{(3)}(k_0)}{\Delta_{11}(k_0) \Delta_{33}^{-1}(k_0)(k-k_0)}  + O(1) 
\end{pmatrix} \qquad \text{as $k \to k_0$}.
$$
Hence, as $k \to k_0$,
\begin{align*}
n^{(4)} = &\; n^{(3)} \hat{M}_{\mathrm{sol}}^{-1} 
=  \begin{pmatrix}
n_1^{(3)}(k_0) + O(k-k_0) & O(1) & \frac{c_{k_{0}}e^{-\theta_{31}(k_0)} }{\Delta_{11}(k_0) \Delta_{33}^{-1}(k_0)(k-k_0)} n_1^{(3)}(k_0) + O(1) 
\end{pmatrix}
	\\
&\times \begin{pmatrix} - \frac{c_{k_0} e^{-\theta_{31}(k_0)}}{\Delta_{11}(k_0)\Delta_{33}^{-1}(k_0)(k - k_0)} V + O(1) \\ O(1) \\ V  + O(k-k_0)\end{pmatrix} = O(1),
\end{align*}
showing that $n^{(4)}$ is analytic at $k_0$.
\end{proof}

\section{Local parametrix near $k_{1}$}\label{localparametrixsec}
The matrix $v^{(4)} - I$ is not uniformly small on $\hat{\mathcal{X}}^\epsilon$. In this section, we construct a local parametrix around $k_{1}$ which approximates $n^{(4)}$ near $k_1$. The local parametrices near the five other saddle points $\{k_{2},\omega k_{1},\omega k_{2},\omega^{2}k_{1},\omega^{2}k_{2}\}$ will then be constructed using the $\mathcal{A}$- and $\mathcal{B}$-symmetries. In Section \ref{n3tonhatsec}, we will prove that these parametrices approximate $n^{(4)}$ in $\cup_{j=0}^{2}\big(D_{\epsilon}(\omega^{j} k_{1})\cup D_{\epsilon}(\omega^{j} k_{2}) \big)$.

As $k \to k_{1}$, we have
\begin{align*}
& \Phi_{21}(\zeta,k)-\Phi_{21}(\zeta,k_{1}) =  \Phi_{21,k_{1}}(k-k_{1})^{2} +O((k-k_{1})^{3}), & & \Phi_{21,k_{1}}=\frac{4-3k_{1} \zeta - k_{1}^{3} \zeta}{4k_{1}^{4}}.
\end{align*}
We define $z=z(\zeta,t,k)$ by
\begin{align*}
z=z_{\star}\sqrt{t} (k-k_{1}) \hat{z}, \qquad \hat{z}= \sqrt{\frac{2i(\Phi_{21}(\zeta,k)-\Phi_{21}(\zeta,k_{1}))}{z_{\star}^{2}(k-k_{1})^{2}}},
\end{align*}
where the principal branch is chosen for $\hat{z}=\hat{z}(\zeta,k)$, and 
\begin{align*}
z_{\star} = \sqrt{2} e^{\frac{\pi i}{4}} \sqrt{\Phi_{21,k_{1}}}, \qquad -ik_{1}z_{\star}>0.
\end{align*}
Note that $\hat{z}(\zeta,k_{1})=1$, and $t(\Phi_{21}(\zeta,k)-\Phi_{21}(\zeta,k_{1})) = -\frac{iz^{2}}{2}$. Let $\epsilon>0$ be fixed and so small that the map $z$ is conformal from $D_{\epsilon}(k_{1})$ to a neighborhood of 0. As $k \to k_{1}$,
\begin{align*}
z = z_{\star}\sqrt{t}(k-k_{1})(1+O(k-k_{1})),
\end{align*}
and for all $k \in D_\epsilon(k_1)$, we have
\begin{align*}
\ln_{k_{1}}(k-k_{1}) = \ln_{0}[z_{\star}(k-k_{1})\hat{z}]- \ln \hat{z} -\ln z_{\star}
\end{align*}
where $\ln_{0}(k):= \ln|k|+i\arg_{0}k$, $\arg_{0}(k)\in (0,2\pi)$, and $\ln$ is the principal logarithm. Decreasing $\epsilon > 0$ if needed, $k \mapsto \ln \hat{z}$ is analytic in $D_{\epsilon}(k_{1})$, $\ln \hat{z} = O(k-k_{1})$ as $k \to k_{1}$, and
\begin{align*}
\ln z_{\star} = \ln |z_{\star}| + i \arg z_{\star} = \ln |z_{\star}| + i \big( \tfrac{\pi}{2}-\arg k_{1} \big),
\end{align*}
where $\arg k_{1} \in (\tfrac{\pi}{2},\tfrac{2\pi}{3})$. By \eqref{delta expression in terms of log and chi}, we have
\begin{align*}
& \delta(\zeta,k) = e^{-i\nu\ln_{k_{1}}(k-k_{1})}e^{-\chi(\zeta,k)} = e^{-i\nu \ln_{0} z} t^{\frac{i \nu}{2}} e^{i \nu \ln \hat{z}} e^{i \nu \ln z_{\star}} e^{-\chi(\zeta,k)}.
\end{align*}
From now on, for conciseness we will use the notation $z_{(0)}^{- i \nu}:=e^{-i\nu \ln_{0} z}$, $\hat{z}^{i\nu}:=e^{i \nu \ln \hat{z}}$ and $z_{\star}^{i \nu}:=e^{i \nu \ln z_{\star}}$, so that
\begin{align*}
\delta(\zeta,k) = z_{(0)}^{- i \nu} t^{\frac{i \nu}{2}} \hat{z}^{i\nu} z_{\star}^{i \nu} e^{-\chi(\zeta,k)}.
\end{align*}
Using this, we find
\begin{align}
& \frac{\Delta_{11}(\zeta,k)}{\Delta_{22}(\zeta,k)} = \frac{\delta(\frac{1}{k})^{2} \delta(\omega k)\delta(\omega^{2}k)}{\delta(k)^{2}\delta(\frac{1}{\omega^{2}k})\delta(\frac{1}{\omega k})} \frac{\mathcal{P}(\zeta,\omega k)}{\mathcal{P}(\zeta,\omega^{2} k)} = z_{(0)}^{2i \nu}d_{0}(\zeta,t)d_{1}(\zeta,k), \label{IIbis Delta33 Delta11} 
	\\ \label{d0def}
& d_{0}(\zeta,t) = e^{2\chi(\zeta,k_{1})}t^{-i \nu} z_{\star}^{-2i\nu} \frac{\delta(\frac{1}{k_{1}})^{2} \delta(\omega k_{1})\delta(\omega^{2}k_{1})}{\delta(\frac{1}{\omega^{2}k_{1}})\delta(\frac{1}{\omega k_{1}})}, 
	 \\
& d_{1}(\zeta,k) = e^{2(\chi(\zeta,k)-\chi(\zeta,k_{1}))} \hat{z}^{-2i\nu} \frac{\delta(\frac{1}{k})^{2} \delta(\omega k)\delta(\omega^{2}k)}{\delta(\frac{1}{\omega^{2}k})\delta(\frac{1}{\omega k})} \bigg( \frac{\delta(\frac{1}{k_{1}})^{2} \delta(\omega k_{1})\delta(\omega^{2}k_{1})}{\delta(\frac{1}{\omega^{2}k_{1}})\delta(\frac{1}{\omega k_{1}})} \bigg)^{-1} \frac{\mathcal{P}(\zeta,\omega k)}{\mathcal{P}(\zeta,\omega^{2} k)}. \nonumber
\end{align}
Define
\begin{align*}
Y(\zeta,t) =  d_{0}(\zeta,t)^{-\frac{\tilde{\sigma}_{3}}{2}}\tilde{r}(k_{1})^{-\frac{1}{4}\tilde{\sigma}_{3}}e^{-\frac{t}{2}\Phi_{21}(\zeta,k_{1})\tilde{\sigma}_{3}} \big( \tfrac{\mathcal{P}(\zeta,\omega k_{1})}{\mathcal{P}(\zeta,\omega^{2} k_{1})} \big)^{-\frac{1}{2}\tilde{\sigma}_{3}},
\end{align*}
where $\tilde{\sigma}_{3} = \diag (1,-1,0)$, and note that $\tilde{r}(k_{1})^{\pm\frac{1}{4}}>0$ (the choices of the branches of $d_{0}(\zeta,t)^{\frac{1}{2}}$ and $\big( \tfrac{\mathcal{P}(\zeta,\omega k_{1})}{\mathcal{P}(\zeta,\omega^{2} k_{1})} \big)^{\frac{1}{2}}$ are unimportant for the analysis below). For $k \in D_\epsilon(k_1)$, we define
\begin{align*}
\tilde{n}(x,t,k) = n^{(4)}(x,t,k) \hat{M}_{\mathrm{sol}}(x,t,k) Y(\zeta,t).
\end{align*}
Let $\tilde{v}$ be the jump matrix of $\tilde{n}$, and let $\tilde{v}_{j}$ be the restriction of $\tilde{v}$ to $\Gamma_{j}^{(4)}\cap D_{\epsilon}(k_{1})$. By \eqref{Vv2def}, \eqref{IIbis def of mp3p}, and \eqref{IIbis Delta33 Delta11}, we have
\begin{align*}
& \tilde{v}_{4} = \begin{pmatrix}
1 & \tilde{r}(k_{1})^{\frac{1}{2}}\hat{r}_{1,a}(k) \tfrac{\mathcal{P}(\zeta,\omega k_{1})}{\mathcal{P}(\zeta,\omega^{2} k_{1})} d_{1}^{-1}z_{(0)}^{-2i\nu}e^{\frac{iz^{2}}{2}} & 0 \\
0 & 1 & 0 \\
0 & 0 & 1
\end{pmatrix}, \\
& \tilde{v}_{7} = \begin{pmatrix}
1 & 0 & 0 \\
-\tilde{r}(k_{1})^{-\frac{1}{2}}r_{2,a}(k) \tfrac{\mathcal{P}(\zeta,\omega^{2} k_{1})}{\mathcal{P}(\zeta,\omega k_{1})} d_{1}z_{(0)}^{2i\nu} e^{-\frac{iz^{2}}{2}} & 1 & 0 \\
0 & 0 & 1
\end{pmatrix}, \\
& \tilde{v}_{9} = \begin{pmatrix}
1 & -\tilde{r}(k_{1})^{\frac{1}{2}}r_{1,a}(k) \tfrac{\mathcal{P}(\zeta,\omega k_{1})}{\mathcal{P}(\zeta,\omega^{2} k_{1})} d_{1}^{-1}z_{(0)}^{-2i\nu}e^{\frac{iz^{2}}{2}} & 0 \\
0 & 1 & 0 \\
0 & 0 & 1 
\end{pmatrix}, \\
& \tilde{v}_{6} = \begin{pmatrix}
1 & 0 & 0 \\
\tilde{r}(k_{1})^{-\frac{1}{2}}\hat{r}_{2,a}(k) \tfrac{\mathcal{P}(\zeta,\omega^{2} k_{1})}{\mathcal{P}(\zeta,\omega k_{1})} d_{1}z_{(0)}^{2i\nu}e^{-\frac{iz^{2}}{2}} & 1 & 0 \\
0 & 0 & 1
\end{pmatrix}.
\end{align*}
Let $q:=-\tilde{r}(k_{1})^{-\frac{1}{2}}r_{2}(k_{1})$. We have $-\tilde{r}(k_{1})^{\frac{1}{2}}r_{1}(k_{1})=\bar{q}$, $1+r_{1}(k_{1})r_{2}(k_{1}) = 1+|q|^{2}$ and $\nu = -\frac{1}{2\pi}\ln (1+|q|^{2})$. Note also that $d_{1}(\zeta,k_{1}) = \tfrac{\mathcal{P}(\zeta,\omega k_{1})}{\mathcal{P}(\zeta,\omega^{2} k_{1})}$.

The above discussion suggests that $\tilde{n} = n^{(4)} \hat{M}_{\mathrm{sol}} Y$ is well approximated for large $t$ by $(1,1,1) \hat{M}_{\mathrm{sol}} Y\tilde{m}^{k_{1}}$, where $\tilde{m}^{k_{1}}(x,t,k)$ is analytic for $k \in D_{\epsilon}(k_{1}) \setminus \mathcal{X}^{\epsilon}$, obeys the jump relation $\tilde{m}^{k_{1}}_+ = \tilde{m}^{k_{1}}_- \tilde{v}_{\mathcal{X}^{\epsilon}}^{X}$ on $\mathcal{X}^{\epsilon}$ where
\begin{align*}
& \tilde{v}_{\mathcal{X}_{1}^{\epsilon}}^{X} = \begin{pmatrix}
1 & \frac{\tilde{r}(k_{1})^{\frac{1}{2}}r_{1}(k_{1})}{1+r_{1}(k_{1})r_{2}(k_{1})}z_{(0)}^{-2i\nu}e^{\frac{iz^{2}}{2}} & 0 \\
0 & 1 & 0 \\
0 & 0 & 1
\end{pmatrix} = \begin{pmatrix}
1 & \frac{-\bar{q}}{1+|q|^{2}}z_{(0)}^{-2i\nu}e^{\frac{iz^{2}}{2}} & 0 \\
0 & 1 & 0 \\
0 & 0 & 1
\end{pmatrix}, \\
& \tilde{v}_{\mathcal{X}_{2}^{\epsilon}}^{X} = \begin{pmatrix}
1 & 0 & 0 \\
-\tilde{r}(k_{1})^{-\frac{1}{2}}r_{2}(k_{1}) z_{(0)}^{2i\nu} e^{-\frac{iz^{2}}{2}} & 1 & 0 \\
0 & 0 & 1
\end{pmatrix} = \begin{pmatrix}
1 & 0 & 0 \\
q z_{(0)}^{2i\nu} e^{-\frac{iz^{2}}{2}} & 1 & 0 \\
0 & 0 & 1
\end{pmatrix}, \\
& \tilde{v}_{\mathcal{X}_{3}^{\epsilon}}^{X} = \begin{pmatrix}
1 & -\tilde{r}(k_{1})^{\frac{1}{2}}r_{1}(k_{1}) z_{(0)}^{-2i\nu}e^{\frac{iz^{2}}{2}} & 0 \\
0 & 1 & 0 \\
0 & 0 & 1 
\end{pmatrix} = \begin{pmatrix}
1 & \bar{q} z_{(0)}^{-2i\nu}e^{\frac{iz^{2}}{2}} & 0 \\
0 & 1 & 0 \\
0 & 0 & 1 
\end{pmatrix}, \\
& \tilde{v}_{\mathcal{X}_{4}^{\epsilon}}^{X} = \begin{pmatrix}
1 & 0 & 0 \\
\frac{\tilde{r}(k_{1})^{-\frac{1}{2}}r_{2}(k_{1})}{1+r_{1}(k_{1})r_{2}(k_{1})} z_{(0)}^{2i\nu} e^{-\frac{iz^{2}}{2}} & 1 & 0 \\
0 & 0 & 1
\end{pmatrix} = \begin{pmatrix}
1 & 0 & 0 \\
\frac{-q}{1+|q|^{2}} z_{(0)}^{2i\nu} e^{-\frac{iz^{2}}{2}} & 1 & 0 \\
0 & 0 & 1
\end{pmatrix},
\end{align*}
and satisfies $\tilde{m}^{k_{1}}(x,t,k) \to I$ as $t\to\infty$ uniformly for $k\in \partial D_{\epsilon}(k_{1})$.
This motivates us to define $m^{k_{1}}$ for $k \in D_\epsilon(k_1)$ by
\begin{align}\label{mk0def}
m^{k_{1}}(x,t,k) = \hat{M}_{\mathrm{sol}}(x,t,k) Y(\zeta,t)m^{X}(q,z(\zeta,t,k))Y(\zeta,t)^{-1} \hat{M}_{\mathrm{sol}}(x,t,k)^{-1}, 
\end{align}
where $m^{X}$ is the solution to the model RH problem of Lemma \ref{IIbis Xlemma 3}.
We expect $n^{(4)}$ to be well approximated by $(1,1,1) m^{k_{1}}$  for large $t$; in Section \ref{n3tonhatsec} we will show that this is indeed the case.

\begin{lemma}\label{lemma: bound on Y}
The function $Y(\zeta,t)$ is uniformly bounded:
\begin{align}\label{Ybound}
\sup_{\zeta \in \mathcal{I}} \sup_{t \geq 2} | Y(\zeta,t)^{\pm 1}| \leq C.
\end{align}
Moreover, the functions $d_0(\zeta, t)$ and $d_1(\zeta, k)$ satisfy
\begin{align}
& |d_0(\zeta, t)| = e^{2\pi \nu}, & & \zeta \in \mathcal{I}, \ t \geq 2, \label{d0estimate} \\
& |d_1(\zeta, k) - \tfrac{\mathcal{P}(\zeta,\omega k_{1})}{\mathcal{P}(\zeta,\omega^{2} k_{1})}| \leq C |k - k_1| (1+ |\ln|k-k_1||), & & \zeta \in \mathcal{I}, \ k \in \mathcal{X}^{\epsilon}.\label{d1estimate}
\end{align}
\end{lemma}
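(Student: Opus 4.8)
The plan is to prove the three estimates \eqref{Ybound}, \eqref{d0estimate}, \eqref{d1estimate} in that order, deriving \eqref{Ybound} as a consequence of \eqref{d0estimate} together with the boundedness properties of $\delta$, $\chi$, and $\mathcal{P}$ already established. First I would establish \eqref{d0estimate}. Recall the definition \eqref{d0def} of $d_0(\zeta,t)$; taking absolute values, the factor $t^{-i\nu}$ has modulus $1$ because $\nu \in \R$ (as noted right after \eqref{nudef}), and similarly $|z_\star^{-2i\nu}| = e^{2\nu \arg z_\star}$ with $\arg z_\star = \tfrac{\pi}{2} - \arg k_1$ from the computation of $\ln z_\star$ in Section~\ref{localparametrixsec}. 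For the remaining factors, I would use part~$(\ref{deltapartc})$ of Lemma~\ref{IIbis deltalemma}: since all of $\tfrac{1}{k_1}, \omega k_1, \omega^2 k_1, \tfrac{1}{\omega^2 k_1}, \tfrac{1}{\omega k_1}$ lie on $\partial\D$, the modulus $|\delta(\zeta, \cdot)|$ at each is the constant \eqref{|delta| is constant on a part of the unit circle}, except that one must be careful about which side of $\Gamma_5^{(2)}$ each point lies on and hence which branch of $\arg_i$ to use; a short bookkeeping of the six arguments, combined with $|e^{2\chi(\zeta,k_1)}| = e^{2\re\chi(\zeta,k_1)}$ computed from \eqref{re chi on part of the unit circle}, collapses the product to $e^{2\pi\nu}$. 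This argument is essentially the same cancellation that underlies the $|q|^2$ bookkeeping; the identity $\nu = -\tfrac{1}{2\pi}\ln(1+|q|^2)$ is the organizing fact.

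Next, for \eqref{Ybound}: writing $Y(\zeta,t) = d_0^{-\tilde\sigma_3/2}\,\tilde r(k_1)^{-\tilde\sigma_3/4}\,e^{-\tfrac{t}{2}\Phi_{21}(\zeta,k_1)\tilde\sigma_3}\,\big(\tfrac{\mathcal{P}(\zeta,\omega k_1)}{\mathcal{P}(\zeta,\omega^2 k_1)}\big)^{-\tilde\sigma_3/2}$, each diagonal factor is controlled separately. By \eqref{d0estimate}, $|d_0^{\pm 1/2}| = e^{\pm\pi\nu}$, which is bounded uniformly for $\zeta \in \mathcal{I}$ because $\nu = \nu(k_1(\zeta))$ is a continuous function of $\zeta$ on the compact set $\mathcal{I}$ (indeed $\nu \le 0$, so $e^{\pi\nu} \le 1$ and $e^{-\pi\nu}$ is bounded by compactness). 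The factor $\tilde r(k_1)^{\pm 1/4}$ is positive and bounded since $k_1 = k_1(\zeta)$ stays on the arc $\arg k \in (\tfrac{\pi}{2},\tfrac{2\pi}{3})$ bounded away from the zeros $\pm\omega$ and poles $\pm\omega^2$ of $\tilde r$ for $\zeta$ in a compact subset of $(1,\infty)$. Crucially $\Phi_{21}(\zeta,k_1)$ is purely imaginary — $k_1$ lies on $|k|=1$ where $\re\Phi_{21} \equiv 0$, as used throughout Section~\ref{n1ton2sec} — so $|e^{-\tfrac{t}{2}\Phi_{21}(\zeta,k_1)\tilde\sigma_3}| = 1$; this is the point where one must invoke $|k_1|=1$ from \eqref{def of kj}. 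Finally $\big|\big(\tfrac{\mathcal{P}(\zeta,\omega k_1)}{\mathcal{P}(\zeta,\omega^2 k_1)}\big)^{\pm 1/2}\big|$ is bounded because $\omega k_1, \omega^2 k_1 \in \partial\D$ and the symmetry \eqref{mathcalPDelta33symm} gives $|\mathcal{P}(\zeta,k)| = 1$ for $|k|=1$ (using $\overline{\mathcal{P}(\zeta,\bar k)}^{-1} = \mathcal{P}(\zeta,1/k) = \mathcal{P}(\zeta,k)$ when $\bar k = 1/k$), and also because neither $\omega k_1$ nor $\omega^2 k_1$ coincides with a point of $\hat{\mathsf{Z}}$ for $\zeta \in \mathcal{I}$, so no zero or pole of $\mathcal{P}$ is hit.

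For \eqref{d1estimate}, I would start from the formula for $d_1(\zeta,k)$ just above the definition of $Y(\zeta,t)$ and observe that at $k = k_1$ it reduces exactly to $\tfrac{\mathcal{P}(\zeta,\omega k_1)}{\mathcal{P}(\zeta,\omega^2 k_1)}$ (the factor $\hat z^{-2i\nu} e^{2(\chi(\zeta,k)-\chi(\zeta,k_1))}$ and the ratio of $\delta$-products both evaluate to $1$ at $k=k_1$, since $\hat z(\zeta,k_1)=1$). So $d_1(\zeta,k) - \tfrac{\mathcal{P}(\zeta,\omega k_1)}{\mathcal{P}(\zeta,\omega^2 k_1)}$ is a difference of the value of a function at $k$ minus its value at $k_1$, and I would bound it by a Lipschitz-type estimate. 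Each ingredient is $C^1$ near $k_1$ except $e^{2(\chi(\zeta,k)-\chi(\zeta,k_1))}$ and $\hat z^{-2i\nu}$: the first is handled by Lemma~\ref{IIbis deltalemma}$(\ref{deltapartd})$, i.e.\ \eqref{II asymp chi at k1}, which gives $|\chi(\zeta,k)-\chi(\zeta,k_1)| \le C|k-k_1|(1+|\ln|k-k_1||)$ and hence the same bound for $e^{2(\chi(\zeta,k)-\chi(\zeta,k_1))} - 1$; the factor $\hat z^{-2i\nu} - 1 = e^{-2i\nu\ln\hat z} - 1$ is $O(|k-k_1|)$ because $\ln\hat z$ is analytic with $\ln\hat z = O(k-k_1)$ as $k\to k_1$ (noted in Section~\ref{localparametrixsec}) and $\nu$ is bounded. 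The remaining factors (the ratio of $\delta$'s evaluated at $\omega k, \omega^2 k, 1/(\omega k), 1/(\omega^2 k)$ and the ratio $\tfrac{\mathcal{P}(\zeta,\omega k)}{\mathcal{P}(\zeta,\omega^2 k)}$) are analytic and nonvanishing in a fixed neighborhood of $k_1$ — here one uses that $\omega k_1$ and $\omega^2 k_1$ are off the jump contour $\Gamma_5^{(2)}$ and off $\hat{\mathsf{Z}}$, so Lemma~\ref{IIbis deltalemma}$(\ref{deltapartb})$ and Lemma~\ref{Deltalemma} apply — hence they are Lipschitz with a $\zeta$-uniform constant on $\mathcal{X}^\epsilon$. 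Combining these via the product rule (writing the difference of products as a telescoping sum) and absorbing the dominant $|k-k_1|(1+|\ln|k-k_1||)$ term yields \eqref{d1estimate}. The main obstacle, and the only place real care is needed, is the branch bookkeeping in \eqref{d0estimate}: getting the six arguments $\arg_i$ of $\delta$ at the sixth-root-of-unity–related points right so that the product telescopes to precisely $e^{2\pi\nu}$ rather than $e^{2\pi\nu + 2\pi i n}$ for some stray integer $n$; this is routine but must be done explicitly using the cut structure described in Lemma~\ref{IIbis deltalemma}$(\ref{deltaparta})$.
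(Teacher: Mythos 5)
Your proof is correct and follows essentially the same route as the paper, which computes $|d_0|$ by evaluating each of the $\delta$-moduli via the constant formula \eqref{|delta| is constant on a part of the unit circle} and combining with $\re\chi(\zeta,k_1)$ from \eqref{re chi on part of the unit circle}, while handling \eqref{Ybound} and \eqref{d1estimate} with the one-line citation ``follow from Lemma \ref{IIbis deltalemma}'' --- the arguments you supply are exactly what that citation abbreviates. One cosmetic remark: your worry that the product might come out as $e^{2\pi\nu + 2\pi i n}$ for a stray integer $n$ is vacuous (that equals $e^{2\pi\nu}$), and in fact no branch bookkeeping is needed for the moduli at all, since $\partial\D\setminus\Gamma_5^{(2)}$ is a single connected arc on which $|\delta|$ takes a single constant value; the $\arg_i$-dependence cancels inside the derivation of \eqref{|delta| is constant on a part of the unit circle} before the product is ever formed.
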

\begin{proof}
The estimates \eqref{Ybound} and \eqref{d1estimate} follow from Lemma \ref{IIbis deltalemma}. Let us prove \eqref{d0estimate}. Since $\nu \in \mathbb{R}$, by \eqref{d0def} we have
\begin{align}\label{lol2}
|d_{0}(\zeta,t)| = e^{2 \, \re\chi(\zeta,k_{1})} e^{2\nu \arg z_{\star}} \left|\frac{\delta(\frac{1}{k_{1}})^{2} \delta(\omega k_{1})\delta(\omega^{2}k_{1})}{\delta(\frac{1}{\omega^{2}k_{1}})\delta(\frac{1}{\omega k_{1}})} \right|,
\end{align}
and we recall that $\arg z_{\star} = \tfrac{\pi}{2}-\arg k_{1}$. Using \eqref{|delta| is constant on a part of the unit circle}, we find
\begin{align}\label{lol1}
\left|\frac{\delta(\frac{1}{k_{1}})^{2} \delta(\omega k_{1})\delta(\omega^{2}k_{1})}{\delta(\frac{1}{\omega^{2}k_{1}})\delta(\frac{1}{\omega k_{1}})} \right| = \exp \bigg( \nu \arg k_{1} - \frac{1}{2\pi} \int_{\Gamma_{5}^{(2)}} \arg s \; d\ln(1+r_1(s)r_{2}(s)) \bigg).
\end{align}
We obtain \eqref{d0estimate} after substituting \eqref{re chi on part of the unit circle} evaluated at $k_1$ and \eqref{lol1} into \eqref{lol2}. 
\end{proof}

We establish in Lemma \ref{lemma:Msol P is unif bounded} that for any small enough $\epsilon>0$, $\hat{M}_{\mathrm{sol}}(x,t,k)$ is uniformly bounded for $t \geq 2$, $\zeta \in \mathcal{I}$, and $k\in \C\setminus \mathcal{D}_{\sol}$, where $\mathcal{D}_{\sol}$ is defined in \eqref{def of Dsol}.
Using this fact, the proof of the following lemma is similar to \cite[Proof of Lemma 9.11]{CLmain}, so we omit it.

\begin{lemma}\label{k0lemma}
For each $t \geq 2$ and $\zeta \in \mathcal{I}$, $m^{k_1}(x,t,k)$ defined in (\ref{mk0def}) is analytic for $k \in D_\epsilon(k_1) \setminus \mathcal{X}^\epsilon$. Furthermore, $m^{k_1}(x,t,k)$ is uniformly bounded for $t \geq 2$, $\zeta \in \mathcal{I}$ and $k \in D_\epsilon(k_1) \setminus \mathcal{X}^\epsilon$.
On $\mathcal{X}^\epsilon$, $m^{k_1}$ obeys the jump condition $m_+^{k_1} =  m_-^{k_1} v^{k_1}$, where $v^{k_1}$ satisfies
\begin{align}\label{v3vk0estimate}
\begin{cases}
 \| v^{(4)} - v^{k_1} \|_{L^1(\mathcal{X}^\epsilon)} \leq C t^{-1} \ln t,
	\\
\| v^{(4)} - v^{k_1} \|_{L^\infty(\mathcal{X}^\epsilon)} \leq C t^{-1/2} \ln t,
\end{cases} \qquad \zeta \in \mathcal{I}, \ t \geq 2.
\end{align}
Furthermore, as $t \to \infty$,
\begin{align}\label{mmodmuestimate2}
& \| m^{k_1}(x,t,\cdot) - I \|_{L^\infty(\partial D_\epsilon(k_1))} = O(t^{-1/2}),
	\\ \label{mmodmuestimate1}
& m^{k_1} - I =  \frac{\hat{M}_{\mathrm{sol}} Y m_1^X Y^{-1} \hat{M}_{\mathrm{sol}}^{-1}}{z_{\star}\sqrt{t} (k-k_{1}) \hat{z}(\zeta,k)}  + O(t^{-1}),
\end{align}
uniformly for $\zeta \in \mathcal{I}$, and $m_1^X=m_1^X(q)$ is given by \eqref{mXasymptotics}.
\end{lemma}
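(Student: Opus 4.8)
The plan is to derive all four assertions directly from the definition (\ref{mk0def}), which exhibits $m^{k_1}$ as an analytic conjugation of the model solution $m^X$ by $\hat M_{\mathrm{sol}}(x,t,k)$ and by the $k$-independent matrix $Y(\zeta,t)$, using four inputs: (i) the uniform boundedness of $\hat M_{\mathrm{sol}}^{\pm1}$ on $D_\epsilon(k_1)$ for $\epsilon$ small (Lemma \ref{lemma:Msol P is unif bounded}, noting that $\det\hat M_{\mathrm{sol}}=\mathcal{P}(\zeta,\omega k)\mathcal{P}(\zeta,\omega^{2}k)\mathcal{P}(\zeta,k)$ stays bounded away from $0$ on $D_\epsilon(k_1)$ since this disk avoids the zeros and poles of $\mathcal{P}$ uniformly for $\zeta\in\mathcal{I}$); (ii) the bounds (\ref{Ybound}) and (\ref{d1estimate}); (iii) the decomposition Lemma \ref{IIbis decompositionlemma}; and (iv) the properties of $m^X$ from Lemma \ref{IIbis Xlemma 3} in Appendix \ref{modelapp}, namely its uniform boundedness for $q$ in a compact set (which contains $\{-\tilde r(k_1)^{-1/2}r_2(k_1):\zeta\in\mathcal{I}\}$, since $k_1=k_1(\zeta)$ ranges over a compact subarc of $\{\arg k\in(\tfrac{\pi}{2},\tfrac{2\pi}{3})\}$ on which $\tilde r>0$) and the large-$z$ expansion $m^X(q,z)=I+m_1^X(q)z^{-1}+O(z^{-2})$. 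For the first assertions: $\hat M_{\mathrm{sol}}(x,t,\cdot)$ has no jump across $\Gamma^{(4)}$ and its only singularities lie in $\hat{\mathsf{Z}}$, which is at a positive distance from $k_1$ uniformly for $\zeta\in\mathcal{I}$, so $\hat M_{\mathrm{sol}}^{\pm1}$ are analytic and bounded on $D_\epsilon(k_1)$; $Y(\zeta,t)$ is constant in $k$; and since $k\mapsto z(\zeta,t,k)$ is conformal on $D_\epsilon(k_1)$ and maps $\mathcal{X}^\epsilon$ onto a portion of the model contour (respecting orientations), $k\mapsto m^X(q,z(\zeta,t,k))$ is analytic and bounded on $D_\epsilon(k_1)\setminus\mathcal{X}^\epsilon$; hence $m^{k_1}$ has the same two properties.

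For the jump, observe that $\hat M_{\mathrm{sol}}Y$ is analytic across $\mathcal{X}^\epsilon$, so (\ref{mk0def}) gives $v^{k_1}=\hat M_{\mathrm{sol}}Y\,\tilde v^X_{\mathcal{X}^\epsilon}\,Y^{-1}\hat M_{\mathrm{sol}}^{-1}$ on $\mathcal{X}^\epsilon$, where $\tilde v^X_{\mathcal{X}^\epsilon}$ is the pullback under $z=z(\zeta,t,k)$ of the model jump, i.e. the matrices $\tilde v^X_{\mathcal{X}_j^\epsilon}$ displayed above; likewise $v^{(4)}=\hat M_{\mathrm{sol}}Y\,\tilde v\,Y^{-1}\hat M_{\mathrm{sol}}^{-1}$ on $\mathcal{X}^\epsilon$ with $\tilde v$ the jump of $\tilde n$, i.e. the matrices $\tilde v_4,\tilde v_6,\tilde v_7,\tilde v_9$ displayed above. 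Hence $v^{(4)}-v^{k_1}=\hat M_{\mathrm{sol}}Y(\tilde v-\tilde v^X_{\mathcal{X}^\epsilon})Y^{-1}\hat M_{\mathrm{sol}}^{-1}$, and by the boundedness of $\hat M_{\mathrm{sol}}^{\pm1}$ and $Y^{\pm1}$ it suffices to estimate $|\tilde v-\tilde v^X_{\mathcal{X}^\epsilon}|$ on each arm. On $\mathcal{X}_1^\epsilon$ the single nontrivial entry of the difference is, up to the bounded factors $\tilde r(k_1)^{1/2}$ and $z_{(0)}^{-2i\nu}$, equal to $e^{iz^2/2}\big(\tfrac{\mathcal{P}(\zeta,\omega k_1)}{\mathcal{P}(\zeta,\omega^{2}k_1)}d_1(\zeta,k)^{-1}\hat r_{1,a}(x,t,k)-\hat r_1(k_1)\big)$; using $\tfrac{\mathcal{P}(\zeta,\omega k_1)}{\mathcal{P}(\zeta,\omega^{2}k_1)}d_1(\zeta,k)^{-1}=1+O(|k-k_1|(1+|\ln|k-k_1||))$ from (\ref{d1estimate}), the bound $|\hat r_{1,a}(x,t,k)-\hat r_1(k_1)|\le C|k-k_1|e^{\frac{t}{4}|\re \Phi_{21}(\zeta,k)|}$ from Lemma \ref{IIbis decompositionlemma}$(b)$ (with $k_\star=k_1\in\hat{\mathcal{R}}$ and any $N\ge1$), and the identity $\hat r_1(k_1)=r_1(k_1)/(1+r_1(k_1)r_2(k_1))$, this difference is $O\big(|k-k_1|(1+|\ln|k-k_1||)e^{\frac{t}{4}|\re \Phi_{21}(\zeta,k)|}|e^{iz^2/2}|\big)$, and the analogous estimate holds on $\mathcal{X}_2^\epsilon,\mathcal{X}_3^\epsilon,\mathcal{X}_4^\epsilon$ (with $r_{2,a},r_{1,a},\hat r_{2,a}$ in place of $\hat r_{1,a}$, their decompositions coming from (\ref{r1r2 relation with kbar symmetry}), and $e^{\pm iz^2/2}$ in place of $e^{iz^2/2}$). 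The key observation is that $|k_1|=1$ forces $\re \Phi_{21}(\zeta,k_1)=0$, so $t(\Phi_{21}(\zeta,k)-\Phi_{21}(\zeta,k_1))=-iz^2/2$ yields $\tfrac{t}{4}|\re \Phi_{21}(\zeta,k)|=\tfrac{1}{8}|\im(z^2)|$, and along each arm the relevant factor $e^{\pm iz^2/2}$ is the decaying one, so $e^{\frac{t}{4}|\re \Phi_{21}|}|e^{\pm iz^2/2}|\le e^{-c|z|^2}$ for a fixed $c>0$; hence every entry of $\tilde v-\tilde v^X_{\mathcal{X}^\epsilon}$ is $O\big(|k-k_1|(1+|\ln|k-k_1||)e^{-c|z|^2}\big)$. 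Writing $s=|k-k_1|$ and using that $|z|$ is comparable to $\sqrt t\,s$ on $\mathcal{X}^\epsilon$ (since $|\hat z|$ is bounded above and below), the substitution $u=\sqrt t\,s$ gives $\sup_{0<s\le\epsilon}s(1+|\ln s|)e^{-cts^2}=O(t^{-1/2}\ln t)$ and $\int_0^\epsilon s(1+|\ln s|)e^{-cts^2}\,ds=O(t^{-1}\ln t)$, which is (\ref{v3vk0estimate}).

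Finally, on $\partial D_\epsilon(k_1)$ one has $|z|=|z_\star|\sqrt t\,\epsilon\,|\hat z|$ comparable to $\sqrt t$, so inserting the large-$z$ expansion of $m^X$ into (\ref{mk0def}) gives
\begin{align*}
m^{k_1}-I &=\hat M_{\mathrm{sol}}Y\Big(\tfrac{m_1^X(q)}{z}+O(z^{-2})\Big)Y^{-1}\hat M_{\mathrm{sol}}^{-1} \\
&=\frac{\hat M_{\mathrm{sol}}Y m_1^X(q)Y^{-1}\hat M_{\mathrm{sol}}^{-1}}{z_\star\sqrt t\,(k-k_1)\hat z(\zeta,k)}+O(t^{-1})
\end{align*}
uniformly for $\zeta\in\mathcal{I}$, which is (\ref{mmodmuestimate1}); taking $L^\infty$-norms over $\partial D_\epsilon(k_1)$ and using the boundedness of $\hat M_{\mathrm{sol}}^{\pm1}$, $Y^{\pm1}$ and $m_1^X$ gives (\ref{mmodmuestimate2}). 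The argument follows the pattern of \cite[Proof of Lemma 9.11]{CLmain}; the one step genuinely requiring care is the interplay just described between the exponentially growing error factor $e^{\frac{t}{4}|\re \Phi_{21}|}$ supplied by the decomposition lemma and the Gaussian decay $e^{\pm iz^2/2}$ along the arms of $\mathcal{X}^\epsilon$, after which all the $L^1$ and $L^\infty$ bounds are routine.
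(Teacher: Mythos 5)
Your proof is correct and follows essentially the same route the paper intends — the paper explicitly omits this proof, saying it "is similar to [Proof of Lemma 9.11, CLmain]" once the uniform boundedness of $\hat{M}_{\mathrm{sol}}^{\pm1}$ from Lemma \ref{lemma:Msol P is unif bounded} is in hand, and that is exactly the standard local-parametrix scheme you carry out: conjugation by the analytic, uniformly bounded factor $\hat{M}_{\mathrm{sol}}Y$, pullback of the model jump under the conformal change of variable $z$, termwise comparison of $\tilde{v}$ with $\tilde{v}^X$ using Lemma \ref{IIbis decompositionlemma} and estimate (\ref{d1estimate}), the cancellation $e^{\frac{t}{4}|\re\Phi_{21}|}|e^{\pm iz^2/2}|\le e^{-c|z|^2}$ made possible by $\re\Phi_{21}(\zeta,k_1)=0$, and insertion of the large-$z$ expansion of $m^X$ on $\partial D_\epsilon(k_1)$. (A minor remark: $\det\hat{M}_{\mathrm{sol}}\equiv 1$ identically, since $\det M_{\mathrm{sol}}\equiv 1$ by Lemma \ref{RHSlemma} and $\det\mathsf{P}\equiv 1$; your observation that it stays bounded away from zero is correct but the stronger fact is already stated in the paper, and in any case the boundedness of the inverse is contained directly in Lemma \ref{lemma:Msol P is unif bounded}.)
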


\section{The $n^{(4)}\to \hat{n}$ transformation}\label{n3tonhatsec}

We use the symmetries
\begin{align*}
m^{k_1}(x,t,k) = \mathcal{A} m^{k_1}(x,t,\omega k)\mathcal{A}^{-1} = \mathcal{B} m^{k_1}(x,t,k^{-1}) \mathcal{B}
\end{align*}
to extend the domain of definition of $m^{k_1}$ from $D_\epsilon(k_1)$ to $\mathcal{D}$, where $\mathcal{D} := D_\epsilon(k_1) \cup \omega D_\epsilon(k_1) \cup \omega^2 D_\epsilon(k_1) \cup D_\epsilon(k_1^{-1}) \cup \omega D_\epsilon(k_1^{-1}) \cup \omega^2 D_\epsilon(k_1^{-1})$. 
We will show that
\begin{align}\label{Sector I final transfo}
\hat{n} :=
\begin{cases}
n^{(4)} (m^{k_1})^{-1}, & k \in \mathcal{D}, \\
n^{(4)}, & \text{elsewhere},
\end{cases}
\end{align}
is close to $(1,1,1)$ for large $t$ and $\zeta \in \mathcal{I}$. 
Let $\hat{\Gamma} = \Gamma^{(4)} \cup \partial \mathcal{D}$ be the contour displayed in Figure \ref{fig:Gammahat}, where each circle that is part of $\partial \mathcal{D}$ is oriented clockwise, and define
\begin{align}\label{def of vhat II}
\hat{v}= \begin{cases}
v^{(4)}, & k \in \hat{\Gamma} \setminus \bar{\mathcal{D}},
	\\
m^{k_1}, & k \in \partial \mathcal{D},
	\\
m_-^{k_1} v^{(4)}(m_+^{k_1})^{-1}, & k \in \hat{\Gamma} \cap \mathcal{D}.
\end{cases}
\end{align}
Let $\hat{\Gamma}_{\star}$ be the set of self-intersection points of $\hat{\Gamma}$. The matrix $\hat{n}$ satisfies the following RH problem: (a) $\hat{n}:\mathbb{C}\setminus \hat{\Gamma}\to \mathbb{C}^{1\times 3}$ is analytic, (b) $\hat{n}_{+} = \hat{n}_{-}\hat{v}$ for $k \in \hat{\Gamma}\setminus \hat{\Gamma}_{\star}$, (c) $\hat{n}(x,t,k) = (1,1,1)+O(k^{-1})$ as $k \to \infty$, and (d) $\hat{n}(x,t,k)=O(1)$ as $k\to k_{\star}\in \hat{\Gamma}_{\star}$.

\begin{figure}
\begin{center}
\begin{tikzpicture}[master]
\node at (0,0) {};
\draw[black,line width=0.5 mm] (0,0)--(30:7.5);
\draw[black,line width=0.5 mm,->-=0.25,->-=0.57,->-=0.71,->-=0.91] (0,0)--(90:7.5);
\draw[black,line width=0.5 mm] (0,0)--(150:7.5);
\draw[dashed,black,line width=0.15 mm] (0,0)--(60:7.5);
\draw[dashed,black,line width=0.15 mm] (0,0)--(120:7.5);

\draw[black,line width=0.5 mm] ([shift=(30:3*1.5cm)]0,0) arc (30:150:3*1.5cm);
\draw[black,arrows={-Triangle[length=0.27cm,width=0.18cm]}]
($(73:3*1.5)$) --  ++(-17:0.001);
\draw[black,arrows={-Triangle[length=0.21cm,width=0.14cm]}]
($(116.5:3*1.5)$) --  ++(116+90:0.001);

\draw[black,line width=0.5 mm] ([shift=(30:3*1.5cm)]0,0) arc (30:150:3*1.5cm);

\draw[black,arrows={-Triangle[length=0.27cm,width=0.18cm]}]
($(97:3*1.5)$) --  ++(97-90:0.001);

\draw[black,line width=0.5 mm] (150:3.65)--($(129.688:3*1.5)+(129.688+135:0.5)$)--(129.688:3*1.5)--($(129.688:3*1.5)+(129.688+45:0.5)$)--(150:5.8);
\draw[black,line width=0.5 mm,-<-=0.13,->-=0.78] (90:3.65)--($(110.3:3*1.5)+(110.3-135:0.5)$)--(110.3:3*1.5)--($(110.3:3*1.5)+(110.3-45:0.5)$)--(90:5.8);
\draw[black,line width=0.5 mm,-<-=0.08,->-=0.81] (120:3.65)--($(110.3:3*1.5)+(110.3+135:0.5)$)--(110.3:3*1.5)--($(110.3:3*1.5)+(110.3+45:0.5)$)--(120:5.8);
\draw[black,line width=0.5 mm] (120:3.65)--($(129.688:3*1.5)+(129.688-135:0.5)$)--(129.688:3*1.5)--($(129.688:3*1.5)+(129.688-45:0.5)$)--(120:5.8);

\draw[black,line width=0.5 mm,->-=0.6] (60:4.5)--(60:5.8);
\draw[black,line width=0.5 mm,->-=0.75] (60:3.65)--(60:4.5);
\draw[black,line width=0.5 mm,->-=0.6] (120:4.5)--(120:5.8);
\draw[black,line width=0.5 mm,->-=0.75] (120:3.65)--(120:4.5);

\draw[black,line width=0.5 mm,-<-=0.70] ([shift=(30:3.65cm)]0,0) arc (30:90:3.65cm);
\draw[black,line width=0.5 mm,-<-=0.71] ([shift=(30:5.8cm)]0,0) arc (30:90:5.8cm);

\draw[black,line width=0.5 mm] (129.688:3.65)--(129.688:5.8);
\draw[black,line width=0.5 mm,-<-=0.17,->-=0.84] (110.3:3.65)--(110.3:5.8);

\draw[black,line width=0.5 mm] ([shift=(30:3.65cm)]0,0) arc (30:150:3.65cm);
\draw[black,line width=0.5 mm] ([shift=(30:5.8cm)]0,0) arc (30:150:5.8cm);

\draw[green,fill] (129.688:3*1.5) circle (0.12cm);
\draw[blue,fill] (110.3:3*1.5) circle (0.12cm);

\draw[black,line width=0.5 mm] (129.688:3*1.5) circle (0.5cm);
\draw[black,line width=0.5 mm] (110.3:3*1.5) circle (0.5cm);
\end{tikzpicture}
\end{center}
\begin{figuretext}
\label{fig:Gammahat}
The contour $\hat{\Gamma}=\Gamma^{(4)}\cup \partial \mathcal{D}$ for $\arg k \in [\frac{\pi}{6},\frac{5\pi}{6}]$ (solid), the boundary of $\mathsf{S}$ (dashed) and, from right to left, the saddle points $k_{1}$ (blue) and $\omega^{2}k_{2}$ (green).
\end{figuretext}
\end{figure}

\begin{lemma}\label{whatlemma}
Let $\hat{w} = \hat{v}-I$. The following estimates hold uniformly for $t \geq 2$ and $\zeta \in \mathcal{I}$:
\begin{subequations}\label{hatwestimate}
\begin{align}\label{hatwestimate1}
& \| \hat{w}\|_{(L^1\cap L^\infty)(\hat\Gamma \setminus (\partial \mathcal{D} \cup \hat{\mathcal{X}}^\epsilon))} \leq C t^{-1},
	\\\label{hatwestimate3}
& \| \hat{w}\|_{(L^1\cap L^{\infty})(\partial \mathcal{D})} \leq C t^{-1/2},	\\\label{hatwestimate4}
& \| \hat{w}\|_{L^1(\hat{\mathcal{X}}^\epsilon)} \leq C t^{-1}\ln t,
	\\\label{hatwestimate5}
& \| \hat{w}\|_{L^\infty(\hat{\mathcal{X}}^\epsilon)} \leq C t^{-1/2}\ln t.
\end{align}
\end{subequations}
\end{lemma}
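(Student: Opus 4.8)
The plan is to estimate $\hat w:=\hat v-I$ piece by piece, following the three cases of the definition \eqref{def of vhat II}, and to use the $\mathcal A$- and $\mathcal B$-symmetries of $v^{(4)}$ and of the local parametrices to reduce every bound to the component of $\hat\Gamma$ attached to the saddle $k_1$; the bound then transfers verbatim to the five other saddle points $\{\omega^j k_1,\omega^j k_2\}_{j=0}^2$. Throughout, $\epsilon>0$ is fixed so small that the six disks making up $\bar{\mathcal D}$ are pairwise disjoint, are disjoint from $\hat{\mathsf Z}$, and are disjoint from the set $\mathcal D_{\mathrm{sol}}$ of \eqref{def of Dsol}, and so that Lemmas \ref{k0lemma} and \ref{lemma:Msol P is unif bounded} apply; we may also arrange the contour $\Gamma^{(4)}$ to avoid $\hat{\mathsf Z}$, so that $\hat M_{\mathrm{sol}}$ is analytic across $\Gamma^{(4)}=\Gamma^{(3)}$.

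First I would treat $\hat\Gamma\setminus(\partial\mathcal D\cup\hat{\mathcal X}^\epsilon)$. Off the disks one has $\hat v=v^{(4)}$, while on the finitely many arcs of $\hat\Gamma$ lying inside $\mathcal D$ but off the crosses the model jump $v^{k_1}$ is trivial, so there $\hat v=m^{k_1}v^{(4)}(m^{k_1})^{-1}$; in both cases $\hat w$ is a uniformly bounded conjugate of $v^{(4)}-I$, using the uniform boundedness of $m^{k_1}$ and $(m^{k_1})^{-1}$ from Lemma \ref{k0lemma}. Since $\hat M_{\mathrm{sol}}$ has no jump across $\Gamma^{(4)}$, the jump relation coming from \eqref{n4def} gives $v^{(4)}-I=\hat M_{\mathrm{sol}}(v^{(3)}-I)\hat M_{\mathrm{sol}}^{-1}$; combined with the uniform boundedness of $\hat M_{\mathrm{sol}}^{\pm1}$ away from $\mathcal D_{\mathrm{sol}}$ (Lemma \ref{lemma:Msol P is unif bounded}) this yields $|v^{(4)}-I|\le C|v^{(3)}-I|$ on this set, and \eqref{II v3estimatesa} of Lemma \ref{II v3lemma} then produces \eqref{hatwestimate1}. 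On $\partial\mathcal D$ we have $\hat v=m^{k_1}$ on $\partial D_\epsilon(k_1)$, and the corresponding $\mathcal A,\mathcal B$-images on the other five circles, so $\hat w=m^{k_1}-I$ there; \eqref{mmodmuestimate2} gives $\|\hat w\|_{L^\infty(\partial\mathcal D)}=O(t^{-1/2})$, and since $\partial\mathcal D$ has finite total length the same bound holds in $L^1$, proving \eqref{hatwestimate3}.

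It remains to bound $\hat w$ on the six crosses $\hat{\mathcal X}^\epsilon$. On $\mathcal X^\epsilon$ we have $\hat v=m_-^{k_1}v^{(4)}(m_+^{k_1})^{-1}$ by \eqref{def of vhat II}, and inserting the jump relation $m_+^{k_1}=m_-^{k_1}v^{k_1}$ of Lemma \ref{k0lemma} gives
\begin{align*}
\hat w=m_-^{k_1}\big(v^{(4)}(v^{k_1})^{-1}-I\big)(m_-^{k_1})^{-1}=m_-^{k_1}\big(v^{(4)}-v^{k_1}\big)(v^{k_1})^{-1}(m_-^{k_1})^{-1}.
\end{align*}
Because $m_-^{k_1}$, $(m_-^{k_1})^{-1}$ and $(v^{k_1})^{-1}$ are uniformly bounded — the last since, by the construction \eqref{mk0def}, $v^{k_1}$ is a bounded conjugate of the model jumps $\tilde v^{X}_{\mathcal X^\epsilon}$, whose entries are manifestly bounded on $\mathcal X^\epsilon$ — we obtain $|\hat w|\le C|v^{(4)}-v^{k_1}|$ pointwise on $\mathcal X^\epsilon$, hence $\|\hat w\|_{L^p(\mathcal X^\epsilon)}\le C\|v^{(4)}-v^{k_1}\|_{L^p(\mathcal X^\epsilon)}$ for $p=1,\infty$. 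Feeding in the two estimates of \eqref{v3vk0estimate} and passing from $\mathcal X^\epsilon$ to all of $\hat{\mathcal X}^\epsilon$ by the symmetries yields \eqref{hatwestimate4} and \eqref{hatwestimate5}, completing the proof.

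The genuinely hard work has already been done in the preceding sections: the main obstacle is the construction of the local parametrix $m^{k_1}$ together with the error estimates \eqref{mmodmuestimate2} and \eqref{v3vk0estimate} of Lemma \ref{k0lemma} (which rely on Lemma \ref{IIbis decompositionlemma} and the $d_1$-estimate \eqref{d1estimate}), and the uniform boundedness of $\hat M_{\mathrm{sol}}$ away from $\mathcal D_{\mathrm{sol}}$. Granted these, the present lemma is essentially bookkeeping: one must only keep track of which of the three formulas in \eqref{def of vhat II} is in force on each piece of $\hat\Gamma$, and check that the conjugating factors $\hat M_{\mathrm{sol}}$, $m^{k_1}_\pm$, $v^{k_1}$ are uniformly invertible there.
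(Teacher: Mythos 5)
Your proof is correct and takes essentially the same route as the paper: you treat each of the three regions in the definition of $\hat v$ separately, reducing \eqref{hatwestimate1} to Lemma~\ref{II v3lemma} and the uniform boundedness of the conjugating factors $m^{k_1}$ and $\hat M_{\mathrm{sol}}$, \eqref{hatwestimate3} to \eqref{mmodmuestimate2}, and \eqref{hatwestimate4}--\eqref{hatwestimate5} to \eqref{v3vk0estimate} via the algebraic identity $\hat w = m_-^{k_1}(v^{(4)}-v^{k_1})(v^{k_1})^{-1}(m_-^{k_1})^{-1}$. The paper's own proof is a three-sentence citation of the same ingredients; you simply spell out the intermediate steps that it leaves implicit, in particular the conjugation $v^{(4)}-I = \hat M_{\mathrm{sol}}(v^{(3)}-I)\hat M_{\mathrm{sol}}^{-1}$ and the factoring through $v^{k_1}$ on the crosses.
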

\begin{proof}
Note that $\hat{\Gamma} \setminus (\partial \mathcal{D} \cup \hat{\mathcal{X}}^\epsilon) = \Gamma^{(4)}\setminus \hat{\mathcal{X}}^\epsilon$. Hence \eqref{hatwestimate1} follows from \eqref{def of vhat II}, Lemma \ref{II v3lemma}, and the uniform boundedness of $m^{k_{1}}$. The inequality \eqref{hatwestimate3} is a consequence of \eqref{def of vhat II} and \eqref{mmodmuestimate2}. The estimates \eqref{hatwestimate4} and \eqref{hatwestimate5} follow from \eqref{def of vhat II}, \eqref{v3vk0estimate}, and the uniform boundedness of $m^{k_1}$.
\end{proof}
We define the Cauchy transform $\hat{\mathcal{C}}h$ of a function $h$ defined on $\hat{\Gamma}$ by
\begin{align}\label{Cauchyoperatordef}
(\hat{\mathcal{C}}h)(z) = \frac{1}{2\pi i} \int_{\hat{\Gamma}} \frac{h(z')dz'}{z' - z}, \qquad z \in \C \setminus \hat{\Gamma}.
\end{align}
The estimates in Lemma \ref{whatlemma} show that
\begin{align}\label{hatwLinfty}
\begin{cases}
\|\hat{w}\|_{L^1(\hat{\Gamma})}\leq C t^{-1/2},
	\\
\|\hat{w}\|_{L^\infty(\hat{\Gamma})}\leq C t^{-1/2}\ln t,
\end{cases}	 \qquad t \geq 2, \ \zeta \in \mathcal{I},
\end{align}
and hence, employing the inequality $\| f \|_{L^p} \leq \| f \|_{L^1}^{1/p}\|f \|_{L^{\infty}}^{(p-1)/p}$, we get
\begin{align}\label{Lp norm of what}
& \|\hat{w}\|_{L^p(\hat{\Gamma})} 
\leq C t^{-\frac{1}{2}} (\ln t)^{\frac{p-1}{p}},  \qquad t \geq 2, \ \zeta \in \mathcal{I},
\end{align}
for each $1 \leq p \leq \infty$. From \eqref{Lp norm of what}, we infer that
$\hat{w} \in L^{2}(\hat{\Gamma}) \cap L^{\infty}(\hat{\Gamma})$, and thus the operator $\hat{\mathcal{C}}_{\hat{w}}=\hat{\mathcal{C}}_{\hat{w}(x,t,\cdot)}: L^{2}(\hat{\Gamma})+L^{\infty}(\hat{\Gamma}) \to L^{2}(\hat{\Gamma})$, $h \mapsto \hat{\mathcal{C}}_{\hat{w}}h := \hat{\mathcal{C}}_{-}(h \hat{w})$ is well-defined. Let $\mathcal{B}(L^{2}(\hat{\Gamma}))$ be the space of bounded linear operators on $L^{2}(\hat{\Gamma})$. We deduce from \eqref{Lp norm of what} that there exists $T > 0$ such that $I - \hat{\mathcal{C}}_{\hat{w}(x, t, \cdot)} \in \mathcal{B}(L^{2}(\hat{\Gamma}))$ is invertible for all $t \geq T$ and $\zeta \in \mathcal{I}$. Hence $\hat{n}$ is the solution of a small-norm RH problem, and we have
\begin{align}\label{IIbis hatmrepresentation}
\hat{n}(x, t, k) = (1,1,1) + \hat{\mathcal{C}}(\hat{\mu}\hat{w}) = (1,1,1) + \frac{1}{2\pi i}\int_{\hat{\Gamma}} \hat{\mu}(x, t, s) \hat{w}(x, t, s) \frac{ds}{s - k}
\end{align}
for $t \geq T$ and $\zeta \in \mathcal{I}$, where 
\begin{align}\label{IIbis hatmudef}
\hat{\mu} = (1,1,1) + (I - \hat{\mathcal{C}}_{\hat{w}})^{-1}\hat{\mathcal{C}}_{\hat{w}}(1,1,1) \in (1,1,1) + L^{2}(\hat{\Gamma}).
\end{align}
Furthermore, using \eqref{Lp norm of what}, \eqref{IIbis hatmudef}, and the fact that $\|\hat{\mathcal{C}}_{-}\|_{\mathcal{B}(L^{p}(\hat{\Gamma}))}< \infty$ for $p \in (1,\infty)$, we find that for any $p \in (1,\infty)$ there exists $C_{p}>0$ such that
\begin{align}\label{estimate on mu}
& \|\hat{\mu} - (1,1,1)\|_{L^p(\hat{\Gamma})} \leq  C_{p} t^{-\frac{1}{2}}(\ln t)^{\frac{p-1}{p}}
\end{align}
holds for all $\zeta \in \mathcal{I}$ and large enough $t$. 

Let us now obtain the long-time asymptotics of $\hat{n}$. Define $\hat{n}^{(1)}$ as the nontangential limit
\begin{align*}
& \hat{n}^{(1)}(x,t):=\ntlim_{k\to \infty} k(\hat{n}(x,t,k) - (1,1,1))
= - \frac{1}{2\pi i}\int_{\hat{\Gamma}} \hat{\mu}(x,t,k) \hat{w}(x,t,k) dk.
\end{align*}
\begin{lemma}\label{lemma: hatmplp asymp sector I}
As $t \to \infty$, 
\begin{align}\label{limlhatm}
& \hat{n}^{(1)}(x,t) = -\frac{(1,1,1)}{2\pi i}\int_{\partial \mathcal{D}} \hat{w}(x,t,k) dk + O(t^{-1}\ln t).
\end{align}
\end{lemma}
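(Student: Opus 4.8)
The plan is to start from the exact representation
$$\hat{n}^{(1)}(x,t) = - \frac{1}{2\pi i}\int_{\hat{\Gamma}} \hat{\mu}(x,t,k) \hat{w}(x,t,k)\, dk$$
obtained above, and to replace $\hat{\mu}$ by $(1,1,1)$ at the cost of a controllable error. Writing $\hat{\mu} = (1,1,1) + (\hat{\mu} - (1,1,1))$ splits the right-hand side into a leading term $-\frac{(1,1,1)}{2\pi i}\int_{\hat{\Gamma}} \hat{w}\, dk$ and a remainder $-\frac{1}{2\pi i}\int_{\hat{\Gamma}} (\hat{\mu} - (1,1,1))\hat{w}\, dk$.

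For the remainder I would apply the Cauchy--Schwarz inequality together with the $L^2$ bounds already established: by \eqref{estimate on mu} we have $\|\hat{\mu} - (1,1,1)\|_{L^2(\hat{\Gamma})} \leq C t^{-1/2}(\ln t)^{1/2}$, and by \eqref{Lp norm of what} (with $p=2$) we have $\|\hat{w}\|_{L^2(\hat{\Gamma})} \leq C t^{-1/2}(\ln t)^{1/2}$, so the product is $O(t^{-1}\ln t)$. Hence the remainder is absorbed into the error term of \eqref{limlhatm}.

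For the leading term I would decompose $\hat{\Gamma}$ into the three pieces $\partial \mathcal{D}$, $\Gamma^{(4)}\setminus \hat{\mathcal{X}}^\epsilon = \hat{\Gamma}\setminus(\partial \mathcal{D} \cup \hat{\mathcal{X}}^\epsilon)$, and $\hat{\mathcal{X}}^\epsilon$. The contribution of $\Gamma^{(4)}\setminus \hat{\mathcal{X}}^\epsilon$ is $O(t^{-1})$ in absolute value by the $L^1$ bound \eqref{hatwestimate1}, and the contribution of $\hat{\mathcal{X}}^\epsilon$ is $O(t^{-1}\ln t)$ by the $L^1$ bound \eqref{hatwestimate4}. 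Therefore only the integral over $\partial \mathcal{D}$ survives up to an error of order $t^{-1}\ln t$, which gives
$$-\frac{(1,1,1)}{2\pi i}\int_{\hat{\Gamma}} \hat{w}\, dk = -\frac{(1,1,1)}{2\pi i}\int_{\partial \mathcal{D}} \hat{w}\, dk + O(t^{-1}\ln t),$$
and combining this with the remainder estimate yields \eqref{limlhatm}.

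These are routine contour-integral estimates; the only point requiring attention is the bookkeeping of which estimate dominates. The $O(t^{-1}\ln t)$ error comes jointly from the $(\hat{\mu}-(1,1,1))\hat{w}$ term and from the $\hat{\mathcal{X}}^\epsilon$ part of $\int_{\hat{\Gamma}}\hat{w}\,dk$, and one must keep in mind that the $\partial\mathcal{D}$ contribution is only $O(t^{-1/2})$ (by \eqref{hatwestimate3}), so it genuinely has to be retained rather than discarded into the error.
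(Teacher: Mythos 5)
Your proposal is correct and follows essentially the same route as the paper: split $\hat{\mu}=(1,1,1)+(\hat{\mu}-(1,1,1))$, bound the remainder via Cauchy--Schwarz using \eqref{estimate on mu} and \eqref{Lp norm of what} with $p=2$, and dispose of $\int_{\hat{\Gamma}\setminus\partial\mathcal{D}}\hat{w}\,dk$ with the $L^1$ bounds of Lemma \ref{whatlemma}. The paper phrases it slightly more compactly (treating $\hat{\Gamma}\setminus\partial\mathcal{D}$ as one piece), but the ingredients and the bookkeeping are identical.
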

\begin{proof}
Since
\begin{align*}
\hat{n}^{(1)}(x,t) = & -\frac{(1,1,1)}{2\pi i}\int_{\partial \mathcal{D}} \hat{w}(x,t,k) dk  -\frac{(1,1,1)}{2\pi i}\int_{\hat{\Gamma}\setminus\partial \mathcal{D}} \hat{w}(x,t,k) dk
	\\
& -\frac{1}{2\pi i}\int_{\hat{\Gamma}} (\hat{\mu}(x,t,k)-(1,1,1))\hat{w}(x,t,k) dk,
\end{align*}
the statement follows from \eqref{estimate on mu} and Lemma \ref{whatlemma}.
\end{proof}
We define $\{F^{(l)}\}_{l \in \mathbb{Z}}$ by
\begin{align*}
F^{(l)}(\zeta,t) = & - \frac{1}{2\pi i} \int_{\partial D_\epsilon(k_1)} k^{l-1}\hat{w}(x,t,k) dk	
= - \frac{1}{2\pi i}\int_{\partial D_\epsilon(k_1)}k^{l-1} (m^{k_1} - I) dk.
\end{align*}
Using \eqref{mmodmuestimate1} and $\hat{z}(\zeta,k_{1})=1$, and the fact that $\partial D_\epsilon(k_{1})$ is oriented clockwise, we obtain
\begin{align}
F^{(l)}(\zeta, t) &  =  k_{1}^{l-1}\frac{\hat{M}_{\mathrm{sol}}(x,t,k_{1}) Y(\zeta,t) m_1^{X} Y(\zeta,t)^{-1} \hat{M}_{\mathrm{sol}}(x,t,k_{1})^{-1}}{z_{\star}\sqrt{t}} + O( t^{-1}) \nonumber \\
& =  -ik_{1}^{l}Z(\zeta,t) + O( t^{-1}) \label{asymptotics for Fl}
\end{align}
as $t \to \infty$ uniformly for $\zeta \in \mathcal{I}$, where
\begin{align*}
Z(\zeta,t) & = \frac{\hat{M}_{\mathrm{sol}}(x,t,k_{1}) Y(\zeta,t) m_1^{X} Y(\zeta,t)^{-1} \hat{M}_{\mathrm{sol}}(x,t,k_{1})^{-1}}{-ik_{1}z_{\star}\sqrt{t}} 
	 \\
& = \frac{\hat{M}_{\mathrm{sol}}(x,t,k_{1})}{-ik_{1}z_{\star}\sqrt{t}} \begin{pmatrix}
0 & \hspace{-0.15cm}\frac{\beta_{12}\tilde{r}(k_{1})^{-\frac{1}{2}}}{d_{0} e^{t\Phi_{21}(\zeta,k_{1})} } \tfrac{\mathcal{P}(\zeta,\omega^{2} k_{1})}{\mathcal{P}(\zeta,\omega k_{1})} & \hspace{-0.15cm}0 \\
\frac{d_{0}  \tilde{r}(k_{1})^{\frac{1}{2}} \beta_{21}}{e^{-t \Phi_{21}(\zeta,k_{1})}} \tfrac{\mathcal{P}(\zeta,\omega k_{1})}{\mathcal{P}(\zeta,\omega^{2} k_{1})}& \hspace{-0.15cm}0 & \hspace{-0.15cm}0 \\
0 & \hspace{-0.15cm}0 & \hspace{-0.15cm}0
\end{pmatrix}\hat{M}_{\mathrm{sol}}(x,t,k_{1})^{-1}.
\end{align*}
Since $\hat{M}_{\sol} = M_{\mathrm{sol}} \mathsf{P}$ with $\mathsf{P}$ given by (\ref{mathsfPdef}), the expression for $Z$ simplifies to
\begin{align}\label{Zsimplified}
Z(\zeta,t) = \frac{M_{\mathrm{sol}}(x,t,k_{1})}{-ik_{1}z_{\star}\sqrt{t}} \begin{pmatrix}
0 & \hspace{-0.15cm}\frac{\beta_{12}\tilde{r}(k_{1})^{-\frac{1}{2}}}{d_{0} e^{t\Phi_{21}(\zeta,k_{1})} } & \hspace{-0.15cm}0 \\
\frac{d_{0}  \tilde{r}(k_{1})^{\frac{1}{2}} \beta_{21}}{e^{-t \Phi_{21}(\zeta,k_{1})}} & \hspace{-0.15cm}0 & \hspace{-0.15cm}0 \\
0 & \hspace{-0.15cm}0 & \hspace{-0.15cm}0
\end{pmatrix} M_{\mathrm{sol}}(x,t,k_{1})^{-1}.
\end{align}

\begin{lemma}\label{lemma: some integrals by symmetry}
For $l \in \mathbb{Z}$ and $j=0,1,2$, we have
\begin{align}
& -\frac{1}{2\pi i}\int_{\omega^{j} \partial D_\epsilon(k_1)} k^{l}\hat{w}(x,t,k)dk = \omega^{j(l+1)} \mathcal{A}^{-j}F^{(l+1)}(\zeta,t)\mathcal{A}^{j}, \label{int1} \\
& -\frac{1}{2\pi i}\int_{\omega^{j} \partial D_\epsilon(k_1^{-1})} k^{l}\hat{w}(x,t,k)dk = -\omega^{j(l+1)}\mathcal{A}^{-j}\mathcal{B} F^{(-l-1)}(\zeta,t) \mathcal{B}\mathcal{A}^{j}. \label{int1 tilde}
\end{align}
\end{lemma}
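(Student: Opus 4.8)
The plan is to exploit the $\mathcal{A}$- and $\mathcal{B}$-symmetries of $m^{k_1}$ recorded at the beginning of Section~\ref{n3tonhatsec}, together with the definition \eqref{def of vhat II} of $\hat v$ on $\partial\mathcal{D}$, which gives $\hat w = m^{k_1}-I$ on each of the six circles constituting $\partial\mathcal{D}$. The whole lemma is then a direct change-of-variables computation; no new estimates are needed.

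First I would reduce the rotated contours to the base circle. Writing $k=\omega^{j}\tilde k$ with $\tilde k\in\partial D_\epsilon(k_1)$, multiplication by $\omega^{j}$ preserves the (clockwise) orientation, and iterating $m^{k_1}(x,t,k)=\mathcal{A}m^{k_1}(x,t,\omega k)\mathcal{A}^{-1}$ gives $m^{k_1}(x,t,\omega^{j}\tilde k)=\mathcal{A}^{-j}m^{k_1}(x,t,\tilde k)\mathcal{A}^{j}$; since $\mathcal{A}^{-j}I\mathcal{A}^{j}=I$, the same relation holds for $\hat w$ on $\omega^{j}\partial D_\epsilon(k_1)=\partial D_\epsilon(\omega^{j}k_1)$ (and likewise on $\omega^{j}\partial D_\epsilon(k_1^{-1})$). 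Substituting into the integral, the factors $(\omega^{j}\tilde k)^{l}$ and $dk=\omega^{j}\,d\tilde k$ combine to $\omega^{j(l+1)}$, and, pulling $\mathcal{A}^{-j}$ and $\mathcal{A}^{j}$ out of the integral, what is left is exactly $F^{(l+1)}(\zeta,t)$. This proves \eqref{int1}, and the same reduction applied to $\omega^{j}\partial D_\epsilon(k_1^{-1})$ reduces \eqref{int1 tilde} to the case $j=0$, i.e.\ to evaluating $-\frac{1}{2\pi i}\int_{\partial D_\epsilon(k_1^{-1})}k^{l}\hat w(x,t,k)\,dk$.

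For that remaining case I would use the inversion symmetry $\hat w(x,t,k)=\mathcal{B}\hat w(x,t,k^{-1})\mathcal{B}$, which is valid in a full neighborhood of $\partial D_\epsilon(k_1^{-1})$: by the explicit formula \eqref{mk0def}, $m^{k_1}$, and hence this symmetry, extends analytically slightly beyond $D_\epsilon(k_1)$ and its image under $k\mapsto k^{-1}$, provided $\epsilon$ is small enough that $z(\zeta,t,\cdot)$ remains conformal and $\hat M_{\mathrm{sol}}$ remains pole-free there. After pulling $\mathcal{B}$ out of the integral, I would deform $\partial D_\epsilon(k_1^{-1})$ to the image of $\partial D_\epsilon(k_1)$ under $k\mapsto k^{-1}$ (Cauchy's theorem: both contours wind once clockwise about $k_1^{-1}$, and for $\epsilon$ small no singularity of $k^{l}\hat w(x,t,k^{-1})$ lies between them), then change variables $k=w^{-1}$ with $dk=-w^{-2}\,dw$, the inversion being orientation preserving. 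The power $k^{l}=w^{-l}$ together with the Jacobian $-w^{-2}$ produces the integrand $-w^{-l-2}\hat w(x,t,w)=-w^{(-l-1)-1}\hat w(x,t,w)$; recognizing $-\frac{1}{2\pi i}\int_{\partial D_\epsilon(k_1)}w^{(-l-1)-1}\hat w\,dw=F^{(-l-1)}(\zeta,t)$ and keeping track of the extra minus sign yields $-\mathcal{B}F^{(-l-1)}(\zeta,t)\mathcal{B}$. Combining with the $j$-reduction above gives \eqref{int1 tilde}.

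The argument is elementary; the only thing that requires care is the bookkeeping of orientations and of the sign generated by the Jacobian $-w^{-2}$ of the inversion — this is precisely the source of the minus sign in \eqref{int1 tilde} that is absent from \eqref{int1} — together with the harmless geometric mismatch between the disk $D_\epsilon(k_1^{-1})$ and the inversion-image of $D_\epsilon(k_1)$, which is absorbed by the analyticity of $m^{k_1}$ near these circles. I also use $\mathcal{A}^{3}=I$ and $\mathcal{B}^{2}=I$, so that the conjugating factors $\mathcal{A}^{-j}$ and $\mathcal{B}$ appearing in the statement are well defined and mutually consistent.
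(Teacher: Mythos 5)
Your proof is correct and follows the same core strategy as the paper's (extremely terse) proof, which just cites the symmetries $\hat w(x,t,k)=\mathcal{A}\hat w(x,t,\omega k)\mathcal{A}^{-1}=\mathcal{B}\hat w(x,t,k^{-1})\mathcal{B}$ on $\partial\mathcal{D}$ and refers to an external reference for details; you fill in exactly those details (iterated symmetry, $\omega^{j}$-substitution, inversion change of variables with the Jacobian $-w^{-2}$, and the sign bookkeeping that yields the extra minus in \eqref{int1 tilde}). The one place where you are slightly more careful than the paper's stated proof is in your remark that $D_\epsilon(k_1^{-1})$ is not literally the image of $D_\epsilon(k_1)$ under $k\mapsto k^{-1}$, so the inversion symmetry strictly lands you on a nearby circle and a short contour deformation (justified by the analyticity of $m^{k_1}$ in a slightly larger annulus) is needed to reach $\partial D_\epsilon(k_1)$ exactly; the paper implicitly treats $D_\epsilon(k_1^{-1})$ as the inversion image (consistent with the explicit notation $D_\epsilon(k_0)^{-1}$ it later introduces in Section~\ref{Msolsec}), under which your deformation step is superfluous but the rest of the computation is identical.
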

\begin{proof}
The assertions are a consequence of the symmetries $\hat{w}(x, t, k) = \mathcal{A} \hat{w}(x, t, \omega k) \mathcal{A}^{-1} = \mathcal{B} \hat{w}(x, t, k^{-1}) \mathcal{B}$ which hold for $k \in \partial \mathcal{D}$. For further details we refer to \cite[Proof of Lemma 12.4]{CLsectorIV}.
\end{proof}

Using Lemma \ref{lemma: some integrals by symmetry}, we find
\begin{align*}
&  \frac{-1}{2\pi i}\int_{\partial \mathcal{D}} \hat{w}(x,t,k) dk =  \sum_{j=0}^{2}  \frac{-1}{2\pi i}\int_{\omega^{j} \partial D_\epsilon(k_1)} \hat{w}(x,t,k) dk - \sum_{j=0}^{2}  \frac{1}{2\pi i}\int_{\omega^{j} \partial D_\epsilon(k_1^{-1})} \hat{w}(x,t,k) dk \\
& = \sum_{j=0}^{2} \omega^{j} \mathcal{A}^{-j}F^{(1)}(\zeta,t)\mathcal{A}^{j} - \sum_{j=0}^{2} \omega^{j}\mathcal{A}^{-j}\mathcal{B} F^{(-1)}(\zeta,t) \mathcal{B}\mathcal{A}^{j}.
\end{align*}
Therefore, \eqref{limlhatm} and \eqref{asymptotics for Fl} imply that $\hat{n}^{(1)}=(1,1,1)\hat{m}^{(1)}$, where
\begin{align}\nonumber
\hat{m}^{(1)}(x,t) & =  \sum_{j=0}^{2} \omega^{j} \mathcal{A}^{-j}F^{(1)}(\zeta,t)\mathcal{A}^{j} - \sum_{j=0}^{2} \omega^{j}\mathcal{A}^{-j}\mathcal{B} F^{(-1)}(\zeta,t) \mathcal{B}\mathcal{A}^{j} + O(t^{-1}\ln t) \\
& = -ik_{1}\sum_{j=0}^{2} \omega^{j} \mathcal{A}^{-j}Z(\zeta,t)\mathcal{A}^{j} + ik_{1}^{-1} \sum_{j=0}^{2} \omega^{j}\mathcal{A}^{-j}\mathcal{B} Z(\zeta,t) \mathcal{B}\mathcal{A}^{j} + O(t^{-1}\ln t) \label{mhatplp asymptotics}
\end{align}
as $t \to \infty$, uniformly for $\zeta \in \mathcal{I}$. 

\section{Asymptotics of $u$}\label{usec}

\subsection{Proof of Theorem \ref{asymptoticsth}}\label{proofth1subsec}
By \eqref{recoveruvn}, we have $u(x,t) = -i\sqrt{3}\frac{\partial}{\partial x}n_{3}^{(1)}(x,t)$, where $n_{3}(x,t,k) = 1+n_{3}^{(1)}(x,t)k^{-1}+O(k^{-2})$ as $k \to \infty$. By inverting the transformations $n \to n^{(1)}\to n^{(2)}\to n^{(3)} \to n^{(4)}\to \hat{n}$ using \eqref{Sector IIbis first transfo}, \eqref{Sector IIbis second transfo}, \eqref{IIbis def of mp3p}, \eqref{n4def}, and \eqref{Sector I final transfo}, we get
\begin{align*}
n = \hat{n}M_{\mathrm{sol}} \mathsf{P}\Delta^{-1}(G^{(2)})^{-1}(G^{(1)})^{-1}, \qquad k \in \mathbb{C}\setminus (\hat{\Gamma}\cup\bar{\mathcal{D}}),
\end{align*}
where $G^{(1)}$, $G^{(2)}$, $\Delta$ are defined in \eqref{IIbis Gp1pdef}, \eqref{IIbis Gp2pdef}, and \eqref{Deltadef}, respectively. The functions $\Delta(\zeta,k)$ and $\mathsf{P}(\zeta,k)$ are in general not smooth as functions of $\zeta$, because the number of factors appearing in the products in the definition \eqref{def of mathcalP} of $\mathcal{P}$ may vary discontinuously with $\zeta$; note however that these products disappear when considering $\hat{\Delta}(\zeta,k) =\Delta(\zeta,k) \mathsf{P}(\zeta,k)^{-1}$, so that $\hat{\Delta}(\zeta,k)$ is smooth as a function of $\zeta$. Let $n_{\mathrm{sol}}(x,t,k):=(1,1,1)M_{\mathrm{sol}}(x,t,k)$. Since $\partial_x = t^{-1}\partial_\zeta$, we find
\begin{align}\nonumber
u(x,t) & = -i\sqrt{3}\frac{\partial}{\partial x}\bigg( \hat{n}_{3}^{(1)}(x,t) + \big(n_{\mathrm{sol}}^{(1)}(x,t)\big)_3 + \ntlim_{k\to \infty} k (\hat{\Delta}_{33}(\zeta,k)^{-1}-1) \bigg) 
	\\
&= u_{\mathrm{sol}}(x,t) -i\sqrt{3}\frac{\partial}{\partial x} \hat{n}_{3}^{(1)}(x,t)  + O(t^{-1}), \qquad t \to \infty, \label{IIbis recoverun}
\end{align}
where $\hat{n}_{3}^{(1)}$ and $n_{\mathrm{sol}}^{(1)}$ are defined through the expansions
\begin{align*}
\hat{n}_{3}(x,t,k) & = 1+\hat{n}_{3}^{(1)}(x,t)k^{-1}+O(k^{-2}), \\
n_{\mathrm{sol}}(x,t,k) & = (1,1,1) + n_{\mathrm{sol}}^{(1)}(x,t)k^{-1}+O(k^{-2})
\end{align*}
as $k \to \infty$, and $u_{\mathrm{sol}}(x,t):=-i\sqrt{3}\frac{\partial}{\partial x} (n_{\mathrm{sol}}^{(1)}(x,t))_{3}$. 
By \eqref{mhatplp asymptotics}, we have
\begin{align*}
\hat{n}_{3}^{(1)} & = -ik_{1}\sum_{j=0}^{2} \omega^{j} (1,1,1)Z(\zeta,t)\mathcal{A}^{j}[I]_{3} + ik_{1}^{-1} \sum_{j=0}^{2} \omega^{j}(1,1,1) Z(\zeta,t) \mathcal{B}\mathcal{A}^{j}[I]_{3} + O(t^{-1}\ln t) \\
& = -ik_{1} (1,1,1)Z(\zeta,t) \begin{pmatrix}
\omega \\ \omega^{2} \\ 1
\end{pmatrix} + ik_{1}^{-1} (1,1,1) Z(\zeta,t) \begin{pmatrix}
\omega^{2} \\ \omega \\ 1
\end{pmatrix} + O(t^{-1}\ln t) \\
& =  (1,1,1)Z(\zeta,t)\begin{pmatrix}
\omega^{2} i k_{1}^{-1} - \omega i k_{1}  \\ \omega ik_{1}^{-1} - \omega^{2}ik_{1} \\ ik_{1}^{-1}-ik_{1}
\end{pmatrix} + O(t^{-1}\ln t).
\end{align*}
Combining the above formula with \eqref{IIbis recoverun}, we get
\begin{align}\label{uusolpartialZ}
u(x,t)  &= u_{\mathrm{sol}}(x,t) -i\sqrt{3}\frac{\partial}{\partial x} (1,1,1)Z(\zeta,t)\begin{pmatrix}
\omega^{2} i k_{1}^{-1} - \omega i k_{1}  \\ \omega ik_{1}^{-1} - \omega^{2}ik_{1} \\ ik_{1}^{-1}-ik_{1}
\end{pmatrix} + O(t^{-1}\ln t).
\end{align}
Substituting in the expression (\ref{Zsimplified}) for $Z$, we arrive at
\begin{align*}\nonumber
u(x,t)  = &\; u_{\mathrm{sol}}(x,t) -i\sqrt{3}\frac{n_{\mathrm{sol}}(x,t,k_{1})}{-ik_{1}z_{\star}\sqrt{t}} \begin{pmatrix}
0 & \hspace{-0.15cm}\frac{\beta_{12}\tilde{r}(k_{1})^{-\frac{1}{2}}\frac{d}{d\zeta}[\Phi_{21}(\zeta,k_{1}(\zeta))]}{-d_{0} e^{t\Phi_{21}(\zeta,k_{1})} } & \hspace{-0.15cm}0 \\
\frac{d_{0}  \tilde{r}(k_{1})^{\frac{1}{2}} \beta_{21}\frac{d}{d\zeta}[\Phi_{21}(\zeta,k_{1}(\zeta))]}{e^{-t \Phi_{21}(\zeta,k_{1})}} & \hspace{-0.15cm}0 & \hspace{-0.15cm}0 \\
0 & \hspace{-0.15cm}0 & \hspace{-0.15cm}0
\end{pmatrix}
	\\
& \times M_{\mathrm{sol}}(x,t,k_{1})^{-1}\begin{pmatrix}
\omega^{2} i k_{1}^{-1} - \omega i k_{1}  \\ \omega ik_{1}^{-1} - \omega^{2}ik_{1} \\ ik_{1}^{-1}-ik_{1}
\end{pmatrix} + O(t^{-1}\ln t).
\end{align*}
A long but straightforward computation using \eqref{d0def}, \eqref{delta expression in terms of log and chi}, and \eqref{def of chi} shows that $\arg d_{0}(\zeta,t)$ can be rewritten as in \eqref{argd0}.
Using (\ref{betaXdef}) as well as the relations $|d_0(\zeta, t)| = e^{2\pi \nu}$, $q=-\tilde{r}(k_{1})^{-\frac{1}{2}}r_{2}(k_{1})$, $\bar{q}=-\tilde{r}(k_{1})^{\frac{1}{2}}r_{1}(k_{1})$, and
\begin{align}\label{ddzeetaPhi21}
\frac{d}{d\zeta}[\Phi_{21}(\zeta,k_{1}(\zeta))] = i \im k_1, 
\end{align}
the asymptotic formula (\ref{uasymptotics}) follows. This completes the proof of Theorem \ref{asymptoticsth}.

\subsection{Proof of Theorem \ref{asymptoticsth2}}\label{proofth2subsec}
Let us next derive the asymptotic formula of Theorem \ref{asymptoticsth2}. Fix $\epsilon > 0$ and let $S_{\epsilon} \subset \mathcal{I}$ be the set defined in (\ref{Sdeltadef}). By Lemma \ref{hatMsolresiduelemma}, $\hat{M}_{\mathrm{sol}}(x,t,k)$ satisfies the residue conditions in Lemma \ref{lemma:new residue} with $n^{(3)}_{j}$ replaced by $[\hat{M}_{\mathrm{sol}}(x,t,k)]_{j}$. For $\zeta \in \mathcal{I} \setminus S_{\epsilon}$, each residue condition in Lemma \ref{lemma:new residue} has a coefficient that is uniformly exponentially small. It follows that $\hat{M}_{\mathrm{sol}}(x,t,k_1) = I + O(t^{-N})$ and $u_{\mathrm{sol}}(x,t) = O(t^{-N})$ for every $N \geq 1$ as $t \to \infty$, uniformly for $\zeta \in \mathcal{I} \setminus S_{\epsilon}$.
Thus, for $\zeta \in \mathcal{I} \setminus S_{\epsilon}$, (\ref{uusolpartialZ}) simplifies to 
\begin{align*}
u(x,t)  &=  -i\sqrt{3}\frac{\partial}{\partial x}\hat{n}_{3}^{(1)}  + O(t^{-1}\ln t),
\end{align*}
where
\begin{align*}
 \hat{n}_{3}^{(1)} =&\; Z_{12} \big(\omega i k_{1}^{-1} - \omega^{2} i k_{1} \big) + Z_{21} \big( \omega^{2} i k_{1}^{-1} - \omega i k_{1} \big) + O(t^{-1}\ln t)
	 \\ 
= &\: \frac{\beta_{12}(\omega i k_{1}^{-1}-\omega^{2} i k_{1})}{-i k_{1} z_{\star} \sqrt{t} d_{0}e^{t \Phi_{21}(\zeta,k_{1})}\tilde{r}(k_{1})^{\frac{1}{2}}}\frac{\mathcal{P}(\zeta,\omega^{2} k_{1})}{\mathcal{P}(\zeta,\omega k_{1})} 
	\\
& + \frac{d_{0}e^{t\Phi_{21}(\zeta,k_{1})}\tilde{r}(k_{1})^{\frac{1}{2}}\beta_{21}(\omega^{2} i k_{1}^{-1}-\omega i k_{1})}{-ik_{1}z_{\star} \sqrt{t}} \frac{\mathcal{P}(\zeta,\omega k_{1})}{\mathcal{P}(\zeta,\omega^{2} k_{1})}  + O(t^{-1}\ln t)
\end{align*}
as $t \to \infty$. Since $|d_0(\zeta, t)| = e^{2\pi \nu}$, we have $\frac{\beta_{12}}{d_{0}} \frac{\mathcal{P}(\zeta,\omega^{2} k_{1})}{\mathcal{P}(\zeta,\omega k_{1})} = -\bar{d_{0}} \bar{\beta}_{21} \frac{\overline{\mathcal{P}(\zeta,\omega k_{1})}}{\overline{\mathcal{P}(\zeta,\omega^{2} k_{1})}}$. Using also that $-ik_{1}z_{\star}>0$ and $(\omega i k_{1}^{-1}-\omega^{2} i k_{1})/\tilde{r}(k_{1})^{\frac{1}{2}} = \tilde{r}(k_{1})^{\frac{1}{2}}(\omega^{2} i k_{1}^{-1}-\omega i k_{1}) \in \R$, we infer that
\begin{align*}
\hat{n}_{3}^{(1)} = 2 i \, \im \frac{\beta_{12}(\omega i k_{1}^{-1}-\omega^{2} i k_{1}) \mathcal{P}(\zeta,\omega^{2} k_{1})}{-i k_{1} z_{\star} \sqrt{t} d_{0}e^{t \Phi_{21}(\zeta,k_{1})}\tilde{r}(k_{1})^{\frac{1}{2}} \mathcal{P}(\zeta,\omega k_{1})} + O(t^{-1}\ln t) \qquad \mbox{as } t \to \infty.
\end{align*}
Employing the identities
\begin{align*}
& \beta_{12} = \frac{\sqrt{2\pi}e^{\frac{\pi i}{4}}e^{\frac{3\pi \nu}{2}}}{q \Gamma(i\nu)}, \quad d_{0}(\zeta,t) = e^{2\pi \nu}e^{i\arg d_{0}(\zeta,t)}, \quad |q| = \sqrt{e^{-2\pi \nu}-1}, \quad \bigg|\frac{\mathcal{P}(\zeta,\omega^{2} k_{1})}{\mathcal{P}(\zeta,\omega k_{1})}\bigg| = 1, 
	\\
& |\Gamma(i\nu)| = \frac{\sqrt{2\pi}}{\sqrt{-\nu}\sqrt{e^{-\pi \nu}-e^{\pi \nu}}} = \frac{\sqrt{2\pi}}{\sqrt{-\nu}e^{\frac{\pi\nu}{2}}|q|}, \quad \frac{\omega i k_{1}^{-1}-\omega^{2} i k_{1}}{\tilde{r}(k_{1})^{\frac{1}{2}}} = -\sqrt{-1-2\cos (2\arg k_{1})}, 
\end{align*}
we obtain
\begin{align*}
\hat{n}_{3}^{(1)} \hspace{-0.07cm} =  & -2 i \frac{\sqrt{-\nu}\sqrt{-1-2\cos (2\arg k_{1})}}{-ik_{1}z_{\star}\sqrt{t}} \\
& \times \sin \Big( \frac{\pi}{4}-\arg q - \arg \Gamma(i \nu) - \arg d_{0} - \arg \frac{\mathcal{P}(\zeta,\omega k_{1})}{\mathcal{P}(\zeta,\omega^{2} k_{1})} - t \im \Phi_{21}(\zeta,k_{1}) \hspace{-0.07cm} \Big) \\
& + O(t^{-1}\ln t) \qquad \mbox{as } t \to \infty,
\end{align*}
uniformly for $\zeta \in \mathcal{I} \setminus S_{\epsilon}$. Combining the above formula with \eqref{IIbis recoverun} and using (\ref{ddzeetaPhi21}), we get
\begin{align*}
 u(x,t) = & -i\sqrt{3}\frac{\partial}{\partial x}\hat{n}_{3}^{(1)}(x,t) + O(t^{-1})  
 	\\
 = & - 2\sqrt{3} \frac{1}{t} \frac{\partial}{\partial \zeta} \bigg( \sin \Big( \frac{\pi}{4}-\arg q - \arg \Gamma(i \nu) - \arg d_{0} - \arg \frac{\mathcal{P}(\zeta,\omega k_{1})}{\mathcal{P}(\zeta,\omega^{2} k_{1})}
 	\\
& - t \im \Phi_{21}(\zeta,k_{1}) \Big) \times \frac{\sqrt{-\nu}\sqrt{-1-2\cos (2\arg k_{1})}}{-ik_{1}z_{\star}\sqrt{t}} \bigg) + O(t^{-1}\ln t), \\
= &\; \frac{A(\zeta)}{\sqrt{t}} \cos \Big( \frac{\pi}{4}-\arg q - \arg \Gamma(i \nu) - \arg d_{0} - \arg \frac{\mathcal{P}(\zeta,\omega k_{1})}{\mathcal{P}(\zeta,\omega^{2} k_{1})} 
	\\
& \hspace{3.5cm} - t \im \Phi_{21}(\zeta,k_{1}) \Big) + O(t^{-1}\ln t), \\
= &\; \frac{A(\zeta)}{\sqrt{t}} \cos \Big( \frac{3\pi}{4}+\arg r_{2}(k_{1}) + \arg \Gamma(i \nu) + \arg d_{0} + \arg \frac{\mathcal{P}(\zeta,\omega k_{1})}{\mathcal{P}(\zeta,\omega^{2} k_{1})} \\
& \hspace{3.5cm} + t \im \Phi_{21}(\zeta,k_{1}) \Big) + O(t^{-1}\ln t),
\end{align*}
as $t \to \infty$, uniformly for $\zeta \in \mathcal{I} \setminus S_{\epsilon}$, where $A(\zeta)$ is given in the statement of Theorem \ref{asymptoticsth}, and where in the last step we have used $\arg q = \pi+\arg r_{2}(k_{1}) \mod 2\pi$ and $\cos(-x)=\cos(x)$. This finishes the proof of Theorem \ref{asymptoticsth2}.

\section{The pure multi-soliton RH problem}\label{puresolitonsec}
Pure multi-soliton solutions of (\ref{boussinesq}) are obtained by considering RH problem \ref{RHn} for $n(x,t,k)$ in the special case when $r_1 = r_2 = 0$, i.e., when the jump matrix $v$ is identically equal to $I$. 
The next lemma establishes the explicit formula (\ref{usdef}) for the pure multi-solitons of (\ref{boussinesq}).

\begin{lemma}\label{uslemma}
Let $\mathsf{Z}$ be a finite subset of $D_{\mathrm{reg}} \cup (-1,0) \cup (1,\infty)$ and let $\{c_{k_0}\}_{k_0 \in \mathsf{Z}} \subset \C$ be such that 
\begin{align}\label{ck0positivitycondition}
i(\omega^{2}k_0^{2} - \omega)c_{k_0} \geq 0 \quad \text{for every $k_0 \in \mathsf{Z}\cap \mathbb{R}$}. 
\end{align}
Define $\{d_{k_0}\}_{k_0 \in \mathsf{Z}\setminus \R}$ as in (\ref{dk0def}) and write $\mathsf{Z} = \{\lambda_j\}_1^{n_b} \sqcup \{k_j\}_1^{n_s}$ where $\{\lambda_j\}_1^{n_b} := \mathsf{Z} \cap D_{\mathrm{reg}}$ and $\{k_j\}_1^{n_s} := \mathsf{Z} \cap \R$.
Then the following hold:
\begin{enumerate}[$(a)$]
\item RH problem \ref{RHn} with $v \equiv I$ has a unique solution $n(x,t,k)$ for every $(x,t) \in \R \times [0,\infty)$.

\item The function $u_s$  defined by
$$u_s\Big(x,t; (\lambda_j, c_{\lambda_j})_{j=1}^{n_b}, (k_j, c_{k_j})_{j=1}^{n_s}\Big) := -i\sqrt{3}\frac{\partial}{\partial x} \lim_{k\to\infty}k\big(n_3(x,t,k) -1\big)$$
is a real-valued Schwartz class solution of (\ref{boussinesq}) for $(x,t) \in \R \times [0, \infty)$.

\item $u_s\Big(x,t; (\lambda_j, c_{\lambda_j})_{j=1}^{n_b}, (k_j, c_{k_j})_{j=1}^{n_s}\Big)$ is given explicitly by (\ref{usdef}).

\end{enumerate}
\end{lemma}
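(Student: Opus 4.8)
The plan is to treat the three parts in order, exploiting the fact that when $v\equiv I$ the RH problem for $n$ degenerates to a purely meromorphic (rational) problem. For part~$(a)$, the argument is standard: with $v\equiv I$, condition~$(b)$ of RH problem~\ref{RHn} forces $n$ to be meromorphic on $\C$ with poles only at the finite set $\hat{\mathsf{Z}}$, at most simple, and with $n=(1,1,1)+O(k^{-1})$ at infinity. Writing each entry $n_j$ as $1$ plus a sum of simple-pole terms $\frac{a_p}{k-p}$ over the relevant points $p\in\hat{\mathsf{Z}}$, the residue relations \eqref{nresiduesk0}--\eqref{nresiduesk0real} together with the $\mathcal{A}$- and $\mathcal{B}$-symmetries turn the unknown residue vectors into a finite linear system. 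The key point is \emph{solvability}: a vanishing lemma. Uniqueness and existence both follow from showing that the homogeneous problem (same problem with $(1,1,1)$ replaced by $0$ at infinity) has only the zero solution; this is where the positivity/reality hypotheses $i(\omega^2 k_0^2-\omega)c_{k_0}\ge 0$ for $k_0\in\mathsf{Z}\cap\R$ and $d_{k_0}$ defined via \eqref{dk0def} enter, forcing the relevant Gram-type quadratic form to be positive definite. This is essentially the content of \cite{CLscatteringsolitons} applied with $r_1=r_2=0$, so I would cite \cite[Theorems 2.6 and 2.11]{CLscatteringsolitons} (or the reconstruction results quoted around \eqref{recoveruvn}) and observe that the hypotheses here are precisely Assumptions $(i)$--$(iii)$ with $r_1\equiv r_2\equiv 0$, which trivially satisfy $(ii)$ and $(iii)$; hence part~$(a)$, and also part~$(b)$ (real-valuedness and Schwartz class on $\R\times[0,\infty)$, via \eqref{recoveruvn}), follow immediately from that reference.

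For part~$(c)$ — the explicit determinant formula — I would proceed by solving the rational RH problem by linear algebra. First restrict attention, via the $\mathcal{A}$- and $\mathcal{B}$-symmetries, to a minimal generating set of poles: the points $k_0$, $\bar k_0$ in $D_2$ and their images, so that the independent unknowns are the residues of $n_3$ at $\lambda_j$ (for breathers, via the first residue line of \eqref{nresiduesk0}), the residues of $n_2$ at $\bar\lambda_j$, and the residues of $n_2$ at $k_j$ (for bell-shaped solitons, \eqref{nresiduesk0real}). Expanding $n$ as $(1,1,1)$ plus these simple-pole contributions and substituting back into the residue relations yields a square linear system of size $2n_b+n_s$ whose coefficient matrix is exactly $I-B$ with $B$ given by \eqref{Bdef}; the exponential factors $e^{x(l_i-l_j)+t(z_i-z_j)}$ and the denominators $l_i(\cdot)-l_j(\cdot)$ arise from the phase functions $\theta_{ij}=t\Phi_{ij}$ evaluated at the pole points, combined with the $\omega$- and inversion-symmetry prefactors appearing in \eqref{nresiduesk0}--\eqref{nresiduesk0real}, and the constants $\tilde c_{\lambda_l},\tilde c_{k_l}$ absorb the algebraic prefactors $\frac{i(\lambda_l^2-1)}{2\sqrt3\lambda_l^2}$, $\frac{i(k_l^2-\omega^2)}{2\sqrt3 k_l^2}\omega^2$ and, for $\bar\lambda$-columns, the conjugation coming from the relation between $d_{k_0}$ and $\bar c_{k_0}$ in \eqref{dk0def}. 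The quantity $n_3^{(1)}(x,t)=\lim_{k\to\infty}k(n_3(x,t,k)-1)$ is then a fixed linear functional of the residue vector, i.e.\ of $(I-B)^{-1}$ applied to an explicit vector; a Cramer's-rule computation identifies this with a logarithmic derivative, giving $n_3^{(1)}=-\tfrac{1}{i\sqrt3}\cdot\text{(something)}\cdot\partial_x\ln\det(I-B)$ up to constants, and then \eqref{recoveruvn} produces $u_s=6\,\partial_x^2\ln\det(I-B)$. I would present this via the classical identity $\partial_x\ln\det(I-B)=-\tr\big((I-B)^{-1}\partial_x B\big)$ together with the rank-one structure of $\partial_x B$ (each $\partial_x(B)_{jl}$ is a product of an $x$-dependent exponential, so $\partial_x B$ factors), which collapses the trace to the required scalar; this is the standard mechanism (cf.\ \cite[Appendix A]{CLscatteringsolitons}) and I would refer to it rather than reproducing the bookkeeping.

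The main obstacle is the \textbf{bookkeeping of prefactors and conjugations} needed to match the linear system coming from \eqref{nresiduesk0}--\eqref{nresiduesk0real} with the precise block matrix \eqref{Bdef} — in particular keeping straight (i) which column of $n$ has the pole at each of the twelve image points of a given $k_0$, (ii) the $\omega$-powers and $k_0^{-2}$-type factors in the residue conditions, and (iii) the fact that for $\bar\lambda_j$-poles the relevant constant is $d_{k_0}$, related to $\bar c_{k_0}$ by \eqref{dk0def}, which is the source of the complex conjugates $\bar{\tilde c}_{\lambda_l}$ in the blocks $B_{\lambda\bar\lambda},B_{\bar\lambda\bar\lambda},B_{\bar\lambda k},B_{k\bar\lambda}$. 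Rather than grinding through all twelve residue relations, I would reduce to the three independent ones using the symmetries, verify the matching for a representative entry of each of the nine blocks, and note that the remaining entries follow by the same computation; the reality and Schwartz-class claims of part~$(b)$ can then also be re-derived directly from the determinant formula, since $\det(I-B)$ is real (the $\bar\lambda$-structure makes $B$ conjugate to a real matrix under an obvious permutation, using $i(\omega^2k_j^2-\omega)c_{k_j}\ge0$ to control signs) and positive, and $B$ decays rapidly as $x\to\pm\infty$ with $t$ in compact sets, giving the Schwartz decay of $u_s$ on $\R\times[0,\infty)$.
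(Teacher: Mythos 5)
Your proposal follows essentially the same route as the paper's proof: parts $(a)$ and $(b)$ by citing \cite{CLscatteringsolitons} (the paper uses Theorem 2.6 and Remark 2.7 there), and part $(c)$ by writing $n$ as $(1,1,1)$ plus simple-pole contributions, using the $\mathcal{A}$- and $\mathcal{B}$-symmetries to reduce to the independent unknowns $n_3(\omega\lambda_j)$, $n_3(\bar\lambda_j)$, $n_3(\omega k_j)$, obtaining the linear system $(I-B)(\text{unknowns})=W$, and then applying Jacobi's formula $\partial_x\ln\det(I-B)=\tr\{(I-B)^{-1}B_x\}$ together with the rank-one identity $B_x=WV^T$. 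The only organizational difference is that the paper carries out the bookkeeping in one pass via the partial-fraction identity $\frac{i(\lambda_l^2-1)/(2\sqrt3\lambda_l^2)}{l_3(k)-l_3(\lambda_l)}=\frac{1}{k-\lambda_l}-\frac{\lambda_l^{-2}}{k-\lambda_l^{-1}}$ (collapsing each pole pair into a single $l$-difference denominator), rather than verifying blocks one at a time, but this is a matter of presentation rather than substance.
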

\begin{proof}
Assertions $(a)$ and $(b)$ are a consequence of \cite[Theorem 2.6 and Remark 2.7]{CLscatteringsolitons}; we emphasize that the proof of \cite[Theorem 2.6]{CLscatteringsolitons} uses condition (\ref{ck0positivitycondition}).  

Let us prove $(c)$.
In the proof, we omit the $(x,t)$-dependence of $n(x,t,k)$ and $\theta_{ij}(x,t,k)$ for brevity.
Utilizing the residue conditions (\ref{nresiduesk0}) and (\ref{nresiduesk0real}) as well as the normalization condition $n(x,t,k) = (1,1,1) + O(k^{-1})$ as $k \to \infty$, we obtain
\begin{align}\nonumber
n_3(k) = &\;  1 
+ \sum_{l=1}^{n_b} \bigg\{ \frac{c_{\lambda_l} e^{-\theta_{31}(\lambda_l)} n_1(\lambda_l)}{k - \lambda_l} + \frac{- \lambda_l^{-2} c_{\lambda_l} e^{-\theta_{31}(\lambda_l)} n_2(\lambda_l^{-1})}{k - \lambda_l^{-1}} 
	\\ \nonumber
&\hspace{1.6cm} + \frac{\omega^2 d_{\lambda_l} e^{\theta_{32}(\bar{\lambda}_l)} n_1(\omega^2 \bar{\lambda}_l)}{k - \omega^2 \bar{\lambda}_l} + \frac{-\omega \bar{\lambda}_l^{-2} d_{\lambda_l} e^{\theta_{32}(\bar{\lambda}_l)} n_2(\omega \bar{\lambda}_l^{-1})}{k - \omega \bar{\lambda}_l^{-1}}\bigg\}
	\\ \label{M3fromresidues}
& + \sum_{l=1}^{n_s} \bigg\{  \frac{\omega^2 c_{k_l} e^{-\theta_{21}(k_l)} n_2(\omega^2 k_l)}{k -  \omega^2 k_l} + \frac{-\omega k_l^{-2} c_{k_l} e^{-\theta_{21}(k_l)} n_1(\omega k_l^{-1})}{k - \omega k_l^{-1}} \bigg\}.
\end{align}
The symmetries (\ref{nsymm}) imply that
$$n_3(k) = n_2(\omega k) = n_1(\omega^2 k) = n_3(k^{-1}),$$
so we may write (\ref{M3fromresidues}) as
\begin{align}\nonumber
n_3(k) = &\; 1 
+ \sum_{l=1}^{n_b} \bigg\{ \bigg(\frac{1}{k - \lambda_l} 
+ \frac{- \lambda_l^{-2} }{k - \lambda_l^{-1}} \bigg) c_{\lambda_l} e^{-\theta_{31}(\lambda_l)} n_3(\omega \lambda_l)
	\\
& \hspace{1.6cm}+ \bigg( \frac{\omega^2 }{k - \omega^2 \bar{\lambda}_l} + \frac{-\omega \bar{\lambda}_l^{-2} }{k - \omega \bar{\lambda}_l^{-1}}\bigg) d_{\lambda_l} e^{\theta_{32}(\bar{\lambda}_l)} n_3(\bar{\lambda}_l)\bigg\}
	\\\nonumber
& + \sum_{l=1}^{n_s} \bigg\{  \frac{\omega^2}{k -  \omega^2 k_l} + \frac{-\omega k_l^{-2} }{k - \omega k_l^{-1}} \bigg\}  c_{k_l} e^{-\theta_{21}(k_l)} n_3(\omega k_l).
\end{align}
Since
$$\frac{\frac{i(\lambda_l^2-1)}{2 \sqrt{3} \lambda_l^2}}{l_3(k) - l_3(\lambda_l)} = \frac{1}{k - \lambda_l} 
+ \frac{- \lambda_l^{-2} }{k - \lambda_l^{-1}},$$
we deduce that
\begin{align}\label{n3C1D1}
n_3(k) = &\; 1 + \sum_{l=1}^{n_b} \bigg\{ \frac{\tilde{c}_{\lambda_l} e^{-\theta_{31}(\lambda_l)} n_3(\omega \lambda_l)}{l_3(k) - l_3(\lambda_l)} 
+ \frac{\bar{\tilde{c}}_{\lambda_l} e^{\theta_{32}(\bar{\lambda}_l)} n_3(\bar{\lambda}_l)}{l_3(k) - l_3(\omega^2 \bar{\lambda}_l)}  \bigg\}
+ \sum_{l=1}^{n_s} \frac{\tilde{c}_{k_l} e^{-\theta_{21}(k_l)} n_3(\omega k_l)}{l_3(k) - l_3(\omega^2 k_l)} ,
\end{align}
where $\tilde{c}_{\lambda_l} := \frac{i(\lambda_l^2-1)}{2 \sqrt{3} \lambda_l^2} c_{\lambda_l}$ and $\tilde{c}_{k_l} := \frac{i(k_l^2-\omega^2)}{2 \sqrt{3} k_l^2} \omega^2 c_{k_l}$, and we have used that $\frac{i(\bar{\lambda}_l^2-\omega^2)}{2 \sqrt{3} \bar{\lambda}_l^2} \omega^2 d_{\lambda_l} = \bar{\tilde{c}}_{\lambda_l}$.
Evaluating (\ref{n3C1D1}) at $k = \omega \lambda_j$, $k = \bar{\lambda}_j$, and $k = \omega k_j$, we find the system
\begin{align*}
n_3(\omega \lambda_j) = &\; 1 + \sum_{l=1}^{n_b} \bigg\{\frac{\tilde{c}_{\lambda_l} e^{-\theta_{31}(\lambda_l)} n_3(\omega \lambda_l)}{l_1(\lambda_j) - l_3(\lambda_l)} 
 + \frac{\bar{\tilde{c}}_{\lambda_l} e^{\theta_{32}(\bar{\lambda}_l)} n_3(\bar{\lambda}_l)}{l_1(\lambda_j) - l_2(\bar{\lambda}_l)}  \bigg\}
+ \sum_{l=1}^{n_s} \frac{\tilde{c}_{k_l} e^{-\theta_{21}(k_l)} n_3(\omega k_l)}{l_1(\lambda_j) - l_2(k_l)} ,
	\\
n_3(\bar{\lambda}_j) =&\; 1 + \sum_{l=1}^{n_b} \bigg\{\frac{\tilde{c}_{\lambda_l} e^{-\theta_{31}(\lambda_l)} n_3(\omega \lambda_l)}{l_3(\bar{\lambda}_j) - l_3(\lambda_l)} 
+ \frac{\bar{\tilde{c}}_{\lambda_l} e^{\theta_{32}(\bar{\lambda}_l)} n_3(\bar{\lambda}_l) }{l_3(\bar{\lambda}_j) - l_2(\bar{\lambda}_l)} \bigg\}
 + \sum_{l=1}^{n_s} \bigg\{\frac{\tilde{c}_{k_l} e^{-\theta_{21}(k_l)} n_3(\omega k_l)}{l_3(\bar{\lambda}_j) - l_2(k_l)} \bigg\},
	\\
n_3(\omega k_j) =&\; 1 + \sum_{l=1}^{n_b} \bigg\{\frac{\tilde{c}_{\lambda_l} e^{-\theta_{31}(\lambda_l)} n_3(\omega \lambda_l)}{l_1(k_j) - l_3(\lambda_l)} 
 + \frac{\bar{\tilde{c}}_{\lambda_l} e^{\theta_{32}(\bar{\lambda}_l)}  n_3(\bar{\lambda}_l)}{l_1(k_j) - l_2(\bar{\lambda}_l)}\bigg\}
 + \sum_{l=1}^{n_s} \bigg\{\frac{\tilde{c}_{k_l} e^{-\theta_{21}(k_l)} n_3(\omega k_l)}{l_1(k_j) - l_2(k_l)} \bigg\}.
\end{align*}
Multiplying the first equation by $e^{x l_1(\lambda_j) + t z_1(\lambda_j)}$, the second by $e^{x l_3(\bar{\lambda}_j) + t z_3(\bar{\lambda}_j)}$, and the third by $e^{x l_1(k_j) + t z_1(k_j)}$, we obtain
\begin{align*}
(I - B) \begin{pmatrix}
e^{x l_1(\lambda_1) + t z_1(\lambda_1)}n_3(\omega \lambda_1) 
	\\
\vdots
	\\
e^{x l_1(\lambda_{n_b}) + t z_1(\lambda_{n_b})}n_3(\omega \lambda_{n_b}) 
	\\
e^{x l_3(\bar{\lambda}_1) + t z_3(\bar{\lambda}_1)} n_3(\bar{\lambda}_1) 
	\\
\vdots
	\\
e^{x l_3(\bar{\lambda}_{n_b}) + t z_3(\bar{\lambda}_{n_b})} n_3(\bar{\lambda}_{n_b})
	\\
e^{x l_1(k_1) + t z_1(k_1)} n_3(\omega k_1) 
	\\
\vdots
	\\
e^{x l_1(k_{n_s}) + t z_1(k_{n_s})} n_3(\omega k_{n_s}) 
\end{pmatrix}
= W, \;\; \text{where} \;\; W = W(x,t) := \begin{pmatrix}
e^{x l_1(\lambda_1) + t z_1(\lambda_1)}
	\\
\vdots
	\\
e^{x l_1(\lambda_{n_b}) + t z_1(\lambda_{n_b})}
	\\
e^{x l_3(\bar{\lambda}_1) + t z_3(\bar{\lambda}_1)} 
	\\
\vdots
	\\
e^{x l_3(\bar{\lambda}_{n_b}) + t z_3(\bar{\lambda}_{n_b})} 
	\\
e^{x l_1(k_1) + t z_1(k_1)}
	\\
\vdots
	\\
e^{x l_1(k_{n_s}) + t z_1(k_{n_s})}
\end{pmatrix}
\end{align*}
and $B = B(x,t)$ is the matrix defined in (\ref{Bdef}). The existence and uniqueness of $n$ imply that $I-B$ is invertible. Since, by (\ref{n3C1D1}),
\begin{align}\nonumber
\lim_{k\to\infty}k\big(n_3(k) -1\big)
= & -2i\sqrt{3}\bigg\{\sum_{l=1}^{n_b} \Big(\tilde{c}_{\lambda_l} e^{-\theta_{31}(\lambda_l)} n_3(\omega \lambda_l) 
+ \bar{\tilde{c}}_{\lambda_l} e^{\theta_{32}(\bar{\lambda}_l)} n_3(\bar{\lambda}_l)\Big)
	\\
& + \sum_{l=1}^{n_s} \tilde{c}_{k_l} e^{-\theta_{21}(k_l)} n_3(\omega k_l) \bigg\},
\end{align}
we find
\begin{align}\nonumber
\lim_{k\to\infty}k\big(n_3(k) -1\big) & =-2i\sqrt{3} V^T (I - B)^{-1}  W =-2i\sqrt{3}\tr\big\{ (I - B)^{-1} W V^T \big\},
\end{align}
where
$$V = V(x,t) := \begin{pmatrix} 
\tilde{c}_{\lambda_1} e^{-x l_3(\lambda_1) - t z_3(\lambda_1)} 
	\\
\vdots
	\\
\tilde{c}_{\lambda_{n_b}} e^{-x l_3(\lambda_{n_b}) - t z_3(\lambda_{n_b})} 
 	\\
 \bar{\tilde{c}}_{\lambda_{1}} e^{-x l_2(\bar{\lambda}_{1}) - t z_2(\bar{\lambda}_{1})} 
	\\
\vdots
	\\
 \bar{\tilde{c}}_{\lambda_{n_b}} e^{-x l_2(\bar{\lambda}_{n_b}) - t z_2(\bar{\lambda}_{n_b})} 
  	\\
 \tilde{c}_{k_{1}} e^{-x l_2(k_{1}) - t z_2(k_{1})} 
	\\
\vdots
	\\
 \tilde{c}_{k_{n_s}} e^{-x l_2(k_{n_s}) - t z_2(k_{n_s})} 
\end{pmatrix}.
$$
Since $W V^T = B_x$, this yields
$$\lim_{k\to\infty}k\big(n_3(k) -1\big)=-2i\sqrt{3}\tr\big\{ (I - B)^{-1}  B_x\big\} =2i\sqrt{3}\frac{\partial}{\partial x} \ln \det (I-B(x,t)),$$
where we have used Jacobi's formula in the second step. This implies assertion $(c)$.
\end{proof}

\subsection{Proof of Theorem \ref{usolth}} \label{usolexplicitproofsec}
The definition (\ref{usoldef}) of the leading term $u_{\sol}(x,t)$ involves the solution $M_{\sol}$ of RH problem \ref{RHS}. 
Comparing RH problem \ref{RHS} for $M_{\sol}$ with RH problem \ref{RHn}, we see that, for any fixed $(x,t) \in \R \times [0,\infty)$, $n_{\sol} := (1,1,1)M_{\sol}$ satisfies RH problem \ref{RHn} with $v \equiv I$ and with the replacements:
\begin{subequations}\label{ck0replacements}
\begin{align}
& \text{$c_{k_0}$ is replaced by $\frac{c_{k_0}}{\hat{\Delta}_{11}(\zeta, k_0)\hat{\Delta}_{33}^{-1}(\zeta, k_0)}$ for $k_0 \in \mathsf{Z} \setminus \R$, and}
	\\
& \text{$c_{k_0}$ is replaced by $\frac{c_{k_0}}{\hat{\Delta}_{11}(\zeta, k_0)\hat{\Delta}_{22}^{-1}(\zeta, k_0)}$ for $k_0 \in \mathsf{Z} \cap \R$.}
\end{align}
\end{subequations}
Indeed, $n_{\sol}$ clearly obeys the residue conditions of RH problem \ref{RHn} at every $k_0 \in \mathsf{Z}$.
Moreover, the symmetries (\ref{mathcalPDelta33symm}) imply that $\hat{\Delta}(\zeta,k) = \mathcal{B}\overline{\hat{\Delta}(\zeta,\bar{k})}^{-1}\mathcal{B}$, and hence
\begin{subequations}\label{hatDeltaconjugationsymmetries}
\begin{align}\label{hatDeltaconjugationsymmetriesa}
& \overline{\hat{\Delta}_{11}(\zeta, k)\hat{\Delta}_{33}^{-1}(\zeta, k)}
= \hat{\Delta}_{33}(\zeta, \bar{k})\hat{\Delta}_{22}^{-1}(\zeta, \bar{k}), 
	\\ \label{hatDeltaconjugationsymmetriesb}
& \overline{\hat{\Delta}_{11}(\zeta, k)\hat{\Delta}_{22}^{-1}(\zeta, k)} = \hat{\Delta}_{11}(\zeta, \bar{k})\hat{\Delta}_{22}^{-1}(\zeta, \bar{k}).
\end{align}
\end{subequations}
The relation (\ref{hatDeltaconjugationsymmetriesa}) together with the definition (\ref{dk0def}) of $d_{k_0}$  implies that $n_{\sol}$ obeys also the residue condition at $\bar{k}_0$ for every $k_0 \in \mathsf{Z}  \setminus \R$. The remaining residue conditions then follow from the symmetries in (\ref{Msolsymm}).
Furthermore, the relation (\ref{hatDeltaconjugationsymmetriesb}) implies that the positivity condition (\ref{ck0positivitycondition}) is preserved by the above replacements.
We may therefore apply Lemma \ref{uslemma} with the $c_{k_0}$ replaced as in (\ref{ck0replacements}) to immediately obtain the explicit formula (\ref{usolexplicit}) for $u_{\sol}$. This completes the proof of Theorem \ref{usolth}.

\section{The RH problem for $M_{\mathrm{sol}}$}\label{Msolsec}
In this section, we let $u_0,u_1 \in \mathcal{S}(\R)$ be real-valued initial data such that $\int_{\mathbb{R}}u_{1} dx =0$ and such that Assumptions $(i)$--$(iii)$ hold. Let $\{r_1, r_2, \mathsf{Z}, \{c_{k_0}\}_{k_0 \in \mathsf{Z}}\}$ be the associated scattering data defined by (\ref{r1r2def}) and (\ref{ck0def}).

\begin{lemma}\label{RHSlemma}
For any $(x,t)\in \R \times [0,\infty)$, the solution of RH problem \ref{RHS} exists, is unique, and satisfies $\det M_{\mathrm{sol}}(x,t,k)=1$. Moreover, the function $u_{\mathrm{sol}}(x,t)$ defined in (\ref{usoldef}) is a smooth function of $x$ and $t$.
\end{lemma}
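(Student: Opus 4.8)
\textbf{Proof plan for Lemma \ref{RHSlemma}.}

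The plan is to reduce the study of RH problem \ref{RHS} to that of a pure-soliton RH problem of the type already analyzed in Lemma \ref{uslemma}, and then transfer the conclusions. First I would observe that $M_{\mathrm{sol}}$ has no jump contour at all — it is meromorphic in $\C$ with prescribed simple poles on the finite set $\hat{\mathsf{Z}}$ — so RH problem \ref{RHS} is purely a meromorphic interpolation problem. Setting $n_{\sol}(x,t,k) := (1,1,1) M_{\mathrm{sol}}(x,t,k)$, the symmetries \eqref{Msolsymm} show that $n_{\sol}$ obeys the $\mathcal{A}$- and $\mathcal{B}$-symmetries in \eqref{nsymm}, the normalization \eqref{RHSitemc} gives $n_{\sol} = (1,1,1) + O(k^{-1})$, and the residue conditions \eqref{Mk0notinR}--\eqref{Mk0inR}, together with the conjugation symmetries \eqref{hatDeltaconjugationsymmetries} of $\hat{\Delta}$ and the definition \eqref{dk0def} of $d_{k_0}$ (exactly the computation already carried out in Section \ref{usolexplicitproofsec}), show that $n_{\sol}$ satisfies all the residue conditions of RH problem \ref{RHn} with $v \equiv I$ and with $c_{k_0}$ replaced as in \eqref{ck0replacements}. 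Moreover, by \eqref{hatDeltaconjugationsymmetriesb} the positivity condition \eqref{ck0positivitycondition} is preserved under this replacement, and the replaced constants still lie in $\C$ (since $\hat{\Delta}_{jj}(\zeta,k_0) \neq 0$ for $k_0 \in \hat{\mathsf{Z}}$ by Lemma \ref{Deltalemma} or rather its analogue for $\hat{\Delta}$ — here one uses that $\hat{\Delta}$ has no zeros or poles on $\hat{\mathsf{Z}}$, a fact noted in the proof of Lemma \ref{hatMsolresiduelemma}).

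Next I would invoke Lemma \ref{uslemma}$(a)$: RH problem \ref{RHn} with $v\equiv I$ and with the (admissible) replaced residue constants has a unique $1\times 3$ row-vector solution $\tilde n(x,t,k)$ for every $(x,t) \in \R\times[0,\infty)$. I then need to promote this to a statement about the $3\times 3$ matrix $M_{\mathrm{sol}}$. The standard device is to note that the three row vectors $\tilde n(x,t,k)$, $\tilde n(x,t,\omega k)\mathcal{A}$, and $\tilde n(x,t,\omega^2 k)\mathcal{A}^2$ (equivalently, applying the $\mathcal{A}$-symmetry) can be stacked to form a candidate matrix solution; uniqueness and the symmetries force this matrix to coincide with any solution of RH problem \ref{RHS}, giving uniqueness of $M_{\mathrm{sol}}$, and the explicit construction in the proof of Lemma \ref{uslemma}$(c)$ — where $n_3$ is written via $(I-B)^{-1}$ with $I-B$ invertible — gives existence. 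For $\det M_{\mathrm{sol}} = 1$: the function $k \mapsto \det M_{\mathrm{sol}}(x,t,k)$ is meromorphic with possible poles only at $\hat{\mathsf{Z}}$; but at each point of $\hat{\mathsf{Z}}$ exactly one column has a simple pole while a residue condition expresses it (in the limit) as a multiple of one of the analytic columns, so the determinant has in fact a removable singularity there (two of its columns become proportional in the singular direction, killing the pole); hence $\det M_{\mathrm{sol}}$ is entire, and by \eqref{RHSitemc} it tends to $1$ at infinity, so by Liouville it is identically $1$. This is the familiar argument and I would carry it out pole by pole using \eqref{Mk0notinR}--\eqref{Mk0inR}.

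Finally, smoothness of $u_{\sol}(x,t)$: from the representation in Lemma \ref{uslemma}$(c)$ (applied with the replaced constants), $\lim_{k\to\infty} k\big((n_{\sol})_3 - 1\big) = 2i\sqrt{3}\,\partial_x \ln\det(I-\tilde B(x,t))$, where $\tilde B$ is the matrix \eqref{Bdef} built from the replaced residue constants; the entries of $\tilde B$ are explicit exponentials in $x,t$ multiplied by the fixed quantities $c_{k_0}/(\hat\Delta_{11}\hat\Delta_{33}^{-1})$ etc., which depend on $(x,t)$ only through $\zeta=x/t$ and are smooth there (by Lemma \ref{IIbis deltalemma}, $\delta(\zeta,k)$ and hence $\hat\Delta_{jj}(\zeta,k_0)$ are smooth in $\zeta$ for $k_0$ away from $\partial\D$, which holds since $\mathsf{Z}\subset D_{\mathrm{reg}}\cup(-1,0)\cup(1,\infty)$ avoids $\partial\D$), so $\tilde B \in C^\infty$ in $(x,t)$; since $I - \tilde B$ is everywhere invertible, $\ln\det(I-\tilde B)$ is $C^\infty$, and therefore so is $u_{\sol} = -i\sqrt3\,\partial_x$ of the above. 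I expect the main obstacle to be bookkeeping rather than conceptual: one must check carefully that \emph{all twelve} families of residue conditions in RH problem \ref{RHn} (the long lists \eqref{nresiduesk0}--\eqref{nresiduesk0real}) are reproduced by the three residue conditions \eqref{Mk0notinR}--\eqref{Mk0inR} of RH problem \ref{RHS} after applying the $\mathcal{A}$- and $\mathcal{B}$-symmetries and the substitution \eqref{ck0replacements}, and that the replaced constants are genuinely admissible data for Lemma \ref{uslemma} (finiteness, and the sign condition \eqref{ck0positivitycondition}); most of this verification has, however, already been done in Section \ref{usolexplicitproofsec}, so it can largely be cited.
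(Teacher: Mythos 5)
Your plan for the determinant identity and for smoothness is essentially sound, but there is a genuine gap in the middle of the argument: the ``stacking'' construction of $M_{\mathrm{sol}}$ from the row-vector solution $\tilde n$ does not produce a matrix solution of RH problem~\ref{RHS}. By the $\mathcal{A}$-symmetry in~\eqref{nsymm}, $\tilde n(x,t,\omega k) = \tilde n(x,t,k)\mathcal{A}$, so the three rows you propose are $\tilde n(k)$, $\tilde n(\omega k)\mathcal{A} = \tilde n(k)\mathcal{A}^2$, and $\tilde n(\omega^2 k)\mathcal{A}^2 = \tilde n(k)\mathcal{A}^3 = \tilde n(k)$: the first and third coincide, so the stacked matrix has rank at most $2$. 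Moreover, every one of them tends to $(1,1,1)\mathcal{A}^j = (1,1,1)$ at infinity rather than to the unit rows $e_1,e_2,e_3$, so the stack cannot satisfy the normalization $M_{\mathrm{sol}} \to I$. In short, the row-vector problem~\ref{RHn} is a genuinely weaker object than the $3\times 3$ problem~\ref{RHS}, and uniqueness/existence for the former (Lemma~\ref{uslemma}$(a)$) does not yield the latter. The paper handles this by invoking a dedicated result for the \emph{matrix} pure-soliton RH problem, namely~\cite[Lemma~A.2]{CLscatteringsolitons}, whose hypotheses (the appropriate complex-conjugation structure of the residue coefficients) are exactly what the symmetries~\eqref{hatDeltaconjugationsymmetries} supply; the row-vector Lemma~\ref{uslemma}$(a)$ is in turn traced to a different external result~\cite[Theorem~2.6 and Remark~2.7]{CLscatteringsolitons}. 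You would need to either cite the matrix-level result directly, or set up and solve the finite-dimensional linear system for the coefficients of the rational function $M_{\mathrm{sol}}$ and prove its invertibility from scratch, using the positivity condition and the conjugation symmetries --- neither of which your current argument provides.

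Two smaller remarks. For $\det M_{\mathrm{sol}}\equiv 1$, your Liouville argument (removable singularities at $\hat{\mathsf{Z}}$, then $\det M_{\mathrm{sol}}\to 1$) is a legitimate variant of the paper's proof, which instead trades the poles for jumps on small circles with unit-determinant jump matrices; the two are equivalent and either is fine once $M_{\mathrm{sol}}$ is known to exist. For smoothness, your route via the explicit $\det(I-\tilde B)$ formula of Lemma~\ref{uslemma}$(c)$ (together with the smooth $\zeta$-dependence of $\hat\Delta_{jj}(\zeta,k_0)$ for $k_0$ off $\partial\D$) is acceptable, whereas the paper again just cites~\cite[Theorem~2.6 and Remark~2.7]{CLscatteringsolitons}; but note that to use Lemma~\ref{uslemma}$(c)$ you need the invertibility of $I-\tilde B$ for all $(x,t)$, which you would obtain only once existence and uniqueness of the (matrix or vector) solution have been established --- so the existence step remains the load-bearing one and cannot be bypassed.
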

\begin{proof}
In view of (\ref{hatDeltaconjugationsymmetries}), the coefficients in (\ref{Mk0notinR})--(\ref{Mk0inR}) obey the appropriate complex conjugation properties for \cite[Lemma A.2]{CLscatteringsolitons} to apply, which implies that the solution of RH problem \ref{RHS} exists and is unique. The identity $\det M_{\mathrm{sol}}(x,t,k)=1$ can be proved by transforming the RH problem into an equivalent RH problem in which the poles are replaced by jumps on small circles and noting that the jumps have unit determinant.
The fact that $u_{\mathrm{sol}}(x,t)$ defined in (\ref{usoldef}) is a smooth function of $x$ and $t$ follows from \cite[Theorem 2.6 and Remark 2.7]{CLscatteringsolitons}.
\end{proof}

For each $k_{0}\in \mathsf{Z}$, we let $D_{\epsilon}(k_{0})$ be a small open disk centered at $k_0$ of radius $\epsilon>0$. We let $D_{\epsilon}(k_{0})^{-1}$, $D_{\epsilon}(k_{0})^*$, and $D_{\epsilon}(k_{0})^{-*}$ be the images of $D_{\epsilon}(k_{0})$ under the maps $k \mapsto k^{-1}$, $k \mapsto \bar{k}$, and $k \mapsto \bar{k}^{-1}$, respectively. If $k_{0} \in \mathbb{R}$, then $D_{\epsilon}(k_{0})^*=D_{\epsilon}(k_{0})$. Let $\partial D_{\epsilon}(k_{0})$ and $\partial D_{\epsilon}(k_{0})^*$ be oriented counterclockwise, and let $\partial D_{\epsilon}(k_{0})^{-1}$ and $\partial D_{\epsilon}(k_{0})^{-*}$ be oriented clockwise. Let 
\begin{align}
& \mathcal{D}_{\sol} = \bigcup_{k_{0}\in \mathsf{Z}}\bigcup_{j=0,1,2} \bigg( \omega^{j} D_{\epsilon}(k_{0}) \cup \omega^{j}  D_{\epsilon}(k_{0})^{-1} \cup \omega^{j} D_{\epsilon}(k_{0})^{*} \cup \omega^{j}  D_{\epsilon}(k_{0})^{-*}\bigg), \label{def of Dsol} \\
& \partial \mathcal{D}_{\sol} = \bigcup_{k_{0}\in \mathsf{Z}}\bigcup_{j=0,1,2} \bigg( \omega^{j} \partial D_{\epsilon}(k_{0}) \cup \omega^{j}  \partial D_{\epsilon}(k_{0})^{-1} \cup \omega^{j} \partial D_{\epsilon}(k_{0})^{*} \cup \omega^{j}  \partial D_{\epsilon}(k_{0})^{-*}\bigg), \nonumber
\end{align}
so that $\partial \mathcal{D}_{\sol}$ is the union of $6 |\mathsf{Z}\cap \mathbb{R}| + 12 |\mathsf{Z}\setminus \mathbb{R}|$ small circles.
By decreasing $\epsilon>0$ if necessary, we may assume that these circles do not intersect each other. Let $\mathcal{I}$ be a compact subset of $(1,\infty)$ and let $\mathsf{P}(\zeta, k)$ be the matrix-valued function defined in (\ref{mathsfPdef}). The following uniform bound on $\hat{M}_{\mathrm{sol}}$ is used in the proof of Lemma \ref{k0lemma} and in the proof of Theorem \ref{asymptoticsth3} (which is given in Section \ref{proofth3subsec} below).

\begin{lemma}\label{lemma:Msol P is unif bounded}
For any fixed and small enough $\epsilon>0$, $\hat{M}_{\mathrm{sol}}(x,t,k):=M_{\mathrm{sol}}(x,t,k) \mathsf{P}(\zeta,k)$ and its inverse are uniformly bounded for $t \geq 2$, $\zeta \in \mathcal{I}$, and $k\in \C\setminus \mathcal{D}_{\sol}$.
\end{lemma}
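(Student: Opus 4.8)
The plan is to convert RH problem \ref{RHS} for $M_{\mathrm{sol}}$, which has poles at the points of $\hat{\mathsf{Z}}$, into an equivalent RH problem $\tilde{M}$ with no poles but with jumps across the small circles $\partial\mathcal{D}_{\mathrm{sol}}$, and then show that the associated singular integral equation is uniformly solvable in $t \geq 2$ and $\zeta \in \mathcal{I}$ so that $\tilde{M}$ (and hence $M_{\mathrm{sol}}$, and hence $\hat{M}_{\mathrm{sol}}$) is uniformly bounded off $\mathcal{D}_{\mathrm{sol}}$. Concretely, first I would define $\tilde{M}$ on $\C\setminus(\hat{\mathsf{Z}}\cup\mathcal{D}_{\mathrm{sol}})$ to equal $M_{\mathrm{sol}}$, and inside each disk of $\mathcal{D}_{\mathrm{sol}}$ modify the appropriate column of $M_{\mathrm{sol}}$ by subtracting off its pole: e.g. near $k_0\in\mathsf{Z}\setminus\R$, using the residue condition \eqref{Mk0notinR}, set $[\tilde M]_3 = [M_{\mathrm{sol}}]_3 - \frac{c_{k_0}e^{-\theta_{31}(x,t,k_0)}}{\hat\Delta_{11}\hat\Delta_{33}^{-1}(\zeta,k_0)(k-k_0)}[M_{\mathrm{sol}}]_1$ inside $D_\epsilon(k_0)$ and leave the other two columns unchanged; similarly near $\bar k_0$ using \eqref{Mk0notinR 2} and near $k_0\in\mathsf{Z}\cap\R$ using \eqref{Mk0inR}, and propagate to the $\omega$- and $k^{-1}$-translated disks via the symmetries \eqref{Msolsymm}. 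Then $\tilde M$ is analytic and bounded at every point of $\hat{\mathsf{Z}}$, is $I+O(k^{-1})$ at $\infty$, and satisfies a jump $\tilde M_+ = \tilde M_- \tilde v$ on $\partial\mathcal{D}_{\mathrm{sol}}$, where on each circle $\tilde v$ is an explicit triangular matrix of the form $I \pm (\text{residue coefficient}) (k-k_0)^{-1}E_{ij}$ restricted to that circle.

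The second step is to control $\tilde v - I$ uniformly. The only $(x,t)$-dependence enters through the exponentials $e^{-\theta_{31}(x,t,k_0)}$, $e^{\theta_{32}(x,t,\bar k_0)}$, $e^{-\theta_{21}(x,t,k_0)}$ (recalling $\theta_{ij}=t\Phi_{ij}(\zeta,k)$) and through the bounded, $\zeta$-smooth factors $\hat\Delta_{jj}(\zeta,k_0)$, which by Lemma \ref{Deltalemma} (combined with $\hat\Delta = \Delta\mathsf{P}^{-1}$ having no zeros or poles on $\hat{\mathsf{Z}}$) are uniformly bounded above and below on $\mathcal{I}$. Because $\mathsf{Z}\subset D_{\mathrm{reg}}\cup(-1,0)\cup(1,\infty)$ and the solitons are non-singular (Assumption $(i)$), one has $\re\Phi_{31}(\zeta,k_0)$, resp. $\re\Phi_{21}(\zeta,k_0)$, bounded (not necessarily of one sign, since $\zeta$ may cross a soliton velocity) but bounded uniformly on the compact set $\mathcal{I}$; hence the exponentials are uniformly bounded and $\|\tilde v - I\|_{L^\infty(\partial\mathcal{D}_{\mathrm{sol}})}\le C$ for $t\ge 2$, $\zeta\in\mathcal{I}$. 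Here I would exploit the construction already built into the residue conditions of RH problem \ref{RHS}: the replacement $c_{k_0}\mapsto c_{k_0}/(\hat\Delta_{11}\hat\Delta_{33}^{-1})$ was designed so that whenever $\re\Phi_{31}(\zeta,k_0)<0$ the factor $\hat\Delta_{11}\hat\Delta_{33}^{-1}$ cancels the growth (cf. the structure in Lemma \ref{lemma:new residue}(a)-(b) versus (c)-(d) and Lemma \ref{hatMsolresiduelemma}), so $\tilde v-I$ stays bounded for all $\zeta\in\mathcal{I}$; for $\zeta$ in the sign-indefinite case the exponential is simply $O(1)$.

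The third step is the uniform solvability. Since $\tilde v - I$ is supported on the fixed compact contour $\partial\mathcal{D}_{\mathrm{sol}}$ (independent of $t,\zeta$) and is bounded there, $\tilde M$ solves the singular integral equation $\tilde\mu = I + \mathcal{C}_{\tilde w}\tilde\mu$ with $\tilde w = \tilde v - I$, and $\|\mathcal{C}_{\tilde w}\|_{\mathcal{B}(L^2(\partial\mathcal{D}_{\mathrm{sol}}))}\le C\|\tilde w\|_{L^\infty}\le C$. A priori this does not give invertibility of $I-\mathcal{C}_{\tilde w}$; the standard remedy is the vanishing-lemma / Fredholm argument: uniqueness of the solution to RH problem \ref{RHS} (Lemma \ref{RHSlemma}, via \cite[Lemma A.2]{CLscatteringsolitons} and $\det M_{\mathrm{sol}}=1$) plus the fact that the family of jump matrices $\{\tilde w(x,t,\cdot)\}$ lives in a \emph{compact} subset of $L^\infty(\partial\mathcal{D}_{\mathrm{sol}})$ (continuity in the parameters $(\zeta,t)\in\mathcal{I}\times[2,\infty]$ after compactifying $t$ — using that $e^{-\theta_{ij}}$ extends continuously as $t\to\infty$ to either $0$ or to bounded oscillatory behaviour depending on the sign of $\re\Phi_{ij}$, and that on each circle the relevant exponential has a definite sign since $\epsilon$ is small) forces $\sup\|\tilde\mu - I\|_{L^2} \le C$ and hence, by the Cauchy integral representation and the distance of $k$ from $\partial\mathcal{D}_{\mathrm{sol}}$, a uniform bound on $\tilde M(x,t,k)^{\pm1}$ for $k$ outside a neighbourhood of the contour — in particular for $k\in\C\setminus\mathcal{D}_{\mathrm{sol}}$. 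Translating back, $M_{\mathrm{sol}} = \tilde M$ there and $\mathsf{P}(\zeta,\cdot)^{\pm1}$ is uniformly bounded on $\C\setminus\mathcal{D}_{\mathrm{sol}}$ for $\zeta\in\mathcal{I}$ (it is a fixed rational matrix with poles/zeros only at points of $\hat{\mathsf{Z}}$, which are excised), giving the claimed uniform bound on $\hat M_{\mathrm{sol}}^{\pm1}$. I expect the main obstacle to be precisely the uniformity in the solvability step: showing $\sup_{t\ge2,\,\zeta\in\mathcal{I}}\|(I-\mathcal{C}_{\tilde w})^{-1}\|<\infty$ rather than merely finiteness for each $(t,\zeta)$. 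This is handled exactly as in the analogous statements in \cite{CLmain} and \cite{CLsectorIV} — one combines the parameter-continuity of $\tilde w$, a compactness argument on the closure of the parameter set (including the $t=\infty$ limit), and the pointwise invertibility guaranteed by uniqueness — so I would assert it by citing those references rather than redoing the Fredholm analysis in detail.
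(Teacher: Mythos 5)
Your overall plan---convert the poles to jumps on the small circles $\partial\mathcal{D}_{\sol}$, bound the jump matrices, and deduce uniform boundedness of the solution from uniform solvability of the resulting small-contour singular integral equation---is the same as the paper's. But there are two genuine gaps.

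First, you construct $\tilde M$ from $M_{\mathrm{sol}}$ using the residue conditions \eqref{Mk0notinR}--\eqref{Mk0inR} directly. The resulting jump coefficient at $k_0\in\mathsf{Z}\setminus\R$ is $\propto c_{k_0}e^{-\theta_{31}(k_0)}\big/\big(\hat\Delta_{11}\hat\Delta_{33}^{-1}(k_0)\big)$, whose modulus is $\asymp e^{-t\,\re\Phi_{31}(\zeta,k_0)}$. When $\zeta<\zeta_{k_0}$, i.e.\ $\re\Phi_{31}(\zeta,k_0)<0$, this grows like $e^{ct}$. Your claim that this growth is ``cancelled'' by the $\hat\Delta$-factor cannot be right: $\hat\Delta$ is $t$-independent and uniformly bounded above and below, so it cannot offset an exponential in $t$; and your earlier sentence (``$\re\Phi_{31}(\zeta,k_0)$ bounded, hence the exponentials are uniformly bounded'') is simply false when $\re\Phi_{31}$ is bounded but negative. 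What actually does the cancellation is the conjugation by $\mathsf{P}(\zeta,k)$: by Lemma \ref{hatMsolresiduelemma}, $\hat M_{\mathrm{sol}}=M_{\mathrm{sol}}\mathsf{P}$ satisfies the \emph{inverted} residue condition of Lemma \ref{lemma:new residue}(a) when $\re\Phi_{31}<0$, with coefficient $\propto c_{k_0}^{-1}e^{+\theta_{31}(k_0)}$, which decays. The paper therefore removes the poles from $\hat M_{\mathrm{sol}}$ (not $M_{\mathrm{sol}}$), choosing between $Q_1^{<}$ and $Q_1^{\geq}$ etc.\ according to the sign of $\re\Phi_{ij}(\zeta,k_0)$; only then is $v_{\mathrm{sol}}-I$ bounded uniformly in $t\geq 2,\ \zeta\in\mathcal{I}$.

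Second, the compactness argument you invoke for uniform solvability---continuity of the jump in $(\zeta,t)\in\mathcal{I}\times[2,\infty]$ after compactifying $t$---does not go through. As $\zeta$ crosses a soliton velocity $\zeta_0\in S_0$, the residue condition (and hence the matrix position and the sign of the exponent in the jump) switches discontinuously; and for $\zeta\in S_0$ the exponential is purely oscillatory and has no limit as $t\to\infty$. The paper's fix is to partition $\mathcal{I}$ at the finitely many points of $S_0$ into intervals $\mathcal{J}$ on which the structure is fixed, and then to parameterize the jump not by $(\zeta,t)$ but by the vector of scalar coefficients $\mathsf{A}\in\C^{|\mathsf{Z}|}$ ranging over a compact set $K$ (chosen inside the set of $\mathsf{A}$ for which the RH problem is solvable---see the paper's footnote). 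Continuity of $\mathsf{A}\mapsto \|(I-\mathcal{C}_{\mathsf{w}(\cdot;\mathsf{A})})^{-1}\|$ on the finite-dimensional compact $K$ then gives the uniform bound. This step is not merely a citation to \cite{CLmain} or \cite{CLsectorIV}: the $\zeta$-discontinuity at $S_0$ is a new feature of the soliton setting that those references do not encounter.
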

\begin{proof}
First observe that $\det M_{\mathrm{sol}} \equiv 1$ by Lemma \ref{RHSlemma} and that $\det \mathsf{P} \equiv 1$ by (\ref{mathsfPdef}) and (\ref{def of mathcalP}). Thus it is enough to bound $\hat{M}_{\mathrm{sol}}$.

We know from Lemma \ref{hatMsolresiduelemma} that $\hat{M}_{\mathrm{sol}}(x,t,k)$ satisfies the residue conditions in Lemma \ref{lemma:new residue} with $n^{(3)}_{j}$ replaced by $[\hat{M}_{\mathrm{sol}}(x,t,k)]_{j}$. 
We apply a transformation $\hat{M}_{\mathrm{sol}}\to \tilde{M}_{\mathrm{sol}}$ which replaces each of these residue conditions by a jump on the corresponding circle in $\partial \mathcal{D}_{\sol}$. The function $\tilde{M}_{\mathrm{sol}}$ is defined to equal $\hat{M}_{\mathrm{sol}}$ except in $\mathcal{D}_{\sol}$ where we define $\tilde{M}_{\mathrm{sol}}(x,t,k)$ as follows: If $k_{0} \in \mathsf{Z}\setminus \mathbb{R}$, we define $\tilde{M}_{\mathrm{sol}}$ for $k \in D_{\epsilon}(k_{0}) \cup D_{\epsilon}(k_{0})^*$ by
\begin{align}\label{tildeMsoldef}
\tilde{M}_{\mathrm{sol}}(x,t,k) = \begin{cases} 
\hat{M}_{\mathrm{sol}}(x,t,k) Q_1^{<}(x,t,k)  & \text{if $k \in D_{\epsilon}(k_{0})$ and $\re \Phi_{31}(\zeta,k_0) < 0$},
	\\
\hat{M}_{\mathrm{sol}}(x,t,k) Q_7^{<}(x,t,k) & \text{if $k \in D_{\epsilon}(k_{0})^*$ and $\re \Phi_{31}(\zeta,k_0) < 0$},
	\\
\hat{M}_{\mathrm{sol}}(x,t,k) Q_1^{\geq}(x,t,k) & \text{if $k \in D_{\epsilon}(k_{0})$ and $\re \Phi_{31}(\zeta,k_0) \geq 0$},
	\\
\hat{M}_{\mathrm{sol}}(x,t,k) Q_7^{\geq}(x,t,k) & \text{if $k \in D_{\epsilon}(k_{0})^*$ and $\re \Phi_{31}(\zeta,k_0) \geq 0$},
\end{cases}
\end{align}
where
\begin{align*}
& Q_1^{<} = \begin{pmatrix} 1 & 0 & 0 \\ 0 & 1 & 0 \\ 
\frac{-c_{k_0}^{-1}e^{\theta_{31}(x,t,k_0)}}{\Delta_{33}'(\zeta,k_{0})(\Delta_{11}^{-1})'(\zeta,k_{0}) (k-k_0)}
 & 0 & 1 \end{pmatrix}, &&
 Q_7^{<} = \begin{pmatrix} 1 & 0 & 0 \\ 0 & 1 & \frac{-d_{k_0}^{-1} e^{-\theta_{32}(x,t,\bar{k}_0)}}{\Delta_{22}'(\zeta,\bar{k}_{0})(\Delta_{33}^{-1})'(\zeta,\bar{k}_{0})(k- \bar{k}_0)} 
 \\ 0 & 0  & 1 \end{pmatrix},
 	\\
& Q_1^{\geq} = \begin{pmatrix} 1 & 0 & \frac{-c_{k_{0}}e^{-\theta_{31}(x,t,k_0)} }{\Delta_{11}(\zeta,k_0) \Delta_{33}^{-1}(\zeta,k_0)(k-k_0)} 
\\ 0 & 1 & 0 \\ 0 & 0 & 1 \end{pmatrix}, &&
 Q_7^{\geq} = \begin{pmatrix} 1 & 0 & 0 \\ 0 & 1 & 0 \\ 0 & \frac{-d_{k_0} e^{\theta_{32}(x,t,\bar{k}_0)} }{\Delta_{33}(\zeta,\bar{k}_0) \Delta_{22}^{-1}(\zeta,\bar{k}_0) (k-\bar{k}_0)} 
 & 1 \end{pmatrix}, 
\end{align*}
and if $k_{0} \in \mathsf{Z} \cap \mathbb{R}$, we define $\tilde{M}_{\mathrm{sol}}$ for $k \in D_{\epsilon}(k_{0})$ by
\begin{align}\label{tildeMsoldef2}
\tilde{M}_{\mathrm{sol}}(x,t,k) = 
\begin{cases}
\hat{M}_{\mathrm{sol}}(x,t,k) P_1^{<}(x,t,k) \; & \text{if $k \in D_{\epsilon}(k_{0})$ and $\re \Phi_{21}(\zeta,k_0) < 0$},
	\\
\hat{M}_{\mathrm{sol}}(x,t,k) P_1^{\geq}(x,t,k) & \text{if $k \in D_{\epsilon}(k_{0})$ and $\re \Phi_{21}(\zeta,k_0) \geq 0$},
\end{cases}
\end{align}
where
\begin{align*}
& P_1^{<} = \begin{pmatrix} 1 & 0 & 0 \\ 
\frac{-c_{k_0}^{-1} e^{\theta_{21}(x,t,k_0)}}{\Delta_{22}'(\zeta,k_{0})(\Delta_{11}^{-1})'(\zeta,k_{0}) (k-k_0)}
& 1 & 0 \\ 0 & 0 & 1 \end{pmatrix},
&&
P_1^{\geq} = \begin{pmatrix} 1 & \frac{-c_{k_0} e^{-\theta_{21}(x,t,k_0)}}{\Delta_{11}(\zeta,k_0) \Delta_{22}^{-1}(\zeta,k_0) (k- k_0)} 
& 0 \\ 0 & 1 & 0 \\ 0 & 0 & 1 \end{pmatrix}.
\end{align*}
Here prime denotes differentiation with respect to the $k$-variable.
We then extend $\tilde{M}_{\mathrm{sol}}$ to all of $\mathcal{D}_{\sol}$ by means of the $\mathcal{A}$- and $\mathcal{B}$-symmetries (\ref{Msolsymm}).

It follows from the above definition of $\tilde{M}_{\mathrm{sol}}$ and Lemma \ref{lemma:new residue} (with $n^{(3)}_{j}$ replaced by $[\hat{M}_{\mathrm{sol}}(x,t,k)]_{j}$) that $\tilde{M}_{\mathrm{sol}}$ has no poles at the points in $\hat{\mathsf{Z}}$. Moreover, on $\partial \mathcal{D}_{\sol}$, the boundary values of $\tilde{M}_{\mathrm{sol}}$ exist, are continuous, and satisfy $(\tilde{M}_{\mathrm{sol}})_+ = (\tilde{M}_{\mathrm{sol}})_-v_{\mathrm{sol}}$, where 
\begin{align}\label{vregdef}
v_{\mathrm{sol}}(x,t,k) := \begin{cases}
Q_1^<(x,t,k) \; & \text{if $k \in \partial D_{\epsilon}(k_{0})$, $k_{0} \in \mathsf{Z}\setminus \R$, and $\re \Phi_{31}(\zeta,k_0) < 0$},
    \\
Q_7^<(x,t,k) & \text{if $k \in \partial D_{\epsilon}(k_{0})^*$, $k_{0} \in \mathsf{Z}\setminus \R$, and $\re \Phi_{31}(\zeta,k_0) < 0$},
	\\
Q_1^\geq(x,t,k) & \text{if $k \in \partial D_{\epsilon}(k_{0})$, $k_{0} \in \mathsf{Z}\setminus \R$, and $\re \Phi_{31}(\zeta,k_0) \geq 0$},
    \\
Q_7^\geq(x,t,k) & \text{if $k \in \partial D_{\epsilon}(k_{0})^*$, $k_{0} \in \mathsf{Z}\setminus \R$, and $\re \Phi_{31}(\zeta,k_0) \geq 0$},
	\\
P_1^<(x,t,k) & \text{if $k \in \partial D_{\epsilon}(k_{0})$, $k_{0} \in \mathsf{Z}\cap \R$, and $\re \Phi_{21}(\zeta,k_0) < 0$},
	\\
P_1^\geq(x,t,k) & \text{if $k \in \partial D_{\epsilon}(k_{0})$, $k_{0} \in \mathsf{Z}\cap \R$, and $\re \Phi_{21}(\zeta,k_0) \geq 0$},
\end{cases}
\end{align}
and $v_{\mathrm{sol}}$ is extended to all of $\partial \mathcal{D}_{\sol}$ by means of the $\mathcal{A}$- and $\mathcal{B}$-symmetries (\ref{vjsymm}). As in Section \ref{mainsec}, let $S_0 \subset \mathcal{I}$ be the (finite) set of all $\zeta \in \mathcal{I}$ for which there exists a $k_{0}$ such that either $k_{0} \in (\mathsf{Z}\setminus \R) \cap \{k\,|\, \re \Phi_{31}(\zeta,k) = 0\}$ or $k_{0} \in \mathsf{Z}\cap \R \cap \{k\,|\, \re \Phi_{21}(\zeta,k) = 0\}$.
In other words, $S_0$ is the set of velocities of the asymptotic solitons encoded in $u_{\mathrm{sol}}$ that lie in $\mathcal{I}$. The jump matrix $v_{\mathrm{sol}}$ is exponentially small as $t\to\infty$ with $\zeta \in \mathcal{I} \setminus S_0$ fixed. Thus, standard small-norm estimates show that $\tilde{M}_{\mathrm{sol}}(x,t,k)$, and hence also $\hat{M}_{\mathrm{sol}}(x,t,k)$, is uniformly bounded for $t \geq 2$ and $k\in \C\setminus \mathcal{D}_{\sol}$ as long as $\zeta \in \mathcal{I}$ stays a bounded distance away from $S_0$. This argument fails if $\zeta \in S_0$, because then $v_{\mathrm{sol}}$ is of order $1$.
To establish the desired estimate uniformly for all $\zeta \in \mathcal{I}$, we therefore proceed as follows.

Let $w_{\mathrm{sol}} = v_{\mathrm{sol}} - I$. Let $\mathcal{C}$ be the Cauchy operator on $\partial \mathcal{D}_{\sol}$ (defined as in (\ref{Cauchyoperatordef}) with $\hat{\Gamma}$ replaced by $\partial \mathcal{D}_{\sol}$) and let $\mathcal{C}_{w_{\mathrm{sol}}}h = \mathcal{C}_{-}(h w_{\mathrm{sol}})$. Then $\tilde{M}_{\mathrm{sol}} = I + \mathcal{C}(\mu_{\mathrm{sol}} w_{\mathrm{sol}})$ where $\mu_{\mathrm{sol}} = I + (I - \mathcal{C}_{w_{\mathrm{sol}}})^{-1}\mathcal{C}_{w_{\mathrm{sol}}} I \in I + L^{2}(\partial \mathcal{D}_{\sol})$. Thus it is enough to show that, for every sufficienty small $\epsilon > 0$, $\|(I - \mathcal{C}_{w_{\mathrm{sol}}})^{-1}\|_{\mathcal{B}(L^2(\partial \mathcal{D}_{\sol}))}$ is uniformly bounded for $t \geq 2$ and $\zeta \in \mathcal{I}$.

If $\zeta_1, \zeta_2 \in S_0$ are such that no other point in $S_0$ lies between $\zeta_1$ and $\zeta_2$, then the integers 
\begin{align}\label{integers}
|\{k_{0} \in \mathsf{Z}\setminus \R \,| \, \re \Phi_{31}(\zeta,k_0) < 0\}|\quad \text{and} \quad |\{k_{0} \in \mathsf{Z}\cap \R\,| \, \re \Phi_{21}(\zeta,k_0) < 0\}|
\end{align}
are independent of $\zeta \in [\zeta_{1},\zeta_{2})$.\footnote{The interval includes $\zeta_1$ but not $\zeta_2$ because $\re \partial_\zeta \Phi_{31}(\zeta,k_0) > 0$ for each $k_0 \in \mathsf{Z}\setminus \R$ and $\re \partial_\zeta \Phi_{21}(\zeta,k_0) > 0$ for each $k_0 \in \mathsf{Z} \cap \R$.}
Since $S_0$ is a finite set, it is therefore sufficient to prove that $\|(I - \mathcal{C}_{w_{\mathrm{sol}}})^{-1}\|_{\mathcal{B}(L^2(\partial \mathcal{D}_{\sol}))}$ is uniformly bounded for $t \geq 2$ and $\zeta \in \mathcal{J}$ whenever $\mathcal{J}$ is a subinterval of $\mathcal{I}$ such that the integers in (\ref{integers}) are independent of $\zeta \in \mathcal{J}$.

To each set $\mathsf{A} = \{A_{k_0}\}_{k_0 \in \mathsf{Z}} \subset \C^{|\mathsf{Z}|}$, we associate a jump matrix $\mathsf{v}(k; \mathsf{A})$ on $\partial \mathcal{D}_{\sol}$ by setting
\begin{align*}
\mathsf{v}(k; \mathsf{A}) := \begin{cases}
\mathsf{Q}_1^<(k; \mathsf{A}) \; & \text{if $k \in \partial D_{\epsilon}(k_{0})$, $k_{0} \in \mathsf{Z}\setminus \R$, and $\re \Phi_{31}(\zeta,k_0) < 0$ for $\zeta \in \mathcal{J}$},
    \\
\mathsf{Q}_7^<(k; \mathsf{A}) & \text{if $k \in \partial D_{\epsilon}(k_{0})^*$, $k_{0} \in \mathsf{Z}\setminus \R$, and $\re \Phi_{31}(\zeta,k_0) < 0$ for $\zeta \in \mathcal{J}$},
	\\
\mathsf{Q}_1^\geq(k; \mathsf{A}) & \text{if $k \in \partial D_{\epsilon}(k_{0})$, $k_{0} \in \mathsf{Z}\setminus \R$, and $\re \Phi_{31}(\zeta,k_0) \geq 0$ for $\zeta \in \mathcal{J}$},
    \\
\mathsf{Q}_7^\geq(k; \mathsf{A}) & \text{if $k \in \partial D_{\epsilon}(k_{0})^*$, $k_{0} \in \mathsf{Z}\setminus \R$, and $\re \Phi_{31}(\zeta,k_0) \geq 0$ for $\zeta \in \mathcal{J}$},
	\\
\mathsf{P}_1^<(k; \mathsf{A}) & \text{if $k \in \partial D_{\epsilon}(k_{0})$, $k_{0} \in \mathsf{Z}\cap \R$, and $\re \Phi_{21}(\zeta,k_0) < 0$ for $\zeta \in \mathcal{J}$},
	\\
\mathsf{P}_1^\geq(k; \mathsf{A}) & \text{if $k \in \partial D_{\epsilon}(k_{0})$, $k_{0} \in \mathsf{Z}\cap \R$, and $\re \Phi_{21}(\zeta,k_0) \geq 0$ for $\zeta \in \mathcal{J}$},
\end{cases}
\end{align*}
where
\begin{align*}
& \mathsf{Q}_1^{<} = \begin{pmatrix} 1 & 0 & 0 \\ 0 & 1 & 0 \\ 
\frac{A_{k_0}}{k-k_0}
 & 0 & 1 \end{pmatrix}, \quad
 \mathsf{Q}_7^{<} = \begin{pmatrix} 1 & 0 & 0 \\ 0 & 1 & \frac{\bar{A}_{k_0}}{k- \bar{k}_0} \frac{\omega^2 (\omega^2 - \bar{k}_0^2)}{\bar{k}_0^2-1}
  \\ 0 & 0  & 1 \end{pmatrix},
 \quad
 \mathsf{Q}_1^{\geq} = \begin{pmatrix} 1 & 0 & \frac{A_{k_0} }{k-k_0} 
\\ 0 & 1 & 0 \\ 0 & 0 & 1 \end{pmatrix}, 
	\\
&
 \mathsf{Q}_7^{\geq} = \begin{pmatrix} 1 & 0 & 0 \\ 0 & 1 & 0 \\ 0 & \frac{\bar{A}_{k_0}}{k-\bar{k}_0} \frac{\bar{k}_0^2-1}{\omega^2 (\omega^2 - \bar{k}_0^2)}
 & 1 \end{pmatrix}, 
 \quad
 \mathsf{P}_1^{<} = \begin{pmatrix} 1 & 0 & 0 \\ 
\frac{A_{k_0}}{k-k_0}
& 1 & 0 \\ 0 & 0 & 1 \end{pmatrix},
\quad
\mathsf{P}_1^{\geq} = \begin{pmatrix} 1 & \frac{A_{k_0}}{k- k_0} 
& 0 \\ 0 & 1 & 0 \\ 0 & 0 & 1 \end{pmatrix},
\end{align*}
and $\mathsf{v}$ is extended to all of $\partial \mathcal{D}_{\sol}$ by means of the $\mathcal{A}$- and $\mathcal{B}$-symmetries (\ref{vjsymm}). 

Note that $\overline{\theta_{31}(x,t,k_0)} = - \theta_{32}(x,t,\bar{k}_0)$ and that, by (\ref{Deltasymm}), $\mathsf{\Delta}_{11}(\zeta, k) = \overline{\mathsf{\Delta}_{22}(\zeta, \bar{k})}^{-1}$ and $\mathsf{\Delta}_{33}(\zeta, k) = \overline{\mathsf{\Delta}_{33}(\zeta, \bar{k})}^{-1}$.
Hence there is a compact subset $K$ of $\C^{|\mathsf{Z}|}$ such that
\begin{align}\label{vsolcontainedinvA}
\{v_{\mathrm{sol}}(x,t,\cdot) \,|\, t \geq 2, \zeta \in \mathcal{J}\} \subset 
\{\mathsf{v}(\cdot; \mathsf{A}) \,|\, \mathsf{A} \in K\}.
\end{align}
Let $U$ be the open subset of $\mathcal{B}(L^2(\partial \mathcal{D}_{\sol}))$ consisting of all invertible operators. Let $\mathsf{w} = \mathsf{v}-I$. By the existence results for RH problem \ref{RHS}, we may assume\footnote{We write ``we may assume", because $K$ cannot be chosen completely arbitrarily. Indeed, for Lemma \ref{RHSlemma} to apply, the following conditions must be satisfied: For $k_{0} \in \mathsf{Z}\cap \R$ such that $A_{k_0} \neq 0$, we must have $i(\omega^{2}k_{0}^{2} - \omega)\mathrm{c}_{k_{0}} \geq 0$, where $\mathrm{c}_{k_{0}} := -1/(\Delta_{22}'(\zeta,k_{0})(\Delta_{11}^{-1})'(\zeta,k_{0})A_{k_{0}})$ if $\re \Phi_{21}(\zeta,k_0) < 0$ for $\zeta \in \mathcal{J}$, and $\mathrm{c}_{k_{0}} := -\Delta_{11}(\zeta,k_{0})\Delta_{22}^{-1}(\zeta,k_{0})A_{k_{0}}$ if $\re \Phi_{21}(\zeta,k_0) \geq 0$ for $\zeta \in \mathcal{J}$. } that the operator $I - \mathcal{C}_{\mathsf{w}(\cdot; \mathsf{A})}$ belongs to $U$ for each $\mathsf{A} \in K$.
The map
$\mathsf{A} \mapsto \mathsf{w}(\cdot; \mathsf{A}): \C^{|\mathsf{Z}|} \to L^\infty(\partial \mathcal{D}_{\sol})$
is continuous, and the map
\begin{align*}
\mathsf{w} \mapsto I - \mathcal{C}_\mathsf{w}: L^\infty(\partial \mathcal{D}_{\sol}) \to U \subset \mathcal{B}(L^2(\partial \mathcal{D}_{\sol}))
\end{align*}
is continuous by the estimate $\|\mathcal{C}_\mathsf{w}\|_{\mathcal{B}(L^2(\partial \mathcal{D}_{\sol}))} \leq C \|\mathsf{w}\|_{L^\infty(\partial \mathcal{D}_{\sol})}$.
Moreover, the map $T \mapsto T^{-1}$ is continuous on $U$. 
By composing these maps, we see that the map that sends $\mathsf{A}$ to $\|(I - \mathcal{C}_{\mathsf{w}(\cdot; \mathsf{A})})^{-1}\|_{\mathcal{B}(L^2(\partial \mathcal{D}_{\sol}))}$ is continuous $K \to \R$ and hence bounded on $K$.
It now follows from (\ref{vsolcontainedinvA}) that $\|(I - \mathcal{C}_{w_{\mathrm{sol}}})^{-1}\|_{\mathcal{B}(L^2(\partial \mathcal{D}_{\sol}))}$ is uniformly bounded for $t \geq 2$ and $\zeta \in \mathcal{J}$.
\end{proof}

\subsection{Proof of Theorem \ref{asymptoticsth3}}\label{proofth3subsec}
Let $\zeta_0 \in S_0$ and let $\epsilon > 0$ be so small that $\zeta_0 - \epsilon > 1$ and $\mathcal{I}_0 := \{\zeta \in \R \,| \, |\zeta - \zeta_0| \leq \epsilon\}$ contains no other point in $S_0$.
Let $M_{\sol}^0$ be the unique solution of RH problem \ref{RHS} with $\mathsf{Z}$ replaced by $\mathsf{Z}^0 := \{k_0 \in \mathsf{Z} \, | \, \zeta_{k_0} = \zeta_0 \}$, i.e., $M_{\sol}^0$ solves the same RH problem as $M_{\sol}$ except that only those residue conditions that give rise to asymptotic solitons propagating at velocity $\zeta_0$ are included. 
Let $\hat{M}_{\mathrm{sol}} = M_{\mathrm{sol}} \mathsf{P}$ and $\hat{M}_{\mathrm{sol}}^0 = M_{\mathrm{sol}}^0 \mathsf{P}$, where $\mathsf{P}$ is given by \eqref{mathsfPdef}. Define $\tilde{M}_{\mathrm{sol}}$ and $v_{\sol}$ in terms of $\hat{M}_{\mathrm{sol}}$ as in the proof of Lemma \ref{lemma:Msol P is unif bounded}. Define $\tilde{M}_{\mathrm{sol}}^0$ and $v_{\sol}^0$ analogously in terms of $\hat{M}_{\mathrm{sol}}^0$. Then $v_{\sol}^0$ is the restriction of $v_{\sol}$ to the subcontour $\partial \mathcal{D}_{\sol}^0$ of $\partial \mathcal{D}_{\sol}$, where $\mathcal{D}_{\sol}^0$ is given by (\ref{def of Dsol}) with $\mathsf{Z}$ replaced by $\mathsf{Z}^0$. 

Shrinking the disks that make up $\mathcal{D}_{\sol}$ if necessary, the jump matrix $v_{\mathrm{sol}}$ is $O(e^{-ct})$ on $\partial \mathcal{D}_{\sol} \setminus \partial \mathcal{D}_{\sol}^0$ uniformly for $\zeta \in \mathcal{I}_0$ as  $t \to \infty$.
The matrix $\tilde{M}_{\mathrm{sol}} (\tilde{M}_{\mathrm{sol}}^0)^{-1}$ has no jump on $\partial \mathcal{D}_{\sol}^0$ and the jump on  $\partial \mathcal{D}_{\sol} \setminus \partial \mathcal{D}_{\sol}^0$ is given by $\tilde{M}_{\mathrm{sol}}^0 v_{\mathrm{sol}} (\tilde{M}_{\mathrm{sol}}^0)^{-1}$.
Since $\tilde{M}_{\mathrm{sol}}^0$ and $(\tilde{M}_{\mathrm{sol}}^0)^{-1}$ are uniformly bounded for $t \geq 2$, $\zeta \in \mathcal{I}_0$, and $k\in \C\setminus \mathcal{D}_{\sol}^{0}$ by Lemma \ref{lemma:Msol P is unif bounded}, we conclude that the jump of $\tilde{M}_{\mathrm{sol}} (\tilde{M}_{\mathrm{sol}}^0)^{-1}$ is exponentially small as  $t \to \infty$ uniformly for $\zeta \in \mathcal{I}_0$. Consequently, by standard estimates for small-norm RH problems,
$$\lim_{k\to\infty}k\Big(\big((1,1,1)M_{\mathrm{sol}}(x,t,k)\big)_{3}-1\Big)
= \lim_{k\to\infty}k\Big(\big((1,1,1)M_{\mathrm{sol}}^0(x,t,k)\big)_{3}-1\Big) + O(e^{-ct})$$
as $t \to \infty$ uniformly for $\zeta \in \mathcal{I}_0$, and this formula can be differentiated with respect to $x$ without changing the error term.
By (\ref{usoldef}) and Theorem \ref{usolth} (applied with $\mathsf{Z}$ replaced by $\mathsf{Z}^0$), this implies that
\begin{align*}
u_{\sol}(x,t) = &\; u_s\bigg(x,t; \bigg(\lambda_j, \frac{c_{\lambda_j}}{\hat{\Delta}_{11}(\zeta, \lambda_j)\hat{\Delta}_{33}^{-1}(\zeta, \lambda_j)}\bigg)_{j \in I_0}, \bigg(k_j, \frac{c_{k_j}}{\hat{\Delta}_{11}(\zeta, k_j)\hat{\Delta}_{22}^{-1}(\zeta, k_j)}\bigg)_{j \in J_0}\bigg)
	\\
& + O(e^{-ct})
\end{align*}
as $t \to \infty$ uniformly for $\zeta \in \mathcal{I}_0$.
Substituting this formula into (\ref{uasymptotics}), we arrive at the asymptotic formula (\ref{uasymptoticsnearsoliton}).

Finally, note that if $C_0 >0$, then
$$\frac{c_{k_j}}{\hat{\Delta}_{11}(\zeta, k_j)\hat{\Delta}_{22}^{-1}(\zeta, k_j)}
 =  \frac{c_{k_j}}{\hat{\Delta}_{11}(\zeta_{k_j}, k_j)\hat{\Delta}_{22}^{-1}(\zeta_{k_j}, k_j)} 
 + O(t^{-1}) \qquad \text{as $t \to \infty$}$$
uniformly for $x \in [\zeta_{k_j} t - C_0, \zeta_{k_j} t + C_0]$. Hence the asymptotic formula (\ref{usechasymptotics}) follows from (\ref{usonesoliton}) and (\ref{uasymptoticsnearsoliton}).
The proof of Theorem \ref{asymptoticsth3} is complete.

\appendix

\section{A model RH problem}\label{modelapp}

The model RH problem considered in this appendix is needed for the local parametrix near $k_{1}$. The solution of this RH problem is constructed in terms of parabolic cylinder functions as in \cite{I1981}. We omit the proof here, but refer to \cite[Appendix A]{CLsectorV} for the construction of the solution to a similar (but more complicated) RH problem.

Let $X = X_1 \cup \cdots \cup X_4 \subset \C$ be the cross defined by
\begin{align} \nonumber
&X_1 = \bigl\{se^{\frac{i\pi}{4}}\, \big| \, 0 \leq s < \infty\bigr\}, && 
X_2 = \bigl\{se^{\frac{3i\pi}{4}}\, \big| \, 0 \leq s < \infty\bigr\},  
	\\ \label{Xdef}
&X_3 = \bigl\{se^{-\frac{3i\pi}{4}}\, \big| \, 0 \leq s < \infty\bigr\}, && 
X_4 = \bigl\{se^{-\frac{i\pi}{4}}\, \big| \, 0 \leq s < \infty\bigr\},
\end{align}
and oriented away from the origin, see Figure \ref{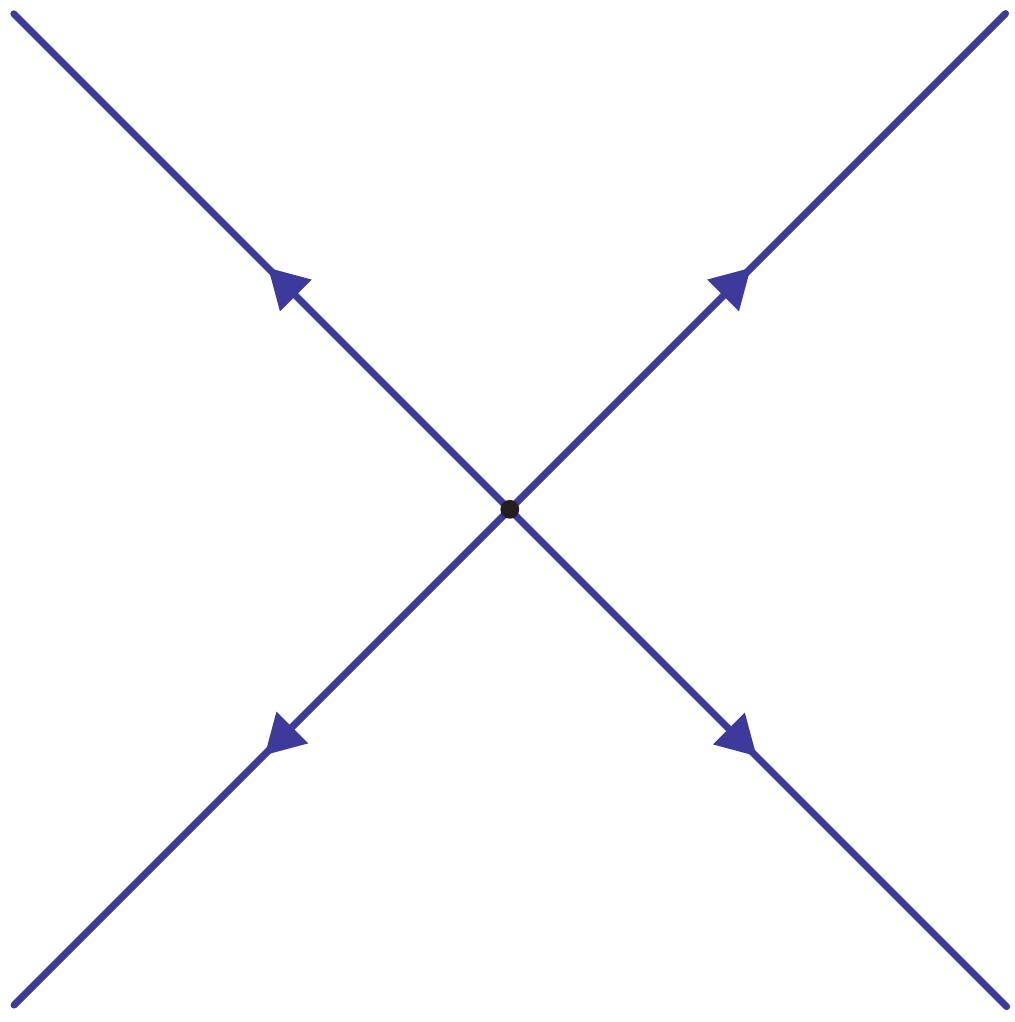}. 

\begin{figure}
\begin{center}
 \begin{overpic}[width=.35\textwidth]{X.pdf}
      \put(74,68){\small $X_1$}
      \put(18,68){\small $X_2$}
      \put(17,28){\small $X_3$}
      \put(75,27){\small $X_4$}
      \put(48,43){$0$}
    \end{overpic}
     \begin{figuretext}\label{X.pdf}
        The contour $X = X_1 \cup X_2 \cup X_3 \cup X_4$.
     \end{figuretext}
     \end{center}
\end{figure}

\begin{lemma}[Model RH problem needed near $k=k_{1}$]\label{IIbis Xlemma 3}
Let $q \in \mathbb{C}$, and define
\begin{align*}
\nu = -\tfrac{1}{2\pi} \ln(1 + |q|^2) \leq 0.
\end{align*}
Define the jump matrix $v^{X}(z)$ for $z \in X$ by
\begin{align}\nonumber 
& \begin{pmatrix} 
1 & \frac{-\bar{q}}{1 + |q|^2} z_{(0)}^{-2i\nu}e^{\frac{iz^2}{2}} & 0 \\
0	& 1 & 0 \\
0 & 0 & 1
\end{pmatrix} \mbox{ if } z \in X_{1}, & & 
 \begin{pmatrix} 
1	& 0 & 0	\\
q z_{(0)}^{2i\nu}e^{-\frac{iz^2}{2}}	& 1 & 0 \\
0 & 0 & 1
\end{pmatrix} \mbox{ if } z \in X_{2}, 
	\\ \label{vXdef}
& \begin{pmatrix} 
1 & \bar{q} z_{(0)}^{-2i\nu} e^{\frac{iz^2}{2}} & 0 \\
0 & 1 & 0 \\
0 & 0 & 1
\end{pmatrix} \mbox{ if } z \in X_{3}, & & \begin{pmatrix} 
1 & 0 & 0 \\
\frac{-q}{1 + |q|^2}z_{(0)}^{2i\nu} e^{-\frac{iz^2}{2}} & 1 & 0 \\
0 & 0 & 1 \end{pmatrix} \mbox{ if } z \in X_{4},
\end{align}
where $z_{(0)}^{i\nu}$ has a branch cut along $[0,+\infty)$, such that $z_{(0)}^{i\nu} = |z|^{i\nu}e^{-\nu  \arg_{0}(z)}$, $\arg_{0}(z) \in (0,2\pi)$. Then the RH problem 
\begin{enumerate}[$(a)$]
\item $m^{X}(\cdot) = m^{X}(q, \cdot) : \C \setminus X \to \mathbb{C}^{3 \times 3}$ is analytic;

\item on $X \setminus \{0\}$, the boundary values of $m^{X}$ exist, are continuous, and satisfy $m_+^{X} =  m_-^{X} v^{X}$;

\item $m^{X}(z) = I + O(z^{-1})$ as $z \to \infty$, and $m^{X}(z) = O(1)$ as $z \to 0$;
\end{enumerate}
has a unique solution $m^{X}(q,z)$. This solution satisfies
\begin{align}\label{mXasymptotics}
m^{X}(z) = I + \frac{m_{1}^{X}}{z} + O\biggl(\frac{1}{z^2}\biggr), \quad z \to \infty,  \quad m_{1}^{X}:=\begin{pmatrix} 
0 & \beta_{12} & 0 \\ 
\beta_{21} & 0 & 0 \\
0 & 0 & 0 \end{pmatrix},
\end{align}  
where the error term is uniform for $\arg z \in [-\pi, \pi]$ and $q$ in compact subsets of $\mathbb{C}$, and
\begin{align}\label{betaXdef}
& \beta_{12} := \frac{\sqrt{2\pi}e^{\frac{\pi i}{4}}e^{\frac{3\pi \nu}{2}}}{q \Gamma(i\nu)}, \qquad \beta_{21} := \frac{\sqrt{2\pi}e^{-\frac{\pi i}{4}}e^{-\frac{5\pi \nu}{2}}}{-\bar{q} \Gamma(-i\nu)}.
\end{align}
(Note that $\beta_{12}\beta_{21} = \nu$ because $|\Gamma(i\nu)| = \frac{\sqrt{2\pi}}{\sqrt{-\nu}\sqrt{e^{-\pi\nu}-e^{\pi\nu}}}=\frac{\sqrt{2\pi}}{\sqrt{-\nu}e^{\frac{\pi\nu}{2}}|q|}$.)
\end{lemma}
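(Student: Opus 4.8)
The plan is to reduce the $3\times 3$ model RH problem of Lemma~\ref{IIbis Xlemma 3} to a classical $2\times 2$ parabolic cylinder problem, which is solved explicitly in the literature (e.g.\ \cite{I1981}). First I would observe that the third row and third column of each jump matrix in \eqref{vXdef} are trivial: every $v^X(z)$ has the block form $\left(\begin{smallmatrix} w(z) & 0 \\ 0 & 1\end{smallmatrix}\right)$ with $w(z) \in \C^{2\times 2}$ the standard Its-type jump across the four rays of the cross $X$. Hence one seeks $m^X$ in the block-diagonal form $m^X = \left(\begin{smallmatrix} \psi(z) & 0 \\ 0 & 1 \end{smallmatrix}\right)$, where $\psi : \C\setminus X \to \C^{2\times 2}$ solves the $2\times 2$ RH problem with jump $w$, normalization $\psi(z) = I + O(z^{-1})$ at infinity, and $\psi(z) = O(1)$ at $0$. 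Uniqueness for the $3\times 3$ problem follows from uniqueness for the $2\times 2$ problem by a Liouville argument: any two solutions differ by an entire function that tends to $I$ (the relevant determinant is $1$ because the jumps are unipotent, so there are no poles at $0$ either once the $O(1)$ condition is imposed); the block structure is forced because the first two columns of $m^X(z)(m^X_{\mathrm{other}}(z))^{-1}$ satisfy no jump and are bounded.

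Next I would invoke the known solution of the $2\times 2$ parabolic cylinder model problem. The standard computation (deform the cross to the real line, differentiate $\psi(z) e^{-\frac{iz^2}{4}\sigma_3} z^{i\nu\sigma_3}$ to get a linear ODE of parabolic cylinder type, match the asymptotics of $D_a(\cdot)$) yields a unique $\psi$ with
\begin{align*}
\psi(z) = I + \frac{1}{z}\begin{pmatrix} 0 & \beta_{12} \\ \beta_{21} & 0 \end{pmatrix} + O(z^{-2}), \qquad z \to \infty,
\end{align*}
uniformly for $\arg z \in [-\pi,\pi]$ and $q$ in compact subsets of $\C$, with $\beta_{12} = \frac{\sqrt{2\pi}\, e^{\pi i/4} e^{3\pi\nu/2}}{q\,\Gamma(i\nu)}$ and $\beta_{21} = -\frac{\sqrt{2\pi}\, e^{-\pi i/4} e^{-5\pi\nu/2}}{\bar q\,\Gamma(-i\nu)}$; the off-diagonal placement is dictated by the orientation and triangular structure of the jumps. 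Embedding $\psi$ back into the $3\times 3$ framework gives $m^X(z) = \left(\begin{smallmatrix}\psi(z)&0\\0&1\end{smallmatrix}\right)$, and reading off the $1/z$ term produces exactly \eqref{mXasymptotics} and \eqref{betaXdef}, with the last two rows/columns of $m_1^X$ vanishing. The identity $\beta_{12}\beta_{21} = \nu$ is then a direct consequence of the reflection/duplication formulas for $\Gamma$, via $|\Gamma(i\nu)|^2 = \frac{\pi}{\nu \sinh(\pi\nu)}$, i.e.\ $|\Gamma(i\nu)| = \frac{\sqrt{2\pi}}{\sqrt{-\nu}\,\sqrt{e^{-\pi\nu}-e^{\pi\nu}}}$, combined with $1+|q|^2 = e^{-2\pi\nu}$.

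Since the excerpt explicitly says the proof is omitted and refers to \cite[Appendix A]{CLsectorV} for the construction in a harder case, the write-up should be short: state the block reduction, cite the $2\times 2$ result and \cite{I1981, CLsectorV}, and verify the two reflection-formula identities. The only genuine point requiring care — and the step I would flag as the main obstacle if one were to write it in full — is establishing the \emph{uniformity} of the error term in \eqref{mXasymptotics} with respect to $q$ in compact sets (and uniformly up to the branch cut $\arg z = \pm\pi$). This requires tracking the $q$-dependence through the parabolic cylinder function asymptotics and the small-norm estimate for the associated singular integral operator $I - \mathcal{C}_{w}$, showing its inverse is bounded locally uniformly in $q$; this is routine but is the part that cannot be dispatched by a one-line citation, so in a complete argument it would be handled exactly as in \cite[Appendix A]{CLsectorV}.
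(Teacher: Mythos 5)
Your proposal is correct and matches the route the paper indicates: the paper omits the proof, citing only \cite{I1981} and \cite[Appendix A]{CLsectorV}, and your reduction of the $3\times 3$ problem to the classical $2\times 2$ Its parabolic cylinder model via the trivial $2\oplus 1$ block structure of the jumps, together with the Liouville uniqueness argument (valid since the unipotent jumps give $\det m^X\equiv 1$) and the reflection-formula check $\beta_{12}\beta_{21}=\nu$, is exactly what those references carry out. Your observation that the only point requiring real care is the uniformity of the $O(z^{-2})$ error in $q$ on compact sets is also the right one to flag, and it is handled in \cite[Appendix A]{CLsectorV} as you suggest.
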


\subsection*{Acknowledgements}
Support is acknowledged from the Novo Nordisk Fonden Project, Grant 0064428, the European Research Council, Grant Agreement No. 682537, the Swedish Research Council, Grant No. 2015-05430, Grant No. 2021-04626, and Grant No. 2021-03877, the G\"oran Gustafsson Foundation, and the Ruth and Nils-Erik Stenb\"ack Foundation.

\subsection*{Declarations of interest} None.

\bibliographystyle{plain}
\bibliography{is}

\end{document}